\DeclareMathAlphabet{\mathpzc}{OT1}{pzc}{m}{it}
\newcommand{\FF}[1]{{\color{black}#1}}
\newcommand{\DQ}[1]{{\color{black}#1}}
\newcommand{\EO}[1]{{\color{black}#1}}
\newcommand{\alert}[1]{{\color{black}#1}}
\DeclareSymbolFont{fouriersymbols}{FMS}{futm}{m}{n}
\DeclareSymbolFont{fourierlargesymbols}{FMX}{futm}{m}{n}
\DeclareMathDelimiter{\VERT}{\mathord}{fouriersymbols}{152}{fourierlargesymbols}{147}
\newcommand{\T}{\mathscr{T}}
\newcommand{\Sides}{\mathscr{S}}
\newcommand{\TheTitle}{A posteriori error estimates for a distributed optimal control problem of the stationary Navier--Stokes equations}
\newcommand{\ShortTitle}{A control problem for the Navier--Stokes equations}
\newcommand{\TheAuthors}{A. Allendes, F. Fuica, E. Ot\'arola, D. Quero}
\headers{\ShortTitle}{\TheAuthors}
\title{{\TheTitle}\thanks{AA is partially supported by CONICYT through FONDECYT project 1170579. FF is supported by UTFSM through Beca de Mantenci\'on. EO is partially supported by CONICYT through FONDECYT Project 11180193.}}
\author{Alejandro Allendes\thanks{Departamento de Matem\'atica, Universidad T\'ecnica Federico Santa Mar\'ia, Valpara\'iso, Chile.
(\email{alejandro.allendes@usm.cl}).}
\and
Francisco Fuica\thanks{Departamento de Matem\'atica, Universidad T\'ecnica Federico Santa Mar\'ia, Valpara\'iso, Chile.
(\email{francisco.fuica@sansano.usm.cl}).}
\and
Enrique Ot\'arola\thanks{Departamento de Matem\'atica, Universidad T\'ecnica Federico Santa Mar\'ia, Valpara\'iso, Chile.
(\email{enrique.otarola@usm.cl}, \url{http://eotarola.mat.utfsm.cl/}).}
\and
Daniel Quero\thanks{Departamento de Matem\'atica, Universidad T\'ecnica Federico Santa Mar\'ia, Valpara\'iso, Chile.
(\email{daniel.quero@alumnos.usm.cl}).}}
\date{Draft version of \today.}
\begin{document}

\maketitle

\begin{abstract}
\EO{In two and three dimensional Lipschitz, but not necessarily convex, polytopal domains, we propose and analyze a posteriori error estimators for an optimal control problem involving the stationary Navier--Stokes equations; control constraints are also considered. We devise two strategies of discretization: a semi discrete scheme where the control variable is not discretized -- the so-called variational discrezation approach -- and a fully discrete scheme where the control is discretized with piecewise quadratic functions. For each solution solution technique, we design an a posteriori error estimator that can be decomposed as the sum of contributions related to the discretization of the state and adjoint equations and, additionally, the discretization of the control variable for when the fully discrete scheme is considered. We prove that the devised error estimators are reliable and also explore local efficiency estimates. Numerical experiments reveal a competitive performance of adaptive loops based on the devised a posteriori error estimators.}
\end{abstract}

\begin{keywords}
optimal control problems, Navier--Stokes equations, finite elements, a posteriori error estimates, adaptive finite element methods.
\end{keywords}

\begin{AMS}
35Q35,         
35Q30,         
49M05,   	   
49M25,		   
65N15,         
65N30,         
65N50.         
\end{AMS}

\section{Introduction}\label{sec:intro}

In this work we shall be interested in the design and analysis of a posteriori error \EO{estimators} for a distributed optimal control problem involving the stationary Navier--Stokes equations; control constraints are also considered. To make matters precise, let $\Omega\subset\mathbb{R}^d$, with $d\in \{2, 3\}$, be an open and bounded polytopal domain with Lipschitz boundary $\partial\Omega$. Given a desired state $\mathbf{y}_\Omega \in \mathbf{L}^2(\Omega)$ and a regularization parameter $\alpha>0$, we define the quadratic cost functional
\begin{equation*}\label{def:cost_functional}
J(\mathbf{y},\mathbf{u}):=\frac{1}{2}\|\mathbf{y}-\mathbf{y}_\Omega\|_{\mathbf{L}^2(\Omega)}^2+\frac{\alpha}{2}\|\mathbf{u}\|_{\mathbf{L}^2(\Omega)}^2.
\end{equation*}
\EO{We shall be concerned with the following optimal control problem}: Find
\begin{equation}\label{eq:minimize_cost_func}
\min{J(\mathbf{y},\mathbf{u})}
\end{equation}
subject to the stationary Navier--Stokes equations
\begin{equation}\label{eq:state_equations}
-\nu\Delta \mathbf{y}+(\mathbf{y}\cdot\nabla)\mathbf{y}+\nabla p = \mathbf{u} \text{ in } \Omega, \quad \text{div }\mathbf{y}=0 \text{ in } \Omega,\quad \mathbf{y}=\boldsymbol 0  \text{ on } \partial\Omega,
\end{equation}
and the control constraints
\begin{equation}\label{def:box_constraints}
\mathbf{u}\in\mathbb{U}_{ad}, \qquad \mathbb{U}_{ad}:=\{ \mathbf{v}\in \mathbf{L}^2(\Omega): \mathbf{a} \leq \mathbf{v} \leq \mathbf{b} \text{ a.e. } \text{in } \Omega \},
\end{equation}
with $\mathbf{a}, \mathbf{b} \in  \mathbb{R}^d$ satisfying $\mathbf{a} < \mathbf{b}$. We immediately comment that, throughout this work, vector inequalities must be understood componentwise.  In \eqref{eq:state_equations}, $\nu> 0$ denotes the kinematic viscosity.

The numerical analysis of optimal control problems governed by the stationary Navier--Stokes equations has been previously considered in a number of works. For a slightly different cost functional $J$, which in contrast to \eqref{def:cost_functional} measures the difference $\mathbf{y} - \mathbf{y}_\Omega$ in the $\mathbf{L}^4(\Omega)$-norm, the authors of \cite{MR1079020} \EO{analyzed} inf-sup stable finite element approximations, \EO{satisfying \cite[(4.1)--(4.3)]{MR1079020}}, of suitable distributed and boundary optimal control problems; control constraints are not considered. In two and three dimensions and under the assumptions that $\Omega$ \EO{is of class $C^{1,1}$,} the mesh--size is sufficiently small, and that both the optimal state $(\bar{\mathbf{y}},\bar{p})$ and adjoint state $(\bar{\mathbf{z}},\bar{r})$ belong to $\mathbf{H}^2(\Omega)\times H^1(\Omega)$, the authors prove, for the distributed case, that 
$
\| \bar{\mathbf{u}}-\bar{\mathbf{u}}_{\T} \|_{\mathbf{L}^2(\Omega)}
\lesssim
h^2_{\T}
$
\cite[Corollary 4.5 and section 5.2]{MR1079020}. Here, $\bar{\mathbf{u}}_\T$ denotes the corresponding finite element approximation of the optimal control variable $\bar{\mathbf{u}}$. Later, the authors of \cite{MR2338434} derived error estimates for suitable finite element approximations of \eqref{eq:minimize_cost_func}--\eqref{def:box_constraints}. Notice that control constraints are considered. Under the assumption that $\Omega$ is of class $C^2$, the authors show that the $\mathbf{L}^2(\Omega)$-norm of the error approximation of the control variable behaves as $h_{\T}^2$, when the control set is not discretized, and as $h_{\T}$, when such a set is discretized by using piecewise constant functions \cite[Theorem 4.18]{MR2338434}.  These error estimates are obtained for local solutions of the optimal control problem which satisfy a second order sufficient optimality condition and which are nonsingular, in the sense that the linearized Navier--Stokes equations around these solutions define some isomorphisms.

A class of numerical methods that has proven useful for approximating the solution to PDE--constrained optimization problems, and the ones we will use in this work, are the so-called adaptive finite element methods (AFEMs). AFEMs are iterative methods that improve the quality of the finite element approximation to a partial differential equation (PDE) while striving to keep an optimal distribution of computational resources measured in terms of degrees of freedom. An essential ingredient of an AFEM is an a posteriori error estimator, which is a computable quantity that depends on the discrete solution and data and provides information about the local quality of the approximate solution. The a posteriori error analysis for optimal control problems that are based on the minimization of a quadratic functional subject to a \emph{linear} PDE and control constraints has achieved several advances in recent years. \EO{We refer the reader to \cite{MR1780911,MR3621827,MR2434065,MR3212590,MR1887737,MR1950625} for a discussion. In particular, we mention the work \cite{MR3621827} where the authors consider the so-called variational discretization approach, introduced by Hinze in \cite{MR2122182}, and prove the convergence and quasi-optimality of suitable AFEMs for a class of linear-quadratic control-constrained optimal control problems.} As opposed to these advances, the analysis of AFEMs for optimal control problems involving nonlinear equations is rather scarce. We mention the approach introduced in \cite{MR1780911} for estimating the error in terms of the cost functional for semilinear optimal control problems \cite[section 6]{MR1780911} and its extensions to problems with control constraints  \cite{MR2421327,MR2373479}. Recently, the authors of \cite{semilinear_aposteriori} have studied a posteriori error estimates for a distributed semilinear elliptic optimal control problem\FF{; the results obtained in this work complement and extend the ones derived in \cite{MR2024491}. In \cite{semilinear_aposteriori}, the authors} have proposed a general framework that, on the basis of global reliability estimates for the state and adjoints equations and second order optimality conditions, yields a global reliability result for the proposed error estimator of the underlying optimal control problem. For a particular residual--type setting, the authors obtain, on the basis of bubble functions arguments, local efficiency estimates. Regarding a posteriori error estimates for optimal control problems involving \eqref{eq:state_equations} we mention references \cite{MR1867885,MR1923792}. \EO{Within the setting of boundary control problems,} the authors of \cite{MR1867885,MR1923792} invoke the approach of \cite{MR1780911} and construct an upper bound for the error $J(\bar{\mathbf{y}},\bar{\mathbf{u}})-J(\bar{\mathbf{y}}_\T,\bar{\mathbf{u}}_\T)$. An efficiency analysis is, however, not provided. \EO{To conclude this paragraph, we mention the work \cite{MR3843444}, where the authors develop and implement, on the basis of a dual-weighted residual approach, an adaptive solution technique for the optimal control of a time--discrete Cahn--Hilliard--Navier--Stokes system with variable densities.}

\EO{In this work, we consider two strategies to discretize the optimal control problem  \eqref{eq:minimize_cost_func}--\eqref{def:box_constraints}: a semi discrete scheme, where the control variable is not discretized  -- the so-called variational discrezation approach \cite{MR2122182} -- and a fully discrete scheme, where the control is discretized. For each one of these strategies we devise a residual--based a posteriori error estimator. For the fully discrete scheme the error estimator is decomposed as the sum of three contributions, which are related to the discretization of the state and adjoint equations and the control variable. Instead, the error estimator for the variational discretization approach is decomposed only in two contributions that are related to the discretization of the state and adjoint equations}. We must immediately mention that, as is usual in the a posteriori error analysis for the Navier--Stokes equations, we shall operate under a smallness assumption on data; see \cite{MR1885308,MR1259620,MR3059294}. Under this assumption, in two and three dimensional Lipschitz polytopes, we obtain reliability and efficiency estimates. \EO{On the basis of the devised error estimators, we also design simple adaptive strategies that exhibit, for the examples that we perform, optimal experimental rates of convergence for all the optimal variables but with the exception of the control variable when the fully discrete scheme is considered}. Several remarks and comparisons with the existing literature are now in order:
\begin{enumerate}
\item[$\bullet$] \EO{We show that our error estimators are globally reliable; see Theorems \ref{thm:control_bound} and \ref{thm:control_bound_variational}. In contrast to \cite{MR1867885,MR1923792}, we also explore local efficiency estimates; see Theorems \ref{thm:global_eff} and \ref{thm:global_eff_var}.}

\item[$\bullet$] In contrast to the a priori theory developed in \cite{MR2338434,MR1079020}, our a posteriori error analysis only requires that $\Omega$ is a Lipschitz polytope, $\mathbf{y}_{\Omega} \in \mathbf{L}^2(\Omega)$, \EO{and that the optimal state $(\bar{\mathbf{y}},\bar{p})$ and the optimal adjoint state $(\bar{\mathbf{z}},\bar{r})$ 
belong to $ \mathbf{H}_0^1(\Omega) \cap \mathbf{W}^{1,3}(\Omega) \times L^2(\Omega)$ and $\mathbf{H}_0^1(\Omega)\times L^2(\Omega)$, respectively.}

\item[$\bullet$] As opposed to the case when the state equation is linear, where, in general, only first order optimality conditions are needed to obtain a posteriori error estimates, the strategy that we develop here relies on the use of a second order sufficient optimality condition and on the particular structure of the associated critical cone; see Theorems \ref{thm:error_bound_control_tilde}, \ref{thm:control_bound}, \FF{and \ref{thm:control_bound_variational}.}
\end{enumerate}

The rest of the paper is organized as follows. In section \ref{sec:not_and_prel} we set notation and review some preliminaries for the Navier--Stokes equations. Basic results for the optimal control problem \eqref{eq:minimize_cost_func}--\eqref{def:box_constraints} as well as first and second order optimality conditions are reviewed in section \ref{sec:ocp}. \EO{The core of our work are sections \ref{sec:a_posteriori} and \ref{sec:a_posteriori_semi}, where we design an a posteriori error estimator for the fully discrete and the semi discrete scheme, respectively. We show the global reliability of each error estimator and analyze local efficiency estimates.} Finally, two and three dimensional numerical examples are presented in section \ref{sec:numerical_ex}. These examples illustrate the theory and reveal a competitive performance of the devised \EO{AFEMs.}


\section{Notation and preliminaries}\label{sec:not_and_prel}
Let us set notation and recall some facts that will be useful later.


\subsection{Notation}\label{sec:notation}

\EO{Let $d \in \{1,2,3\}$ and $U \subset \mathbb{R}^d$ be an open and bounded domain. We shall use standard notation for Lebesgue and Sobolev spaces.} The space of functions in $L^2(U)$ that have zero average is denoted by $L^2_0(U)$. By $W^{m,t}(U)$, we denote the Sobolev space of functions in $L^t(U)$ with partial derivatives of order up to $m$ in $L^t(U)$. Here, \EO{$0 < m < \infty$} and $1 \leq t \leq \infty$. The closure with respect to the norm in $W^{m,t}(U)$ of the space of $C^{\infty}$ functions compactly supported in $U$ is denoted by $W_0^{m,t}(U)$.
When $t = 2$ and $m \in [0,\infty)$, we set $H^{m}(U) := W^{m,2}(U)$ and $H_0^{m}(U) := W_0^{m,2}(U)$.
We use bold letters to denote the vector-valued counterparts of the aforementioned spaces. In particular, we set
\[
\mathbf{V}(U):= \{ \mathbf{v} \in \mathbf{H}_{0}^{1}(U): \text{div }\mathbf{v}  = 0\}.
\]

If $\mathcal{X}$ and $\mathcal{Y}$ are normed vector spaces, we write $\mathcal{X}\hookrightarrow\mathcal{Y}$ to denote that $\mathcal{X}$ is continuously embedded in $\mathcal{Y}$. The relation $a \lesssim b$ indicates that $a \leq C b$, with a positive constant that depends neither on $a$, $b$ nor the discretization \EO{parameters}. The value of $C$ might change at each occurrence.

Finally, throughout this work, $\Omega$ denotes an open and bounded polytopal domain in $\mathbb{R}^d$ $(d \in \{2,3\})$ with Lipschitz boundary $\partial\Omega$.


\subsection{Preliminaries for the Navier--Stokes equations}\label{sec:preliminaries}

Let us, for the sake of future reference, collect here some standard results \EO{concerning} the analysis of \eqref{eq:state_equations}.

In order to write a weak formulation for \eqref{eq:state_equations}, we introduce the trilinear form
\begin{equation*}
b(\mathbf{v}_1;\mathbf{v}_2,\mathbf{v}_3):=((\mathbf{v}_1\cdot \nabla)\mathbf{v}_2,\mathbf{v}_3)_{\mathbf{L}^2(\Omega)}.
\end{equation*}
The form $b$ satisfies the following properties \cite[Chapter IV, Lemma 2.2]{MR851383}, \cite[Chapter II, Lemma 1.3]{MR603444}: Let $\mathbf{v}_1\in \mathbf{V}(\Omega)$ and $\mathbf{v}_2, \mathbf{v}_3 \in \mathbf{H}_0^1(\Omega)$. Then, we have
\begin{equation}\label{eq:properties_trilinear}
b(\mathbf{v}_1;\mathbf{v}_2,\mathbf{v}_3)=-b(\mathbf{v}_1;\mathbf{v}_3,\mathbf{v}_2), 
\qquad  
b(\mathbf{v}_1;\mathbf{v}_2,\mathbf{v}_2)=0.
\end{equation}
The form $b$ is well-defined and continuous on \EO{$\mathbf{H}_0^1(\Omega)$} and
\begin{equation}\label{eq:trilinear_embedding}
|b(\mathbf{v}_1;\mathbf{v}_2,\mathbf{v}_3)|\leq \mathcal{C}_b\|\nabla \mathbf{v}_1\|_{\mathbf{L}^2(\Omega)}\|\nabla \mathbf{v}_2\|_{\mathbf{L}^2(\Omega)}\|\nabla \mathbf{v}_3\|_{\mathbf{L}^2(\Omega)},
\end{equation}
where $\mathcal{C}_b>0$; see \cite[Lemma IX.1.1]{Gal11} and \cite[Chapter II, Lemma 1.1]{MR603444}.

\EO{We will also make use of the fact that, on Lipschitz domains, the divergence operator is surjective from $\mathbf{H}_0^1(\Omega)$ to $L_0^2(\Omega)$. This implies that there is a constant $\beta >0$, that depends only on $d$ and $\Omega$,} such that \cite[Chapter I, section 5.1]{MR851383}, \cite[Corollary B. 71]{MR2050138}
\begin{equation}\label{eq:inf_sup_cond}
\sup_{\mathbf{v}\in\mathbf{H}_0^1(\Omega)}\frac{(q,\text{div }\mathbf{v})_{L^2(\Omega)}}{\|\nabla \mathbf{v}\|_{\mathbf{L}^2(\Omega)}}\geq \beta\|q\|_{L^2(\Omega)}  \qquad \forall q\in L_0^2(\Omega).
\end{equation}

With these ingredients at hand, we introduce the following weak formulation of problem \eqref{eq:state_equations} \EO{\cite[Chapter IV, problem (2.8)]{MR851383}: Given $\mathbf{f} \in \mathbf{H}^{-1}(\Omega)$, find $(\mathbf{y},p) \in \mathbf{V}(\Omega)\times L_0^2(\Omega)$ such that 
\begin{equation}\label{eq:weak_ns_divergence_free}
\nu(\nabla \mathbf{y}, \nabla \mathbf{v})_{\mathbf{L}^2(\Omega)}+b(\mathbf{y};\mathbf{y},\mathbf{v})-(p,\text{div } \mathbf{v})_{L^2(\Omega)}  = \langle \mathbf{f},\mathbf{v} \rangle  \quad  \forall \mathbf{v} \in \mathbf{H}_0^1(\Omega).
\end{equation}}
Here, $\langle \cdot,\cdot \rangle$ denotes the duality pairing between $\mathbf{H}^{-1}(\Omega)$ and $\mathbf{H}_0^1(\Omega)$.

Denote by $C_2$ the \EO{best} constant in the Sobolev embedding $\mathbf{H}_0^1(\Omega)\hookrightarrow \mathbf{L}^2(\Omega)$. The following result states the existence and uniqueness of solutions for the Navier--Stokes equations for small data (see \cite[Chapter IV, Theorem 2.2]{MR851383} and \cite[Chapter II, Theorem 1.3]{MR603444}). Since it will be useful later, we restrict the discussion to $\mathbf{f}\in\mathbf{L}^2(\Omega)$.
 
\begin{theorem}[well--posedness]\label{thm:well_posedness_navier_stokes}
If $\|\mathbf{f}\|_{\mathbf{L}^2(\Omega)}< C_2^{-1}\mathcal{C}_b^{-1}\nu^2 $, then there exists a unique solution \FF{$(\mathbf{y},p) \in \mathbf{V}(\Omega)\times L_0^2(\Omega)$ of problem \eqref{eq:weak_ns_divergence_free}}. In addition, we have
\begin{equation}\label{eq:stability_state_eq}
\|\nabla \mathbf{y}\|_{\mathbf{L}^2(\Omega)}\leq \theta \mathcal{C}_b^{-1}\nu, \quad \theta\in[0,1).
\end{equation}
\end{theorem}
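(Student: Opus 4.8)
The plan is to prove both existence--uniqueness and the a priori bound \eqref{eq:stability_state_eq} by a standard fixed-point/energy argument exploiting the antisymmetry \eqref{eq:properties_trilinear} and the continuity \eqref{eq:trilinear_embedding}. Since we seek a divergence-free velocity $\mathbf{y} \in \mathbf{V}(\Omega)$, I would first test \eqref{eq:weak_ns_divergence_free} with $\mathbf{v} = \mathbf{y} \in \mathbf{V}(\Omega)$. Because $\operatorname{div}\mathbf{y} = 0$, the pressure term $(p,\operatorname{div}\mathbf{v})_{L^2(\Omega)}$ vanishes, and by the second identity in \eqref{eq:properties_trilinear} the convective term satisfies $b(\mathbf{y};\mathbf{y},\mathbf{y}) = 0$. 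This leaves the clean energy identity $\nu\|\nabla\mathbf{y}\|_{\mathbf{L}^2(\Omega)}^2 = \langle\mathbf{f},\mathbf{y}\rangle$. Bounding the right-hand side via Cauchy--Schwarz and the embedding $\mathbf{H}_0^1(\Omega)\hookrightarrow\mathbf{L}^2(\Omega)$ with best constant $C_2$, namely $\langle\mathbf{f},\mathbf{y}\rangle \le \|\mathbf{f}\|_{\mathbf{L}^2(\Omega)}\|\mathbf{y}\|_{\mathbf{L}^2(\Omega)} \le C_2\|\mathbf{f}\|_{\mathbf{L}^2(\Omega)}\|\nabla\mathbf{y}\|_{\mathbf{L}^2(\Omega)}$, and dividing by $\|\nabla\mathbf{y}\|_{\mathbf{L}^2(\Omega)}$, yields the a priori estimate $\|\nabla\mathbf{y}\|_{\mathbf{L}^2(\Omega)} \le C_2\nu^{-1}\|\mathbf{f}\|_{\mathbf{L}^2(\Omega)}$. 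Under the smallness hypothesis $\|\mathbf{f}\|_{\mathbf{L}^2(\Omega)} < C_2^{-1}\mathcal{C}_b^{-1}\nu^2$ this gives $\|\nabla\mathbf{y}\|_{\mathbf{L}^2(\Omega)} < \mathcal{C}_b^{-1}\nu$, which is exactly \eqref{eq:stability_state_eq} for a suitable $\theta = C_2\mathcal{C}_b\nu^{-2}\|\mathbf{f}\|_{\mathbf{L}^2(\Omega)} \in [0,1)$.

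For existence I would work in the divergence-free space $\mathbf{V}(\Omega)$, where the pressure is eliminated, and set up a fixed-point map or a Galerkin procedure. Concretely, define $T:\mathbf{V}(\Omega)\to\mathbf{V}(\Omega)$ by letting $T\mathbf{w}$ solve the linearized (Oseen-type) problem $\nu(\nabla T\mathbf{w},\nabla\mathbf{v}) + b(\mathbf{w};T\mathbf{w},\mathbf{v}) = \langle\mathbf{f},\mathbf{v}\rangle$ for all $\mathbf{v}\in\mathbf{V}(\Omega)$; each such linear problem is well-posed by Lax--Milgram, since $b(\mathbf{w};\mathbf{v},\mathbf{v}) = 0$ guarantees coercivity with constant $\nu$. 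The a priori bound above shows $T$ maps the closed ball $\{\|\nabla\mathbf{v}\|_{\mathbf{L}^2(\Omega)} \le \mathcal{C}_b^{-1}\nu\theta\}$ into itself, and one then invokes a compactness (Schauder/Brouwer via Galerkin) argument to produce a fixed point $\mathbf{y}$, which is the sought velocity. Recovery of the pressure $p\in L_0^2(\Omega)$ from the full equation \eqref{eq:weak_ns_divergence_free} tested against all $\mathbf{v}\in\mathbf{H}_0^1(\Omega)$ is then handled by the inf-sup condition \eqref{eq:inf_sup_cond}, which guarantees a unique $p$ associated with the residual functional.

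For uniqueness, I would take two solutions $\mathbf{y}_1,\mathbf{y}_2\in\mathbf{V}(\Omega)$, subtract their equations, and test the difference equation with $\mathbf{w} := \mathbf{y}_1 - \mathbf{y}_2 \in \mathbf{V}(\Omega)$. After using the trilinearity to rewrite $b(\mathbf{y}_1;\mathbf{y}_1,\mathbf{w}) - b(\mathbf{y}_2;\mathbf{y}_2,\mathbf{w}) = b(\mathbf{w};\mathbf{y}_1,\mathbf{w}) + b(\mathbf{y}_2;\mathbf{w},\mathbf{w})$ and noting the last term vanishes by \eqref{eq:properties_trilinear}, one arrives at $\nu\|\nabla\mathbf{w}\|_{\mathbf{L}^2(\Omega)}^2 = -b(\mathbf{w};\mathbf{y}_1,\mathbf{w})$. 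Applying \eqref{eq:trilinear_embedding} bounds the right side by $\mathcal{C}_b\|\nabla\mathbf{y}_1\|_{\mathbf{L}^2(\Omega)}\|\nabla\mathbf{w}\|_{\mathbf{L}^2(\Omega)}^2$, so $(\nu - \mathcal{C}_b\|\nabla\mathbf{y}_1\|_{\mathbf{L}^2(\Omega)})\|\nabla\mathbf{w}\|_{\mathbf{L}^2(\Omega)}^2 \le 0$. The a priori bound $\|\nabla\mathbf{y}_1\|_{\mathbf{L}^2(\Omega)} \le \theta\mathcal{C}_b^{-1}\nu$ with $\theta<1$ makes the prefactor strictly positive, forcing $\mathbf{w} = \boldsymbol 0$. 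I expect the genuine obstacle to be the existence step rather than the bounds: passing to the limit in the nonlinear convective term $b(\mathbf{y}_n;\mathbf{y}_n,\mathbf{v})$ along a Galerkin sequence requires strong convergence of $\mathbf{y}_n$ in $\mathbf{L}^2(\Omega)$, which one secures via the Rellich--Kondrachov compact embedding $\mathbf{H}_0^1(\Omega)\hookrightarrow\hookrightarrow\mathbf{L}^2(\Omega)$ on the bounded Lipschitz domain $\Omega$; the smallness of data is what ultimately reconciles this existence argument with uniqueness.
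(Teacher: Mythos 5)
Your proposal is correct and follows the classical route: the paper itself does not prove Theorem \ref{thm:well_posedness_navier_stokes} but cites \cite[Chapter IV, Theorem 2.2]{MR851383} and \cite[Chapter II, Theorem 1.3]{MR603444}, and your argument (energy identity with $\mathbf{v}=\mathbf{y}$ giving $\|\nabla\mathbf{y}\|_{\mathbf{L}^2(\Omega)}\leq C_2\nu^{-1}\|\mathbf{f}\|_{\mathbf{L}^2(\Omega)}$ and hence \eqref{eq:stability_state_eq}, Galerkin/fixed-point existence in $\mathbf{V}(\Omega)$ with compactness for the convective term, contraction-type uniqueness under $\theta<1$, and pressure recovery via \eqref{eq:inf_sup_cond}) is precisely the argument in those references. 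The only point worth polishing is the limit passage in $b(\mathbf{y}_n;\mathbf{y}_n,\mathbf{v})$: strong $\mathbf{L}^2(\Omega)$ convergence suffices only after rewriting the term as $-b(\mathbf{y}_n;\mathbf{v},\mathbf{y}_n)$ and testing against smooth (or fixed Galerkin) fields $\mathbf{v}$, exactly as in Temam.
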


\EO{
\begin{remark}[equivalent formulation]\label{rmk:equivalent}
Notice that problem \eqref{eq:weak_ns_divergence_free} can be equivalently formulated as follows: Find $(\mathbf{y},p)\in\mathbf{H}_0^1(\Omega)\times L_0^2(\Omega)$ such that 
\begin{equation}\label{eq:weak_navier_stokes_eq}
\begin{aligned}
\nu(\nabla \mathbf{y}, \nabla \mathbf{v})_{\mathbf{L}^2(\Omega)}+b(\mathbf{y};\mathbf{y},\mathbf{v})-(p,\text{div } \mathbf{v})_{L^2(\Omega)} & = \langle \mathbf{f},\mathbf{v} \rangle \quad & \forall \mathbf{v} \in \mathbf{H}_0^1(\Omega),\\
(q,\text{div } \mathbf{y})_{L^2(\Omega)} & =   0  \quad & \forall q\in L^2_0(\Omega);
\end{aligned}
\end{equation}
see \cite[Chapter IV, Section 2]{MR851383} for details. Since formulations \eqref{eq:weak_ns_divergence_free} and \eqref{eq:weak_navier_stokes_eq} are equivalent, Theorem \ref{thm:well_posedness_navier_stokes} guarantees the existence of a unique solution $(\mathbf{y},p)\in\mathbf{H}_0^1(\Omega)\times L_0^2(\Omega)$ to \eqref{eq:weak_navier_stokes_eq} under the assumption that $\|\mathbf{f}\|_{\mathbf{L}^2(\Omega)}< C_2^{-1}\mathcal{C}_b^{-1}\nu^2$.
\end{remark}
}


\section{The optimal control problem}\label{sec:ocp}
In this section we present a weak formulation for problem \eqref{eq:minimize_cost_func}--\eqref{def:box_constraints}. We review first and second order optimality conditions in sections \ref{sec:first_second_opt_cond} and \ref{sec:second_opt_cond}, respectively, and introduce, in section \ref{sec:fem_ocp}, \EO{finite element discretization schemes}. 

We consider the following weak version of the control problem \eqref{eq:minimize_cost_func}--\eqref{def:box_constraints}: Find
\begin{equation}\label{eq:min_weak_setting}
\min_{\mathbf{H}_0^1(\Omega)\times\mathbb{U}_{ad}}J(\mathbf{y},\mathbf{u})
\end{equation}
subject to the weak formulation of the stationary Navier--Stokes equations
\begin{equation}\label{eq:weak_st_equation}
\begin{aligned}
\nu(\nabla \mathbf{y}, \nabla \mathbf{v})_{\mathbf{L}^2(\Omega)}+b(\mathbf{y};\mathbf{y},\mathbf{v})-(p,\text{div } \mathbf{v})_{L^2(\Omega)} &  =  (\mathbf{u},\mathbf{v})_{\mathbf{L}^2(\Omega)}  & \forall \mathbf{v} \in \mathbf{H}_0^1(\Omega),\\
(q,\text{div } \mathbf{y})_{L^2(\Omega)} & =   0   &\forall q\in L^2_0(\Omega).
\end{aligned}
\end{equation}
\EO{We observe that, in view of the results of \cite[Chapter IV]{MR851383}, problem \eqref{eq:weak_st_equation} can be equivalently written as follows:
\begin{equation}\label{eq:st_reduced_form}
\mathbf{y}\in\mathbf{V}(\Omega): \quad \nu(\nabla\mathbf{y},\nabla \mathbf{v})_{\mathbf{L}^2(\Omega)}+b(\mathbf{y};\mathbf{y},\mathbf{v})=(\mathbf{u},\mathbf{v})_{\mathbf{L}^2(\Omega)} \quad \forall \mathbf{v}\in \mathbf{V}(\Omega).
\end{equation}

Let us define $\mathfrak{M}_{ad}:=\sup_{\mathbf{u}\in\mathbb{U}_{ad}}\|\mathbf{u}\|_{\mathbf{L}^2(\Omega)}$ and assume that \cite{Ru,MR2192070}
\begin{equation}\label{eq:smallness_assumption}
\frac{\mathcal{C}_b C_{2}}{\nu^2}
\mathfrak{M}_{ad}\leq \theta
<1, \qquad \theta\in (0,1). 
\end{equation}
Owing} to Theorem \ref{thm:well_posedness_navier_stokes} and Remark \ref{rmk:equivalent} we conclude that, for each $\mathbf{u}\in\mathbb{U}_{ad}$, there exists a unique pair $(\mathbf{y},p)\in \mathbf{H}_0^1(\Omega)\times L_0^2(\Omega)$ that solves \eqref{eq:weak_st_equation} \FF{or equivalently \eqref{eq:st_reduced_form}}. 

\EO{
\begin{remark}[assumption \eqref{eq:smallness_assumption}]
Assumption \eqref{eq:smallness_assumption} is an additional constraint that restricts the admissible set $\mathbb{U}_{ad}$. Observe that, if we assume \eqref{eq:smallness_assumption}, then $\mathbf{a},\mathbf{b}\in \mathbb{R}^{d}$ must satisfy the upper bound $\max\{\|\mathbf{a}\|_{\mathbf{L}^2(\Omega)},\|\mathbf{b}\|_{\mathbf{L}^2(\Omega)}\} \leq \theta C_2^{-1}\mathcal{C}_b^{-1}\nu^2$, with $\theta < 1$. From now on, we shall assume that assumption \eqref{eq:smallness_assumption} holds.
\end{remark}

\begin{remark}[on the constants in assumption \eqref{eq:smallness_assumption}] For practical applications, it would be desirable to have at hand the values of the constants $\mathcal{C}_b$ and $C_{2}$. However, determining the explicit values of these constants is, in general, a very difficult task. In spite of this fact, estimates for these constant can be found in the literature \cite[Lemma IX.1.1, equation (II.5.5)]{Gal11}:
\begin{equation*}
\mathcal{C}_{b}
=
\frac{1}{2}|\Omega|^{\frac{1}{2}}\text{ if } d=2, 
\qquad 
\mathcal{C}_{b}
=
\frac{2\sqrt{2}}{3}|\Omega|^{\frac{1}{6}} \text{ if } d=3,
\qquad
C_{2} 
=
\frac{(d-1)}{\sqrt{d}}|\Omega|^{\frac{1}{d}} \text{ if } d\in\{2,3\}.
\end{equation*}
Here, $|\Omega|$ denotes the Lebesgue measure of  $\Omega$. 
\end{remark}}


\subsection{Local solutions}
Since the optimal control problem \eqref{eq:min_weak_setting}--\eqref{eq:weak_st_equation} is not convex, we \FF{study} optimality conditions in the context of local solutions. A control $\bar{\mathbf{u}} \in \mathbb{U}_{ad}$ is said to be locally optimal in $\mathbf{L}^2(\Omega)$ for \eqref{eq:min_weak_setting}--\eqref{eq:weak_st_equation}, if there exists $\delta>0$ such that
\[
J(\bar{\mathbf{y}},\bar{\mathbf{u}})\leq J(\mathbf{y},\mathbf{u})
\]
for all $\mathbf{u}\in\mathbb{U}_{ad}$ such that $\|\mathbf{u}-\bar{\mathbf{u}}\|_{\mathbf{L}^2(\Omega)}\leq \delta$. Here, $\bar{\mathbf{y}}$ and $\mathbf{y}$ denote the velocity fields associated to $\bar{\mathbf{u}}$ and $\mathbf{u}$, respectively. \EO{We stress that the smallness assumption \eqref{eq:smallness_assumption} is involved within this definition of \emph{local optimality}.}

The existence of a local solution $(\bar{\mathbf{y}},\bar{\mathbf{u}})\in \mathbf{H}_0^1(\Omega)\times \mathbb{U}_{ad}$ for problem \eqref{eq:min_weak_setting}--\eqref{eq:weak_st_equation} follows standard arguments; see \cite[Theorem 3.1]{MR2109045}.


\subsection{First order optimality conditions}\label{sec:first_second_opt_cond}

\EO{We now turn our attention to the discussion of} first and second order optimality conditions for problem \eqref{eq:min_weak_setting}--\eqref{eq:weak_st_equation}. We begin such a discussion by introducing the so-called control-to-state map $\mathcal{S}: \mathbf{L}^2(\Omega)\rightarrow \mathbf{V}(\Omega)$ which, given a control $\mathbf{u}\in \mathbb{U}_{ad}$, associates to it the velocity field $\mathbf{y} \in \mathbf{V}(\Omega)$ \EO{of the unique pair $(\mathbf{y},p)$} that solves \eqref{eq:st_reduced_form} under the smallness assumption \eqref{eq:smallness_assumption}. With this operator at hand, we introduce the reduced cost functional 
\[
j(\mathbf{u}):=J(\mathcal{S}\mathbf{u},\mathbf{u})=\frac{1}{2}\|\mathcal{S}\mathbf{u}-\mathbf{y}_\Omega\|_{\mathbf{L}^2(\Omega)}^2+\frac{\alpha}{2}\|\mathbf{u}\|_{\mathbf{L}^2(\Omega)}^2.
\]

Under the smallness assumption \eqref{eq:smallness_assumption}, the control-to-state map $\mathcal{S}$ is Fr\'echet differentiable from $\mathbf{L}^2(\Omega)$ to $\mathbf{V}(\Omega)$; see \cite[Lemma 3.8]{MR2192070}. As a consequence, if $\bar{\mathbf{u}}$ denotes a local optimal control for problem \eqref{eq:min_weak_setting}--\eqref{eq:weak_st_equation}, $\bar{\mathbf{u}}$ satisfies the variational inequality
\begin{equation}\label{eq:variational_ineq}
(\bar{\mathbf{z}}+\alpha \bar{\mathbf{u}},\mathbf{u}-\bar{\mathbf{u}})_{\mathbf{L}^2(\Omega)}
\geq 
0 
\quad \forall \mathbf{u}\in\mathbb{U}_{ad},
\end{equation}
where $(\bar{\mathbf{z}},\bar{r})\in \mathbf{H}_0^1(\Omega)\times L_0^2(\Omega)$ is the unique solution to the adjoint equations
\begin{equation}\label{eq:weak_adjoint_equation}
\begin{array}{rcl}
\nu(\nabla \mathbf{w}, \nabla \bar{\mathbf{z}})_{\mathbf{L}^2(\Omega)}
+
b(\bar{\mathbf{y}};\mathbf{w},\bar{\mathbf{z}})
+
b(\mathbf{w};\bar{\mathbf{y}},\bar{\mathbf{z}})
-
(\bar{r},\text{div } \mathbf{w})_{L^2(\Omega)} 
& \hspace{-0.3cm} = \hspace{-0.25cm} & (\bar{\mathbf{y}}-\mathbf{y}_\Omega,\mathbf{w})_{\mathbf{L}^2(\Omega)}, 
\\
(s,\text{div } \bar{\mathbf{z}})_{L^2(\Omega)} & \hspace{-0.3cm} = \hspace{-0.25cm} & 0,
\end{array}\hspace{-0.3cm}
\end{equation}
for all $(\mathbf{w},s)\in \mathbf{H}_0^1(\Omega)\times L_0^2(\Omega)$. For details, we refer the reader to \cite[Theorem 2.2]{Ru}, \cite[Theorem 3.10]{MR2192070}, and \cite[Theorem 3.2]{MR2338434}.

\begin{remark}[well--posedness of adjoint equations]
\label{remark:wp_ad}
Define $\mathcal{B}: \mathbf{H}_0^1(\Omega) \times \mathbf{H}_0^1(\Omega)$ by
\begin{equation*}
\mathcal{B}(\mathbf{w},\mathbf{v}):=\nu (\nabla \mathbf{w},\nabla\mathbf{v})_{\mathbf{L}^2(\Omega)}+b(\bar{\mathbf{y}};\mathbf{w},\mathbf{v})+b(\mathbf{w};\bar{\mathbf{y}}, \mathbf{v}).
\end{equation*}
Assume that \eqref{eq:smallness_assumption} holds. Theorem \ref{thm:well_posedness_navier_stokes} thus yields $\| \nabla \bar{ \mathbf{y}} \|_{\mathbf{L}^2(\Omega)} \leq \theta \mathcal{C}_b^{-1} \nu$. Consequently,
\begin{equation}
\label{eq:coercivity_bilinear_form_B}
\mathcal{B}(\mathbf{w},\mathbf{w}) \geq \nu  \|  \nabla \mathbf{w} \|^2_{\mathbf{L}^2(\Omega)} - \mathcal{C}_b \| \nabla  \mathbf{w} \|^2_{\mathbf{L}^2(\Omega)} \| \nabla \bar{\mathbf{y}} \|_{\mathbf{L}^2(\Omega)} \geq \nu(1-\theta) \|  \nabla \mathbf{w} \|^2_{\mathbf{L}^2(\Omega)},
\end{equation}
where, we recall $\theta < 1$. We have thus proved that $\mathcal{B}$ is coercive on $\mathbf{H}_0^1(\Omega) \times \mathbf{H}_0^1(\Omega)$. The standard inf--sup theory for saddle point problems \cite[Theorem 2.34]{MR2050138} yields, on the basis of \eqref{eq:inf_sup_cond}, the existence and uniqueness of a solution $(\bar{\mathbf{z}},\bar{r})\in \mathbf{H}_0^1(\Omega)\times L_0^2(\Omega)$ to \eqref{eq:weak_adjoint_equation}. \EO{In addition, by setting $\mathbf{w} = \bar{\mathbf{z}}$ and $s=0$ and utilizing} \eqref{eq:properties_trilinear}, \eqref{eq:trilinear_embedding}, and \eqref{eq:smallness_assumption} we arrive at the bound
\begin{equation}\label{eq:stability_adjoint_par}
\|\nabla\bar{\mathbf{z}}\|_{\mathbf{L}^2(\Omega)} \leq \EO{[\nu(1-\theta)]^{-1}C_2}\|\bar{\mathbf{y}}-\mathbf{y}_\Omega\|_{\mathbf{L}^2(\Omega)}.
\end{equation}
\end{remark}

The local optimal control $\bar{\mathbf{u}}$ satisfies \eqref{eq:variational_ineq} if and only if \cite[equation (2.10)]{Ru}, \cite[equation (3.9)]{MR2338434}
\begin{equation}\label{eq:projection_formula}
\bar{\mathbf{u}}(x)=\Pi_{[\mathbf{a},\mathbf{b}]}\left(-\alpha^{-1}\bar{\mathbf{z}}(x)\right)  \textrm{ a.e. } x \in \Omega,
\end{equation}
where the projection operator $\Pi_{[\mathbf{a},\mathbf{b}]}:\mathbf{L}^1(\Omega)\rightarrow \mathbb{U}_{ad}$ is defined as
\begin{equation}\label{def:projection_operator}
\Pi_{[\mathbf{a},\mathbf{b}]}(\mathbf{v}):=\min\{\mathbf{b},\max\{\mathbf{v},\mathbf{a}\}\}.
\end{equation}


\subsection{Second order optimality conditions}\label{sec:second_opt_cond}

We now follow \cite[section 3.2]{MR2338434} and present necessary and sufficient second order optimality conditions. \EO{To present them, we introduce the variable} $\bar{\mathbf{d}}:=\bar{\mathbf{z}}+\alpha\bar{\mathbf{u}}$ and the cone of critical directions 
\begin{equation*}
\mathbf{C}_{\bar{\mathbf{u}}}:=\{\mathbf{v}\in \mathbf{L}^2(\Omega) \text{ that satisfies } \eqref{eq:critical_cone_charac} \text{ and } \mathbf{v}_i(x)=0 \text{ if } \bar{\mathbf{d}}_i(x)\neq 0, \, i=1,\ldots,d\}.
\end{equation*}
Here, $\bar{\mathbf{d}}_i$ corresponds to the $i$-th component of the vector $\bar{\mathbf{d}}$ and condition \eqref{eq:critical_cone_charac} \EO{reads
\begin{equation}\label{eq:critical_cone_charac}
\mathbf{v}_i(x)
\begin{cases}
\geq 0  \text{ a.e. } x \in \Omega \text{ if } \bar{\mathbf{u}}_i(x) = \mathbf{a}_i \text{ and } \bar{\mathbf{d}}_i(x) = 0, \\
\leq 0  \text{ a.e. } x \in \Omega \text{ if } \bar{\mathbf{u}}_i(x) = \mathbf{b}_i \text{ and } \bar{\mathbf{d}}_i(x) = 0,
\end{cases}
\end{equation}
where $i \in \{1,\dots,d\}$. We} are now in position to present second order necessary and sufficient optimality conditions; see \cite[Theorems 3.6 \EO{and 3.8} and Corollary 3.9]{MR2338434}.

\begin{theorem}[second order optimality conditions]\label{thm:second_necess_opt_cond} 
Assume that \eqref{eq:smallness_assumption} holds. If $\bar{\mathbf{u}} \in \mathbb{U}_{ad}$ is a local minimum for problem \eqref{eq:min_weak_setting}--\eqref{eq:weak_st_equation}, then
\begin{equation*}
j''(\bar{\mathbf{u}})\mathbf{v}^2 \geq 0 \quad \forall \mathbf{v} \in \mathbf{C}_{\bar{\mathbf{u}}}.
\end{equation*}
Conversely, if $(\bar{\mathbf{y}},\bar{p},\bar{\mathbf{z}},\bar{r},\bar{\mathbf{u}}) \in \mathbf{H}_0^1(\Omega)\times L_0^2(\Omega) \times \mathbf{H}_0^1(\Omega)\times L_0^2(\Omega) \times \mathbb{U}_{ad}$ satisfies the first order optimality conditions \eqref{eq:weak_st_equation}, \eqref{eq:variational_ineq}, and \eqref{eq:weak_adjoint_equation}, and  
\begin{equation}\label{eq:equivalent_second_I}
j''(\bar{\mathbf{u}})\mathbf{v}^2 > 0 \quad \forall \mathbf{v} \in \mathbf{C}_{\bar{\mathbf{u}}}\setminus \{\boldsymbol 0\},
\end{equation}
then, there exist \EO{$\delta >0$} and $\varepsilon>0$ such that
\begin{equation*}
j(\mathbf{u})\geq j(\bar{\mathbf{u}})+ \frac{\EO{\delta}}{2}\left(\|\mathbf{u}-\bar{\mathbf{u}}\|_{\mathbf{L}^2(\Omega)}^2+\|\mathbf{y}-\bar{\mathbf{y}}\|_{\mathbf{L}^2(\Omega)}^2\right),
\end{equation*}
for every pair $(\mathbf{u},\mathbf{y})$ that satisfies \eqref{eq:state_equations}, $\mathbf{u} \in \mathbb{U}_{ad}$, and $\|\mathbf{u}-\bar{\mathbf{u}}\|_{\mathbf{L}^2(\Omega)}^2+\|\mathbf{y}-\bar{\mathbf{y}}\|_{\mathbf{L}^2(\Omega)}^2\leq \varepsilon$.
\end{theorem}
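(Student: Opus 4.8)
The plan is to treat the two assertions separately, since the necessary condition is comparatively routine while the sufficient condition carries the analytic weight. For the necessary condition I would exploit that any $\mathbf{v} \in \mathbf{C}_{\bar{\mathbf{u}}}$ can be realized as a limit of feasible directions: after approximating $\mathbf{v}$ by bounded directions in $\mathbf{C}_{\bar{\mathbf{u}}}$, the fact that $\mathbf{v}$ vanishes where $\bar{\mathbf{d}}_i \neq 0$ and obeys the sign conditions \eqref{eq:critical_cone_charac} on the active sets guarantees that $\bar{\mathbf{u}} + t\mathbf{v} \in \mathbb{U}_{ad}$ for small $t>0$, while the first-order term $j'(\bar{\mathbf{u}})\mathbf{v} = (\bar{\mathbf{d}},\mathbf{v})_{\mathbf{L}^2(\Omega)}$ vanishes by the definition of the critical cone together with \eqref{eq:variational_ineq}. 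A second-order Taylor expansion of $j$ around $\bar{\mathbf{u}}$, combined with local optimality, then forces $j''(\bar{\mathbf{u}})\mathbf{v}^2 \geq 0$ after dividing by $t^2$ and letting $t \to 0^+$.

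For the sufficient condition I would argue by contradiction with the claimed quadratic growth. Suppose it fails; then for each $k$ there is a feasible $\mathbf{u}_k$ with associated state $\mathbf{y}_k$ such that $\|\mathbf{u}_k - \bar{\mathbf{u}}\|_{\mathbf{L}^2(\Omega)} \to 0$ and $j(\mathbf{u}_k) < j(\bar{\mathbf{u}}) + \tfrac{1}{k}(\|\mathbf{u}_k - \bar{\mathbf{u}}\|_{\mathbf{L}^2(\Omega)}^2 + \|\mathbf{y}_k - \bar{\mathbf{y}}\|_{\mathbf{L}^2(\Omega)}^2)$. Setting $\rho_k := \|\mathbf{u}_k - \bar{\mathbf{u}}\|_{\mathbf{L}^2(\Omega)}$ and $\mathbf{v}_k := \rho_k^{-1}(\mathbf{u}_k - \bar{\mathbf{u}})$, I have $\|\mathbf{v}_k\|_{\mathbf{L}^2(\Omega)} = 1$, so along a subsequence $\mathbf{v}_k \rightharpoonup \mathbf{v}$ weakly in $\mathbf{L}^2(\Omega)$. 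Since $\mathcal{S}$ is Lipschitz under \eqref{eq:smallness_assumption}, one has $\|\mathbf{y}_k - \bar{\mathbf{y}}\|_{\mathbf{L}^2(\Omega)} \lesssim \rho_k$, so the right-hand side divided by $\rho_k^2$ remains bounded.

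The heart of the argument is then threefold. First, I would show $\mathbf{v} \in \mathbf{C}_{\bar{\mathbf{u}}}$: the sign conditions \eqref{eq:critical_cone_charac} define a closed convex set preserved under weak limits, while the condition $\mathbf{v}_i = 0$ on $\{\bar{\mathbf{d}}_i \neq 0\}$ follows from the projection formula \eqref{eq:projection_formula} and a Taylor expansion forcing $(\bar{\mathbf{d}}, \mathbf{v})_{\mathbf{L}^2(\Omega)} = 0$ together with the pointwise sign of $\bar{\mathbf{d}}_i \mathbf{v}_{k,i}$. Second, a second-order Taylor expansion $j(\mathbf{u}_k) = j(\bar{\mathbf{u}}) + j'(\bar{\mathbf{u}})(\mathbf{u}_k - \bar{\mathbf{u}}) + \tfrac12 j''(\hat{\mathbf{u}}_k)(\mathbf{u}_k - \bar{\mathbf{u}})^2$ at an intermediate point $\hat{\mathbf{u}}_k$, divided by $\rho_k^2$ and using $j'(\bar{\mathbf{u}})(\mathbf{u}_k - \bar{\mathbf{u}}) \geq 0$ from \eqref{eq:variational_ineq}, yields $\limsup_k j''(\hat{\mathbf{u}}_k)\mathbf{v}_k^2 \leq 0$. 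Third, I would decompose $j''(\cdot)\mathbf{v}_k^2 = \alpha\|\mathbf{v}_k\|_{\mathbf{L}^2(\Omega)}^2 + \mathcal{R}_k$, where $\mathcal{R}_k$ collects the tracking and trilinear-form contributions built from the linearized and second-order states; using the compact embedding $\mathbf{H}_0^1(\Omega) \hookrightarrow\hookrightarrow \mathbf{L}^2(\Omega)$, the maps $\mathbf{v}_k \mapsto \mathcal{R}_k$ are weakly continuous, so $\mathcal{R}_k \to \mathcal{R}(\mathbf{v})$, whereas weak lower semicontinuity gives $\alpha\|\mathbf{v}\|_{\mathbf{L}^2(\Omega)}^2 \leq \liminf_k \alpha\|\mathbf{v}_k\|_{\mathbf{L}^2(\Omega)}^2$. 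Hence $j''(\bar{\mathbf{u}})\mathbf{v}^2 \leq \liminf_k j''(\hat{\mathbf{u}}_k)\mathbf{v}_k^2 \leq 0$.

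Finally I would close with a dichotomy. If $\mathbf{v} \neq \boldsymbol 0$, then $\mathbf{v} \in \mathbf{C}_{\bar{\mathbf{u}}}\setminus\{\boldsymbol 0\}$ with $j''(\bar{\mathbf{u}})\mathbf{v}^2 \leq 0$, contradicting \eqref{eq:equivalent_second_I}. If $\mathbf{v} = \boldsymbol 0$, then $\mathcal{R}_k \to 0$ while $\alpha\|\mathbf{v}_k\|_{\mathbf{L}^2(\Omega)}^2 = \alpha$, so $j''(\hat{\mathbf{u}}_k)\mathbf{v}_k^2 \to \alpha > 0$, contradicting $\limsup_k j''(\hat{\mathbf{u}}_k)\mathbf{v}_k^2 \leq 0$. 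Either way we reach a contradiction, which proves the quadratic growth condition. I expect the main obstacle to be the third step: establishing that every term of $j''$ other than $\alpha\|\cdot\|^2$ is weakly continuous in the direction variable and continuous with respect to the base point $\hat{\mathbf{u}}_k \to \bar{\mathbf{u}}$. This rests on the second-order Fr\'echet differentiability of $\mathcal{S}$ and on Rellich compactness to control the state-dependent and trilinear terms, and it is precisely this weak-continuity-plus-lower-semicontinuity splitting that lets the single surviving coercive term $\alpha\|\cdot\|^2$ settle the degenerate case $\mathbf{v}=\boldsymbol 0$.
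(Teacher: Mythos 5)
The paper itself offers no proof of this theorem: it is quoted from the cited reference \cite[Theorems 3.6 and 3.8 and Corollary 3.9]{MR2338434}, so the only meaningful comparison is with the standard argument in that literature. Your proof of the \emph{sufficient} condition is exactly that argument: negate the quadratic growth, normalize $\mathbf{v}_k=(\mathbf{u}_k-\bar{\mathbf{u}})/\rho_k$, extract a weak $\mathbf{L}^2$ limit $\mathbf{v}$, show $\mathbf{v}\in\mathbf{C}_{\bar{\mathbf{u}}}$ via the pointwise sign of $\bar{\mathbf{d}}_i(\mathbf{u}_{k,i}-\bar{\mathbf{u}}_i)$ and the Taylor expansion, split $j''$ into $\alpha\|\cdot\|_{\mathbf{L}^2(\Omega)}^2$ plus terms that are weakly sequentially continuous in the direction (Rellich compactness through $\mathcal{S}'$, $\mathcal{S}''$), and close with the dichotomy $\mathbf{v}\neq\boldsymbol 0$ versus $\mathbf{v}=\boldsymbol 0$. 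That outline is sound; note only that the base-point continuity you flag as the main obstacle is precisely what the paper's Lemma \ref{lemma:property_of_j} supplies, since $\hat{\mathbf{u}}_k$ and $\bar{\mathbf{u}}$ lie in the $\mathbf{L}^\infty$-bounded set $\mathbb{U}_{ad}$: you may replace $j''(\hat{\mathbf{u}}_k)$ by $j''(\bar{\mathbf{u}})$ at the price of $C_{\mathsf{M}}\|\hat{\mathbf{u}}_k-\bar{\mathbf{u}}\|_{\mathbf{L}^2(\Omega)}\to 0$, and only then pass to the weak limit in the direction.

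The genuine gap is in the \emph{necessary} condition. You claim that, after approximating $\mathbf{v}\in\mathbf{C}_{\bar{\mathbf{u}}}$ by bounded directions, the sign conditions \eqref{eq:critical_cone_charac} together with $\mathbf{v}_i=0$ on $\{\bar{\mathbf{d}}_i\neq 0\}$ guarantee $\bar{\mathbf{u}}+t\mathbf{v}\in\mathbb{U}_{ad}$ for small $t>0$. This is false: on the inactive set $\{\mathbf{a}_i<\bar{\mathbf{u}}_i<\mathbf{b}_i\}$ the cone places no restriction on $\mathbf{v}_i$ (there $\bar{\mathbf{d}}_i=0$ by the projection formula \eqref{eq:projection_formula}), and $\bar{\mathbf{u}}_i$ may come arbitrarily close to the bounds on sets of positive measure, so no fixed $t>0$ makes $\mathbf{a}_i\leq\bar{\mathbf{u}}_i+t\mathbf{v}_i\leq\mathbf{b}_i$ hold a.e., however small $\|\mathbf{v}\|_{\mathbf{L}^\infty(\Omega)}$ is. The standard repair, which is what the cited proof does, is to approximate $\mathbf{v}$ by $\mathbf{v}_k$ obtained by truncating $\mathbf{v}$ at height $k$ \emph{and} setting $\mathbf{v}_{k,i}=0$ on the set where $0<\min\{\bar{\mathbf{u}}_i-\mathbf{a}_i,\,\mathbf{b}_i-\bar{\mathbf{u}}_i\}<1/k$. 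Then $\mathbf{v}_k\in\mathbf{C}_{\bar{\mathbf{u}}}$, and $\bar{\mathbf{u}}+t\mathbf{v}_k\in\mathbb{U}_{ad}$ for all $0<t\leq 1/k^2$ (a nonzero component either starts at a bound and moves inward by at most $1/k$, or starts at distance at least $1/k$ from both bounds and moves by at most $1/k$), so your Taylor argument legitimately gives $j''(\bar{\mathbf{u}})\mathbf{v}_k^2\geq 0$; finally $\mathbf{v}_k\to\mathbf{v}$ in $\mathbf{L}^2(\Omega)$ by dominated convergence, and the boundedness of the quadratic form $j''(\bar{\mathbf{u}})$ on $\mathbf{L}^2(\Omega)$ yields $j''(\bar{\mathbf{u}})\mathbf{v}^2\geq 0$. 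Without this cutoff near, but off, the active set, the feasibility step of your necessity argument fails.
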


We now introduce, \EO{for} $\tau >0$, the cone
\begin{equation}\label{def:cone_tau}
\mathbf{C}_{\bar{\mathbf{u}}}^{\tau}:=\{ 
\mathbf{v}\in \mathbf{L}^2(\Omega) \text{ that satisfies } \eqref{eq:critical_cone_charac_tau} 
\},
\end{equation}
where condition \eqref{eq:critical_cone_charac_tau} reads, \EO{for $ i \in \{1,\ldots,d\}$}, as follows:
\begin{equation}\label{eq:critical_cone_charac_tau}
\mathbf{v}_i(x)
\begin{cases}
= 0 \text{ if } |\bar{\mathbf{d}}_i(x)|> \tau,
\\
\geq 0  \text{ a.e. } x \in \Omega \text{ if } \bar{\mathbf{u}}_i(x) = \mathbf{a}_i \textrm{ and }|\bar{\mathbf{d}}_i(x)| \leq \tau, \\
\leq 0  \text{ a.e. } x \in \Omega \text{ if } \bar{\mathbf{u}}_i(x) = \mathbf{b}_i \textrm{ and } |\bar{\mathbf{d}}_i(x)| \leq \tau.
\end{cases}
\end{equation}

The next result will be of importance for deriving a posteriori error estimates for the discretizations of \eqref{eq:min_weak_setting}--\eqref{eq:weak_st_equation} that we will propose; see \cite[Corollary 3.11]{MR2338434}.

\begin{theorem}[equivalent second order optimality condition]\label{thm:equivalent_opt_cond}
Assume that \eqref{eq:smallness_assumption} holds. Let $(\bar{\mathbf{y}},\bar{p},\bar{\mathbf{z}},\bar{r},\bar{\mathbf{u}}) \in \mathbf{H}_0^1(\Omega)\times L_0^2(\Omega) \times \mathbf{H}_0^1(\Omega)\times L_0^2(\Omega) \times \mathbb{U}_{ad}$ be a local solution to \eqref{eq:min_weak_setting}--\eqref{eq:weak_st_equation} satisfying the first order optimality conditions \eqref{eq:weak_st_equation}, \eqref{eq:variational_ineq}, and \eqref{eq:weak_adjoint_equation}. Then, \eqref{eq:equivalent_second_I} is equivalent to the existence of $\EO{\mu} > 0$ and $\tau > 0$ such that
\begin{equation}\label{eq:equivalent_second_II}
j''(\bar{\mathbf{u}})\mathbf{v}^2 \geq \EO{\mu} \|\mathbf{v}\|_{\mathbf{L}^2(\Omega)}^2 \quad \forall \mathbf{v} \in \mathbf{C}_{\bar{\mathbf{u}}}^{\tau}.
\end{equation}
\end{theorem}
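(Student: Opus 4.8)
The plan is to establish the two implications separately; the reverse implication is immediate, while the forward one rests on a compactness (contradiction) argument. For the easy direction I would first record the inclusion $\mathbf{C}_{\bar{\mathbf{u}}}\subset\mathbf{C}_{\bar{\mathbf{u}}}^{\tau}$, valid for every $\tau>0$. Indeed, if $\mathbf{v}\in\mathbf{C}_{\bar{\mathbf{u}}}$ and $|\bar{\mathbf{d}}_i(x)|>\tau$, then $\bar{\mathbf{d}}_i(x)\neq 0$ and hence $\mathbf{v}_i(x)=0$, which is the first requirement in \eqref{eq:critical_cone_charac_tau}; the sign conditions of \eqref{eq:critical_cone_charac_tau} also hold, because on the overlap where $0<|\bar{\mathbf{d}}_i|\leq\tau$ one has $\mathbf{v}_i=0$, which satisfies both inequalities, while on $\{\bar{\mathbf{d}}_i=0\}$ they coincide with the conditions in \eqref{eq:critical_cone_charac}. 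Consequently, if \eqref{eq:equivalent_second_II} holds, then for every $\mathbf{v}\in\mathbf{C}_{\bar{\mathbf{u}}}\setminus\{\boldsymbol 0\}$ we obtain $j''(\bar{\mathbf{u}})\mathbf{v}^2\geq\mu\|\mathbf{v}\|_{\mathbf{L}^2(\Omega)}^2>0$, which is precisely \eqref{eq:equivalent_second_I}.

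For the forward implication I would argue by contradiction. Assuming \eqref{eq:equivalent_second_I} together with the negation of \eqref{eq:equivalent_second_II}, and choosing $\tau=\mu=1/n$, one produces for each $n$ a direction $\mathbf{v}_n\in\mathbf{C}_{\bar{\mathbf{u}}}^{1/n}$ with $j''(\bar{\mathbf{u}})\mathbf{v}_n^2<n^{-1}\|\mathbf{v}_n\|_{\mathbf{L}^2(\Omega)}^2$; note $\mathbf{v}_n\neq\boldsymbol 0$, so after normalization we may assume $\|\mathbf{v}_n\|_{\mathbf{L}^2(\Omega)}=1$ and $j''(\bar{\mathbf{u}})\mathbf{v}_n^2<1/n$. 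Since $\{\mathbf{v}_n\}$ is bounded in $\mathbf{L}^2(\Omega)$, I would extract a (not relabeled) subsequence with $\mathbf{v}_n\rightharpoonup\mathbf{v}$ in $\mathbf{L}^2(\Omega)$. The first key step is to show $\mathbf{v}\in\mathbf{C}_{\bar{\mathbf{u}}}$: on each fixed set $\{|\bar{\mathbf{d}}_i|>1/m\}$ one has $\mathbf{v}_{n,i}=0$ for all $n\geq m$, so the weak limit gives $\mathbf{v}_i=0$ there, and letting $m\to\infty$ yields $\mathbf{v}_i=0$ on $\{\bar{\mathbf{d}}_i\neq 0\}$; the sign restrictions on $\{\bar{\mathbf{u}}_i=\mathbf{a}_i,\ \bar{\mathbf{d}}_i=0\}$ and $\{\bar{\mathbf{u}}_i=\mathbf{b}_i,\ \bar{\mathbf{d}}_i=0\}$ survive the weak limit because sets of the form $\{w:\ w\geq 0\text{ a.e. on }E\}$ are convex and strongly closed, hence weakly closed.

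The decisive step is the behaviour of $j''(\bar{\mathbf{u}})$ along the sequence, for which I would use the explicit representation, obtained by eliminating the second derivative of $\mathcal{S}$ through the adjoint state $\bar{\mathbf{z}}$,
\begin{equation*}
j''(\bar{\mathbf{u}})\mathbf{v}^2=\alpha\|\mathbf{v}\|_{\mathbf{L}^2(\Omega)}^2+\|\boldsymbol{\eta}_{\mathbf{v}}\|_{\mathbf{L}^2(\Omega)}^2-2\,b(\boldsymbol{\eta}_{\mathbf{v}};\boldsymbol{\eta}_{\mathbf{v}},\bar{\mathbf{z}}),
\end{equation*}
where $\boldsymbol{\eta}_{\mathbf{v}}=\mathcal{S}'(\bar{\mathbf{u}})\mathbf{v}\in\mathbf{V}(\Omega)$ solves the linearized state equation. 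The smallness assumption \eqref{eq:smallness_assumption} guarantees that this linear problem is well posed and that $\mathbf{v}\mapsto\boldsymbol{\eta}_{\mathbf{v}}$ is bounded from $\mathbf{L}^2(\Omega)$ into $\mathbf{H}_0^1(\Omega)$. Hence $\{\boldsymbol{\eta}_{\mathbf{v}_n}\}$ is bounded in $\mathbf{H}_0^1(\Omega)$ and, by the compact embeddings $\mathbf{H}_0^1(\Omega)\hookrightarrow\hookrightarrow\mathbf{L}^2(\Omega)$ and $\mathbf{H}_0^1(\Omega)\hookrightarrow\hookrightarrow\mathbf{L}^4(\Omega)$, converges (along a further subsequence) strongly in $\mathbf{L}^2(\Omega)$ and $\mathbf{L}^4(\Omega)$; passing to the limit in the linearized equation identifies the limit as $\boldsymbol{\eta}_{\mathbf{v}}$. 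Together with the continuity property \eqref{eq:trilinear_embedding} of $b$, this makes $\mathbf{v}\mapsto\|\boldsymbol{\eta}_{\mathbf{v}}\|_{\mathbf{L}^2(\Omega)}^2-2b(\boldsymbol{\eta}_{\mathbf{v}};\boldsymbol{\eta}_{\mathbf{v}},\bar{\mathbf{z}})$ weakly sequentially continuous, while $\alpha\|\cdot\|_{\mathbf{L}^2(\Omega)}^2$ is weakly lower semicontinuous; consequently $j''(\bar{\mathbf{u}})\mathbf{v}^2\leq\liminf_n j''(\bar{\mathbf{u}})\mathbf{v}_n^2\leq 0$. Finally I would split into cases: if $\mathbf{v}\neq\boldsymbol 0$, this contradicts \eqref{eq:equivalent_second_I}; if $\mathbf{v}=\boldsymbol 0$, the weakly continuous remainder tends to $0$, so $\alpha=\lim_n\alpha\|\mathbf{v}_n\|_{\mathbf{L}^2(\Omega)}^2=\lim_n j''(\bar{\mathbf{u}})\mathbf{v}_n^2\leq 0$, contradicting $\alpha>0$. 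The main obstacle, and the step deserving the most care, is the weak sequential continuity of the quadratic remainder: it hinges on the compactness of the Sobolev embeddings, on the continuity of the trilinear form $b$, and on the well-posedness of the linearized equation under \eqref{eq:smallness_assumption}.
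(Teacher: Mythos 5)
Your proposal is correct, but there is nothing in the paper to compare it against line by line: the paper does not prove Theorem~\ref{thm:equivalent_opt_cond} at all, it imports it verbatim from \cite[Corollary 3.11]{MR2338434}. What you have written is a self-contained reconstruction of the standard argument underlying that citation, and all of its ingredients are sound. The inclusion $\mathbf{C}_{\bar{\mathbf{u}}}\subset\mathbf{C}_{\bar{\mathbf{u}}}^{\tau}$ disposes of the easy implication exactly as you say. For the converse, the contradiction scheme (normalized $\mathbf{v}_n\in\mathbf{C}_{\bar{\mathbf{u}}}^{1/n}$ with $j''(\bar{\mathbf{u}})\mathbf{v}_n^2<1/n$, weak limit $\mathbf{v}\in\mathbf{C}_{\bar{\mathbf{u}}}$, splitting of $j''(\bar{\mathbf{u}})$ into $\alpha\|\cdot\|_{\mathbf{L}^2(\Omega)}^2$ plus a weakly sequentially continuous quadratic remainder) is the right one, and your representation $j''(\bar{\mathbf{u}})\mathbf{v}^2=\alpha\|\mathbf{v}\|_{\mathbf{L}^2(\Omega)}^2+\|\boldsymbol{\eta}_{\mathbf{v}}\|_{\mathbf{L}^2(\Omega)}^2-2b(\boldsymbol{\eta}_{\mathbf{v}};\boldsymbol{\eta}_{\mathbf{v}},\bar{\mathbf{z}})$ is the correct elimination of $\mathcal{S}''$ through the adjoint state; the two-case ending ($\mathbf{v}\neq\boldsymbol 0$ contradicts \eqref{eq:equivalent_second_I}, while $\mathbf{v}=\boldsymbol 0$ forces $\alpha\leq 0$) closes the argument properly. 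Two refinements are worth recording. First, the well-posedness and $\mathbf{L}^2(\Omega)\to\mathbf{H}_0^1(\Omega)$ boundedness of $\mathbf{v}\mapsto\boldsymbol{\eta}_{\mathbf{v}}$ that you invoke is exactly what the coercivity estimate \eqref{eq:coercivity_bilinear_form_B} of Remark~\ref{remark:wp_ad} provides, so the paper already contains the tool you need. Second, for the limit passage in the trilinear term it is slightly cleaner to use the antisymmetry \eqref{eq:properties_trilinear} and write $b(\boldsymbol{\eta}_{\mathbf{v}_n};\boldsymbol{\eta}_{\mathbf{v}_n},\bar{\mathbf{z}})=-b(\boldsymbol{\eta}_{\mathbf{v}_n};\bar{\mathbf{z}},\boldsymbol{\eta}_{\mathbf{v}_n})$, so that no gradient of $\boldsymbol{\eta}_{\mathbf{v}_n}$ appears and the strong $\mathbf{L}^4(\Omega)$ convergence from the compact embedding suffices on its own; your weak-times-strong pairing also works, but this version makes the ``decisive step'' you flag essentially immediate. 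In short: the paper buys brevity by citing; your version buys self-containedness, at the cost of re-proving what is, in substance, the argument of the cited reference.
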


We close this section with the next result.

\begin{lemma}[property of $j''$]\label{lemma:property_of_j}
Let $\mathbf{u}, \mathbf{h}, \mathbf{v} \in \mathbf{L}^\infty(\Omega)$ and $\mathsf{M} >0$ be such that $\max\{\| \mathbf{u}\|_{\mathbf{L}^\infty(\Omega)},\|\mathbf{h}\|_{\mathbf{L}^\infty(\Omega)}\} \leq \mathsf{M}$. Then, there exists $C_{\mathsf{M}} > 0$ such that
\begin{equation}\label{eq:estimate_second_der}
|j''(\mathbf{u} + \mathbf{h})\mathbf{v}^2-j''(\mathbf{u})\mathbf{v}^2|\leq C_{\mathsf{M}} \|\mathbf{h}\|_{\mathbf{L}^\FF{2}(\Omega)}\|\mathbf{v}\|_{\mathbf{L}^2(\Omega)}^2.
\end{equation}
\end{lemma}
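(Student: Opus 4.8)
The plan is to represent $j''(\mathbf{w})\mathbf{v}^2$ through the linearized and adjoint states and then estimate the difference term by term, exploiting the Lipschitz dependence of these states on the control. For a control $\mathbf{w}$, let $\mathbf{y}_\mathbf{w}:=\mathcal{S}\mathbf{w}\in\mathbf{V}(\Omega)$ solve \eqref{eq:st_reduced_form}, let $\boldsymbol{\phi}_\mathbf{w}\in\mathbf{V}(\Omega)$ be the linearized state in the direction $\mathbf{v}$, i.e. the solution of
\begin{equation*}
\nu(\nabla\boldsymbol{\phi}_\mathbf{w},\nabla\boldsymbol{\chi})_{\mathbf{L}^2(\Omega)}+b(\mathbf{y}_\mathbf{w};\boldsymbol{\phi}_\mathbf{w},\boldsymbol{\chi})+b(\boldsymbol{\phi}_\mathbf{w};\mathbf{y}_\mathbf{w},\boldsymbol{\chi})=(\mathbf{v},\boldsymbol{\chi})_{\mathbf{L}^2(\Omega)}\quad\forall\boldsymbol{\chi}\in\mathbf{V}(\Omega),
\end{equation*}
and let $\boldsymbol{\varphi}_\mathbf{w}\in\mathbf{H}_0^1(\Omega)$ denote the adjoint state associated to $\mathbf{y}_\mathbf{w}$, namely the velocity component of the solution to the analogue of \eqref{eq:weak_adjoint_equation}. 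A direct differentiation of $j$, using the state equation \eqref{eq:st_reduced_form} and the adjoint equation to eliminate the second derivative of $\mathcal{S}$, yields the representation
\begin{equation*}
j''(\mathbf{w})\mathbf{v}^2=\|\boldsymbol{\phi}_\mathbf{w}\|_{\mathbf{L}^2(\Omega)}^2+\alpha\|\mathbf{v}\|_{\mathbf{L}^2(\Omega)}^2-2\,b(\boldsymbol{\phi}_\mathbf{w};\boldsymbol{\phi}_\mathbf{w},\boldsymbol{\varphi}_\mathbf{w}).
\end{equation*}
Since the regularization term is independent of $\mathbf{w}$, it cancels when we subtract the representations at $\mathbf{w}=\mathbf{u}+\mathbf{h}$ and $\mathbf{w}=\mathbf{u}$, so it suffices to estimate the two remaining differences.

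I would first record the relevant stability and Lipschitz bounds. Writing $\boldsymbol{\eta}:=\mathbf{y}_{\mathbf{u}+\mathbf{h}}-\mathbf{y}_\mathbf{u}$, $\boldsymbol{\delta}:=\boldsymbol{\phi}_{\mathbf{u}+\mathbf{h}}-\boldsymbol{\phi}_\mathbf{u}$, and $\boldsymbol{\rho}:=\boldsymbol{\varphi}_{\mathbf{u}+\mathbf{h}}-\boldsymbol{\varphi}_\mathbf{u}$, subtracting the corresponding equations and testing with the difference itself gives, via the coercivity \eqref{eq:coercivity_bilinear_form_B} and the trilinear bound \eqref{eq:trilinear_embedding}, the basic estimates
\begin{equation*}
\|\nabla\boldsymbol{\eta}\|_{\mathbf{L}^2(\Omega)}\lesssim\|\mathbf{h}\|_{\mathbf{L}^2(\Omega)},\qquad \|\nabla\boldsymbol{\phi}_\mathbf{w}\|_{\mathbf{L}^2(\Omega)}\lesssim\|\mathbf{v}\|_{\mathbf{L}^2(\Omega)},\qquad \|\nabla\boldsymbol{\varphi}_\mathbf{w}\|_{\mathbf{L}^2(\Omega)}\lesssim 1,
\end{equation*}
where the hidden constants depend on $\nu$, $\mathcal{C}_b$, $C_2$, $\mathbf{y}_\Omega$, and on $\mathsf{M}$ through $\|\mathbf{u}\|_{\mathbf{L}^2(\Omega)},\|\mathbf{h}\|_{\mathbf{L}^2(\Omega)}\le\mathsf{M}|\Omega|^{1/2}$. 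The genuinely nontrivial estimates are the Lipschitz bounds for the linearized and adjoint states: subtracting the two linearized equations yields
\begin{equation*}
\nu(\nabla\boldsymbol{\delta},\nabla\boldsymbol{\chi})_{\mathbf{L}^2(\Omega)}+b(\mathbf{y}_{\mathbf{u}+\mathbf{h}};\boldsymbol{\delta},\boldsymbol{\chi})+b(\boldsymbol{\delta};\mathbf{y}_{\mathbf{u}+\mathbf{h}},\boldsymbol{\chi})=-b(\boldsymbol{\eta};\boldsymbol{\phi}_\mathbf{u},\boldsymbol{\chi})-b(\boldsymbol{\phi}_\mathbf{u};\boldsymbol{\eta},\boldsymbol{\chi}),
\end{equation*}
and testing with $\boldsymbol{\chi}=\boldsymbol{\delta}$, using coercivity and \eqref{eq:trilinear_embedding}, gives $\|\nabla\boldsymbol{\delta}\|_{\mathbf{L}^2(\Omega)}\lesssim\|\nabla\boldsymbol{\eta}\|_{\mathbf{L}^2(\Omega)}\|\nabla\boldsymbol{\phi}_\mathbf{u}\|_{\mathbf{L}^2(\Omega)}\lesssim\|\mathbf{h}\|_{\mathbf{L}^2(\Omega)}\|\mathbf{v}\|_{\mathbf{L}^2(\Omega)}$. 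An entirely analogous computation for the adjoint states, now carrying the extra data term $(\boldsymbol{\eta},\cdot)_{\mathbf{L}^2(\Omega)}$ on the right-hand side, yields $\|\nabla\boldsymbol{\rho}\|_{\mathbf{L}^2(\Omega)}\lesssim\|\mathbf{h}\|_{\mathbf{L}^2(\Omega)}$.

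To conclude, for the first difference I would write $\|\boldsymbol{\phi}_{\mathbf{u}+\mathbf{h}}\|_{\mathbf{L}^2(\Omega)}^2-\|\boldsymbol{\phi}_\mathbf{u}\|_{\mathbf{L}^2(\Omega)}^2=(\boldsymbol{\delta},\boldsymbol{\phi}_{\mathbf{u}+\mathbf{h}}+\boldsymbol{\phi}_\mathbf{u})_{\mathbf{L}^2(\Omega)}$ and bound it, via the embedding $\mathbf{H}_0^1(\Omega)\hookrightarrow\mathbf{L}^2(\Omega)$, by $\|\boldsymbol{\delta}\|_{\mathbf{L}^2(\Omega)}(\|\boldsymbol{\phi}_{\mathbf{u}+\mathbf{h}}\|_{\mathbf{L}^2(\Omega)}+\|\boldsymbol{\phi}_\mathbf{u}\|_{\mathbf{L}^2(\Omega)})\lesssim\|\mathbf{h}\|_{\mathbf{L}^2(\Omega)}\|\mathbf{v}\|_{\mathbf{L}^2(\Omega)}^2$. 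For the trilinear difference I would telescope
\begin{equation*}
b(\boldsymbol{\phi}_{\mathbf{u}+\mathbf{h}};\boldsymbol{\phi}_{\mathbf{u}+\mathbf{h}},\boldsymbol{\varphi}_{\mathbf{u}+\mathbf{h}})-b(\boldsymbol{\phi}_\mathbf{u};\boldsymbol{\phi}_\mathbf{u},\boldsymbol{\varphi}_\mathbf{u})=b(\boldsymbol{\delta};\boldsymbol{\phi}_{\mathbf{u}+\mathbf{h}},\boldsymbol{\varphi}_{\mathbf{u}+\mathbf{h}})+b(\boldsymbol{\phi}_\mathbf{u};\boldsymbol{\delta},\boldsymbol{\varphi}_{\mathbf{u}+\mathbf{h}})+b(\boldsymbol{\phi}_\mathbf{u};\boldsymbol{\phi}_\mathbf{u},\boldsymbol{\rho}),
\end{equation*}
and estimate each of the three summands with \eqref{eq:trilinear_embedding}; the bounds above show that every summand is controlled by $\|\mathbf{h}\|_{\mathbf{L}^2(\Omega)}\|\mathbf{v}\|_{\mathbf{L}^2(\Omega)}^2$. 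Collecting the contributions delivers \eqref{eq:estimate_second_der} with a constant $C_\mathsf{M}$ of the stated form. The delicate point is not the telescoping but ensuring that all coercivity and stability constants are \emph{uniform} in $\mathbf{h}$: the coercivity \eqref{eq:coercivity_bilinear_form_B} of the linearized and adjoint operators around both $\mathbf{y}_\mathbf{u}$ and $\mathbf{y}_{\mathbf{u}+\mathbf{h}}$, with constant bounded below by $\nu(1-\theta)$, must hold simultaneously. This rests on the smallness assumption \eqref{eq:smallness_assumption} forcing $\|\nabla\mathbf{y}_\mathbf{w}\|_{\mathbf{L}^2(\Omega)}\le\theta\mathcal{C}_b^{-1}\nu$ for the controls at hand; carefully tracking how the resulting constants depend on $\mathsf{M}$, through the $\mathbf{L}^2(\Omega)$-norms of the controls and of $\mathbf{y}_\Omega$, is precisely what produces $C_\mathsf{M}$.
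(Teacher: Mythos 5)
The paper never proves this lemma: it is stated without proof (in effect imported from \cite{MR2338434}), so there is no in-paper argument to compare yours against; I judge the proposal on its own merits, and it is correct. The representation $j''(\mathbf{w})\mathbf{v}^2=\|\boldsymbol{\phi}_{\mathbf{w}}\|_{\mathbf{L}^2(\Omega)}^2+\alpha\|\mathbf{v}\|_{\mathbf{L}^2(\Omega)}^2-2\,b(\boldsymbol{\phi}_{\mathbf{w}};\boldsymbol{\phi}_{\mathbf{w}},\boldsymbol{\varphi}_{\mathbf{w}})$ is the standard one (test the adjoint equation with $\mathcal{S}''(\mathbf{w})\mathbf{v}^2$ and the equation satisfied by $\mathcal{S}''(\mathbf{w})\mathbf{v}^2$ with $\boldsymbol{\varphi}_{\mathbf{w}}$, so that the two terms cancel against $-2b(\boldsymbol{\phi}_{\mathbf{w}};\boldsymbol{\phi}_{\mathbf{w}},\boldsymbol{\varphi}_{\mathbf{w}})$; this is exactly the formula used in \cite{MR2338434}). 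Your telescoping identities are exact by trilinearity of $b$, the Lipschitz bounds $\|\nabla\boldsymbol{\eta}\|_{\mathbf{L}^2(\Omega)}\lesssim\|\mathbf{h}\|_{\mathbf{L}^2(\Omega)}$, $\|\nabla\boldsymbol{\delta}\|_{\mathbf{L}^2(\Omega)}\lesssim\|\mathbf{h}\|_{\mathbf{L}^2(\Omega)}\|\mathbf{v}\|_{\mathbf{L}^2(\Omega)}$, and $\|\nabla\boldsymbol{\rho}\|_{\mathbf{L}^2(\Omega)}\lesssim\|\mathbf{h}\|_{\mathbf{L}^2(\Omega)}$ follow as you say from the coercivity argument of \eqref{eq:coercivity_bilinear_form_B} together with \eqref{eq:properties_trilinear} and \eqref{eq:trilinear_embedding}, and, importantly, every factor of $\mathbf{h}$ and $\mathbf{v}$ in your final bounds carries only an $\mathbf{L}^2(\Omega)$-norm, which is what \eqref{eq:estimate_second_der} requires. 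This gives the paper something it currently lacks: a self-contained proof.

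The one point that needs to be made airtight --- and you correctly flag it in your closing remarks, so I record it as a proviso rather than a gap --- is well-posedness: the hypotheses $\mathbf{u},\mathbf{h}\in\mathbf{L}^\infty(\Omega)$ with $\max\{\|\mathbf{u}\|_{\mathbf{L}^\infty(\Omega)},\|\mathbf{h}\|_{\mathbf{L}^\infty(\Omega)}\}\leq\mathsf{M}$ do not by themselves guarantee that $j''(\mathbf{u})$ and $j''(\mathbf{u}+\mathbf{h})$ are defined, since $\mathcal{S}$ is only well defined under the smallness condition of Theorem \ref{thm:well_posedness_navier_stokes}, nor that the coercivity constant $\nu(1-\theta)$ is available simultaneously around $\mathbf{y}_{\mathbf{u}}$ and $\mathbf{y}_{\mathbf{u}+\mathbf{h}}$. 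To close this cleanly one should either assume $\mathbf{u},\,\mathbf{u}+\mathbf{h}\in\mathbb{U}_{ad}$, which is what occurs in the lemma's only application (in Theorem \ref{thm:error_bound_control_tilde} one takes $\mathbf{u}=\bar{\mathbf{u}}$ and $\mathbf{u}+\mathbf{h}=\bar{\mathbf{u}}+\theta_{\T}(\tilde{\mathbf{u}}-\bar{\mathbf{u}})$, which lies in $\mathbb{U}_{ad}$ by convexity, so \eqref{eq:smallness_assumption} and \eqref{eq:stability_state_eq} apply to both states), or require $\mathsf{M}$ small enough that $\|\mathbf{u}\|_{\mathbf{L}^2(\Omega)}$ and $\|\mathbf{u}+\mathbf{h}\|_{\mathbf{L}^2(\Omega)}$, bounded by $2\mathsf{M}|\Omega|^{1/2}$, satisfy the smallness threshold $C_2^{-1}\mathcal{C}_b^{-1}\nu^2$. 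With that hypothesis made explicit, your argument is complete.
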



\subsection{Finite element approximation}\label{sec:fem_ocp}

We now introduce the discrete setting in which we will operate. We consider $\mathscr{T}=\{T\}$ to be a conforming partition  of $\overline{\Omega}$ into closed simplices $T$ with size $h_T=\text{diam}(T)$. Define $h_{\mathscr{T}}:=\max_{T\in\mathscr{T}}h_T$. We denote by $\mathbb{T}$ the collection of conforming and shape regular meshes that are refinements of an initial mesh $\mathscr{T}_0$. Let $\mathscr{S}$ be the set of internal $(d-1)-$dimensional interelement boundaries $S$ of $\mathscr{T}$. For $T \in \mathscr{T}$, let $\mathscr{S}_T$ denote the subset of $\mathscr{S}$ which contains the sides in $\mathscr{S}$ which are sides of $T$. We denote by $\mathcal{N}_S$ the subset of $\mathscr{T}$ that contains the two elements that have $S$ as a side, i.e., $\mathcal{N}_S=\{T^+,T^-\}$, where $T^+, T^- \in \mathscr{T}$ are such that $S = T^+ \cap T^-$. For $T \in \mathscr{T}$, we define the \emph{star} associated with the element $T$ as
\begin{equation}\label{def:patch}
\mathcal{N}_T:= \left \{ T^{\prime}\in\mathscr{T}: \mathscr{S}_{T}\cap \mathscr{S}_{T^\prime}\neq\emptyset \right \}.
\end{equation}
In an abuse of notation, in what follows, by $\mathcal{N}_T$ we will indistinctively denote either this set or the union of the triangles that comprise it.

For a discrete tensor valued function $\mathbf{W}_{\mathscr{T}}$, we define the jump or interelement residual on the internal side $S\in\mathscr{S}$, shared by the distinct elements $T^+, T^-\in\mathcal{N}_S$, by $\llbracket{\mathbf{W}_{\mathscr{T}}\cdot\mathbf{n}} \rrbracket=\mathbf{W}_{\mathscr{T}}|_{T^+}\cdot\mathbf{n}^+ +\mathbf{W}_{\mathscr{T}}|_{T^-}\cdot\mathbf{n}^-$. Here, $\mathbf{n}^+$ and $\mathbf{n}^-$ are unit normal on $S$ pointing towards $T^+$ and $T^-$, respectively.

\EO{Finally,} we introduce the inf-sup stable finite element spaces that we will consider in our work. Given a mesh $\mathscr{T}\in\mathbb{T}$, we denote by $\mathbf{V}(\mathscr{T})$ and $\mathcal{P}(\mathscr{T})$ the finite element spaces that approximate the velocity field and the pressure,  respectively,  based on the classical Taylor--Hood elements \cite[section 4.2.5]{MR2050138}: 
\begin{align}
&\mathbf{V}(\mathscr{T})=\{\mathbf{v}_{\mathscr{T}}\in\mathbf{C}(\overline{\Omega}): \mathbf{v}_{\mathscr{T}}|_T\in[\mathbb{P}_2(T)]^{d} \ \forall T\in\mathscr{T}\}\cap\mathbf{H}_0^{1}(\Omega), \label{def:velocity_space}
\\
&\mathcal{P}(\mathscr{T})=\{ q_{\mathscr{T}}\in C(\overline{\Omega}) : q_{\mathscr{T}}|_T\in\mathbb{P}_1(T) \ \forall T\in\mathscr{T} \}\cap L_{0}
^{2}(\Omega).\label{def:pressure_space}
\end{align}


\subsubsection{A fully discrete scheme}\label{sec:fully_discrete_scheme}

In \EO{this section, we introduce a fully discrete scheme to approximate the solution to the optimal control problem \eqref{eq:min_weak_setting}--\eqref{eq:weak_st_equation}. The scheme utilizes the classical Taylor--Hood elements to discretize the state and adjoint equations and piecewise quadratic functions to approximate local optimal controls. To be precise,} $\bar{\mathbf{u}}_{\T} \in \mathbb{U}_{ad}(\T)$, where
\begin{equation}
\label{def:control_space}
\mathbb{U}_{ad}(\T) = \mathbb{U}(\T) \cap \mathbb{U}_{ad}, \quad \mathbb{U}(\T) = \{ \mathbf{v}_{\T} \in \mathbf{L}^{\infty}(\Omega): \mathbf{v}_{\T}|_{T} \in [\mathbb{P}_{2}(T)]^{d} \ \forall T \in \T \}. 
\end{equation}

The \EO{fully discrete scheme reads as follows: Find 
\begin{equation}\label{eq:discrete_cost_mini}
\min_{\mathbf{V}(\T)\times\mathbb{U}_{ad}(\T)} J(\mathbf{y}_{\T},\mathbf{u}_{\T})
\end{equation}
subject} to the discrete state equation
\begin{equation}\label{eq:discrete_state_equation}
\begin{aligned}
\nu(\nabla \mathbf{y}_\T, \nabla \mathbf{v}_\T)_{\mathbf{L}^2(\Omega)} 
+
b(\mathbf{y}_\T;\mathbf{y}_\T,\mathbf{v}_\T) 
- 
(p_\T,\text{div } \mathbf{v}_\T)_{L^2(\Omega)} 
&= (\mathbf{u}_\T,\mathbf{v}_\T)_{\mathbf{L}^2(\Omega)},
\\
(q_\T,\text{div } \mathbf{y}_\T)_{L^2(\Omega)} &= 0,
\end{aligned}
\end{equation}
for all $(\mathbf{v}_\T,q_\T) \in \mathbf{V}(\mathscr{T})\times \mathcal{P}(\T)$. 

\EO{In what follows we assume, in addition to \eqref{eq:smallness_assumption}, that we have that
\begin{enumerate}[label=(A.\arabic*)]
\item \label{A1} the mesh $\T$ is sufficiently refined, this is, the parameter $h_{\T}$ is sufficiently small such that the results in \cite[Theorem 4.11]{MR2338434} hold, and

\item \label{A2} the discrete variable $\mathbf{u}_\T\in\mathbb{U}_{ad}(\T)$ is sufficiently close to $\bar{\mathbf{u}}$, i.e., there exist a constant $\rho>0$ such that $\|\mathbf{u}_\T- \bar{\mathbf{u}}\|_{\mathbf{L}^2(\Omega)}\leq \rho$ \cite[Remark 4.9]{MR2338434}.
\end{enumerate} 
We thus invoke \cite[Theorem 4.8]{MR2338434} to guarantee the existence of a unique discrete pair $(\mathbf{y}_\T,p_\T) \in \mathbf{V}(\mathscr{T})\times \mathcal{P}(\T)$ solving \eqref{eq:discrete_state_equation}. In addition, we have that $(\mathbf{y}_\T,p_\T)$ lies in a suitable neighborhood of $(\bar{\mathbf{y}},\bar{p})$. An application of \cite[Theorem 4.11]{MR2338434} yields the existence of at least one solution to our fully discrete problem \eqref{eq:discrete_cost_mini}--\eqref{eq:discrete_state_equation}.}

If $\bar{ \mathbf{u}}_\mathscr{T}$ denotes a local solution, we have
\begin{equation}\label{eq:discrete_var_ineq}
 (\bar{\mathbf{z}}_\mathscr{T}+\alpha\bar{\mathbf{u}}_\mathscr{T},\mathbf{u}_\mathscr{T}-\bar{\mathbf{u}}_\mathscr{T})^{}_{\mathbf{L}^2(\Omega)}  \geq  0 \quad \forall
  \mathbf{u}_\mathscr{T} \in \mathbb{U}_{ad}(\T),
\end{equation}
where the pair $(\bar{\mathbf{z}}_\mathscr{T},\bar{r}_\T) \in \mathbf{V}(\T)\times\mathcal{P}(\T)$ solves
\begin{equation}
\label{eq:discrete_adjoint_equation}
\begin{aligned}
\nu(\nabla \mathbf{w}_\T, \nabla \bar{\mathbf{z}}_\T)_{\mathbf{L}^2(\Omega)}+b(\bar{\mathbf{y}}_\T;\mathbf{w}_\T,\bar{\mathbf{z}}_\T)
\\
+b(\mathbf{w}_\T;\bar{\mathbf{y}}_\T,\bar{\mathbf{z}}_\T)
-(\bar{r}_\T,\text{div } \mathbf{w}_\T)_{L^2(\Omega)} &=(\bar{\mathbf{y}}_\T-\mathbf{y}_\Omega,\mathbf{w}_\T)_{\mathbf{L}^2(\Omega)},
\\
(s_\T,\text{div } \bar{\mathbf{z}}_\T)_{L^2(\Omega)} &= 0,
\end{aligned}
\end{equation}
for all $(\mathbf{w}_{\mathscr{T}},s_\T) \in \mathbf{V}(\T)\times\mathcal{P}(\T)$ \cite[Lemma 4.14]{MR2338434}. Under the assumption that
\begin{equation}\label{eq:assumption_y_T_small_2}
2\|\nabla \bar{\mathbf{y}}_\T \|_{\mathbf{L}^2(\Omega)} \leq \FF{\varrho} \nu \mathcal{C}_b^{-1},
\qquad
\varrho < 1,
\end{equation}
the discrete problem \eqref{eq:discrete_adjoint_equation} admits a unique solution. In fact, define
\begin{equation}
\label{eq:bilinear_form_B_hat}
\mathcal{C}(\mathbf{w},\mathbf{v}):=\nu (\nabla \mathbf{w},\nabla\mathbf{v})_{\mathbf{L}^2(\Omega)}+b(\bar{\mathbf{y}}_{\T};\mathbf{w},\mathbf{v})+b(\mathbf{w};\bar{\mathbf{y}}_{\T}, \mathbf{v}).
\end{equation}
With \eqref{eq:assumption_y_T_small_2} at hand, similar arguments to ones used to derive \eqref{eq:coercivity_bilinear_form_B} yield the coercivity of $\mathcal{C}\FF{(\cdot,\cdot)}$ in $\mathbf{H}_0^1(\Omega) \times \mathbf{H}_0^1(\Omega)$. Since the pair $(\mathbf{V}(\T), \mathcal{P}(\T))$ satisfy a discrete inf-sup condition \cite[Lemma 4.24]{MR2050138}, an application of \cite[Theorem 2.42]{MR2050138} allows us to conclude.

\EO{
\subsubsection{A semi discrete scheme}\label{sec:semi_discrete_scheme}

In this section, we introduce a semidiscrete scheme for  \eqref{eq:min_weak_setting}--\eqref{eq:weak_st_equation} that is based on the so-called variational discretization approach \cite{MR2122182}. This approach, in contrast to the fully discrete scheme of section \ref{sec:fully_discrete_scheme}, discretizes only the state space -- \emph{the control space $\mathbb{U}_{ad}$ is not discretized} -- and induces a discretization of the optimal control variable by projecting the optimal discrete adjoint state into the admissible control set.

We notice that, in practice, AFEMs for the fully discrete scheme tend to generate DOF (degrees of freedom) in the vicinity of the border of the active set of the control variable. These DOF seem unnecessary for an accurate approximation of the control variable; see section \ref{sec:numerical_ex} for numerical evidence that support this claim. This motivates the use of the variational discretization approach within AFEMs.

The semi discrete scheme is defined as follows:
\begin{equation}\label{eq:discrete_cost_var}
\min_{\mathbf{V}(\T)\times\mathbb{U}_{ad}} J(\mathbf{y}_{\T},\mathbf{g})
\end{equation}
subject to the discrete state equation
\begin{equation}\label{eq:discrete_st_variational}
\begin{aligned}
\nu(\nabla \mathbf{y}_\T, \nabla \mathbf{v}_\T)_{\mathbf{L}^2(\Omega)} 
+
b(\mathbf{y}_\T;\mathbf{y}_\T,\mathbf{v}_\T) 
- 
(p_\T,\text{div } \mathbf{v}_\T)_{L^2(\Omega)} 
&= (\mathbf{g},\mathbf{v}_\T)_{\mathbf{L}^2(\Omega)},
\\
(q_\T,\text{div } \mathbf{y}_\T)_{L^2(\Omega)} &= 0,
\end{aligned}
\end{equation}
for all $(\mathbf{v}_\T,q_\T) \in \mathbf{V}(\mathscr{T})\times \mathcal{P}(\T)$. Assume that \eqref{eq:smallness_assumption} and \ref{A1} hold. Assume, in addition, that 
\begin{enumerate}[label=(A.3)]
\item\label{A3} the variable $\mathbf{g}\in\mathbb{U}_{ad}$ is sufficiently close to $\bar{\mathbf{u}}$, i.e., there exist a constant $\rho>0$ such that $\|\mathbf{g}- \bar{\mathbf{u}}\|_{\mathbf{L}^2(\Omega)}\leq \rho$ \cite[Remark 4.9]{MR2338434}.
\end{enumerate} 
Then, the discrete state equation \eqref{eq:discrete_st_variational} admits a unique solution which lies in a  neighborhood of $(\bar{\mathbf{y}},\bar{p})$ \cite[Theorem 4.8]{MR2338434}. In addition, we have that the semidiscrete optimal control problem \eqref{eq:discrete_cost_var}--\eqref{eq:discrete_st_variational} admits at least one solution \cite[Theorem 4.11]{MR2338434}. 

If $\bar{\mathbf{g}}$ denotes a local solution for \eqref{eq:discrete_cost_var}--\eqref{eq:discrete_st_variational}, we have
\begin{equation}\label{eq:discrete_var_ineq_variational}
(\bar{\mathbf{z}}_\mathscr{T}+\alpha\bar{\mathbf{g}},\mathbf{u}-\bar{\mathbf{g}})^{}_{\mathbf{L}^2(\Omega)}  \geq  0 \quad \forall  \mathbf{u} \in \mathbb{U}_{ad},
\end{equation}
where the pair $(\bar{\mathbf{z}}_\mathscr{T},\bar{r}_\T) \in \mathbf{V}(\T)\times\mathcal{P}(\T)$ solves 
\begin{equation}
\label{eq:discrete_adjoint_equation_variational}
\begin{aligned}
\nu(\nabla \mathbf{w}_\T, \nabla \bar{\mathbf{z}}_\T)_{\mathbf{L}^2(\Omega)}+b(\bar{\mathbf{y}}_\T;\mathbf{w}_\T,\bar{\mathbf{z}}_\T)
\\
+b(\mathbf{w}_\T;\bar{\mathbf{y}}_\T,\bar{\mathbf{z}}_\T)
-(\bar{r}_\T,\text{div } \mathbf{w}_\T)_{L^2(\Omega)} &=(\bar{\mathbf{y}}_\T-\mathbf{y}_\Omega,\mathbf{w}_\T)_{\mathbf{L}^2(\Omega)},
\\
(s_\T,\text{div } \bar{\mathbf{z}}_\T)_{L^2(\Omega)} &= 0,
\end{aligned}
\end{equation}
for all $(\mathbf{w}_{\mathscr{T}},s_\T) \in \mathbf{V}(\T)\times\mathcal{P}(\T)$. Here, $\bar{\mathbf{y}}_{\T}=\bar{\mathbf{y}}_\T(\bar{\mathbf{g}})$ solves \eqref{eq:discrete_st_variational} with $\mathbf{g}=\bar{\mathbf{g}}$.}


\section{\EO{A posteriori error analysis: the fully discrete scheme}}\label{sec:a_posteriori}

In this section, we propose and analyze an a posteriori error estimator \EO{for the fully discrete optimal control problem \eqref{eq:discrete_cost_mini}--\eqref{eq:discrete_state_equation}.} This estimator can be decomposed as the sum of three contributions which are related to the discretization of the state \EO{and} adjoint equations and the control set. To obtain a reliability estimate, i.e., an upper bound for the error in terms of the devised a posteriori error estimator, we invoke upper bounds on the error between the solution to the discretization \eqref{eq:discrete_state_equation}--\eqref{eq:discrete_adjoint_equation} and auxiliary variables that we define in the following sections.

In order to guarantee the existence of a local solution $(\bar{\mathbf{y}}_\T,\bar{p}_\T,\bar{\mathbf{z}}_\T,\bar{r}_\T,\bar{\mathbf{u}}_\T)\in \mathbf{V}(\T)\times \mathcal{P}(\T)\times \mathbf{V}(\T)\times \mathcal{P}(\T)\times \mathbb{U}_{ad}(\T)$ to the \EO{fully discrete} optimal control problem, satisfying the discrete system \eqref{eq:discrete_state_equation}--\eqref{eq:discrete_adjoint_equation}, we shall assume, throughout the following sections, \FF{assumptions \eqref{eq:smallness_assumption}, \ref{A1} and \ref{A2}.}


\subsection{A posteriori error analysis for the state equations}\label{sec:a_posteriori_navier_stokes}
We present, inspired \EO{by} reference \cite{MR1259620} (see also \cite[section 9.3]{MR1885308}), a posteriori error estimates for \EO{a suitable discretization of the} stationary Navier--Stokes equations \eqref{eq:state_equations}.

We begin the discussion by introducing the following auxiliary variables. Let $(\hat{\mathbf{y}},\hat{p}) \in \mathbf{H}_0^1(\Omega)\times L_0^2(\Omega)$ be the solution to
\begin{equation}\label{eq:hat_function_st}
\begin{aligned}
\nu(\nabla \hat{\mathbf{y}}, \nabla \mathbf{v})_{\mathbf{L}^2(\Omega)}+b(\hat{\mathbf{y}};\hat{\mathbf{y}},\mathbf{v})-(\hat{p},\text{div } \mathbf{v})_{L^2(\Omega)} & = (\bar{\mathbf{u}}_\T,\mathbf{v})_{\mathbf{L}^2(\Omega)}  & \forall \mathbf{v} \in \mathbf{H}_0^1(\Omega),\\
(q,\text{div } \hat{\mathbf{y}})_{L^2(\Omega)} & =  0  & \forall q\in L^2_0(\Omega).
\end{aligned}
\end{equation}
Theorem \ref{thm:well_posedness_navier_stokes} guarantees the existence of a unique pair 
$(\hat{\mathbf{y}},\hat{p})$ solving problem \eqref{eq:hat_function_st} with
\begin{equation}\label{eq:stab_hat_y}
\|\nabla \hat{\mathbf{y}}\|_{\mathbf{L}^2(\Omega)}\leq \theta\mathcal{C}_{b}^{-1}\nu, \qquad \theta < 1.
\end{equation}
Notice that the pair $(\bar{\mathbf{y}}_{\T},\bar{p}_{\T})$, which solves \eqref{eq:discrete_state_equation} with $\mathbf{u}_{\T}$ replaced by $\bar{\mathbf{u}}_{\T}$, can be seen as the finite element approximation, within the space $\mathbf{V}(\mathscr{T}) \times \mathcal{P}(\T)$, of $(\hat{\mathbf{y}},\hat{p})$. This observation motivates us to define the following a posteriori error estimator:
\begin{gather}\label{eq:error_estimator_st}
\mathcal{E}_{st}^{2}:= \sum_{T\in\T}\mathcal{E}_{st,T}^2, \quad
\mathcal{E}_{st,T}^{2}:=h_T^2\|\bar{\mathbf{u}}_\T+\nu\Delta\bar{\mathbf{y}}_\T-(\bar{\mathbf{y}}_\T\cdot \nabla)\bar{\mathbf{y}}_\T-\nabla \bar{p}_\T\|_{\mathbf{L}^2(T)}^2 \\\nonumber
+ \|\text{div }\bar{\mathbf{y}}_\T\|_{L^2(T)}^2
+ h_T\|\llbracket (\nu\nabla \bar{\mathbf{y}}_\T-\bar{p}_\T\mathbb{I}_{d} )\cdot \mathbf{n}\rrbracket\|_{\mathbf{L}^2(\partial T \setminus \partial \Omega)}^2.
\end{gather}

We present the following global reliability result.

\begin{theorem}[global reliability of $\mathcal{E}_{st}$]\label{thm:navier_stokes_estimator}
Assume that \eqref{eq:smallness_assumption} holds. Let $(\hat{\mathbf{y}},\hat{p})\in \mathbf{H}_0^1(\Omega)\times L_0^2(\Omega)$ be the unique solution to \eqref{eq:hat_function_st}.
Let $(\bar{\mathbf{y}}_{\T},\bar{p}_\T)\in \mathbf{V}(\T)\times \mathcal{P}(\T)$ be the solution to \eqref{eq:discrete_state_equation} with $\mathbf{u}_{\T}$ replaced by $\bar{\mathbf{u}}_{\T}$.
Assume that the estimate
\begin{equation}\label{eq:assumption_y_y_T}
\|\nabla \bar{\mathbf{y}}_\T \|_{\mathbf{L}^2(\Omega)}< \nu \mathcal{C}_b^{-1}
\end{equation}
holds. Then, we have that
\begin{equation}\label{eq:estimate_state_hat_discrete}
\|\nabla (\hat{\mathbf{y}}-\bar{\mathbf{y}}_\T)\|_{\mathbf{L}^2(\Omega)}^2+\|\hat{p}-\bar{p}_\T\|_{L^2(\Omega)}^2 
\lesssim
\mathcal{E}_{st}^2,
\end{equation}
with a hidden constant that is independent of $(\hat{\mathbf{y}},\hat{p})$ and $(\bar{\mathbf{y}}_{\T},\bar{p}_{\T})$, the size of the elements in the mesh $\T$, and $\#\T$.
\end{theorem}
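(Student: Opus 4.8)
The plan is to run a residual argument for the linearization of the Navier--Stokes equations about the exact solution $(\hat{\mathbf{y}},\hat{p})$, controlling the nonlinearity through the smallness hypothesis \eqref{eq:assumption_y_y_T}. Write $\mathbf{e}:=\hat{\mathbf{y}}-\bar{\mathbf{y}}_\T$ and $\xi:=\hat{p}-\bar{p}_\T$, and define the residual $\mathcal{R}\in\mathbf{H}^{-1}(\Omega)$ by
\[
\langle\mathcal{R},\mathbf{v}\rangle:=(\bar{\mathbf{u}}_\T,\mathbf{v})_{\mathbf{L}^2(\Omega)}-\nu(\nabla\bar{\mathbf{y}}_\T,\nabla\mathbf{v})_{\mathbf{L}^2(\Omega)}-b(\bar{\mathbf{y}}_\T;\bar{\mathbf{y}}_\T,\mathbf{v})+(\bar{p}_\T,\text{div }\mathbf{v})_{L^2(\Omega)}.
\]
Subtracting \eqref{eq:discrete_state_equation} (with $\mathbf{u}_\T=\bar{\mathbf{u}}_\T$) from \eqref{eq:hat_function_st} and employing the decomposition $b(\hat{\mathbf{y}};\hat{\mathbf{y}},\mathbf{v})-b(\bar{\mathbf{y}}_\T;\bar{\mathbf{y}}_\T,\mathbf{v})=b(\hat{\mathbf{y}};\mathbf{e},\mathbf{v})+b(\mathbf{e};\bar{\mathbf{y}}_\T,\mathbf{v})$ yields the linearized error identity
\[
\nu(\nabla\mathbf{e},\nabla\mathbf{v})_{\mathbf{L}^2(\Omega)}+b(\hat{\mathbf{y}};\mathbf{e},\mathbf{v})+b(\mathbf{e};\bar{\mathbf{y}}_\T,\mathbf{v})-(\xi,\text{div }\mathbf{v})_{L^2(\Omega)}=\langle\mathcal{R},\mathbf{v}\rangle,
\]
valid for all $\mathbf{v}\in\mathbf{H}_0^1(\Omega)$, together with $\text{div }\mathbf{e}=-\text{div }\bar{\mathbf{y}}_\T$ a.e., the latter because $\hat{\mathbf{y}}\in\mathbf{V}(\Omega)$ is pointwise solenoidal. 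I stress the particular decomposition chosen above: placing the full error $\mathbf{e}$ in the second slot of $b(\hat{\mathbf{y}};\mathbf{e},\cdot)$ ensures that, upon later testing with $\mathbf{v}=\mathbf{e}$, the sign--indefinite cubic term vanishes by \eqref{eq:properties_trilinear}, since $\hat{\mathbf{y}}\in\mathbf{V}(\Omega)$.

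The second step bounds $\|\mathcal{R}\|_{\mathbf{H}^{-1}(\Omega)}$ by the estimator. Since \eqref{eq:discrete_state_equation} holds for every $\mathbf{v}_\T\in\mathbf{V}(\T)$, Galerkin orthogonality gives $\langle\mathcal{R},\mathbf{v}_\T\rangle=0$, whence $\langle\mathcal{R},\mathbf{v}\rangle=\langle\mathcal{R},\mathbf{v}-\mathbf{v}_\T\rangle$ with $\mathbf{v}_\T$ a quasi--interpolant (of Cl\'ement/Scott--Zhang type) preserving the homogeneous boundary condition. Integrating by parts element by element recasts $\langle\mathcal{R},\mathbf{v}-\mathbf{v}_\T\rangle$ as the interior residuals $\bar{\mathbf{u}}_\T+\nu\Delta\bar{\mathbf{y}}_\T-(\bar{\mathbf{y}}_\T\cdot\nabla)\bar{\mathbf{y}}_\T-\nabla\bar{p}_\T$ tested against $\mathbf{v}-\mathbf{v}_\T$ on each $T$, plus the face jumps $\llbracket(\nu\nabla\bar{\mathbf{y}}_\T-\bar{p}_\T\mathbb{I}_d)\cdot\mathbf{n}\rrbracket$ on the interior sides. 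The standard local estimates $\|\mathbf{v}-\mathbf{v}_\T\|_{\mathbf{L}^2(T)}\lesssim h_T\|\nabla\mathbf{v}\|_{\mathbf{L}^2(\mathcal{N}_T)}$ and $\|\mathbf{v}-\mathbf{v}_\T\|_{\mathbf{L}^2(S)}\lesssim h_T^{1/2}\|\nabla\mathbf{v}\|_{\mathbf{L}^2(\mathcal{N}_T)}$, together with Cauchy--Schwarz and the finite overlap of the stars $\mathcal{N}_T$, then produce $\|\mathcal{R}\|_{\mathbf{H}^{-1}(\Omega)}\lesssim\mathcal{E}_{st}$. Combined with $\|\text{div }\mathbf{e}\|_{L^2(\Omega)}=\|\text{div }\bar{\mathbf{y}}_\T\|_{L^2(\Omega)}\le\mathcal{E}_{st}$, this accounts for all three contributions in \eqref{eq:error_estimator_st}.

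The third step turns the residual bounds into control of $\|\nabla\mathbf{e}\|_{\mathbf{L}^2(\Omega)}$ and $\|\xi\|_{L^2(\Omega)}$. For the pressure I use \eqref{eq:inf_sup_cond}: the identity gives $(\xi,\text{div }\mathbf{v})=\nu(\nabla\mathbf{e},\nabla\mathbf{v})+b(\hat{\mathbf{y}};\mathbf{e},\mathbf{v})+b(\mathbf{e};\bar{\mathbf{y}}_\T,\mathbf{v})-\langle\mathcal{R},\mathbf{v}\rangle$, so bounding the convective terms with \eqref{eq:trilinear_embedding} and the gradient bounds \eqref{eq:stab_hat_y} and \eqref{eq:assumption_y_y_T} yields $\|\xi\|_{L^2(\Omega)}\lesssim\|\nabla\mathbf{e}\|_{\mathbf{L}^2(\Omega)}+\mathcal{E}_{st}$. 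For the velocity I test the identity with $\mathbf{v}=\mathbf{e}$: the cubic term $b(\hat{\mathbf{y}};\mathbf{e},\mathbf{e})$ vanishes by \eqref{eq:properties_trilinear}, the surviving convective term satisfies $|b(\mathbf{e};\bar{\mathbf{y}}_\T,\mathbf{e})|\le\mathcal{C}_b\|\nabla\bar{\mathbf{y}}_\T\|_{\mathbf{L}^2(\Omega)}\|\nabla\mathbf{e}\|_{\mathbf{L}^2(\Omega)}^2$ by \eqref{eq:trilinear_embedding}, and the coupling term equals $(\xi,\text{div }\mathbf{e})=-(\xi,\text{div }\bar{\mathbf{y}}_\T)$, bounded by $\|\xi\|_{L^2(\Omega)}\mathcal{E}_{st}$. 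This delivers
\[
\big(\nu-\mathcal{C}_b\|\nabla\bar{\mathbf{y}}_\T\|_{\mathbf{L}^2(\Omega)}\big)\|\nabla\mathbf{e}\|_{\mathbf{L}^2(\Omega)}^2\lesssim\mathcal{E}_{st}\|\nabla\mathbf{e}\|_{\mathbf{L}^2(\Omega)}+\|\xi\|_{L^2(\Omega)}\mathcal{E}_{st},
\]
whose leading coefficient is strictly positive precisely by assumption \eqref{eq:assumption_y_y_T}. Inserting the pressure bound and absorbing the cross terms with Young's inequality closes the estimate $\|\nabla\mathbf{e}\|_{\mathbf{L}^2(\Omega)}\lesssim\mathcal{E}_{st}$, after which $\|\xi\|_{L^2(\Omega)}\lesssim\mathcal{E}_{st}$, and adding squares gives \eqref{eq:estimate_state_hat_discrete}; the hidden constants depend only on $\nu$, $\mathcal{C}_b$, $\beta$, $\theta$, and shape regularity. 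The main obstacle is the nonlinear convective term: the whole argument rests on choosing the linearization so that the indefinite cubic contribution is eliminated through the divergence--free identity \eqref{eq:properties_trilinear}, and on hypothesis \eqref{eq:assumption_y_y_T}, which is exactly what renders the linearized operator coercive so that $b(\mathbf{e};\bar{\mathbf{y}}_\T,\mathbf{e})$ can be absorbed; by comparison, the residual--to--estimator bound is routine, though technically the most laborious, ingredient.
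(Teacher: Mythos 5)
Your proof is correct, and it reaches the estimate by a genuinely more self-contained route than the paper. Both arguments share the same two ingredients in the periphery: the linearization $b(\hat{\mathbf{y}};\hat{\mathbf{y}},\cdot)-b(\bar{\mathbf{y}}_\T;\bar{\mathbf{y}}_\T,\cdot)=b(\hat{\mathbf{y}};\mathbf{e},\cdot)+b(\mathbf{e};\bar{\mathbf{y}}_\T,\cdot)$, and the residual-to-estimator bound via Galerkin orthogonality, Cl\'ement interpolation, elementwise integration by parts, and finite overlap of stars (your Step 2 is the paper's bound for $\|\nabla\boldsymbol\varphi\|_{\mathbf{L}^2(\Omega)}$, and your divergence identity $\mathrm{div}\,\mathbf{e}=-\mathrm{div}\,\bar{\mathbf{y}}_\T$ is the paper's bound for $\|\psi\|_{L^2(\Omega)}$). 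Where you diverge is the key step ``error controlled by residual'': the paper introduces the Ritz projection $(\boldsymbol\varphi,\psi)$ of the residuals in \eqref{eq:ritz_projection_navier} --- whose right-hand side encodes exactly your linearization --- and then \emph{cites} \cite[Theorem 4]{MR1259620} (Verf\"urth's nonlinear a posteriori framework) to get the equivalence \eqref{eq:ineq_aposteriori_navier} under \eqref{eq:assumption_y_y_T}. You instead prove this step directly: testing the error equation with $\mathbf{v}=\mathbf{e}$, annihilating the indefinite cubic term $b(\hat{\mathbf{y}};\mathbf{e},\mathbf{e})$ via \eqref{eq:properties_trilinear} (legitimate since $\hat{\mathbf{y}}\in\mathbf{V}(\Omega)$), absorbing $b(\mathbf{e};\bar{\mathbf{y}}_\T,\mathbf{e})$ thanks to \eqref{eq:assumption_y_y_T}, converting the pressure coupling into $-(\xi,\mathrm{div}\,\bar{\mathbf{y}}_\T)$, recovering $\|\xi\|_{L^2(\Omega)}$ from the inf-sup condition \eqref{eq:inf_sup_cond}, and closing with Young's inequality. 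Your version buys transparency and self-containedness --- the role of \eqref{eq:assumption_y_y_T} is laid bare as positivity of $\nu-\mathcal{C}_b\|\nabla\bar{\mathbf{y}}_\T\|_{\mathbf{L}^2(\Omega)}$ --- at the price of a constant proportional to the reciprocal of that gap; the paper's route hides the same dependence inside the cited theorem. In either argument, a constant truly independent of $(\bar{\mathbf{y}}_\T,\bar{p}_\T)$, as claimed in the statement, really requires a uniform gap of the form \eqref{eq:assumption_y_T_small_2} rather than the bare strict inequality \eqref{eq:assumption_y_y_T}; this imprecision is in the theorem itself and is not a defect of your proof relative to the paper's.
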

\begin{proof} 
To perform a reliability analysis for the a posteriori error estimator \eqref{eq:error_estimator_st}, we introduce a Ritz projection $(\boldsymbol\varphi,\psi)$ of the residuals \cite{MR1445736}. The pair $(\boldsymbol\varphi,\psi)$ is defined as the solution to the following problem: Find $(\boldsymbol\varphi,\psi)\in\mathbf{H}_0^1(\Omega)\times L_0^2(\Omega)$ such that 
\begin{equation}
\begin{aligned}
(\nabla \boldsymbol \varphi,\nabla \mathbf{v})_{\mathbf{L}^2(\Omega)} 
& = 
\nu(\nabla\hat{\mathbf{e}}_{\mathbf{y}},\nabla\mathbf{v} )_{\mathbf{L}^2(\Omega)} \!-\! (\hat{e}_p,\text{div } \mathbf{v})_{L^2(\Omega)} \!+\! b(\hat{\mathbf{y}};\hat{\mathbf{e}}_{\mathbf{y}},\mathbf{v})\!
+\! b(\hat{\mathbf{e}}_{\mathbf{y}};\bar{\mathbf{y}}_{\T},\mathbf{v}),
\\
(\psi,q)_{L^2(\Omega)} & = (q,\text{div }\hat{\mathbf{e}}_\mathbf{y})_{L^2(\Omega)},
\end{aligned}\hspace{-0.2cm}
\label{eq:ritz_projection_navier}
\end{equation}
for all \EO{$(\mathbf{v},q) \in \mathbf{H}_0^1(\Omega) \times L_0^{2}(\Omega)$.} To shorten notation, we have introduced $(\hat{\mathbf{e}}_{\mathbf{y}},\hat{e}_p):=(\hat{\mathbf{y}}-\bar{\mathbf{y}}_\T,\hat{p}-\bar{p}_\T)$. The existence and uniqueness of the pair $(\boldsymbol\varphi,\psi)$ follows from \EO{applying} the Lax--Milgram Lemma. On the other hand, under assumption \eqref{eq:assumption_y_y_T}, similar arguments to the ones developed in \cite[Theorem 4]{MR1259620} (see also \cite[section 9.3]{MR1885308}) yield
\begin{equation}\label{eq:ineq_aposteriori_navier}
\|\nabla \hat{\mathbf{e}}_{\mathbf{y}}\|_{\mathbf{L}^2(\Omega)}^2+\|\hat{e}_p\|_{L^2(\Omega)}^2
\lesssim \|\nabla \boldsymbol\varphi\|_{\mathbf{L}^2(\Omega)}^2+\|\psi\|_{L^2(\Omega)}^2.
\end{equation}

The rest of the proof is dedicated to bound the terms on the right-hand side of \eqref{eq:ineq_aposteriori_navier}. Let $\mathbf{v} \in \mathbf{H}_0^1(\Omega)$ and set $q=0$ in \eqref{eq:ritz_projection_navier}. This, combined with \eqref{eq:hat_function_st}, yields
\[
(\nabla\boldsymbol\varphi,\nabla \mathbf{v})_{\mathbf{L}^2(\Omega)}=(\bar{\mathbf{u}}_\T,\mathbf{v})_{\mathbf{L}^2(\Omega)}-\nu(\nabla \bar{\mathbf{y}}_\T, \nabla \mathbf{v})_{\mathbf{L}^2(\Omega)}
-b(\bar{\mathbf{y}}_\T;\bar{\mathbf{y}}_\T,\mathbf{v})+(\bar{p}_\T,\text{div } \mathbf{v})_{L^2(\Omega)}.
\]
Denote by $\mathcal{I}_\T:\mathbf{L}^1(\Omega)\rightarrow \mathbf{V}(\T)$ the Cl\'ement interpolation operator \cite{MR2373954,MR0520174}. Invoke the previous relation, the discrete problem \eqref{eq:discrete_state_equation} with $\mathbf{v}_{\T} = \mathcal{I}_{\T} \mathbf{v}$, an elementwise integration by parts formula, standard approximation properties for $\mathcal{I}_\T$, and the finite overlapping property of stars, to conclude that
\begin{multline*}
(\nabla\boldsymbol\varphi,\nabla \mathbf{v})_{\mathbf{L}^2(\Omega)}
\lesssim 
\bigg(\sum_{T\in\T}h_T^2\|\bar{\mathbf{u}}_\T+\nu\Delta\bar{\mathbf{y}}_\T-(\bar{\mathbf{y}}_\T\cdot \nabla)\bar{\mathbf{y}}_\T-\nabla \bar{p}_\T\|_{\mathbf{L}^2(T)}^2\\
+h_T\|\llbracket (\nu\nabla \bar{\mathbf{y}}_\T-\bar{p}_\T\mathbb{I}_{d} )\cdot \mathbf{n}\rrbracket\|_{\mathbf{L}^2(\partial T\setminus\partial\Omega)}^2\bigg)^{\frac{1}{2}}\|\nabla\mathbf{v}\|_{\mathbf{L}^2(\Omega)}.
\end{multline*}
Set $\mathbf{v}=\boldsymbol\varphi$. This yields the estimate $\|\nabla\boldsymbol\varphi\|_{\mathbf{L}^2(\Omega)}\lesssim \mathcal{E}_{st}$.

Now, let $q \in L_0^2(\Omega)$ and set $\mathbf{v}=\boldsymbol 0$ in \eqref{eq:ritz_projection_navier}. The Cauchy--Schwarz inequality yields
\begin{equation*}
(\psi,q)_{L^2(\Omega)}\leq\left(\sum_{T\in\T}\|\text{div }\bar{\mathbf{y}}_\T\|_{L^2(T)}^2\right)^{\frac{1}{2}}\|q\|_{L^2(\Omega)}.
\end{equation*}
Consequently, $\|\psi\|_{L^2(\Omega)}\leq \mathcal{E}_{st}$. 

A collection of the previous estimates \FF{yields} $\|\nabla\boldsymbol\varphi\|_{\mathbf{L}^2(\Omega)}+\|\psi\|_{L^2(\Omega)} \lesssim \mathcal{E}_{st}$. We thus invoke  \eqref{eq:ineq_aposteriori_navier} to arrive at the desired estimate \eqref{eq:estimate_state_hat_discrete}. This concludes the proof.
\end{proof}


\subsection{A posteriori error analysis for the adjoint equations}\label{sec:a_posteriori_adjoint_equations}

In this section, we introduce an auxiliary problem related to the adjoint equations, devise an a posteriori error estimator for such a problem, and obtain a global reliability result. To the best of our knowledge, these results are not available in the literature.

Let $(\hat{\mathbf{z}},\hat{r}) \in \mathbf{H}_0^1(\Omega)\times L_0^2(\Omega)$ be the solution to
\begin{equation}
\label{eq:hat_function_ad}
\hspace{-0.3cm}
\begin{array}{rl}
\nu(\nabla \mathbf{w}, \nabla \hat{\mathbf{z}})_{\mathbf{L}^2(\Omega)}\!+\!b(\bar{\mathbf{y}}_\T;\mathbf{w},\hat{\mathbf{z}})\!+\!b(\mathbf{w};\bar{\mathbf{y}}_\T,\hat{\mathbf{z}})\!-\!(\hat{r},\text{div } \mathbf{w})_{L^2(\Omega)} \hspace{-0.35cm} &=\! (\bar{\mathbf{y}}_\T\!-\!\mathbf{y}_\Omega,\mathbf{w})_{\mathbf{L}^2(\Omega)}, \\
(s,\text{div } \hat{\mathbf{z}})_{L^2(\Omega)}  \hspace{-0.35cm} &=\!  0, 
\end{array}\hspace{-0.57cm}
\end{equation}
for all \EO{$(\mathbf{w},s) \in \mathbf{H}_0^1(\Omega) \times  L_0^2(\Omega)$}. Under assumption \eqref{eq:assumption_y_T_small_2} problem \eqref{eq:hat_function_ad} is well--posed. On the other hand, notice that $(\bar{\mathbf{z}}_\T,\bar{r}_\T)$, the solution to \eqref{eq:discrete_adjoint_equation}, can be seen as the finite element approximation, within the space $\mathbf{V}(\T)\times\mathcal{P}(\T)$, of $(\hat{\mathbf{z}},\hat{r})$. In view of this fact, we define, for $T\in\T$, the local error indicators
\begin{multline}\label{eq:local_error_indicator_ad}
\mathcal{E}_{ad,T}^2:=h_T^2\|\bar{\mathbf{y}}_\T-\mathbf{y}_\Omega+\nu\Delta\bar{\mathbf{z}}_\T-(\nabla \bar{\mathbf{y}}_\T)^{\intercal}\bar{\mathbf{z}}_\T+(\bar{\mathbf{y}}_\T\cdot \nabla)\bar{\mathbf{z}}_\T-\nabla \bar{r}_\T\|_{\mathbf{L}^2(T)}^2\\
+h_T\|\llbracket (\nu\nabla \bar{\mathbf{z}}_\T-\bar{r}_\T\mathbb{I}_{d} )\cdot \mathbf{n}\rrbracket\|_{\mathbf{L}^2(\partial T \setminus \partial \Omega)}^2+\|\text{div }\bar{\mathbf{z}}_\T\|_{L^2(T)}^2,
\end{multline}
and the a posteriori error estimator
\begin{equation}\label{eq:error_estimator_adjoint_eq}
\mathcal{E}_{ad}^2:=\sum_{T\in\T}\mathcal{E}_{ad,T}^2.
\end{equation}

The following result yields an upper bound for the error $\|\nabla(\hat{\mathbf{z}}-\bar{\mathbf{z}}_\T)\|_{\mathbf{L}^2(\Omega)} + \|\hat{r}-\bar{r}_\T\|_{L^2(\Omega)}$ in terms of the computable quantity $\mathcal{E}_{ad}$.

\begin{theorem}[global reliability of $\mathcal{E}_{ad}$]
\label{thm:estimate_aux_variables}
Let $(\hat{\mathbf{y}},\hat{p})$ and $(\bar{\mathbf{y}}_{\T},\bar{p}_{\T})$ be as in the statement of Theorem \ref{thm:navier_stokes_estimator}. Let $(\hat{\mathbf{z}},\hat{r})\in \mathbf{H}_0^1(\Omega)\times L_0^2(\Omega)$ and $(\bar{\mathbf{z}}_{\T},\bar{r}_\T)\in \mathbf{V}(\T)\times \mathcal{P}(\T)$ be the solutions to \eqref{eq:hat_function_ad} and \eqref{eq:discrete_adjoint_equation}, respectively.
Assume that the estimate \eqref{eq:assumption_y_T_small_2} holds. Then, we have \EO{the a posteriori error estimate}
\begin{equation}\label{eq:adjoint_hat_estimate}
\|\nabla(\hat{\mathbf{z}}-\bar{\mathbf{z}}_\T)\|_{\mathbf{L}^2(\Omega)}^2+\|\hat{r}-\bar{r}_\T\|_{L^2(\Omega)}^2\lesssim \mathcal{E}_{ad}^2,
\end{equation}
with a hidden constant that is independent of $(\hat{\mathbf{z}},\hat{r})$, $(\bar{\mathbf{z}}_{\T},\bar{r}_{\T})$, the size of the elements in the mesh $\T$, and $\#\T$.
\end{theorem}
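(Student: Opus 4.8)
The plan is to mirror the proof of Theorem~\ref{thm:navier_stokes_estimator}, exploiting that problem \eqref{eq:hat_function_ad} is \emph{linear} in $(\hat{\mathbf{z}},\hat{r})$. Writing $(\hat{\mathbf{e}}_{\mathbf{z}},\hat{e}_r):=(\hat{\mathbf{z}}-\bar{\mathbf{z}}_\T,\hat{r}-\bar{r}_\T)$, I would introduce the Ritz projection $(\boldsymbol\varphi,\psi)\in\mathbf{H}_0^1(\Omega)\times L_0^2(\Omega)$ of the residuals, defined by
\begin{equation*}
(\nabla\boldsymbol\varphi,\nabla\mathbf{w})_{\mathbf{L}^2(\Omega)}=\nu(\nabla\mathbf{w},\nabla\hat{\mathbf{e}}_{\mathbf{z}})_{\mathbf{L}^2(\Omega)}+b(\bar{\mathbf{y}}_\T;\mathbf{w},\hat{\mathbf{e}}_{\mathbf{z}})+b(\mathbf{w};\bar{\mathbf{y}}_\T,\hat{\mathbf{e}}_{\mathbf{z}})-(\hat{e}_r,\text{div }\mathbf{w})_{L^2(\Omega)},
\end{equation*}
together with $(\psi,q)_{L^2(\Omega)}=(q,\text{div }\hat{\mathbf{e}}_{\mathbf{z}})_{L^2(\Omega)}$, for all $(\mathbf{w},q)\in\mathbf{H}_0^1(\Omega)\times L_0^2(\Omega)$; the Lax--Milgram lemma makes $(\boldsymbol\varphi,\psi)$ well defined. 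The goal is then to establish, in two stages, that $\|\nabla\hat{\mathbf{e}}_{\mathbf{z}}\|_{\mathbf{L}^2(\Omega)}^2+\|\hat{e}_r\|_{L^2(\Omega)}^2\lesssim\|\nabla\boldsymbol\varphi\|_{\mathbf{L}^2(\Omega)}^2+\|\psi\|_{L^2(\Omega)}^2\lesssim\mathcal{E}_{ad}^2$.

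The first stage --- the analogue of \eqref{eq:ineq_aposteriori_navier} --- is in fact more direct here than in the Navier--Stokes case precisely because the adjoint problem is linear. Testing the first identity of the Ritz system with $\mathbf{w}=\hat{\mathbf{e}}_{\mathbf{z}}$ and the second with $q=\hat{e}_r$ and combining them yields $\mathcal{C}(\hat{\mathbf{e}}_{\mathbf{z}},\hat{\mathbf{e}}_{\mathbf{z}})=(\nabla\boldsymbol\varphi,\nabla\hat{\mathbf{e}}_{\mathbf{z}})_{\mathbf{L}^2(\Omega)}+(\psi,\hat{e}_r)_{L^2(\Omega)}$, where $\mathcal{C}$ is the form in \eqref{eq:bilinear_form_B_hat}. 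Its coercivity on $\mathbf{H}_0^1(\Omega)$, guaranteed by \eqref{eq:assumption_y_T_small_2} --- the factor $2$ therein being exactly what absorbs \emph{both} trilinear terms $b(\bar{\mathbf{y}}_\T;\hat{\mathbf{e}}_{\mathbf{z}},\hat{\mathbf{e}}_{\mathbf{z}})$ and $b(\hat{\mathbf{e}}_{\mathbf{z}};\bar{\mathbf{y}}_\T,\hat{\mathbf{e}}_{\mathbf{z}})$, neither of which vanishes since $\bar{\mathbf{y}}_\T$ is only discretely divergence free --- then controls $\|\nabla\hat{\mathbf{e}}_{\mathbf{z}}\|_{\mathbf{L}^2(\Omega)}$. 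The pressure error $\|\hat{e}_r\|_{L^2(\Omega)}$ is recovered separately from the inf--sup condition \eqref{eq:inf_sup_cond}: the first identity expresses $(\hat{e}_r,\text{div }\mathbf{w})$ in terms of $\|\nabla\hat{\mathbf{e}}_{\mathbf{z}}\|_{\mathbf{L}^2(\Omega)}$ and $\|\nabla\boldsymbol\varphi\|_{\mathbf{L}^2(\Omega)}$ (using the continuity \eqref{eq:trilinear_embedding} of $b$), and a Young-type argument closes the first--stage bound.

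The second stage bounds the two residual quantities. For $\|\nabla\boldsymbol\varphi\|_{\mathbf{L}^2(\Omega)}$ I would set $q=0$ and subtract \eqref{eq:hat_function_ad}, collapsing the right--hand side to the residual of the discrete adjoint equation \eqref{eq:discrete_adjoint_equation}; invoking Galerkin orthogonality against $\mathbf{w}_\T=\mathcal{I}_\T\mathbf{w}$ (the Cl\'ement interpolant), an elementwise integration by parts, the approximation properties of $\mathcal{I}_\T$, and the finite overlapping of stars, and finally setting $\mathbf{w}=\boldsymbol\varphi$, reproduces the volume and jump contributions of \eqref{eq:local_error_indicator_ad}. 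For $\|\psi\|_{L^2(\Omega)}$ I would set $\mathbf{w}=\boldsymbol 0$ and use that $\hat{\mathbf{z}}$ is divergence free, so that $(\psi,q)=-(q,\text{div }\bar{\mathbf{z}}_\T)$; choosing $q=\psi$ then gives $\|\psi\|_{L^2(\Omega)}\le(\sum_{T}\|\text{div }\bar{\mathbf{z}}_\T\|_{L^2(T)}^2)^{1/2}\le\mathcal{E}_{ad}$, the divergence term of \eqref{eq:local_error_indicator_ad}. Collecting both bounds with the first stage delivers \eqref{eq:adjoint_hat_estimate}.

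The main obstacle I anticipate lies in the elementwise integration by parts of the adjoint convective term. Unlike the state equation, where $b(\bar{\mathbf{y}}_\T;\bar{\mathbf{y}}_\T,\mathbf{v})$ already carries the derivative on $\bar{\mathbf{y}}_\T$ and needs no integration by parts, here $b(\bar{\mathbf{y}}_\T;\mathbf{w}',\bar{\mathbf{z}}_\T)=((\bar{\mathbf{y}}_\T\cdot\nabla)\mathbf{w}',\bar{\mathbf{z}}_\T)$, with $\mathbf{w}'=\mathbf{w}-\mathcal{I}_\T\mathbf{w}$, carries the derivative on the test function, so transferring it onto $\bar{\mathbf{z}}_\T$ produces the expected residual $(\bar{\mathbf{y}}_\T\cdot\nabla)\bar{\mathbf{z}}_\T$ of \eqref{eq:local_error_indicator_ad} \emph{together with} a contribution $(\text{div }\bar{\mathbf{y}}_\T)\bar{\mathbf{z}}_\T$ that is absent from the estimator (the interelement boundary terms of this trilinear expression cancel globally by continuity of $\bar{\mathbf{y}}_\T$, $\bar{\mathbf{z}}_\T$ and $\mathbf{w}'$, and hence contribute no jump). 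The remaining term $b(\mathbf{w}';\bar{\mathbf{y}}_\T,\bar{\mathbf{z}}_\T)=(\mathbf{w}',(\nabla\bar{\mathbf{y}}_\T)^\intercal\bar{\mathbf{z}}_\T)$ needs no integration by parts and matches the $-(\nabla\bar{\mathbf{y}}_\T)^\intercal\bar{\mathbf{z}}_\T$ residual directly. The divergence correction is the delicate point: one controls it by exploiting that $\bar{\mathbf{y}}_\T$ is discretely divergence free, so that, via the Cl\'ement estimate $\|\mathbf{w}'\|_{\mathbf{L}^2(T)}\lesssim h_T\|\nabla\mathbf{w}\|_{\mathbf{L}^2(\mathcal{N}_T)}$, its contribution carries an extra power of $h_T$ relative to $\|\text{div }\bar{\mathbf{y}}_\T\|_{L^2(\Omega)}$ and is therefore of higher order; absorbing it so that the final right--hand side is solely $\mathcal{E}_{ad}$ is where the argument must be handled with care.
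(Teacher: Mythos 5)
Your proposal follows the paper's proof almost step for step: the paper introduces the same Ritz projection of the residuals (denoted $(\boldsymbol\eta,\omega)$ there, see \eqref{eq:ritz_projection}), derives the first--stage bound from the coercivity of $\mathcal{C}(\cdot,\cdot)$ under \eqref{eq:assumption_y_T_small_2} together with Brezzi's saddle--point theory, and obtains the second--stage bound exactly as you do, via Galerkin orthogonality with the Cl\'ement interpolant, elementwise integration by parts, approximation properties, and the finite overlap of stars, finishing with $\mathbf{w}=\boldsymbol\eta$ and $\mathbf{w}=\boldsymbol 0$ respectively. Your hands-on first stage (energy argument for the velocity error, inf--sup condition \eqref{eq:inf_sup_cond} for the pressure error, Young's inequality to close) is simply an unpacking of the abstract stability estimate \eqref{eq:errors_estimators} that the paper cites from Brezzi; the two are equivalent. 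Your observation that the factor $2$ in \eqref{eq:assumption_y_T_small_2} is needed because \emph{neither} trilinear term vanishes for the merely discretely divergence-free field $\bar{\mathbf{y}}_\T$ is also correct.

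The one place where you and the paper part ways is the divergence correction term, and there your diagnosis is sharper than the paper's proof: transferring the derivative in $b(\bar{\mathbf{y}}_\T;\mathbf{w}',\bar{\mathbf{z}}_\T)$ onto $\bar{\mathbf{z}}_\T$ does produce the extra volume term $((\operatorname{div}\bar{\mathbf{y}}_\T)\,\bar{\mathbf{z}}_\T,\mathbf{w}')_{\mathbf{L}^2(\Omega)}$ (the interelement boundary contributions cancel by continuity, as you say), this term appears nowhere in \eqref{eq:local_error_indicator_ad}, and the paper's proof simply never mentions it. However, your proposed resolution --- that the extra power of $h_T$ from the Cl\'ement estimate lets you absorb it so that the right--hand side is \emph{solely} $\mathcal{E}_{ad}$ --- does not work as stated. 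What the Cl\'ement estimate gives is a bound of the form
\begin{equation*}
\bigl|((\operatorname{div}\bar{\mathbf{y}}_\T)\,\bar{\mathbf{z}}_\T,\mathbf{w}')_{\mathbf{L}^2(\Omega)}\bigr|
\lesssim
C(\bar{\mathbf{z}}_\T)\,
\Bigl(\sum_{T\in\T} h_T^{2\gamma}\|\operatorname{div}\bar{\mathbf{y}}_\T\|^2_{L^2(T)}\Bigr)^{\frac12}
\|\nabla\mathbf{w}\|_{\mathbf{L}^2(\Omega)}, \qquad \gamma>0,
\end{equation*}
and $\|\operatorname{div}\bar{\mathbf{y}}_\T\|_{L^2(T)}$ is a \emph{state} residual quantity: it is a contribution to $\mathcal{E}_{st,T}$ in \eqref{eq:error_estimator_st}, not to $\mathcal{E}_{ad,T}$, and nothing makes it small relative to $\mathcal{E}_{ad}$. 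The honest outcome of the argument is therefore a bound of the type $\|\nabla\hat{\mathbf{e}}_{\mathbf{z}}\|_{\mathbf{L}^2(\Omega)}^2+\|\hat{e}_r\|_{L^2(\Omega)}^2\lesssim \mathcal{E}_{ad}^2 + h_{\T}^{2\gamma}\mathcal{E}_{st}^2$, i.e.\ $\mathcal{E}_{ad}$ plus a higher-order multiple of the divergence part of the state estimator. This is weaker than \eqref{eq:adjoint_hat_estimate} as literally stated, but it is entirely sufficient for every use made of the theorem downstream, since the global estimator \eqref{def:error_estimator_ocp} contains $\mathcal{E}_{st}$ anyway, so Theorem \ref{thm:control_bound} is unaffected. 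To repair the statement itself one should either carry this explicit extra term, or enlarge the indicator \eqref{eq:local_error_indicator_ad} by $h_T\|(\operatorname{div}\bar{\mathbf{y}}_\T)\,\bar{\mathbf{z}}_\T\|_{\mathbf{L}^2(T)}$; claiming absorption into $\mathcal{E}_{ad}$ alone is not justified, by you or by the paper.
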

\begin{proof} 
We proceed on the basis of four steps.

\emph{Step 1.} To simplify the presentation of the material, we define the pair $(\hat{\mathbf{e}}_\mathbf{z},\hat{e}_r):=(\hat{\mathbf{z}}-\bar{\mathbf{z}}_\T,\hat{r}-\bar{r}_\T)$. \EO{Define} the Ritz projection $(\boldsymbol\eta,\omega)$ of the residuals associated to the discretization \eqref{eq:discrete_adjoint_equation} of \eqref{eq:hat_function_ad} as the solution to the following problem: Find $(\boldsymbol\eta,\omega)\in\mathbf{H}_0^1(\Omega)\times L_0^2(\Omega)$ such that
\begin{equation}
\label{eq:ritz_projection}
\begin{aligned}
(\nabla \boldsymbol \eta,\nabla \mathbf{w})_{\mathbf{L}^2(\Omega)} & =
\mathcal{C}(\mathbf{w},\hat{\mathbf{e}}_{\mathbf{z}})-(\hat{e}_r,\text{div } \mathbf{w})_{L^2(\Omega)} \quad &\forall \mathbf{w}\in\mathbf{H}_0^1(\Omega), \\
(\omega,s)_{L^2(\Omega)} & = (s,\text{div }\hat{\mathbf{e}}_\mathbf{z})_{L^2(\Omega)}\quad &\forall s\in L_0^2(\Omega),
\end{aligned}
\end{equation} 
where $\mathcal{C}\FF{(\cdot,\cdot)}$ is defined as in \eqref{eq:bilinear_form_B_hat}. The Lax--Milgram Lemma immediately yields the existence and uniqueness of $(\boldsymbol\eta,\omega)\in\mathbf{H}_0^1(\Omega)\times L_0^2(\Omega)$ solving \eqref{eq:ritz_projection}.

The rest of the proof is dedicated to obtain the estimates
\begin{equation}
\label{eq:ineq_aposteriori_ad}
\|\nabla \hat{\mathbf{e}}_\mathbf{z}\|_{\mathbf{L}^2(\Omega)}^2+\|\hat{e}_r\|_{L^2(\Omega)}^2
\lesssim \|\nabla \boldsymbol\eta\|_{\mathbf{L}^2(\Omega)}^2+\|\omega\|_{L^2(\Omega)}^2
\lesssim \mathcal{E}_{ad}^2.
\end{equation}

\emph{Step 2.} The goal of this step is to prove the first estimate in \eqref{eq:ineq_aposteriori_ad}. To accomplish this task, we first observe that the pair $(\hat{\mathbf{e}}_{\mathbf{z}},\hat{e}_r)$ satisfies the identities
\begin{equation*}
\begin{aligned}
\mathcal{C}(\mathbf{w},\hat{\mathbf{e}}_{\mathbf{z}})-(\hat{e}_r,\text{div } \mathbf{w})_{L^2(\Omega)}
&=  
(\nabla \boldsymbol\eta, \nabla \mathbf{w})_{\mathbf{L}^2(\Omega)} \quad & \forall  \mathbf{w}\in\mathbf{H}_0^1(\Omega), \\
(s,\text{div }\hat{\mathbf{e}}_{\mathbf{z}})_{L^2(\Omega)} &=  (\omega,s)_{L^2(\Omega)} \quad & \forall s\in L_0^2(\Omega).
\end{aligned}
\end{equation*}
In view of the fact that $\bar{\mathbf{y}}_\T\in\mathbf{V}(\T)$ satisfies assumption \eqref{eq:assumption_y_T_small_2}, \EO{similar arguments to the ones that lead to} \eqref{eq:coercivity_bilinear_form_B} yield that $\mathcal{C}\FF{(\cdot,\cdot)}$ is coercive in $\mathbf{H}_0^1(\Omega) \times \mathbf{H}_0^1(\Omega)$. \EO{Apply} the inf--sup theory for saddle point problems given by Brezzi in \cite{MR365287} to conclude the stability estimate
\begin{equation}
\label{eq:errors_estimators}
\|\nabla \hat{\mathbf{e}}_\mathbf{z}\|_{\mathbf{L}^2(\Omega)}^2+\|\hat{e}_r\|_{L^2(\Omega)}^2
\lesssim
\|\nabla\boldsymbol\eta\|_{\mathbf{L}^2(\Omega)}^2+\|\omega\|_{L^2(\Omega)}^2,
\end{equation}
with a hidden constant that depends on $\nu$.

\emph{Step 3.}  In this step we obtain the second estimate in \eqref{eq:ineq_aposteriori_ad}. To accomplish this task, we invoke problems \eqref{eq:ritz_projection} and \eqref{eq:hat_function_ad} to arrive at
\begin{equation}
\label{eq:residual_eq_I}
\begin{aligned}
(\nabla\boldsymbol\eta,\nabla \mathbf{w})_{\mathbf{L}^2(\Omega)} & =(\bar{\mathbf{y}}_\T-\mathbf{y}_\Omega,\mathbf{w})_{\mathbf{L}^2(\Omega)}-\nu(\nabla \mathbf{w}, \nabla \bar{\mathbf{z}}_\T)_{\mathbf{L}^2(\Omega)}
-b(\bar{\mathbf{y}}_\T;\mathbf{w},\bar{\mathbf{z}}_\T)
\\
& \quad -b(\mathbf{w};\bar{\mathbf{y}}_\T,\bar{\mathbf{z}}_\T)
+(\bar{r}_\T,\text{div } \mathbf{w})_{L^2(\Omega)},
\\
(\omega,s)_{L^2(\Omega)} &= (s,\text{div } \hat{\mathbf{e}}_{\mathbf{z}})_{L^2(\Omega)},
\end{aligned}
\end{equation}
for all \EO{$(\mathbf{w},s) \in\mathbf{H}_0^1(\Omega) \times L_0^2(\Omega)$.} Let $\mathbf{w} \in \mathbf{H}_0^1(\Omega)$ and set $s=0$ in \eqref{eq:residual_eq_I}. Invoke the discrete problem \eqref{eq:discrete_adjoint_equation} with $\mathbf{w}_\T = \mathcal{I}_\T\mathbf{w}$, an elementwise integration by parts formula, standard approximation properties for $\mathcal{I}_\T$, and the
finite overlapping property of stars, to conclude that
\begin{multline*}
(\nabla\boldsymbol\eta,\nabla \mathbf{w})_{\mathbf{L}^2(\Omega)}
\!\lesssim \!
\bigg(\!\sum_{T\in\T}h_T^2\|\bar{\mathbf{y}}_\T-\mathbf{y}_\Omega+\nu\Delta\bar{\mathbf{z}}_\T-(\nabla \bar{\mathbf{y}}_\T)^{\intercal}\bar{\mathbf{z}}_\T +(\bar{\mathbf{y}}_\T\cdot \nabla)\bar{\mathbf{z}}_\T-\nabla \bar{r}_\T\|_{L^2(T)}^2\\
+h_T\|\llbracket (\nu\nabla \bar{\mathbf{z}}_\T-\bar{r}_\T\mathbb{I}_{d} )\cdot \mathbf{n}\rrbracket\|_{L^2(\partial T\setminus\partial\Omega)}^2\bigg)^{\frac{1}{2}}\|\nabla\mathbf{w}\|_{\mathbf{L}^2(\Omega)}.
\end{multline*}
Set $\mathbf{w}=\boldsymbol\eta$. This yields the estimate $\|\nabla\boldsymbol\eta\|_{\mathbf{L}^2(\Omega)}\lesssim \mathcal{E}_{ad}$.

Now, let $s\in L_0^2(\Omega)$ and set $\mathbf{w}=\boldsymbol 0$ in \eqref{eq:residual_eq_I}. The Cauchy--Schwarz inequality, in view of the fact that $\hat{\mathbf{z}}\in \mathbf{V}(\Omega)$, yields
\begin{equation*}
(\omega,s)_{L^2(\Omega)}\leq\left(\sum_{T\in\T}\|\text{div }\bar{\mathbf{z}}_\T\|_{L^2(T)}^2\right)^{\frac{1}{2}}\|s\|_{L^2(\Omega)},
\end{equation*}
which implies that $\|\omega\|_{L^2(\Omega)}\leq \mathcal{E}_{ad}$.

A collection of the previous estimates \FF{yields}
$\|\nabla\boldsymbol\eta\|_{\mathbf{L}^2(\Omega)} +\|\omega\|_{L^2(\Omega)} \lesssim \mathcal{E}_{ad}$.

\emph{Step 4.} Apply \eqref{eq:errors_estimators} and the bounds obtained in step 3 for $\|\nabla\boldsymbol\eta\|_{\mathbf{L}^2(\Omega)}$ and $\|\omega\|_{L^2(\Omega)}$ to arrive at the desired estimate \eqref{eq:adjoint_hat_estimate}.

\end{proof}


\subsection{\EO{Reliability analysis: the fully discrete scheme}}
\label{sec:a_posteriori_global}

In this section, we design an a posteriori error estimator for \EO{the fully discrete scheme} and provide a reliability analysis. The error estimator \EO{is} decomposed as the sum of three contributions: two contributions related to the discretization of the state and adjoint equations, $\mathcal{E}_{st}$ and $\mathcal{E}_{ad}$, respectively (which have been already introduced in sections \ref{sec:a_posteriori_navier_stokes} and \ref{sec:a_posteriori_adjoint_equations}) and a contribution associated to the discretization of the optimal control variable.
\EO{To present the latter, we define the auxiliary variable}
\begin{equation}\label{def:tilde_u}
\tilde{\mathbf{u}}:=\Pi_{[\mathbf{a},\mathbf{b}]}\left(-\alpha^{-1}\bar{\mathbf{z}}_\T\right).
\end{equation}
\EO{We immediately comment that, in what follows, we assume that \eqref{eq:assumption_y_T_small_2} holds. Consequently, there exists a unique pair $(\FF{\bar{\mathbf{z}}_\T,\bar{r}_{\T}})$ solving \eqref{eq:discrete_adjoint_equation}. This implies that $\tilde{\mathbf{u}}$ is uniquely determined.} A key property in favor of the definition of $\tilde{\mathbf{u}} \in\mathbb{U}_{ad}$ is that $\tilde{\mathbf{u}}$ satisfies the inequality
\begin{equation}\label{eq:var_ineq_tilde_u}
(\bar{\mathbf{z}}_\T+\alpha\tilde{\mathbf{u}}, \mathbf{u}-\tilde{\mathbf{u}})_{\mathbf{L}^2(\Omega)}\geq 0 \quad \forall \mathbf{u}\in \mathbb{U}_{ad}.
\end{equation}
\EO{We refer the reader to \cite[Lemma 2.26]{Troltzsch} for a proof of this result.}

With the variable $\tilde{\mathbf{u}}$ at hand, we define the following error estimator and local error indicators associated to the discretization of the optimal control variable:
\begin{equation}\label{eq:error_estimator_control_var}
\mathcal{E}_{ct}^2:=\sum_{T\in\T}\mathcal{E}_{ct,T}^2,\qquad \mathcal{E}_{ct,T}:=\|\tilde{\mathbf{u}}-\bar{\mathbf{u}}_\T\|_{\mathbf{L}^2(T)}.
\end{equation}

The next result is instrumental for our a posteriori error analysis.

\begin{theorem}[auxiliary control estimate]\label{thm:error_bound_control_tilde}
Assume that the smallness assumption \eqref{eq:smallness_assumption} holds. Let $(\bar{\mathbf{y}},\bar{p},\bar{\mathbf{z}},\bar{r},\bar{\mathbf{u}}) \in \mathbf{H}_0^1(\Omega)\times L_0^2(\Omega) \times \mathbf{H}_0^1(\Omega)\times L_0^2(\Omega) \times \mathbb{U}_{ad}$ be a local solution of \eqref{eq:min_weak_setting}--\eqref{eq:weak_st_equation} that satisfies the sufficient second order optimality condition \eqref{eq:equivalent_second_I}, or equivalently \eqref{eq:equivalent_second_II}. Let $\mathsf{M} \EO{> 0}$ be such that 
$\max \{ \| \bar{\mathbf{u}}+\theta_{\T}(\tilde{\mathbf{u}}-\bar{\mathbf{u}})\|_{\mathbf{L}^{\infty}(\Omega)}, \|\tilde{\mathbf{u}} - \bar{\mathbf{u}} \|_{\mathbf{L}^{\infty}(\Omega)}\} \leq \mathsf{M}$ with $\theta_{\T} \in (0,1)$. \EO{If $\T$ is a mesh such that
\begin{equation}\label{eq:assumption_mesh}
\tilde{\mathbf{u}}-\bar{\mathbf{u}}\in \mathbf{C}_{\bar{\mathbf{u}}}^\tau, \qquad 
\|\bar{\mathbf{z}}-\bar{\mathbf{z}}_\T\|_{\mathbf{L}^2(\Omega)} \leq \alpha\mu(2C_{\mathsf{M}})^{-1},
\end{equation}
then}
\begin{equation}\label{eq:inequality_control_tilde}
\frac{\mu}{2}\|\bar{\mathbf{u}}-\tilde{\mathbf{u}}\|_{\mathbf{L}^2(\Omega)}^2
\leq
(j'(\tilde{\mathbf{u}})-j'(\bar{\mathbf{u}}))(\tilde{\mathbf{u}}-\bar{\mathbf{u}}).
\end{equation}
The constant $C_{\mathsf{M}}$ is given by \eqref{eq:estimate_second_der} while the auxiliary variable $\tilde{\mathbf{u}}$ is defined in \eqref{def:tilde_u}.
\end{theorem}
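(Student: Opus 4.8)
The plan is to exploit the second order sufficient optimality condition \eqref{eq:equivalent_second_II} by evaluating the quadratic form $j''$ along the direction $\tilde{\mathbf{u}}-\bar{\mathbf{u}}$, which, by the first hypothesis in \eqref{eq:assumption_mesh}, lies in the cone $\mathbf{C}_{\bar{\mathbf{u}}}^\tau$. First I would use the mean value theorem to write
\begin{equation*}
(j'(\tilde{\mathbf{u}})-j'(\bar{\mathbf{u}}))(\tilde{\mathbf{u}}-\bar{\mathbf{u}})=j''(\bar{\mathbf{u}}+\theta_\T(\tilde{\mathbf{u}}-\bar{\mathbf{u}}))(\tilde{\mathbf{u}}-\bar{\mathbf{u}})^2
\end{equation*}
for some $\theta_\T\in(0,1)$. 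The idea is then to transfer this quantity from the intermediate point $\bar{\mathbf{u}}+\theta_\T(\tilde{\mathbf{u}}-\bar{\mathbf{u}})$ back to $\bar{\mathbf{u}}$, so that the coercivity estimate \eqref{eq:equivalent_second_II} can be applied with $\mathbf{v}=\tilde{\mathbf{u}}-\bar{\mathbf{u}}$.

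The key step is the splitting
\begin{equation*}
j''(\bar{\mathbf{u}}+\theta_\T(\tilde{\mathbf{u}}-\bar{\mathbf{u}}))(\tilde{\mathbf{u}}-\bar{\mathbf{u}})^2
=j''(\bar{\mathbf{u}})(\tilde{\mathbf{u}}-\bar{\mathbf{u}})^2
+\big[j''(\bar{\mathbf{u}}+\theta_\T(\tilde{\mathbf{u}}-\bar{\mathbf{u}}))-j''(\bar{\mathbf{u}})\big](\tilde{\mathbf{u}}-\bar{\mathbf{u}})^2.
\end{equation*}
The first term is bounded below by $\mu\|\tilde{\mathbf{u}}-\bar{\mathbf{u}}\|_{\mathbf{L}^2(\Omega)}^2$ using \eqref{eq:equivalent_second_II}, since $\tilde{\mathbf{u}}-\bar{\mathbf{u}}\in\mathbf{C}_{\bar{\mathbf{u}}}^\tau$. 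For the second (perturbation) term I would invoke Lemma \ref{lemma:property_of_j} with the identification $\mathbf{u}=\bar{\mathbf{u}}$, $\mathbf{h}=\theta_\T(\tilde{\mathbf{u}}-\bar{\mathbf{u}})$, and $\mathbf{v}=\tilde{\mathbf{u}}-\bar{\mathbf{u}}$; the $\mathbf{L}^\infty$-bound required by that lemma is exactly what the hypothesis $\max\{\|\bar{\mathbf{u}}+\theta_\T(\tilde{\mathbf{u}}-\bar{\mathbf{u}})\|_{\mathbf{L}^\infty(\Omega)},\|\tilde{\mathbf{u}}-\bar{\mathbf{u}}\|_{\mathbf{L}^\infty(\Omega)}\}\leq\mathsf{M}$ provides. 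This yields
\begin{equation*}
\big|\big[j''(\bar{\mathbf{u}}+\theta_\T(\tilde{\mathbf{u}}-\bar{\mathbf{u}}))-j''(\bar{\mathbf{u}})\big](\tilde{\mathbf{u}}-\bar{\mathbf{u}})^2\big|
\leq C_{\mathsf{M}}\,\theta_\T\|\tilde{\mathbf{u}}-\bar{\mathbf{u}}\|_{\mathbf{L}^2(\Omega)}\,\|\tilde{\mathbf{u}}-\bar{\mathbf{u}}\|_{\mathbf{L}^2(\Omega)}^2.
\end{equation*}
Combining the lower and upper bounds gives
\begin{equation*}
(j'(\tilde{\mathbf{u}})-j'(\bar{\mathbf{u}}))(\tilde{\mathbf{u}}-\bar{\mathbf{u}})
\geq\big(\mu-C_{\mathsf{M}}\|\tilde{\mathbf{u}}-\bar{\mathbf{u}}\|_{\mathbf{L}^2(\Omega)}\big)\|\tilde{\mathbf{u}}-\bar{\mathbf{u}}\|_{\mathbf{L}^2(\Omega)}^2,
\end{equation*}
where I have dropped the harmless factor $\theta_\T<1$.

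The remaining task, and the one I expect to be the crux, is to absorb the perturbation into the coercivity constant, i.e. to show that $C_{\mathsf{M}}\|\tilde{\mathbf{u}}-\bar{\mathbf{u}}\|_{\mathbf{L}^2(\Omega)}\leq\mu/2$, which would force the bracket above to exceed $\mu/2$ and deliver \eqref{eq:inequality_control_tilde}. Here I would use the projection characterizations: $\bar{\mathbf{u}}=\Pi_{[\mathbf{a},\mathbf{b}]}(-\alpha^{-1}\bar{\mathbf{z}})$ from \eqref{eq:projection_formula} and $\tilde{\mathbf{u}}=\Pi_{[\mathbf{a},\mathbf{b}]}(-\alpha^{-1}\bar{\mathbf{z}}_\T)$ from \eqref{def:tilde_u}. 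Since $\Pi_{[\mathbf{a},\mathbf{b}]}$ is a pointwise projection onto a convex box, it is Lipschitz with constant one in $\mathbf{L}^2(\Omega)$, whence
\begin{equation*}
\|\tilde{\mathbf{u}}-\bar{\mathbf{u}}\|_{\mathbf{L}^2(\Omega)}
\leq\alpha^{-1}\|\bar{\mathbf{z}}-\bar{\mathbf{z}}_\T\|_{\mathbf{L}^2(\Omega)}.
\end{equation*}
The second hypothesis in \eqref{eq:assumption_mesh}, namely $\|\bar{\mathbf{z}}-\bar{\mathbf{z}}_\T\|_{\mathbf{L}^2(\Omega)}\leq\alpha\mu(2C_{\mathsf{M}})^{-1}$, then gives precisely $C_{\mathsf{M}}\|\tilde{\mathbf{u}}-\bar{\mathbf{u}}\|_{\mathbf{L}^2(\Omega)}\leq\mu/2$, closing the argument. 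The main subtlety is ensuring the constants line up exactly as stated in \eqref{eq:assumption_mesh} — the nonexpansiveness of the box projection in $\mathbf{L}^2(\Omega)$ is what makes the scaling by $\alpha^{-1}$ match the hypothesis — rather than any deep analytical difficulty.
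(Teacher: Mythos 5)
Your proposal is correct and follows essentially the same route as the paper's proof: mean value theorem to write $(j'(\tilde{\mathbf{u}})-j'(\bar{\mathbf{u}}))(\tilde{\mathbf{u}}-\bar{\mathbf{u}})=j''(\zeta)(\tilde{\mathbf{u}}-\bar{\mathbf{u}})^2$, the coercivity \eqref{eq:equivalent_second_II} applied to $\tilde{\mathbf{u}}-\bar{\mathbf{u}}\in\mathbf{C}_{\bar{\mathbf{u}}}^\tau$, Lemma \ref{lemma:property_of_j} for the perturbation $j''(\zeta)-j''(\bar{\mathbf{u}})$, and the Lipschitz property of $\Pi_{[\mathbf{a},\mathbf{b}]}$ together with \eqref{eq:assumption_mesh} to absorb it into $\mu/2$. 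The only cosmetic differences are the order in which the terms are rearranged and the choice of base point in the application of Lemma \ref{lemma:property_of_j}, neither of which affects the argument.
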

\begin{proof}
Since $\tilde{\mathbf{u}}-\bar{\mathbf{u}}\in \FF{\mathbf{C}}_{\bar{\mathbf{u}}}^\tau$, with $\FF{\mathbf{C}}_{\bar{\mathbf{u}}}^\tau$ defined in \eqref{def:cone_tau}, and $\bar{\mathbf{u}}$ satisfies the second order optimality condition \eqref{eq:equivalent_second_II}, we are \EO{allowed} to set $\mathbf{v}=\tilde{\mathbf{u}}-\bar{\mathbf{u}}$ in \eqref{eq:equivalent_second_II} \EO{to obtain}
\begin{equation}\label{eq:diff_1}
\mu\|\tilde{\mathbf{u}}-\bar{\mathbf{u}}\|_{\mathbf{L}^2(\Omega)}^2 \leq j''(\bar{\mathbf{u}})(\tilde{\mathbf{u}}-\bar{\mathbf{u}})^2.
\end{equation}
On the other hand, in view of the mean value theorem, we obtain, for some $\theta_{\T} \in (0,1)$, 
\begin{equation*}
\label{eq:mean_value_identity}
(j'(\tilde{\mathbf{u}})-j'(\bar{\mathbf{u}}))(\tilde{\mathbf{u}}-\bar{\mathbf{u}})=j''(\zeta)(\tilde{\mathbf{u}}-\bar{\mathbf{u}})^2,
\end{equation*}
with $\zeta=\bar{\mathbf{u}}+\theta_{\T}(\tilde{\mathbf{u}}-\bar{\mathbf{u}})$. Thus, in view of \eqref{eq:diff_1}, we arrive at
\begin{align}\label{eq:ineq_u_tilde_bar}
\mu\|\tilde{\mathbf{u}}-\bar{\mathbf{u}}\|_{L^2(\Omega)}^2 
& \leq (j'(\tilde{\mathbf{u}})-j'(\bar{\mathbf{u}}))(\tilde{\mathbf{u}}-\bar{\mathbf{u}}) + (j''(\bar{\mathbf{u}})-j''(\zeta))(\tilde{\mathbf{u}}-\bar{\mathbf{u}})^2.
\end{align}
Since $\mathsf{M} > 0$ is such that 
$\max \{ \| \bar{\mathbf{u}}+\theta_{\T}(\tilde{\mathbf{u}}-\bar{\mathbf{u}})\|_{\mathbf{L}^{\infty}(\Omega)}, \|\tilde{\mathbf{u}} - \bar{\mathbf{u}} \|_{\mathbf{L}^{\infty}(\Omega)}\} \leq \mathsf{M}$  and $j$ is of class ${C}^2$ in $\mathbf{L}^2(\Omega)$ \cite[Theorem 3.3]{MR2338434}, we can thus apply \eqref{eq:estimate_second_der} to derive
\begin{equation*}
(j''(\bar{\mathbf{u}})-j''(\zeta))(\tilde{\mathbf{u}}-\bar{\mathbf{u}})^2 
\leq C_{\mathsf{M}} \|\tilde{\mathbf{u}}-\bar{\mathbf{u}}\|_{\mathbf{L}^\FF{2}(\Omega)}\|\tilde{\mathbf{u}}-\bar{\mathbf{u}}\|_{\mathbf{L}^2(\Omega)}^2,
\end{equation*}
where we have also used that $\theta_{\T} \in (0,1)$. Invoke \eqref{eq:projection_formula} and \eqref{def:tilde_u}, the Lipschitz property of the projection operator $\Pi_{[\texttt{a},\texttt{b}]}$, and assumption \eqref{eq:assumption_mesh}, to arrive at
\begin{equation*}
(j''(\bar{\mathbf{u}})-j''(\zeta))(\tilde{\mathbf{u}}-\bar{\mathbf{u}})^2 
\leq C_{\mathsf{M}}{\alpha}^{-1} 
\|\bar{\mathbf{z}} - \bar{\mathbf{z}}_\T \|_{\mathbf{L}^\FF{2}(\Omega)} 
\|\tilde{\mathbf{u}}-\bar{\mathbf{u}}\|_{\mathbf{L}^2(\Omega)}^2
\leq \frac{\mu}{2}\|\tilde{\mathbf{u}}-\bar{\mathbf{u}}\|_{\mathbf{L}^2(\Omega)}^2.
\end{equation*}
Replacing this inequality into \eqref{eq:ineq_u_tilde_bar} allows us to conclude the desired inequality \eqref{eq:inequality_control_tilde}. This concludes the proof.
\end{proof}
\FF{
\begin{remark}[a sufficient condition for $\tilde{\mathbf{u}}-\bar{\mathbf{u}}\in \mathbf{C}_{\bar{\mathbf{u}}}^\tau$] In what follows, we show that
\[
\|\bar{\mathbf{z}}-\bar{\mathbf{z}}_{\T}\|_{\mathbf{L}^\infty(\Omega)}
\leq \tau/2 \implies \tilde{\mathbf{u}}-\bar{\mathbf{u}}\in \mathbf{C}_{\bar{\mathbf{u}}}^\tau.
\]
In fact, since $\tilde{\mathbf{u}}\in \mathbb{U}_{ad}$, we can immediately conclude that $\mathbf{v} =\tilde{\mathbf{u}}-\bar{\mathbf{u}}\geq \boldsymbol 0$ if $\bar{\mathbf{u}}=\mathbf{a}$ and that $\mathbf{v} = \tilde{\mathbf{u}}-\bar{\mathbf{u}}\leq \boldsymbol 0$ if $\bar{\mathbf{u}}=\mathbf{b}$. 
It thus suffices to verify the remaining condition in \eqref{eq:critical_cone_charac_tau}, i.e.,  $\mathbf{v}_i = (\tilde{\mathbf{u}} - \bar{\mathbf{u}})_{i} = 0$ if $|\bar{\mathbf{d}}_i(x)|> \tau$, with $i \in \{1,\cdots,d\}.$ To accomplish this task, we first use the triangle inequality and invoke the Lipschitz property of $\Pi_{[\mathbf{a},\mathbf{b}]}$, in conjunction with the assumption $\|\bar{\mathbf{z}}-\bar{\mathbf{z}}_{\T}\|_{\mathbf{L}^\infty(\Omega)}
\leq \tau/2$, to obtain
\begin{equation}\label{eq:difference_d}
\|\bar{\mathbf{z}}+\alpha \bar{\mathbf{u}}-(\bar{\mathbf{z}}_\T+\alpha\tilde{\mathbf{u}})\|_{\mathbf{L}^\infty(\Omega)}\leq 2\|\bar{\mathbf{z}}-\bar{\mathbf{z}}_\T\|_{\mathbf{L}^\infty(\Omega)}\leq\tau.
\end{equation}
Now, let $\xi\in \Omega$ and $i\in\{1,\ldots,d\}$ be such that $\bar{\mathbf{d}}_{i}(\xi) = (\bar{\mathbf{z}}+\alpha \bar{\mathbf{u}})_{i}(\xi)>\tau$. Since $\tau > 0$, this implies that
$
\bar{\mathbf{u}}_{i}(\xi)> - \alpha^{-1}\bar{\mathbf{z}}_{i}(\xi).
$
Therefore, from the projection formula \eqref{eq:projection_formula}, we conclude that $\bar{\mathbf{u}}_{i}(\xi)=\mathbf{a}_{i}$. On the other hand, since $\xi\in \Omega$ is such that $(\bar{\mathbf{z}}+\alpha \bar{\mathbf{u}})_{i}(\xi)>\tau$, from \eqref{eq:difference_d} we can conclude that
\[
(\bar{\mathbf{z}}_\T+\alpha\tilde{\mathbf{u}})_{i}(\xi)>0,
\]
and thus that
$
\tilde{\mathbf{u}}_{i}(\xi)> -\alpha^{-1}(\bar{\mathbf{z}}_{\T})_{i}(\xi).
$
This, on the basis of the definition of the auxiliary variable $\tilde{\mathbf{u}}$, given in \eqref{def:tilde_u}, yields that $\tilde{\mathbf{u}}_{i}(\xi)=\mathbf{a}_{i}$. Consequently, $\bar{\mathbf{u}}_{i}(\xi)=\tilde{\mathbf{u}}_{i}(\xi)=\mathbf{a}_{i}$. Since $i$ is arbitrary, we conclude that $(\tilde{\mathbf{u}}-\bar{\mathbf{u}})(\xi)=\boldsymbol 0$. Similar arguments allow us to conclude that, if $\bar{\mathbf{d}}_{i}(\xi) =(\bar{\mathbf{z}}+\alpha \bar{\mathbf{u}})_{i}(\xi)<-\tau$, with $i\in\{1,\ldots,d\}$, then $(\tilde{\mathbf{u}}-\bar{\mathbf{u}})(\xi)=\boldsymbol 0$.
\end{remark}
}

The following auxiliary variables are also of particular importance for our reliability analysis. Let $(\tilde{\mathbf{y}},\tilde{p}) \in \mathbf{H}_0^1(\Omega)\times L_0^2(\Omega)$ be the solution to
\begin{equation}\label{eq:tilde_y}
\begin{aligned}
\nu(\nabla \tilde{\mathbf{y}}, \nabla \mathbf{v})_{\mathbf{L}^2(\Omega)}+b(\tilde{\mathbf{y}};\tilde{\mathbf{y}},\mathbf{v})-(\tilde{p},\text{div } \mathbf{v})_{L^2(\Omega)} & = (\tilde{\mathbf{u}},\mathbf{v})_{\mathbf{L}^2(\Omega)}  & \forall \mathbf{v} \in \mathbf{H}_0^1(\Omega),\\
(q,\text{div } \tilde{\mathbf{y}})_{L^2(\Omega)} & =  0  & \forall q\in L^2_0(\Omega).
\end{aligned}
\end{equation}
We also introduce the pair $(\tilde{\mathbf{z}},\tilde{r}) \in \mathbf{H}_0^1(\Omega)\times L_0^2(\Omega)$ as the solution to
\begin{equation*}
\begin{aligned}
\nu(\nabla \mathbf{w}, \nabla \tilde{\mathbf{z}})_{\mathbf{L}^2(\Omega)}+b(\tilde{\mathbf{y}};\mathbf{w},\tilde{\mathbf{z}})+b(\mathbf{w};\tilde{\mathbf{y}},\tilde{\mathbf{z}})-(\tilde{r},\text{div } \mathbf{w})_{L^2(\Omega)} &= (\tilde{\mathbf{y}}-\mathbf{y}_\Omega,\mathbf{w})_{\mathbf{L}^2(\Omega)}, \\
(s,\text{div } \tilde{\mathbf{z}})_{L^2(\Omega)} & =  0,
\end{aligned}
\end{equation*}
for all $\mathbf{w}  \in \mathbf{H}_0^1(\Omega)$ and $s \in L_0^2(\Omega)$.

To present the following result, we define $\mathbf{e}_{\mathbf{y}}:= \bar{\mathbf{y}}-\bar{\mathbf{y}}_\T$, $e_p:=\bar{p}-\bar{p}_\T$, $\mathbf{e}_{\mathbf{z}}:= \bar{\mathbf{z}}-\bar{\mathbf{z}}_\T$, $e_r:=\bar{r}-\bar{r}_\T$, $\mathbf{e}_{\mathbf{u}} := \bar{\mathbf{u}}-\bar{\mathbf{u}}_\T$, the total error norm
\begin{equation}\label{def:error_norm}
\|\mathbf{e} \|^2_{\Omega}: =
\|\nabla \mathbf{e}_{\mathbf{y}}\|_{\mathbf{L}^2(\Omega)}^2 +
\|e_p \|_{L^{2}(\Omega)}^2 +
\|\nabla \mathbf{e}_{\mathbf{z}} \|_{\mathbf{L}^2(\Omega)}^2
+ \|e_r\|_{L^2(\Omega)}^2
+ \|\mathbf{e}_{\mathbf{u}}\|_{\mathbf{L}^2(\Omega)}^2,
\end{equation}
\EO{and the a posteriori error estimator
\begin{equation}\label{def:error_estimator_ocp}
\mathcal{E}_{ocp}^2:=  \mathcal{E}_{ad}^2 + \mathcal{E}_{st}^2+ \mathcal{E}_{ct}^2.
\end{equation}
The estimators $\mathcal{E}_{st}$, $\mathcal{E}_{ad}$, and $\mathcal{E}_{ct}$, are defined as in \eqref{eq:error_estimator_st}, \eqref{eq:error_estimator_adjoint_eq}, and \eqref{eq:error_estimator_control_var}, respectively.}

We are now ready to state and prove the main result of this section.

\begin{theorem}[global reliability \EO{of $\mathcal{E}_{ocp}$}]
\label{thm:control_bound}
Assume that assumptions \eqref{eq:smallness_assumption} and \eqref{eq:assumption_y_T_small_2} hold. Let $(\bar{\mathbf{y}},\bar{p},\bar{\mathbf{z}},\bar{r},\bar{\mathbf{u}}) \in \mathbf{H}_0^1(\Omega)\times L_0^2(\Omega) \times \mathbf{H}_0^1(\Omega)\times L_0^2(\Omega) \times \mathbb{U}_{ad}$ be a local solution of \eqref{eq:min_weak_setting}--\eqref{eq:weak_st_equation} that satisfies the sufficient second order optimality condition \eqref{eq:equivalent_second_I}, or equivalently \eqref{eq:equivalent_second_II}. Let $\bar{\mathbf{u}}_{\T}$ be a local minimum of the \FF{fully} discrete optimal control problem \eqref{eq:discrete_cost_mini}--\eqref{eq:discrete_state_equation}, with $(\bar{\mathbf{y}}_{\T},\bar{p}_\T)$ and $(\bar{\mathbf{z}}_{\T},\bar{r}_\T)$ being the corresponding state and adjoint state variables, respectively. Let $\T$ be a mesh such that \eqref{eq:assumption_mesh} holds. Then
\begin{equation}\label{eq:global_rel}
\|\mathbf{e}  \|_{\Omega}^2
\lesssim 
\mathcal{E}_{ocp}^2,
\end{equation} 
with a hidden constant that is independent of the continuous and discrete optimal variables, the size of the elements in the mesh $\T$, and $\#\T$.
\end{theorem}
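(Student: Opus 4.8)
The plan is to prove \eqref{eq:global_rel} by controlling each of the five contributions in $\|\mathbf{e}\|_\Omega^2$ separately, employing the auxiliary variables $(\hat{\mathbf{y}},\hat{p})$, $(\hat{\mathbf{z}},\hat{r})$, $\tilde{\mathbf{u}}$, $(\tilde{\mathbf{y}},\tilde{p})$, and $(\tilde{\mathbf{z}},\tilde{r})$ as intermediate quantities in suitable triangle inequalities. Theorems \ref{thm:navier_stokes_estimator} and \ref{thm:estimate_aux_variables} already bound $\|\nabla(\hat{\mathbf{y}}-\bar{\mathbf{y}}_\T)\|_{\mathbf{L}^2(\Omega)}+\|\hat{p}-\bar{p}_\T\|_{L^2(\Omega)}$ and $\|\nabla(\hat{\mathbf{z}}-\bar{\mathbf{z}}_\T)\|_{\mathbf{L}^2(\Omega)}+\|\hat{r}-\bar{r}_\T\|_{L^2(\Omega)}$ by $\mathcal{E}_{st}$ and $\mathcal{E}_{ad}$, respectively; the crux is therefore to transfer these into bounds on the genuine errors $\mathbf{e}_{\mathbf{y}}$, $\mathbf{e}_{\mathbf{z}}$, and $\mathbf{e}_{\mathbf{u}}$ at the cost of control-to-state and adjoint stability estimates. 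I would begin with the control error, since the remaining two errors will be made to depend on it.

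For the control, I would first split $\|\mathbf{e}_{\mathbf{u}}\|_{\mathbf{L}^2(\Omega)}\leq\|\bar{\mathbf{u}}-\tilde{\mathbf{u}}\|_{\mathbf{L}^2(\Omega)}+\|\tilde{\mathbf{u}}-\bar{\mathbf{u}}_\T\|_{\mathbf{L}^2(\Omega)}$, where the second term is exactly $\mathcal{E}_{ct}$. To bound $\|\bar{\mathbf{u}}-\tilde{\mathbf{u}}\|_{\mathbf{L}^2(\Omega)}$ I would invoke \eqref{eq:inequality_control_tilde} of Theorem \ref{thm:error_bound_control_tilde}, which, recalling that $j'(\mathbf{u})\mathbf{v}=(\mathbf{z}_{\mathbf{u}}+\alpha\mathbf{u},\mathbf{v})_{\mathbf{L}^2(\Omega)}$ with $\mathbf{z}_{\mathbf{u}}$ the adjoint associated to $\mathbf{u}$, reduces matters to estimating $(j'(\tilde{\mathbf{u}})-j'(\bar{\mathbf{u}}))(\tilde{\mathbf{u}}-\bar{\mathbf{u}})$. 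Testing the variational inequality \eqref{eq:variational_ineq} with $\mathbf{u}=\tilde{\mathbf{u}}$ gives $j'(\bar{\mathbf{u}})(\tilde{\mathbf{u}}-\bar{\mathbf{u}})\geq 0$, so it suffices to bound $j'(\tilde{\mathbf{u}})(\tilde{\mathbf{u}}-\bar{\mathbf{u}})=(\tilde{\mathbf{z}}+\alpha\tilde{\mathbf{u}},\tilde{\mathbf{u}}-\bar{\mathbf{u}})_{\mathbf{L}^2(\Omega)}$. Inserting $\pm\bar{\mathbf{z}}_\T$ and using \eqref{eq:var_ineq_tilde_u} (tested with $\mathbf{u}=\bar{\mathbf{u}}$) to discard the nonpositive term $(\bar{\mathbf{z}}_\T+\alpha\tilde{\mathbf{u}},\tilde{\mathbf{u}}-\bar{\mathbf{u}})_{\mathbf{L}^2(\Omega)}$, this collapses to $(\tilde{\mathbf{z}}-\bar{\mathbf{z}}_\T,\tilde{\mathbf{u}}-\bar{\mathbf{u}})_{\mathbf{L}^2(\Omega)}$, whence a Cauchy--Schwarz and Young argument yields $\|\bar{\mathbf{u}}-\tilde{\mathbf{u}}\|_{\mathbf{L}^2(\Omega)}\lesssim\|\tilde{\mathbf{z}}-\bar{\mathbf{z}}_\T\|_{\mathbf{L}^2(\Omega)}$.

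It then remains to control $\|\tilde{\mathbf{z}}-\bar{\mathbf{z}}_\T\|_{\mathbf{L}^2(\Omega)}$ by the three estimators via the splitting $\tilde{\mathbf{z}}-\bar{\mathbf{z}}_\T=(\tilde{\mathbf{z}}-\hat{\mathbf{z}})+(\hat{\mathbf{z}}-\bar{\mathbf{z}}_\T)$. The second difference is $\lesssim\mathcal{E}_{ad}$ by Theorem \ref{thm:estimate_aux_variables}. For the first, $\tilde{\mathbf{z}}$ and $\hat{\mathbf{z}}$ solve the same type of linearized adjoint problem but about the states $\tilde{\mathbf{y}}$ and $\bar{\mathbf{y}}_\T$ and with right-hand sides $\tilde{\mathbf{y}}$ and $\bar{\mathbf{y}}_\T$; the smallness assumption \eqref{eq:smallness_assumption} and the coercivity \eqref{eq:coercivity_bilinear_form_B} supply a stability estimate $\|\nabla(\tilde{\mathbf{z}}-\hat{\mathbf{z}})\|_{\mathbf{L}^2(\Omega)}\lesssim\|\nabla(\tilde{\mathbf{y}}-\bar{\mathbf{y}}_\T)\|_{\mathbf{L}^2(\Omega)}$, while $\tilde{\mathbf{y}}-\bar{\mathbf{y}}_\T=(\tilde{\mathbf{y}}-\hat{\mathbf{y}})+(\hat{\mathbf{y}}-\bar{\mathbf{y}}_\T)$ is controlled by the Lipschitz dependence of the control-to-state map on its datum, so that $\|\nabla(\tilde{\mathbf{y}}-\hat{\mathbf{y}})\|_{\mathbf{L}^2(\Omega)}\lesssim\|\tilde{\mathbf{u}}-\bar{\mathbf{u}}_\T\|_{\mathbf{L}^2(\Omega)}=\mathcal{E}_{ct}$, together with Theorem \ref{thm:navier_stokes_estimator} ($\lesssim\mathcal{E}_{st}$). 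Collecting these bounds gives $\|\tilde{\mathbf{z}}-\bar{\mathbf{z}}_\T\|_{\mathbf{L}^2(\Omega)}\lesssim\mathcal{E}_{ad}+\mathcal{E}_{st}+\mathcal{E}_{ct}$, and hence $\|\mathbf{e}_{\mathbf{u}}\|_{\mathbf{L}^2(\Omega)}^2\lesssim\mathcal{E}_{ocp}^2$. With the control error in hand, the state errors follow from $\bar{\mathbf{y}}-\bar{\mathbf{y}}_\T=(\bar{\mathbf{y}}-\hat{\mathbf{y}})+(\hat{\mathbf{y}}-\bar{\mathbf{y}}_\T)$ (and the analogous splitting for $\bar{p}-\bar{p}_\T$), where the first term obeys $\lesssim\|\bar{\mathbf{u}}-\bar{\mathbf{u}}_\T\|_{\mathbf{L}^2(\Omega)}=\|\mathbf{e}_{\mathbf{u}}\|_{\mathbf{L}^2(\Omega)}$ by control-to-state stability and the second is $\lesssim\mathcal{E}_{st}$; an analogous splitting with the adjoint stability estimate and Theorem \ref{thm:estimate_aux_variables} bounds $\mathbf{e}_{\mathbf{z}}$ and $e_r$ in terms of $\|\nabla\mathbf{e}_{\mathbf{y}}\|_{\mathbf{L}^2(\Omega)}$ and $\mathcal{E}_{ad}$. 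Summing the five contributions and using $\mathcal{E}_{ad}^2+\mathcal{E}_{st}^2+\mathcal{E}_{ct}^2=\mathcal{E}_{ocp}^2$ yields \eqref{eq:global_rel}.

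The main obstacle I anticipate is establishing the two stability estimates used above: that the continuous control-to-state map $\mathbf{u}\mapsto(\mathbf{y},p)$ and the associated adjoint map depend Lipschitz-continuously on their data in the $\mathbf{H}_0^1\times L^2$ norms. Both rest on the smallness assumption \eqref{eq:smallness_assumption}, which supplies the coercivity \eqref{eq:coercivity_bilinear_form_B} needed to absorb the quadratic convective term; extra care is required because the two adjoint problems being compared are linearized about different velocity fields, so the estimate must account simultaneously for the change in the convective coefficients and in the right-hand side, both of which are controlled by $\|\nabla(\tilde{\mathbf{y}}-\bar{\mathbf{y}}_\T)\|_{\mathbf{L}^2(\Omega)}$. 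A secondary point is to verify that no circularity arises: the control bound is expressed purely through $\mathcal{E}_{st}$, $\mathcal{E}_{ad}$, and $\mathcal{E}_{ct}$ (and not through $\mathbf{e}_{\mathbf{y}}$ or $\mathbf{e}_{\mathbf{z}}$), after which the state and adjoint errors depend only on $\|\mathbf{e}_{\mathbf{u}}\|_{\mathbf{L}^2(\Omega)}$ and the estimators, so the chain of estimates terminates.
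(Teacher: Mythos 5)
Your proposal is correct and follows essentially the same route as the paper's proof: the same chain of auxiliary variables $\tilde{\mathbf{u}}$, $(\hat{\mathbf{y}},\hat{p})$, $(\hat{\mathbf{z}},\hat{r})$, $(\tilde{\mathbf{y}},\tilde{p})$, $(\tilde{\mathbf{z}},\tilde{r})$, the same control-first argument combining Theorem \ref{thm:error_bound_control_tilde} with the two variational inequalities to reduce to $(\tilde{\mathbf{z}}-\bar{\mathbf{z}}_\T,\tilde{\mathbf{u}}-\bar{\mathbf{u}})_{\mathbf{L}^2(\Omega)}$, the same stability estimates transferring $\|\nabla(\tilde{\mathbf{z}}-\hat{\mathbf{z}})\|_{\mathbf{L}^2(\Omega)}$ to $\mathcal{E}_{ct}+\mathcal{E}_{st}$, and the same triangle-inequality treatment (with inf-sup for the pressures) of the remaining four error contributions. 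The only cosmetic difference is that you cite the coercivity \eqref{eq:coercivity_bilinear_form_B} of $\mathcal{B}$ where the paper invokes the analogous coercivity of $\mathcal{C}$ under \eqref{eq:assumption_y_T_small_2}, which is exactly the "extra care" you flag.
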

\begin{proof}
We proceed in six steps.

\emph{Step 1.} The goal of this step is to control $\|\mathbf{e}_\mathbf{u}\|_{\mathbf{L}^2(\Omega)}$ in \eqref{def:error_norm}. Invoke the auxiliary variable $\tilde{\mathbf{u}}$, defined in \eqref{def:tilde_u}, and definition \eqref{eq:error_estimator_control_var} to arrive at
\begin{equation}\label{eq:control_estimate_I}
\|\mathbf{e}_\mathbf{u}\|_{\mathbf{L}^2(\Omega)}\leq \|\bar{\mathbf{u}}-\tilde{\mathbf{u}}\|_{\mathbf{L}^2(\Omega)}+\mathcal{E}_{ct}.
\end{equation}
It thus suffices to bound $\|\bar{\mathbf{u}}-\tilde{\mathbf{u}}\|_{\mathbf{L}^2(\Omega)}$. To accomplish this task, we set $\mathbf{u} = \tilde{\mathbf{u}}$ in \eqref{eq:variational_ineq} and $\mathbf{u} = \bar{\mathbf{u}}$ in \eqref{eq:var_ineq_tilde_u} to obtain
\[
j'(\bar{\mathbf{u}})(\tilde{\mathbf{u}}-\bar{\mathbf{u}})=(\bar{\mathbf{z}}+\alpha\bar{\mathbf{u}}, \tilde{\mathbf{u}}-\bar{\mathbf{u}})_{\mathbf{L}^2(\Omega)}\geq 0,
\qquad
 -(\bar{\mathbf{z}}_\T+\alpha\tilde{\mathbf{u}},\tilde{\mathbf{u}}-\bar{\mathbf{u}})_{\mathbf{L}^2(\Omega)}\geq 0.
\]
With these estimates at hand, we invoke inequality \eqref{eq:inequality_control_tilde} to conclude 
\begin{align*}
\tfrac{\mu}{2}\|\bar{\mathbf{u}}-\tilde{\mathbf{u}}\|_{\mathbf{L}^2(\Omega)}^2
&\leq j'(\tilde{\mathbf{u}})(\tilde{\mathbf{u}}-\bar{\mathbf{u}})-j'(\bar{\mathbf{u}})(\tilde{\mathbf{u}}-\bar{\mathbf{u}})\leq  j'(\tilde{\mathbf{u}})(\tilde{\mathbf{u}}-\bar{\mathbf{u}})\\
&\leq j'(\tilde{\mathbf{u}})(\tilde{\mathbf{u}}-\bar{\mathbf{u}})-(\bar{\mathbf{z}}_\T+\alpha\tilde{\mathbf{u}},\tilde{\mathbf{u}}-\bar{\mathbf{u}})_{\mathbf{L}^2(\Omega)}
= (\tilde{\mathbf{z}}-\bar{\mathbf{z}}_\T,\tilde{\mathbf{u}}-\bar{\mathbf{u}})_{\mathbf{L}^2(\Omega)}.
\end{align*}
Adding and subtracting 
the auxiliary variable $\hat{\mathbf{z}}$, \EO{where $(\hat{\mathbf{z}},\hat{r})$ denotes the} solution to \eqref{eq:hat_function_ad}, and utilizing the Cauchy--Schwarz and triangle inequalities we obtain
\begin{equation*}
\|\bar{\mathbf{u}}-\tilde{\mathbf{u}}\|_{\mathbf{L}^2(\Omega)}
\lesssim
\|\tilde{\mathbf{z}}-\hat{\mathbf{z}}\|_{\mathbf{L}^2(\Omega)}+\|\hat{\mathbf{z}}-\bar{\mathbf{z}}_\T\|_{\mathbf{L}^2(\Omega)}.
\end{equation*}
A Poincar\'e inequality combined with the a posteriori error estimate \eqref{eq:adjoint_hat_estimate} yield
\begin{equation}\label{eq:control_estimate_III}
\|\bar{\mathbf{u}}-\tilde{\mathbf{u}}\|_{\mathbf{L}^2(\Omega)}
\lesssim
\|\nabla(\tilde{\mathbf{z}}-\hat{\mathbf{z}})\|_{\mathbf{L}^2(\Omega)}+ \mathcal{E}_{ad}.
\end{equation}

We now estimate the remaining term $\|\nabla(\tilde{\mathbf{z}}-\hat{\mathbf{z}})\|_{\mathbf{L}^2(\Omega)}$. To accomplish this task, we notice that the pair $(\tilde{\mathbf{z}}-\hat{\mathbf{z}},\tilde{r}-\hat{r})\in\mathbf{H}_0^1(\Omega)\times L_0^2(\Omega)$ solves 
\begin{equation*}
\begin{aligned}
\nu(\nabla \mathbf{w}, \nabla (\tilde{\mathbf{z}}-\hat{\mathbf{z}}))_{\mathbf{L}^2(\Omega)}+b(\tilde{\mathbf{y}};\mathbf{w},\tilde{\mathbf{z}})-b(\bar{\mathbf{y}}_\T;\mathbf{w},\hat{\mathbf{z}})
\\
+b(\mathbf{w};\tilde{\mathbf{y}},\tilde{\mathbf{z}})-b(\mathbf{w};\bar{\mathbf{y}}_\T,\hat{\mathbf{z}})-(\tilde{r}-\hat{r},\text{div } \mathbf{w})_{L^2(\Omega)} & = (\tilde{\mathbf{y}}-\bar{\mathbf{y}}_\T,\mathbf{w})_{\mathbf{L}^2(\Omega)},\\
(s,\text{div } (\tilde{\mathbf{z}}-\hat{\mathbf{z}}))_{L^2(\Omega)} &=  0,
\end{aligned}
\end{equation*}
for all $\mathbf{w} \in \mathbf{H}_0^1(\Omega)$ and $s \in L_0^2(\Omega)$, respectively. Set $s=0$ and $\mathbf{w}=\tilde{\mathbf{z}}-\hat{\mathbf{z}}$ to obtain
\begin{multline*}
\nu\|\nabla (\tilde{\mathbf{z}}-\hat{\mathbf{z}})\|_{\mathbf{L}^2(\Omega)}^2+b(\tilde{\mathbf{y}}-\bar{\mathbf{y}}_\T;\tilde{\mathbf{z}}-\hat{\mathbf{z}},\tilde{\mathbf{z}})+b(\bar{\mathbf{y}}_\T;\tilde{\mathbf{z}}-\hat{\mathbf{z}},\tilde{\mathbf{z}}-\hat{\mathbf{z}})\\
+b(\tilde{\mathbf{z}}-\hat{\mathbf{z}};\tilde{\mathbf{y}}-\bar{\mathbf{y}}_\T,\tilde{\mathbf{z}})+b(\tilde{\mathbf{z}}-\hat{\mathbf{z}};\bar{\mathbf{y}}_\T,\tilde{\mathbf{z}}-\hat{\mathbf{z}})=(\tilde{\mathbf{y}}-\bar{\mathbf{y}}_\T,\tilde{\mathbf{z}}-\hat{\mathbf{z}})_{\mathbf{L}^2(\Omega)}.
\end{multline*}
Invoke now estimate \eqref{eq:trilinear_embedding} and the Cauchy--Schwarz inequality to obtain
\begin{multline*}
\nu\|\nabla (\tilde{\mathbf{z}}-\hat{\mathbf{z}})\|_{\mathbf{L}^2(\Omega)}^2 \leq 2\mathcal{C}_{b}\|\nabla(\tilde{\mathbf{z}}-\hat{\mathbf{z}})\|_{\mathbf{L}^2(\Omega)}\|\nabla(\tilde{\mathbf{y}}-\bar{\mathbf{y}}_\T)\|_{\mathbf{L}^2(\Omega)}\|\nabla\tilde{\mathbf{z}}\|_{\mathbf{L}^2(\Omega)}\\
+2\mathcal{C}_{b}\|\nabla(\tilde{\mathbf{z}}-\hat{\mathbf{z}})\|_{\mathbf{L}^2(\Omega)}^2\|\nabla \bar{\mathbf{y}}_\T\|_{\mathbf{L}^2(\Omega)}+\|\tilde{\mathbf{y}}-\bar{\mathbf{y}}_\T\|_{\mathbf{L}^2(\Omega)}\|\tilde{\mathbf{z}}-\hat{\mathbf{z}}\|_{\mathbf{L}^2(\Omega)}.
\end{multline*}
Then, in view of assumption \eqref{eq:assumption_y_T_small_2}, it immediately follows that
\begin{multline*}
\nu(1-\theta)\|\nabla (\tilde{\mathbf{z}}-\hat{\mathbf{z}})\|_{\mathbf{L}^2(\Omega)}^2
 \leq 
 \|\tilde{\mathbf{y}}-\bar{\mathbf{y}}_\T\|_{\mathbf{L}^2(\Omega)}\|\tilde{\mathbf{z}}-\hat{\mathbf{z}}\|_{\mathbf{L}^2(\Omega)}\\
+2\mathcal{C}_{b}\|\nabla(\tilde{\mathbf{z}}-\hat{\mathbf{z}})\|_{\mathbf{L}^2(\Omega)}\|\nabla(\tilde{\mathbf{y}}-\bar{\mathbf{y}}_\T)\|_{\mathbf{L}^2(\Omega)}\|\nabla\tilde{\mathbf{z}}\|_{\mathbf{L}^2(\Omega)}.
\end{multline*}
Applying a Poincar\'e inequality, adding and subtracting the auxiliary variable $\hat{\mathbf{y}}$, \EO{where $(\hat{\mathbf{y}},\hat{p})$ denotes} the solution to \eqref{eq:hat_function_st}, and using the triangle inequality, we arrive at
\begin{equation}\label{eq:tilde_hat_adjoint_IV}
\|\nabla (\tilde{\mathbf{z}}-\hat{\mathbf{z}})\|_{\mathbf{L}^2(\Omega)} \lesssim 
\left(1+\|\nabla\tilde{\mathbf{z}}\|_{\mathbf{L}^2(\Omega)}\right)
\left(\|\nabla(\tilde{\mathbf{y}}-\hat{\mathbf{y}})\|_{\mathbf{L}^2(\Omega)}+\|\nabla(\hat{\mathbf{y}}-\bar{\mathbf{y}}_\T)\|_{\mathbf{L}^2(\Omega)}\right).
\end{equation}
Notice that \EO{an} stability estimate for the problem that $(\tilde{\mathbf{z}},\tilde{r})$ solves yields
\[
\|\nabla \tilde{\mathbf{z}}\|_{\mathbf{L}^2(\Omega)}
\leq \frac{C_2}{\nu(1-\theta)} \|\tilde{\mathbf{y}} - \mathbf{y}_\Omega\|_{\mathbf{L}^2(\Omega)}
\leq \frac{C_2}{\nu(1-\theta)} \left( C_2\mathcal{C}_b^{-1} \theta \nu + \|\mathbf{y}_\Omega\|_{\mathbf{L}^2(\Omega)}\right),
\]
where we have also used \eqref{eq:stability_state_eq}. Replacing this estimate into \eqref{eq:tilde_hat_adjoint_IV} and invoking the a posteriori error estimate \eqref{eq:estimate_state_hat_discrete} we obtain 
\begin{equation}\label{eq:tilde_hat_adjoint_V}
\|\nabla (\tilde{\mathbf{z}}-\hat{\mathbf{z}})\|_{\mathbf{L}^2(\Omega)} \lesssim \|\nabla(\tilde{\mathbf{y}}-\hat{\mathbf{y}})\|_{\mathbf{L}^2(\Omega)}+\mathcal{E}_{st},
\end{equation}
with a hidden constant that is independent of the continuous and discrete optimal variables, the size of the elements in the mesh $\T$, and $\#\T$ but depends on the continuous problem data and \EO{the constants} $C_2$, $\mathcal{C}_b$, \EO{$\nu$}, and $\theta$.

The rest of this step is dedicated to bound the term $\|\nabla(\tilde{\mathbf{y}}-\hat{\mathbf{y}})\|_{\mathbf{L}^2(\Omega)}$ in \eqref{eq:tilde_hat_adjoint_V}. To accomplish this task, we first notice that $(\tilde{\mathbf{y}}-\hat{\mathbf{y}},\tilde{p}-\hat{p})\in \mathbf{H}_0^1(\Omega)\times L_0^2(\Omega)$ solves 
\begin{equation*}
\hspace{-0.2cm}
\begin{array}{rl}
\nu(\nabla (\tilde{\mathbf{y}}\!-\!\hat{\mathbf{y}}),\nabla \mathbf{v})_{\mathbf{L}^2(\Omega)}\!+\!b(\tilde{\mathbf{y}};\tilde{\mathbf{y}},\mathbf{v})\!-\!b(\hat{\mathbf{y}};\hat{\mathbf{y}},\mathbf{v}) \!-\!(\tilde{p}-\hat{p},\text{div }\mathbf{v})_{L^2(\Omega)} &\hspace{-0.3cm} =  \! (\tilde{\mathbf{u}}-\bar{\mathbf{u}}_\T,\mathbf{v})_{\mathbf{L}^2(\Omega)}, \\
(q,\text{div }(\tilde{\mathbf{y}}-\hat{\mathbf{y}}))_{L^2(\Omega)} & \hspace{-0.3cm}= \!0,
\end{array}
\end{equation*}
for all $\mathbf{v} \in\mathbf{H}_0^1(\Omega)$ and  $q \in L_0^2(\Omega)$, respectively. Set $\mathbf{v}=\tilde{\mathbf{y}}-\hat{\mathbf{y}}$ and $q=0$, and invoke the second property for the form $b$ stated in \eqref{eq:properties_trilinear} to arrive at
\begin{equation*}
\nu \|\nabla (\tilde{\mathbf{y}}-\hat{\mathbf{y}})\|_{\mathbf{L}^2(\Omega)}^2+b(\tilde{\mathbf{y}}-\hat{\mathbf{y}};\hat{\mathbf{y}},\tilde{\mathbf{y}}-\hat{\mathbf{y}})= (\tilde{\mathbf{u}}-\bar{\mathbf{u}}_\T,\tilde{\mathbf{y}}-\hat{\mathbf{y}})_{\mathbf{L}^2(\Omega)}.
\end{equation*}
Estimates \eqref{eq:trilinear_embedding} and \eqref{eq:stab_hat_y} thus yield
$
\|\nabla (\tilde{\mathbf{y}}-\hat{\mathbf{y}})\|_{\mathbf{L}^2(\Omega)}
\lesssim
\|\tilde{\mathbf{u}}-\bar{\mathbf{u}}_\T\|_{\mathbf{L}^2(\Omega)}=\mathcal{E}_{ct},
$
upon using \eqref{eq:error_estimator_control_var}. Replacing \EO{this bound} into \eqref{eq:tilde_hat_adjoint_V}, and the obtained one into  \eqref{eq:control_estimate_III}, we obtain
\begin{equation}\label{eq:tilde_hat_adjoint_VI}
\|\bar{\mathbf{u}}-\tilde{\mathbf{u}}\|_{\mathbf{L}^2(\Omega)} 
\lesssim 
\mathcal{E}_{ad}+ \mathcal{E}_{st}+\mathcal{E}_{ct}.
\end{equation}
On the basis of \eqref{eq:tilde_hat_adjoint_VI} and \eqref{eq:control_estimate_I}, we can thus obtain the a posteriori error estimate
\begin{equation}\label{eq:final_estimate_control}
\|\mathbf{e}_{\mathbf{u}}\|_{\mathbf{L}^2(\Omega)}
\lesssim
\mathcal{E}_{ad}+ \mathcal{E}_{st}+\mathcal{E}_{ct}.
\end{equation}

\emph{Step 2.} The goal of this step is to bound $\|\nabla \mathbf{e}_{\mathbf{y}}\|_{\mathbf{L}^2(\Omega)}$ in \eqref{def:error_norm}. We begin with a simple application of the triangle inequality and \eqref{eq:estimate_state_hat_discrete} \EO{to obtain}
\begin{equation}\label{eq:state_velocity_I}
\|\nabla \mathbf{e}_{\mathbf{y}}\|_{\mathbf{L}^2(\Omega)}
\FF{\lesssim}
\|\nabla(\bar{\mathbf{y}}-\hat{\mathbf{y}})\|_{\mathbf{L}^2(\Omega)}+\mathcal{E}_{st}.
\end{equation}
We now bound $\|\nabla(\bar{\mathbf{y}}-\hat{\mathbf{y}})\|_{\mathbf{L}^2(\Omega)}$. To accomplish this task, we first notice that the pair $(\bar{\mathbf{y}}-\hat{\mathbf{y}},\bar{p}-\hat{p})\in\mathbf{H}_0^1(\Omega)\times L_0^2(\Omega)$ solves the problem 
\begin{equation}\label{eq:equation_hat_bar_st}
\begin{aligned}
\EO{\nu}(\nabla(\bar{\mathbf{y}}-\hat{\mathbf{y}}),\nabla \mathbf{v})_{\mathbf{L}^2(\Omega)}+b(\bar{\mathbf{y}};\bar{\mathbf{y}}-\hat{\mathbf{y}},\mathbf{v})+b(\bar{\mathbf{y}}-\hat{\mathbf{y}};\hat{\mathbf{y}},\mathbf{v})
\\
-(\bar{p}-\hat{p},\text{div } \mathbf{v})_{L^2(\Omega)} & = (\bar{\mathbf{u}}-\bar{\mathbf{u}}_\T,\mathbf{v})_{\mathbf{L}^2(\Omega)},\\
(q,\text{div } (\bar{\mathbf{y}}-\hat{\mathbf{y}}))_{L^2(\Omega)} &= 0,
\end{aligned}
\end{equation}
for all $\mathbf{v} \in\mathbf{H}_0^1(\Omega)$ and $q \in L_0^2(\Omega)$, respectively. Set $\mathbf{v}=\bar{\mathbf{y}}-\hat{\mathbf{y}}$ and $q=0$, and invoke \eqref{eq:properties_trilinear} and the fact that $\bar{\mathbf{y}}-\hat{\mathbf{y}}\in\mathbf{V}(\Omega)$ to arrive at
\begin{equation*}
\nu \|\nabla (\bar{\mathbf{y}}-\hat{\mathbf{y}})\|_{\mathbf{L}^2(\Omega)}^2+b(\bar{\mathbf{y}}-\hat{\mathbf{y}};\hat{\mathbf{y}},\bar{\mathbf{y}}-\hat{\mathbf{y}})= (\bar{\mathbf{u}}-\bar{\mathbf{u}}_\T,\bar{\mathbf{y}}-\hat{\mathbf{y}})_{\mathbf{L}^2(\Omega)}.
\end{equation*}
We thus invoke \eqref{eq:trilinear_embedding} and the stability estimate \eqref{eq:stab_hat_y} to obtain
\begin{equation}\label{eq:hat_bar_st_II}
\|\nabla (\bar{\mathbf{y}}-\hat{\mathbf{y}})\|_{\mathbf{L}^2(\Omega)}
\lesssim
\|\mathbf{e}_{\mathbf{u}}\|_{\mathbf{L}^2(\Omega)}.
\end{equation}
We finally replace estimate \eqref{eq:hat_bar_st_II} into \eqref{eq:state_velocity_I} and invoke
\eqref{eq:final_estimate_control} to obtain the error estimate
\begin{equation}\label{eq:final_estimate_st_velocity}
\|\nabla\mathbf{e}_{\mathbf{y}}\|_{\mathbf{L}^2(\Omega)}
\lesssim
\mathcal{E}_{ad}+ \mathcal{E}_{st}+\mathcal{E}_{ct}.
\end{equation}

\emph{Step 3.} We now estimate the term $\|e_{p}\|_{L^2(\Omega)}$ in \eqref{def:error_norm}. A trivial application of the triangle inequality in conjunction with the a posteriori estimate \eqref{eq:estimate_state_hat_discrete} yield
\begin{equation}\label{eq:state_pressure_I}
\|e_{p}\|_{L^2(\Omega)}
\lesssim
\|\bar{p}-\hat{p}\|_{L^2(\Omega)}+\mathcal{E}_{st}.
\end{equation}
It thus suffices to bound $\|\bar{p}-\hat{p}\|_{L^2(\Omega)}$. To do this, we utilize the inf-sup condition \eqref{eq:inf_sup_cond}, the fact that $(\bar{\mathbf{y}}-\hat{\mathbf{y}},\bar{p}-\hat{p})\in\mathbf{H}_0^1(\Omega)\times L_0^2(\Omega)$ solves \eqref{eq:equation_hat_bar_st} and \eqref{eq:trilinear_embedding}. In fact, we have
\begin{multline}\label{eq:state_pressure_II}
\|\bar{p}-\hat{p}\|_{L^2(\Omega)}
\lesssim
\sup_{\mathbf{v}\in\mathbf{H}_0^1(\Omega)}\frac{(\bar{p}-\hat{p},\text{div } \mathbf{v})_{L^2(\Omega)}}{\|\nabla \mathbf{v}\|_{\mathbf{L}^2(\Omega)}} \lesssim \|\nabla(\bar{\mathbf{y}}-\hat{\mathbf{y}})\|_{\mathbf{L}^2(\Omega)}\\
+ \|\nabla(\bar{\mathbf{y}}-\hat{\mathbf{y}})\|_{\mathbf{L}^2(\Omega)}(\|\nabla\bar{\mathbf{y}}\|_{\mathbf{L}^2(\Omega)}+\|\nabla\hat{\mathbf{y}}\|_{\mathbf{L}^2(\Omega)})+\|\mathbf{e}_{\mathbf{u}}\|_{\mathbf{L}^2(\Omega)}.
\end{multline}
Since the smallness assumption \eqref{eq:smallness_assumption} holds, we immediately \EO{have} the stability estimates \eqref{eq:stability_state_eq} and \eqref{eq:stab_hat_y}. Thus,
$
\|\nabla\bar{\mathbf{y}}\|_{\mathbf{L}^2(\Omega)}+\|\nabla\hat{\mathbf{y}}\|_{\mathbf{L}^2(\Omega)}\leq 2\theta\mathcal{C}_{b}^{-1}\nu,
$
\EO{with $\theta < 1$.} Replace this estimate into \eqref{eq:state_pressure_II} and invoke \eqref{eq:hat_bar_st_II} and \eqref{eq:final_estimate_control} to obtain $\|\bar{p}-\hat{p}\|_{L^2(\Omega)}\lesssim \mathcal{E}_{ad}+ \mathcal{E}_{st}+\mathcal{E}_{ct}$. This estimate, in view of \eqref{eq:state_pressure_I}, yields the a posteriori error estimate
\begin{equation}\label{eq:final_estimate_st_pressure}
\|e_{p}\|_{L^2(\Omega)}
\lesssim
\mathcal{E}_{ad}+ \mathcal{E}_{st}+\mathcal{E}_{ct}.
\end{equation}

\emph{Step 4.} We bound $\|\nabla \mathbf{e}_{\mathbf{z}}\|_{\mathbf{L}^2(\Omega)}$. To accomplish this task, we apply the triangle inequality and invoke the a posteriori estimate \eqref{eq:adjoint_hat_estimate}. These arguments yield
\begin{equation}\label{eq:adjoint_state_velocity_I}
\|\nabla \mathbf{e}_{\mathbf{z}}\|_{\mathbf{L}^2(\Omega)}
\lesssim
\|\nabla(\bar{\mathbf{z}}-\hat{\mathbf{z}})\|_{\mathbf{L}^2(\Omega)}+ \mathcal{E}_{ad}.
\end{equation}
\EO{To bound} $\|\nabla(\bar{\mathbf{z}}-\hat{\mathbf{z}})\|_{\mathbf{L}^2(\Omega)}$ we observe that $(\bar{\mathbf{z}}-\hat{\mathbf{z}},\bar{r}-\hat{r})\in\mathbf{H}_0^1(\Omega)\times L_0^2(\Omega)$ solves 
\begin{align}\label{eq:bar_hat_adjoint_I}
\begin{split}
\nu(\nabla \mathbf{w}, \nabla (\bar{\mathbf{z}}-\hat{\mathbf{z}}))_{\mathbf{L}^2(\Omega)}+b(\bar{\mathbf{y}}-\bar{\mathbf{y}}_\T;\mathbf{w},\bar{\mathbf{z}})+b(\bar{\mathbf{y}}_\T;\mathbf{w},\bar{\mathbf{z}}-\hat{\mathbf{z}})
\\ 
+b(\mathbf{w};\bar{\mathbf{y}}-\bar{\mathbf{y}}_\T,\bar{\mathbf{z}})
+b(\mathbf{w};\bar{\mathbf{y}}_\T,\bar{\mathbf{z}}-\hat{\mathbf{z}})-(\bar{r}-\hat{r},\text{div } \mathbf{w})_{L^2(\Omega)} & \!= \! (\bar{\mathbf{y}}\!-\!\bar{\mathbf{y}}_\T,\mathbf{w})_{\mathbf{L}^2(\Omega)},
\\
(s,\text{div } (\bar{\mathbf{z}}-\hat{\mathbf{z}}))_{L^2(\Omega)} & \!= \!0,
\end{split}\hspace{-0.4cm}
\end{align}
for all \EO{$(\mathbf{w},s) \in \mathbf{H}_0^1(\Omega) \times L_0^2(\Omega)$. Set $(\mathbf{w},q)=(\bar{\mathbf{z}}-\hat{\mathbf{z}},0)$} and invoke \eqref{eq:trilinear_embedding} to obtain
\begin{multline*}
\nu\|\nabla (\bar{\mathbf{z}}-\hat{\mathbf{z}})\|_{\mathbf{L}^2(\Omega)}^2\leq 
2\mathcal{C}_{b}\|\nabla (\bar{\mathbf{y}}-\bar{\mathbf{y}}_\T)\|_{\mathbf{L}^2(\Omega)}\|\nabla \bar{\mathbf{z}}\|_{\mathbf{L}^2(\Omega)}\|\nabla (\bar{\mathbf{z}}-\hat{\mathbf{z}})\|_{\mathbf{L}^2(\Omega)}\\
+ 2\mathcal{C}_{b}\|\nabla (\bar{\mathbf{z}}-\hat{\mathbf{z}})\|_{\mathbf{L}^2(\Omega)}^2\|\nabla \bar{\mathbf{y}}_\T\|_{\mathbf{L}^2(\Omega)}+\|\bar{\mathbf{y}}-\bar{\mathbf{y}}_\T\|_{\mathbf{L}^2(\Omega)}\|\bar{\mathbf{z}}-\hat{\mathbf{z}}\|_{\mathbf{L}^2(\Omega)}.
\end{multline*}
Utilize \eqref{eq:assumption_y_T_small_2} and a Poincar\'e inequality to obtain
\begin{equation}
\label{eq:bar_hat_adjoint_II}
\nu(1-\theta) \|\nabla (\bar{\mathbf{z}}-\hat{\mathbf{z}})\|_{\mathbf{L}^2(\Omega)} 
\leq 
\left(2\mathcal{C}_{b}\|\nabla \bar{\mathbf{z}}\|_{\mathbf{L}^2(\Omega)}
+ C_2^2\right) \| \nabla( \bar{\mathbf{y}}-\bar{\mathbf{y}}_\T)\|_{\mathbf{L}^2(\Omega)}.
\end{equation}
We thus invoke the stability estimate \eqref{eq:stability_adjoint_par}, the smallness assumption \eqref{eq:smallness_assumption}, and the results of Theorem \ref{thm:well_posedness_navier_stokes} to obtain
\begin{equation}
\label{eq:bound_bar_z}
\|\nabla \bar{\mathbf{z}}\|_{\mathbf{L}^2(\Omega)} \leq \FF{[\nu(1-\theta)]^{-1}C_2} \left(C_2\theta\mathcal{C}_{b}^{-1}\nu+\|\mathbf{y}_\Omega\|_{\mathbf{L}^2(\Omega)}\right).
\end{equation}
Replace this estimate into \eqref{eq:bar_hat_adjoint_II} to obtain
$
\|\nabla (\bar{\mathbf{z}}-\hat{\mathbf{z}})\|_{\mathbf{L}^2(\Omega)}
\lesssim
\|\nabla \mathbf{e}_\mathbf{y}\|_{\mathbf{L}^2(\Omega)},
$
\EO{with} a hidden constant that is independent of the continuous and discrete optimal variables, the size of the elements in the mesh $\T$, and $\#\T$ but depends on the continuous problem data and \EO{the constants} $C_2$, $\mathcal{C}_b$, \EO{$\nu$,} and $\theta$. We thus invoke \eqref{eq:final_estimate_st_velocity} to obtain
\begin{equation}\label{eq:bar_hat_adjoint_IV}
\|\nabla (\bar{\mathbf{z}}-\hat{\mathbf{z}})\|_{\mathbf{L}^2(\Omega)}
\lesssim
\mathcal{E}_{ad}+ \mathcal{E}_{st}+\mathcal{E}_{ct},
\end{equation}
which, in view of  \eqref{eq:adjoint_state_velocity_I}, yields the a posteriori error estimate
\begin{equation}\label{eq:final_estimate_adj_velocity}
\|\nabla\mathbf{e}_{\mathbf{z}}\|_{\mathbf{L}^2(\Omega)}
\lesssim
\mathcal{E}_{ad}+ \mathcal{E}_{st}+\mathcal{E}_{ct}.
\end{equation}

\emph{Step 5.} We now control $\|e_{r}\|_{L^2(\Omega)}$ in \eqref{def:error_norm}. We begin by applying \eqref{eq:adjoint_hat_estimate} to derive
\begin{equation}\label{eq:adj_pressure_I}
\|e_{r}\|_{L^2(\Omega)}
\lesssim
\|\bar{r}-\hat{r}\|_{L^2(\Omega)}+\mathcal{E}_{ad}.
\end{equation}
To estimate $\|\bar{r}-\hat{r}\|_{L^2(\Omega)}$ we utilize the inf--sup condition \eqref{eq:inf_sup_cond}, problem \eqref{eq:bar_hat_adjoint_I}, and \eqref{eq:trilinear_embedding}:
\begin{multline*}
\|\bar{r}-\hat{r}\|_{L^2(\Omega)}
\lesssim
\sup_{\mathbf{w}\in\mathbf{H}_0^1(\Omega)}\frac{(\bar{r}-\hat{r},\text{div } \mathbf{w})_{L^2(\Omega)}}{\|\nabla \mathbf{w}\|_{\mathbf{L}^2(\Omega)}} \lesssim \|\nabla(\bar{\mathbf{z}}-\hat{\mathbf{z}})\|_{\mathbf{L}^2(\Omega)}+\|\bar{\mathbf{y}}-\bar{\mathbf{y}}_\T\|_{\mathbf{L}^2(\Omega)}\\
+ \|\nabla(\bar{\mathbf{y}}-\bar{\mathbf{y}}_\T)\|_{\mathbf{L}^2(\Omega)}\|\nabla\bar{\mathbf{z}}\|_{\mathbf{L}^2(\Omega)}+\|\nabla\bar{\mathbf{y}}_\T\|_{\mathbf{L}^2(\Omega)}\|\nabla(\bar{\mathbf{z}}-\hat{\mathbf{z}})\|_{\mathbf{L}^2(\Omega)}.
\end{multline*}
We thus invoke assumption \eqref{eq:assumption_y_T_small_2} and estimate \eqref{eq:bound_bar_z} to arrive at
\begin{equation*}
\|\bar{r}-\hat{r}\|_{L^2(\Omega)}
\lesssim
\|\nabla(\bar{\mathbf{z}}-\hat{\mathbf{z}})\|_{\mathbf{L}^2(\Omega)}+\|\nabla\mathbf{e}_{\mathbf{y}}\|_{\mathbf{L}^2(\Omega)},
\end{equation*}
with a hidden constant that is independent of the continuous and discrete optimal variables, the size of the elements in the mesh $\T$, and $\#\T$ but depends on the continuous problem data and \EO{the constants} $C_2$, $\mathcal{C}_b$, \EO{$\nu$}, and $\theta$. The estimates \eqref{eq:final_estimate_st_velocity} and \eqref{eq:bar_hat_adjoint_IV} immediately yield $\|\bar{r}-\hat{r}\|_{L^2(\Omega)} \lesssim \mathcal{E}_{ad}+ \mathcal{E}_{st}+\mathcal{E}_{ct}$. Finally, we replace this estimate into \eqref{eq:adj_pressure_I} to obtain the a posteriori error estimate
\begin{equation}\label{eq:final_estimate_adj_pressure}
\|e_r\|_{L^2(\Omega)}
\lesssim
\mathcal{E}_{ad}+ \mathcal{E}_{st}+\mathcal{E}_{ct}.
\end{equation}


\emph{Step 6.} The desired estimate \eqref{eq:global_rel} follows from collecting the estimates \eqref{eq:final_estimate_control}, \eqref{eq:final_estimate_st_velocity}, \eqref{eq:final_estimate_st_pressure}, \eqref{eq:final_estimate_adj_velocity}, and \eqref{eq:final_estimate_adj_pressure}. This concludes the proof.
\end{proof}


\subsection{Local efficiency analysis: \EO{the fully discrete scheme}}\label{sec:efficiency}
In this section, we analyze the efficiency properties of the a posteriori error estimator $\mathcal{E}_{ocp}$, defined in \eqref{def:error_estimator_ocp}, on the basis of standard bubble function arguments. Before proceeding with such an analysis, we introduce the following notation: For an edge, triangle, or tetrahedron $G$, let $\mathcal{V}(G)$ be the set of vertices of $G$. With this notation at hand, we introduce, for  $T\in\mathscr{T}$ and $S\in\mathscr{S}$, the following standard element and edge bubble functions \cite{MR3059294}:
\begin{equation*}\label{def:standard_bubbles}
\varphi^{}_{T}=
(d+1)^{(d+1)}\prod_{\textsc{v} \in \mathcal{V}(T)} \lambda^{}_{\textsc{v}},
\qquad
\varphi^{}_{S}=
d^{d} \prod_{\textsc{v} \in \mathcal{V}(S)}\lambda^{}_{\textsc{v}}|^{}_{T'} \text { with } T' \subset \mathcal{N}_{S}.
\end{equation*}
In these formulas, by $\lambda^{}_{\textsc{v}}$ we denote the barycentric coordinate function associated to $\textsc{v} \in \mathcal{V}(T)$. We recall that $\mathcal{N}_{S}$ corresponds to the patch composed of the two elements of $\mathscr{T}$ sharing $S$.

We \EO{derive} local efficiency estimates for the indicator $\mathcal{E}_{st,T}$ defined in \eqref{eq:error_estimator_st}.

\begin{theorem}[\FF{local estimates for} $\mathcal{E}_{st}$]\label{thm:local_eff_st}
Assume that assumptions \eqref{eq:smallness_assumption} \FF{and \eqref{eq:assumption_y_T_small_2} hold}. Let $(\bar{\mathbf{y}},\bar{p},\bar{\mathbf{z}},\bar{r},\bar{\mathbf{u}}) \in \mathbf{H}_0^1(\Omega)\times L_0^2(\Omega) \times \mathbf{H}_0^1(\Omega)\times L_0^2(\Omega) \times \mathbb{U}_{ad}$ be a local solution of \eqref{eq:min_weak_setting}--\eqref{eq:weak_st_equation}. Let $\bar{\mathbf{u}}_{\T}$ be a local minimum of the associated \FF{fully} discrete optimal control problem \FF{\eqref{eq:discrete_cost_mini}--\eqref{eq:discrete_state_equation}}, with $(\bar{\mathbf{y}}_{\T},\bar{p}_\T)$ and $(\bar{\mathbf{z}}_{\T},\bar{r}_\T)$ being the corresponding state and adjoint state discrete variables, respectively. Then, for $T\in\T$, the local error indicator $\mathcal{E}_{st,T}$ satisfies
\begin{equation}\label{eq:local_eff_st}
\mathcal{E}_{st,T}
\lesssim
\FF{\|\mathbf{e}_{\mathbf{y}}\|_{\mathbf{H}^1(\mathcal{N}_{T})}+\|e_p\|_{L^2(\mathcal{N}_{T})}+h_T\|\mathbf{e}_{\mathbf{u}}\|_{\mathbf{L}^2(\mathcal{N}_{T})}},
\end{equation}
where $\mathcal{N}_{T}$ is defined as in \eqref{def:patch}. The hidden constant is independent of the continuous and discrete optimal variables, the size of the elements in the mesh $\T$, and $\#\T$.
\end{theorem}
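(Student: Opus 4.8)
The plan is to follow the standard residual-efficiency machinery built on the interior and face bubble functions $\varphi_T$ and $\varphi_S$, bounding the three contributions to $\mathcal{E}_{st,T}$ in \eqref{eq:error_estimator_st} one at a time and exploiting that the continuous state $(\bar{\mathbf{y}},\bar{p})$ solves \eqref{eq:weak_st_equation} with datum $\bar{\mathbf{u}}$ while $(\bar{\mathbf{y}}_\T,\bar{p}_\T)$ solves the discrete problem with datum $\bar{\mathbf{u}}_\T$. The divergence term is immediate: since $\bar{\mathbf{y}}$ is solenoidal, $\text{div}\,\bar{\mathbf{y}}_\T = -\text{div}\,\mathbf{e}_{\mathbf{y}}$ on every $T$, so $\|\text{div}\,\bar{\mathbf{y}}_\T\|_{L^2(T)} \leq \sqrt{d}\,\|\nabla\mathbf{e}_{\mathbf{y}}\|_{\mathbf{L}^2(T)}$, which already has the form required on the right-hand side of \eqref{eq:local_eff_st}.

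For the interior residual I set $\mathbf{R}_T := \bar{\mathbf{u}}_\T + \nu\Delta\bar{\mathbf{y}}_\T - (\bar{\mathbf{y}}_\T\cdot\nabla)\bar{\mathbf{y}}_\T - \nabla\bar{p}_\T$, a polynomial on $T$, and test with $\mathbf{v}_T := \varphi_T\mathbf{R}_T \in \mathbf{H}_0^1(T)$. Norm equivalence on polynomial spaces gives $\|\mathbf{R}_T\|_{\mathbf{L}^2(T)}^2 \lesssim (\mathbf{R}_T,\mathbf{v}_T)_{\mathbf{L}^2(T)}$; integrating by parts (no boundary terms, as $\mathbf{v}_T$ vanishes on $\partial T$) and subtracting \eqref{eq:weak_st_equation} tested against $\mathbf{v}_T$ yields
\begin{multline*}
(\mathbf{R}_T,\mathbf{v}_T)_{\mathbf{L}^2(T)} = \nu(\nabla\mathbf{e}_{\mathbf{y}},\nabla\mathbf{v}_T)_{\mathbf{L}^2(T)} - (e_p,\text{div}\,\mathbf{v}_T)_{L^2(T)} \\ - (\mathbf{e}_{\mathbf{u}},\mathbf{v}_T)_{\mathbf{L}^2(T)} + b_T(\bar{\mathbf{y}};\bar{\mathbf{y}},\mathbf{v}_T) - b_T(\bar{\mathbf{y}}_\T;\bar{\mathbf{y}}_\T,\mathbf{v}_T),
\end{multline*}
where $b_T$ is the trilinear form restricted to $T$. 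Applying Cauchy--Schwarz to the linear terms together with the bubble scalings $\|\mathbf{v}_T\|_{\mathbf{L}^2(T)}\lesssim\|\mathbf{R}_T\|_{\mathbf{L}^2(T)}$ and $\|\nabla\mathbf{v}_T\|_{\mathbf{L}^2(T)}\lesssim h_T^{-1}\|\mathbf{R}_T\|_{\mathbf{L}^2(T)}$, and then multiplying by $h_T$, produces $h_T\|\mathbf{R}_T\|_{\mathbf{L}^2(T)}\lesssim \|\nabla\mathbf{e}_{\mathbf{y}}\|_{\mathbf{L}^2(T)} + \|e_p\|_{L^2(T)} + h_T\|\mathbf{e}_{\mathbf{u}}\|_{\mathbf{L}^2(T)}$, modulo the convective difference. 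For the jump term I extend the face residual $\mathbf{J}_S := \llbracket(\nu\nabla\bar{\mathbf{y}}_\T - \bar{p}_\T\mathbb{I}_{d})\cdot\mathbf{n}\rrbracket$ to the patch $\mathcal{N}_S$, multiply by $\varphi_S$, and integrate by parts elementwise; this expresses $\int_S\mathbf{J}_S\cdot\mathbf{v}_S$ in terms of the already-controlled interior residuals $\mathbf{R}_{T'}$, the errors $\mathbf{e}_{\mathbf{y}}$, $e_p$, $\mathbf{e}_{\mathbf{u}}$, and a convective remainder. The face-bubble scalings $\|\mathbf{v}_S\|_{\mathbf{L}^2(T')}\lesssim h_S^{1/2}\|\mathbf{J}_S\|_{\mathbf{L}^2(S)}$ and $\|\nabla\mathbf{v}_S\|_{\mathbf{L}^2(T')}\lesssim h_S^{-1/2}\|\mathbf{J}_S\|_{\mathbf{L}^2(S)}$ then close the estimate, and summing over the faces of $T$ generates the patch norms on $\mathcal{N}_T$ appearing in \eqref{eq:local_eff_st}.

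The hard part will be the convective difference $b_T(\bar{\mathbf{y}};\bar{\mathbf{y}},\mathbf{v}) - b_T(\bar{\mathbf{y}}_\T;\bar{\mathbf{y}}_\T,\mathbf{v})$, for both $\mathbf{v}=\mathbf{v}_T$ and $\mathbf{v}=\mathbf{v}_S$. I would split it as $b_T(-\mathbf{e}_{\mathbf{y}};\bar{\mathbf{y}}_\T,\mathbf{v}) + b_T(\bar{\mathbf{y}};-\mathbf{e}_{\mathbf{y}},\mathbf{v})$ and estimate each factor via a generalized H\"older inequality together with the embedding $\mathbf{H}^1\hookrightarrow\mathbf{L}^4$ (valid for $d\in\{2,3\}$) and local inverse estimates for the discrete bubble, so that every occurrence of $\mathbf{e}_{\mathbf{y}}$ collapses into $\|\mathbf{e}_{\mathbf{y}}\|_{\mathbf{H}^1}$; it is precisely the undifferentiated factor in $b_T(-\mathbf{e}_{\mathbf{y}};\bar{\mathbf{y}}_\T,\mathbf{v})$ that upgrades the velocity seminorm to the full $\mathbf{H}^1(\mathcal{N}_T)$ norm on the right-hand side of \eqref{eq:local_eff_st}. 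The two delicate bookkeeping points are (i) ensuring that the powers of $h_T$ produced by the inverse estimates exactly match those gained from the bubble scalings, so no negative power of $h_T$ survives, and (ii) keeping the hidden constants mesh-independent by absorbing the factors $\|\nabla\bar{\mathbf{y}}_\T\|_{\mathbf{L}^2}$ and $\|\nabla\bar{\mathbf{y}}\|_{\mathbf{L}^2}$ into the uniform bounds supplied by the stability estimate \eqref{eq:stability_state_eq} and assumptions \eqref{eq:smallness_assumption} and \eqref{eq:assumption_y_T_small_2}. These smallness bounds are exactly what prevents the efficiency constant from degenerating.
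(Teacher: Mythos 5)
Your scaffolding coincides with the paper's proof: the elementwise error equation obtained by subtracting the discrete momentum balance from the continuous one, testing with $\varphi_T\mathbf{R}_T$ and $\varphi_S\mathbf{J}_S$, the standard bubble scalings, and the one-line bound for $\|\text{div}\,\bar{\mathbf{y}}_\T\|_{L^2(T)}$ are all exactly what the paper does. The gap sits in the one step you yourself flagged as delicate: the convective difference. Your splitting (up to sign) is $b_T(\mathbf{e}_{\mathbf{y}};\bar{\mathbf{y}}_\T,\mathbf{v})+b_T(\bar{\mathbf{y}};\mathbf{e}_{\mathbf{y}},\mathbf{v})$, which puts the gradient on the \emph{discrete} velocity $\bar{\mathbf{y}}_\T$. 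The only uniform control available on $\nabla\bar{\mathbf{y}}_\T$ is in $\mathbf{L}^2$, via \eqref{eq:assumption_y_T_small_2}, so the two remaining factors must share the leftover integrability, e.g.
\begin{equation*}
|b_T(\mathbf{e}_{\mathbf{y}};\bar{\mathbf{y}}_\T,\varphi_T\mathbf{R}_T)|
\leq
\|\mathbf{e}_{\mathbf{y}}\|_{\mathbf{L}^4(T)}\,\|\nabla\bar{\mathbf{y}}_\T\|_{\mathbf{L}^2(T)}\,\|\varphi_T\mathbf{R}_T\|_{\mathbf{L}^4(T)}.
\end{equation*}
In three dimensions the inverse estimate gives $\|\varphi_T\mathbf{R}_T\|_{\mathbf{L}^4(T)}\lesssim h_T^{-3/4}\|\mathbf{R}_T\|_{\mathbf{L}^2(T)}$, and the constant of the local embedding $\mathbf{H}^1(T)\hookrightarrow\mathbf{L}^4(T)$ scales the same way, $\|\mathbf{e}_{\mathbf{y}}\|_{\mathbf{L}^4(T)}\lesssim h_T^{-3/4}\|\mathbf{e}_{\mathbf{y}}\|_{\mathbf{L}^2(T)}+h_T^{1/4}\|\nabla\mathbf{e}_{\mathbf{y}}\|_{\mathbf{L}^2(T)}$; after multiplying by $h_T$ a factor $h_T^{-1/2}$ survives in front of $\|\mathbf{e}_{\mathbf{y}}\|_{\mathbf{L}^2(T)}$. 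This is not an accident of the exponent choice: with $\nabla\bar{\mathbf{y}}_\T$ held in $\mathbf{L}^2$, H\"older forces $\mathbf{e}_{\mathbf{y}}\in\mathbf{L}^p$ and $\mathbf{v}\in\mathbf{L}^{p'}$ with $1/p+1/p'=1/2$, and the combined scaling is always $h_T^{-d/2}$, i.e.\ $h_T^{1-d/2}=h_T^{-1/2}$ after the multiplication by $h_T$ when $d=3$; upgrading $\nabla\bar{\mathbf{y}}_\T$ to $\mathbf{L}^3(T)$ by an inverse inequality costs the same $h_T^{-1/2}$. So your route can only produce a bound containing $h_T^{-1/2}\|\mathbf{e}_{\mathbf{y}}\|_{\mathbf{L}^2(\mathcal{N}_T)}$ (the kind of term that genuinely appears in Theorem \ref{thm:local_eff_ad}), not the clean estimate \eqref{eq:local_eff_st}. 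In $d=2$ your bookkeeping does balance, so the proposal proves the theorem only in two dimensions, while the statement covers $d\in\{2,3\}$.

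The paper escapes this by taking the opposite splitting, $b(\mathbf{e}_{\mathbf{y}};\bar{\mathbf{y}},\mathbf{v})+b(\bar{\mathbf{y}}_\T;\mathbf{e}_{\mathbf{y}},\mathbf{v})$, so that the gradient lands on the \emph{continuous} velocity, and then applying H\"older with exponents $(2,d,\mathfrak{d})$, $\mathfrak{d}=\infty$ for $d=2$ and $\mathfrak{d}=6$ for $d=3$. Both error factors are then measured in $\mathbf{L}^2$-type norms (no local Sobolev scaling needed), the undifferentiated discrete factor is controlled by $\|\bar{\mathbf{y}}_\T\|_{\mathbf{L}^d(\Omega)}\lesssim\|\nabla\bar{\mathbf{y}}_\T\|_{\mathbf{L}^2(\Omega)}$, and the single bubble inverse estimate $\|\varphi_T\mathbf{R}_T\|_{\mathbf{L}^\mathfrak{d}(T)}\lesssim h_T^{-1}\|\mathbf{R}_T\|_{\mathbf{L}^2(T)}$ is exactly cancelled by the factor $h_T$. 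The price is the term $\|\nabla\bar{\mathbf{y}}\|_{\mathbf{L}^3(T)}$ when $d=3$, which is precisely why the paper needs $\bar{\mathbf{y}}\in\mathbf{W}^{1,3}(\Omega)$, supplied by the Stokes regularity argument of Remark \ref{rk:regularity}; your proposal never invokes this regularity, a symptom of the three-dimensional case going unhandled. To repair your argument: swap the splitting, cite Remark \ref{rk:regularity}, and rerun your two bookkeeping checks; they then pass in both dimensions, and the same fix propagates to your jump-term estimate.
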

\begin{proof}

We begin by noticing that, since $(\bar{\mathbf{y}},\bar{p})\in \mathbf{H}_0^1(\Omega)\times L_0^2(\Omega)$ solves \eqref{eq:weak_st_equation} with $\mathbf{u}$ replaced by $\bar{\mathbf{u}}$, an elementwise integration by parts formula allows us to derive 
\begin{multline}\label{eq:error_eq_st}
\nu(\nabla \mathbf{e}_{\mathbf{y}},\nabla \mathbf{v})_{\mathbf{L}^2(\Omega)}+b(\FF{\mathbf{e}_{\mathbf{y}};\bar{\mathbf{y}}},\mathbf{v})+b(\FF{\bar{\mathbf{y}}_\T;\mathbf{e}_{\mathbf{y}},}\mathbf{v})-(e_p,\text{div }\mathbf{v})_{L^2(\Omega)}
\\
+(q,\text{div }\mathbf{e}_{\mathbf{y}})_{L^2(\Omega)}-(\mathbf{e}_{\mathbf{u}},\mathbf{v})_{\mathbf{L}^2(\Omega)}=\sum_{T\in\T}\left(\bar{\mathbf{u}}_\T+\nu\Delta\bar{\mathbf{y}}_\T-(\bar{\mathbf{y}}_\T\cdot \nabla)\bar{\mathbf{y}}_\T-\nabla \bar{p}_\T,\mathbf{v}\right)_{\mathbf{L}^2(T)}
\\
+\sum_{S\in\Sides}\left(\llbracket (\nu\nabla \bar{\mathbf{y}}_\T-\bar{p}_\T\mathbb{I}_{d} )\cdot \mathbf{n}\rrbracket,\mathbf{v}\right)_{\mathbf{L}^2(S)}-\sum_{T\in\T}(q,\text{div }\bar{\mathbf{y}}_\T)_{L^2(T)},
\end{multline}
which holds for every $\mathbf{v}\in \mathbf{H}_0^1(\Omega)$ and $q\in L_0^2(\Omega)$. With the aid of this identity, in the following steps, we will estimate separately each of the individual terms that appear in the definition of the local error indicator $\mathcal{E}_{st,T}$.

We now proceed on the basis of four steps.

\emph{Step 1.} Let $T\in \T$. Define \[
\mathbf{R}_{T}^{st}:=\left(\bar{\mathbf{u}}_\T+\nu\Delta\bar{\mathbf{y}}_\T-(\bar{\mathbf{y}}_\T\cdot \nabla)\bar{\mathbf{y}}_\T-\nabla \bar{p}_\T\right)|^{}_T.
\]
We bound $h_T\| \mathbf{R}_{T}^{st}\|_{\mathbf{L}^2(T)}$ in \eqref{eq:error_estimator_st}. To accomplish this task, we set $\mathbf{v} = \varphi_T\mathbf{R}_{T}^{st}$ and $q=0$ in \eqref{eq:error_eq_st} and utilize standard properties of the bubble function $\varphi_T$ \EO{combined with basic inequalities to arrive at}
\begin{multline*}
\|\mathbf{R}_{T}^{st}\|_{\mathbf{L}^2(T)}^2
\lesssim 
\left(\|\mathbf{e}_{\mathbf{y}}\|_{\mathbf{L}^2(T)}\|\nabla \bar{\mathbf{y}}\|_{\FF{\mathbf{L}^{d}}(T)}+\|\bar{\mathbf{y}}_\T\|_{\FF{\mathbf{L}^d}(T)}\|\nabla\mathbf{e}_{\mathbf{y}}\|_{\FF{\mathbf{L}^2}(T)}\right)\|\varphi_T\mathbf{R}_{T}^{st}\|_{\FF{\mathbf{L}^\mathfrak{d}}(T)}\\
+\left(\|\nabla \mathbf{e}_{\mathbf{y}}\|_{\mathbf{L}^2(T)}+\|e_p\|_{L^2(T)}\right)\|\nabla(\varphi_T\mathbf{R}_{T}^{st})\|_{\mathbf{L}^2(T)}
+
\|\mathbf{e}_{\mathbf{u}}\|_{\mathbf{L}^2(T)}\|\varphi_T\mathbf{R}_{T}^{st}\|_{\mathbf{L}^2(T)},
\end{multline*}
\FF{where $\mathfrak{d}=\infty$ if $d=2$ and $\mathfrak{d}=6$ if $d=3$.} We thus apply \EO{inverse inequalities \cite[Lemma 4.5.3]{MR2373954}} and bubble functions arguments to obtain
\begin{multline}\label{eq:eff_st_I}
\|\mathbf{R}_{T}^{st}\|_{\mathbf{L}^2(T)}
\lesssim 
\FF{h_T^{-1}}\left(\|\mathbf{e}_{\mathbf{y}}\|_{\mathbf{L}^2(T)}\|\nabla\bar{\mathbf{y}}\|_{\FF{\mathbf{L}^d}(T)}+\|\bar{\mathbf{y}}_\T\|_{\FF{\mathbf{L}^d}(T)}\|\nabla\mathbf{e}_{\mathbf{y}}\|_{\FF{\mathbf{L}^2}(T)}\right)\\
+h_T^{-1}\left(\|\nabla \mathbf{e}_{\mathbf{y}}\|_{\mathbf{L}^2(T)}+\|e_p\|_{L^2(T)}\right)+\|\mathbf{e}_{\mathbf{u}}\|_{\mathbf{L}^2(T)}.
\end{multline}
\EO{Observe that, since $d \in \{2,3\}$, $\mathbf{H}_0^1(\Omega)\hookrightarrow \mathbf{L}^{d}(\Omega)$. This and \eqref{eq:assumption_y_T_small_2} yield}
\begin{equation}\label{eq:eff_st_II}
\FF{\|\bar{\mathbf{y}}_\T\|_{\mathbf{L}^d(T)}\leq \|\bar{\mathbf{y}}_\T\|_{\mathbf{L}^d(\Omega)}\leq C\|\nabla\bar{\mathbf{y}}_\T\|_{\mathbf{L}^2(\Omega)} < C\nu\mathcal{C}_{b}^{-1}/2,}
\qquad
\EO{C > 0}.
\end{equation}
\EO{On the other hand, in view of \eqref{eq:stability_state_eq}, when $d=2$, and the fact that $\bar{\mathbf{y}}\in \mathbf{W}^{1,3}(\Omega)$, when $d=3$ (see Remark \ref{rk:regularity}) we have}
$
\FF{\|\nabla \bar{\mathbf{y}}\|_{\mathbf{L}^d(T)}
\leq \|\nabla \bar{\mathbf{y}}\|_{\mathbf{L}^d(\Omega)} < C,}
$
\EO{where $C > 0$}. Replacing this estimate and \eqref{eq:eff_st_II} into inequality \eqref{eq:eff_st_I}, we obtain
\begin{equation}\label{eq:final_residual_st}
h_T^2\|\mathbf{R}_{T}^{st}\|_{\mathbf{L}^2(T)}^2
\lesssim
\FF{\|\mathbf{e}_{\mathbf{y}}\|_{\mathbf{H}^1(T)}^2+\|e_p\|_{L^2(T)}^2+h_T^2\|\mathbf{e}_{\mathbf{u}}\|_{\mathbf{L}^2(T)}^2},
\end{equation}
with a hidden constant that is independent of the continuous and discrete optimal variables, the size of the elements in the mesh $\T$, and $\#\T$ but depends on the continuous problem data and \EO{the constants $\nu$ and $\mathcal{C}_b$.}

\emph{Step 2.} Let $T\in\T$ and $S\in\Sides_T$. Define $\mathbf{J}_{S}^{st}:= \llbracket (\nu\nabla \bar{\mathbf{y}}_\T-\bar{p}_\T\mathbb{I}_{d} )\cdot \mathbf{n}\rrbracket$. We bound the jump term $h_T\|\mathbf{J}_{S}^{st}\|_{\mathbf{L}^2(S)}^2$ in \eqref{eq:error_estimator_st}. To accomplish this task, we set $\mathbf{v}=\varphi_{S}\mathbf{J}_{S}^{st}$ and $q=0$ in \eqref{eq:error_eq_st} and proceed on the basis of similar arguments to the ones that lead to \eqref{eq:eff_st_I}. These arguments yield
\begin{multline*}
\|\mathbf{J}_{S}^{st}\|_{\mathbf{L}^2(S)}^2
\lesssim
\sum_{T'\in\mathcal{N}_S}\bigg(\|\mathbf{e}_{\mathbf{u}}\|_{\mathbf{L}^2(T')}+\|\mathbf{R}_{T'}^{st}\|_{\mathbf{L}^2(T')}+\FF{h_{T'}^{-1}}\left(\|\mathbf{e}_{\mathbf{y}}\|_{\mathbf{L}^2(T')}\|\nabla\bar{\mathbf{y}}\|_{\FF{\mathbf{L}^d}(T')}\right.
\\
\left.
+\|\bar{\mathbf{y}}_\T\|_{\FF{\mathbf{L}^d}(T')}\|\nabla\mathbf{e}_{\mathbf{y}}\|_{\FF{\mathbf{L}^2}(T')}\right)+h_{T'}^{-1}\left(\|\nabla \mathbf{e}_{\mathbf{y}}\|_{\mathbf{L}^2(T')}+\|e_p\|_{L^2(T')}\right)\bigg)h_T^{\frac{1}{2}}\|\mathbf{J}_S^{st}\|_{\mathbf{L}^2(S)}.
\end{multline*}
Invoke \eqref{eq:eff_st_II} and \eqref{eq:final_residual_st} to arrive at
\begin{equation}\label{eq:final_jump_st}
h_{T}\|\mathbf{J}_{S}^{st}\|_{\mathbf{L}^2(S)}^2
\lesssim 
\sum_{T'\in\mathcal{N}_S}\left(\FF{\|\mathbf{e}_{\mathbf{y}}\|_{\mathbf{H}^1(T')}^2+\|e_p\|_{L^2(T')}^2+h_{T'}^2\|\mathbf{e}_{\mathbf{u}}\|_{\mathbf{L}^2(T')}^2}\right).
\end{equation}

\emph{Step 3.} Let $T\in \T$. The goal of this step is to control the term $\|\text{div }\bar{\mathbf{y}}_\T\|_{L^2(T)}^2$ in \eqref{eq:error_estimator_st}. From the incompressibility condition $\text{div }\bar{\mathbf{y}} = 0$, it immediately follows that
\begin{equation}\label{eq:final_div_st}
\|\text{div }\bar{\mathbf{y}}_\T\|_{L^2(T)}
\leq 
\|\text{div }\mathbf{e}_\mathbf{y}\|_{L^2(T)}
\lesssim 
\|\nabla\mathbf{e}_\mathbf{y}\|_{\mathbf{L}^2(T)}
\lesssim 
\|\mathbf{e}_\mathbf{y}\|_{\mathbf{H}^1(T)}.
\end{equation}

\emph{Step 4.} The proof concludes by gathering the estimates \eqref{eq:final_residual_st}, \eqref{eq:final_jump_st}, and \eqref{eq:final_div_st}.
\end{proof}

\EO{
\begin{remark}[$\mathbf{W}^{1,3}(\Omega)$-regularity of $\bar{\mathbf{y}}$ for $d=3$]
\label{rk:regularity}
Observe that the pair $(\bar{\mathbf{y}} , \bar p) \in \mathbf{H}_0^1(\Omega) \times L_0^2(\Omega)$ can be seen as the solution to the following Stokes system:
\begin{equation*}
-\nu\Delta \bar{\mathbf{y}}+\nabla \bar p = \bar{\mathbf{u}} - (\bar{\mathbf{y}}\cdot\nabla)\bar{\mathbf{y}}\text{ in } \Omega, \quad \text{div }\bar{\mathbf{y}}=0 \text{ in } \Omega,\quad \bar{\mathbf{y}}=\boldsymbol 0  \text{ on } \partial\Omega.
\end{equation*}
Since $d=3$, $\bar{\mathbf{u}} \in \mathbf{L}^2(\Omega)$, and $(\bar{\mathbf{y}}\cdot\nabla)\bar{\mathbf{y}} \in \mathbf{W}^{-1,3}(\Omega)$, an application of the regularity results on Lipschitz domains of \cite[Corollary 1.7 (with $\alpha=-1$ and $q=2$)]{MR2987056} yields $\bar{\mathbf{y}} \in \mathbf{W}_0^{1,3}(\Omega)$. 
\end{remark}
}

We now investigate the local efficiency properties of $\mathcal{E}_{ad,T}$ defined in \eqref{eq:local_error_indicator_ad}. To accomplish this task, for any $\mathbf{g}\in \mathbf{L}^2(\Omega)$ and $\mathcal{M}\subset\T$, we define the oscillation term
\begin{equation}\label{eq:osc_term}
\mathrm{osc}_{\mathcal{M}}(\mathbf{g}):=\left(\sum_{T\in\mathcal{M}}h_T^2\|\mathbf{g}-\Pi_{T}(\mathbf{g})\|_{\mathbf{L}^2(T)}^2\right)^{\frac{1}{2}},
\end{equation}
where $\Pi_{T}$ denotes the $\mathbf{L}^2$--projection onto piecewise \EO{quadratic} functions over ${T}$.

\begin{theorem}[\FF{local estimates for} $\mathcal{E}_{ad}$]\label{thm:local_eff_ad}
Assume that assumptions \eqref{eq:smallness_assumption} \FF{and \eqref{eq:assumption_y_T_small_2} hold}. Let $(\bar{\mathbf{y}},\bar{p},\bar{\mathbf{z}},\bar{r},\bar{\mathbf{u}}) \in \mathbf{H}_0^1(\Omega)\times L_0^2(\Omega) \times \mathbf{H}_0^1(\Omega)\times L_0^2(\Omega) \times \mathbb{U}_{ad}$  be a local solution of \eqref{eq:min_weak_setting}--\eqref{eq:weak_st_equation}. Let $\bar{\mathbf{u}}_{\T}$ be a local minimum of the associated discrete optimal control problem with $(\bar{\mathbf{y}}_{\T},\bar{p}_\T)$ and $(\bar{\mathbf{z}}_{\T},\bar{r}_\T)$ being the corresponding state and adjoint state discrete variables, respectively. Then, for $T\in\T$, $\mathcal{E}_{ad,T}$ \EO{satisfies
\begin{multline}\label{eq:local_eff_ad}
\mathcal{E}_{ad,T}
\lesssim 
\|\nabla\mathbf{e}_{\mathbf{z}}\|_{\mathbf{L}^2(\mathcal{N}_T)}
+
\|\nabla\mathbf{e}_{\mathbf{y}}\|_{\mathbf{L}^2(\mathcal{N}_T)}
\\
+
h_T^{-\frac{1}{2}}\|\mathbf{e}_{\mathbf{z}}\|_{\mathbf{L}^2(\mathcal{N}_T)}
+
h_T^{-\frac{1}{2}}\|\mathbf{e}_{\mathbf{y}}\|_{\mathbf{L}^2(\mathcal{N}_T)}
+
\|e_r\|_{L^2(\mathcal{N}_{T})}+\mathrm{osc}_{\mathcal{N}_T}(\mathbf{y}_\Omega),
\end{multline}
where} $\mathcal{N}_T$ and $\mathrm{osc}_{\mathcal{N}_T}(\mathbf{y}_\Omega)$ are defined as in \eqref{def:patch} and \eqref{eq:osc_term}, respectively. The hidden constant is independent of the continuous and discrete optimal variables, the size of the elements in the mesh $\T$, and $\#\T$.
\end{theorem}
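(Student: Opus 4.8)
The plan is to mirror the argument of Theorem \ref{thm:local_eff_st}, now carried out for the adjoint residuals appearing in \eqref{eq:local_error_indicator_ad}. First I would derive an adjoint analogue of the error identity \eqref{eq:error_eq_st}: starting from the continuous adjoint equations \eqref{eq:weak_adjoint_equation} written with data $\bar{\mathbf{y}}-\mathbf{y}_\Omega$ and performing an elementwise integration by parts on the terms carrying $\bar{\mathbf{z}}_\T$ and $\bar{r}_\T$, I obtain, for all $\mathbf{w}\in\mathbf{H}_0^1(\Omega)$ and $s\in L_0^2(\Omega)$, an identity whose right-hand side exhibits precisely the element residual $\mathbf{R}_T^{ad}:=(\bar{\mathbf{y}}_\T-\mathbf{y}_\Omega+\nu\Delta\bar{\mathbf{z}}_\T-(\nabla\bar{\mathbf{y}}_\T)^{\intercal}\bar{\mathbf{z}}_\T+(\bar{\mathbf{y}}_\T\cdot\nabla)\bar{\mathbf{z}}_\T-\nabla\bar{r}_\T)|_T$, the jump $\mathbf{J}_S^{ad}:=\llbracket(\nu\nabla\bar{\mathbf{z}}_\T-\bar{r}_\T\mathbb{I}_{d})\cdot\mathbf{n}\rrbracket$, and the divergence residual $\text{div}\,\bar{\mathbf{z}}_\T$, tested against $\mathbf{w}$ and $s$. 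The left-hand side collects $\nu(\nabla\mathbf{w},\nabla\mathbf{e}_\mathbf{z})$, $-(e_r,\text{div}\,\mathbf{w})$, $-(\mathbf{e}_\mathbf{y},\mathbf{w})$ (the term $\mathbf{y}_\Omega$ cancels here but survives inside $\mathbf{R}_T^{ad}$), together with the convective differences, which I split by adding and subtracting the discrete velocity and adjoint, namely $b(\bar{\mathbf{y}};\mathbf{w},\bar{\mathbf{z}})-b(\bar{\mathbf{y}}_\T;\mathbf{w},\bar{\mathbf{z}}_\T)=b(\mathbf{e}_\mathbf{y};\mathbf{w},\bar{\mathbf{z}})+b(\bar{\mathbf{y}}_\T;\mathbf{w},\mathbf{e}_\mathbf{z})$ and $b(\mathbf{w};\bar{\mathbf{y}},\bar{\mathbf{z}})-b(\mathbf{w};\bar{\mathbf{y}}_\T,\bar{\mathbf{z}}_\T)=b(\mathbf{w};\mathbf{e}_\mathbf{y},\bar{\mathbf{z}})+b(\mathbf{w};\bar{\mathbf{y}}_\T,\mathbf{e}_\mathbf{z})$.

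For the element residual I would, since $\mathbf{y}_\Omega\in\mathbf{L}^2(\Omega)$ only, replace it by its piecewise quadratic $\mathbf{L}^2$-projection $\Pi_T(\mathbf{y}_\Omega)$, writing $\mathbf{R}_T^{ad}=\bar{\mathbf{R}}_T^{ad}+(\Pi_T\mathbf{y}_\Omega-\mathbf{y}_\Omega)$ with $\bar{\mathbf{R}}_T^{ad}$ a polynomial; after the weight $h_T$ the defect produces the oscillation term $\mathrm{osc}_{\mathcal{N}_T}(\mathbf{y}_\Omega)$ of \eqref{eq:osc_term}. Taking $\mathbf{w}=\varphi_T\bar{\mathbf{R}}_T^{ad}$ (extended by zero) and $s=0$, the norm equivalence for bubble-weighted polynomials and the inverse inequalities of \cite[Lemma 4.5.3]{MR2373954} turn $\nu(\nabla\mathbf{w},\nabla\mathbf{e}_\mathbf{z})$ and $(e_r,\text{div}\,\mathbf{w})$ into the $h_T^{-1}$-weighted contributions $h_T^{-1}(\|\nabla\mathbf{e}_\mathbf{z}\|_{\mathbf{L}^2(T)}+\|e_r\|_{L^2(T)})$, while $(\mathbf{e}_\mathbf{y},\mathbf{w})$ yields $\|\mathbf{e}_\mathbf{y}\|_{\mathbf{L}^2(T)}$. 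The four convective pieces are controlled by H\"older's inequality on the scale $\mathbf{L}^2$--$\mathbf{L}^3$--$\mathbf{L}^6$ (for $d=3$; the analogous finite exponents for $d=2$), the Sobolev embedding $\mathbf{H}_0^1(\Omega)\hookrightarrow\mathbf{L}^{6}(\Omega)$, the global stability bounds \eqref{eq:bound_bar_z} and \eqref{eq:assumption_y_T_small_2} on $\|\nabla\bar{\mathbf{z}}\|_{\mathbf{L}^2(\Omega)}$ and $\|\nabla\bar{\mathbf{y}}_\T\|_{\mathbf{L}^2(\Omega)}$, and local inverse estimates to pass from $\mathbf{L}^3$ or $\mathbf{L}^6$ to $\mathbf{L}^2$ norms of the bubble factor; this is exactly where the non-optimally scaled terms $h_T^{-\frac{1}{2}}\|\mathbf{e}_\mathbf{y}\|_{\mathbf{L}^2(T)}$ and $h_T^{-\frac{1}{2}}\|\mathbf{e}_\mathbf{z}\|_{\mathbf{L}^2(T)}$, together with the gradient term $\|\nabla\mathbf{e}_\mathbf{y}\|_{\mathbf{L}^2(T)}$, are produced. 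Multiplying through by $h_T$ then bounds $h_T\|\mathbf{R}_T^{ad}\|_{\mathbf{L}^2(T)}$ by the right-hand side of \eqref{eq:local_eff_ad} restricted to $T$.

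For the jump term I would take $\mathbf{w}=\varphi_S\mathbf{J}_S^{ad}$ supported on $\mathcal{N}_S$ and $s=0$, $S\in\Sides_T$; repeating the residual estimates on each $T'\in\mathcal{N}_S$ and reusing the element bound just obtained controls $h_T^{\frac{1}{2}}\|\mathbf{J}_S^{ad}\|_{\mathbf{L}^2(S)}$ by the same quantities summed over the star $\mathcal{N}_T$. The divergence residual is immediate: since $\text{div}\,\bar{\mathbf{z}}=0$, one has $\|\text{div}\,\bar{\mathbf{z}}_\T\|_{L^2(T)}=\|\text{div}\,\mathbf{e}_\mathbf{z}\|_{L^2(T)}\lesssim\|\nabla\mathbf{e}_\mathbf{z}\|_{\mathbf{L}^2(T)}$. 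Collecting the three contributions and summing over $\mathcal{N}_T$ yields \eqref{eq:local_eff_ad}. I expect the main obstacle to be the bookkeeping of the convective terms: unlike the state estimate, the adjoint carries \emph{two} trilinear forms whose linearizations mix $\nabla\mathbf{e}_\mathbf{y}$, $\mathbf{e}_\mathbf{y}$ and $\mathbf{e}_\mathbf{z}$ against the factors $\bar{\mathbf{z}}$ and $\nabla\bar{\mathbf{y}}_\T$, which are only $\mathbf{L}^2$-bounded; choosing H\"older exponents compatible with $\mathbf{H}_0^1(\Omega)\hookrightarrow\mathbf{L}^{6}(\Omega)$ in three dimensions forces an extra $\mathbf{L}^3$-to-$\mathbf{L}^2$ inverse estimate on the bubble function, and it is precisely this loss of a half power of $h_T$ that prevents full absorption into the gradient norms and leaves the lower-order terms $h_T^{-\frac{1}{2}}\|\mathbf{e}_\mathbf{y}\|_{\mathbf{L}^2(\mathcal{N}_T)}$ and $h_T^{-\frac{1}{2}}\|\mathbf{e}_\mathbf{z}\|_{\mathbf{L}^2(\mathcal{N}_T)}$ in the final bound.
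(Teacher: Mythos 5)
Your plan mirrors the paper's proof almost step for step: the paper also starts from the elementwise-integrated adjoint error identity (its \eqref{eq:error_eq_ad}, with exactly your splitting of the two convective differences), separates the residual as $\mathbf{R}_T^{ad}=\hat{\mathbf{R}}_T^{ad}+(\Pi_T(\mathbf{y}_\Omega)-\mathbf{y}_\Omega)$ to generate the oscillation term, treats element residual, jump, and divergence in separate bubble-function steps, uses the global stability bounds \eqref{eq:bound_bar_z} and \eqref{eq:assumption_y_T_small_2} on the $\mathbf{L}^2$-bounded factors $\bar{\mathbf{z}}$ and $\nabla\bar{\mathbf{y}}_\T$, and obtains the $h_T^{-\frac{1}{2}}$-weighted terms from precisely the convective bookkeeping you describe. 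The one substantive deviation is the test function: the paper tests with the \emph{squared} bubbles $\varphi_T^2\hat{\mathbf{R}}_T^{ad}$ and $\varphi_S^2\mathbf{J}_S^{ad}$, not $\varphi_T\hat{\mathbf{R}}_T^{ad}$, and this choice is not cosmetic. The errors $\mathbf{e}_\mathbf{y}$, $\mathbf{e}_\mathbf{z}$ are not discrete functions, so no inverse estimate can be applied to them; wherever they appear undifferentiated in the trilinear terms they must be raised to $\mathbf{L}^3$ ($d=3$) or $\mathbf{L}^4$ ($d=2$) by a \emph{direct} embedding with the right $h_T$-scaling. The squared bubble leaves one factor $\varphi_T$ attached to each error, so that $\varphi_T\mathbf{e}_\mathbf{y},\varphi_T\mathbf{e}_\mathbf{z}\in\mathbf{H}_0^1(T)$ and the Poincar\'e--Sobolev inequalities of \cite[Lemma II.3.2, inequalities (II.5.5) and (II.3.7)]{Gal11} yield $\|\varphi_T\mathbf{e}\|_{\mathbf{L}^3(T)}\lesssim h_T^{-\frac{1}{2}}\|\mathbf{e}\|_{\mathbf{L}^2(T)}+h_T^{\frac{1}{2}}\|\nabla\mathbf{e}\|_{\mathbf{L}^2(T)}$; this is the actual source of the $h_T^{-\frac{1}{2}}$ terms, not an ``$\mathbf{L}^3$-to-$\mathbf{L}^2$ inverse estimate on the bubble factor'' as you write --- inverse estimates are available only for the polynomial parts $\hat{\mathbf{R}}_T^{ad}$ and $\mathbf{J}_S^{ad}$. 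Your single-bubble variant is still salvageable: since $\mathbf{e}_\mathbf{y},\mathbf{e}_\mathbf{z}\in\mathbf{H}^1(T)$, a Sobolev embedding scaled through the reference element gives the same bound $\|\mathbf{e}\|_{\mathbf{L}^3(T)}\lesssim h_T^{-\frac{1}{2}}\|\mathbf{e}\|_{\mathbf{L}^2(T)}+h_T^{\frac{1}{2}}\|\nabla\mathbf{e}\|_{\mathbf{L}^2(T)}$ without any boundary condition, but you must invoke it as a scaled direct estimate, not as an inverse one. With that tool substituted (or with the paper's $\varphi_T^2$ trick adopted), your argument closes and produces \eqref{eq:local_eff_ad} with the same right-hand side as the paper.
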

\begin{proof} 
Since the pair $(\bar{\mathbf{z}},\bar{r})\in \mathbf{H}_0^1(\Omega)\times L_0^2(\Omega)$ solves \eqref{eq:weak_adjoint_equation}, an elementwise integration by parts formula yields the identity
\begin{multline}\label{eq:error_eq_ad}
\nu(\nabla\mathbf{w}, \nabla \mathbf{e}_{\mathbf{z}})_{\mathbf{L}^2(\Omega)}+b(\mathbf{e}_{\mathbf{y}};\mathbf{w},\bar{\mathbf{z}})+b(\bar{\mathbf{y}}_\T;\mathbf{w},\mathbf{e}_\mathbf{z})+b(\mathbf{w};\mathbf{e}_{\mathbf{y}},\bar{\mathbf{z}})+b(\mathbf{w};\bar{\mathbf{y}}_{\T},\mathbf{e}_{\mathbf{z}})\\
-(e_r,\text{div }\mathbf{w})_{L^2(\Omega)}+(s,\text{div }\mathbf{e}_{\mathbf{z}})_{L^2(\Omega)}-(\mathbf{e}_{\mathbf{y}},\mathbf{w})_{\mathbf{L}^2(\Omega)}=\sum_{T\in\T}\bigg( (\Pi_T(\mathbf{y}_\Omega)-\mathbf{y}_\Omega,\mathbf{w})_{\mathbf{L}^2(T)}\\
+\left(\bar{\mathbf{y}}_\T-\Pi_T(\mathbf{y}_\Omega)+\nu\Delta\bar{\mathbf{z}}_\T-(\nabla \bar{\mathbf{y}}_\T)^{\intercal}\bar{\mathbf{z}}_\T+(\bar{\mathbf{y}}_\T\cdot \nabla)\bar{\mathbf{z}}_\T-\nabla \bar{r}_\T,\mathbf{w}\right)_{\mathbf{L}^2(T)}\\
-(s,\text{div }\bar{\mathbf{z}}_\T)_{L^2(T)}\bigg)+\sum_{S\in\Sides}\left(\llbracket (\nu\nabla \bar{\mathbf{z}}_\T-\bar{r}_\T\mathbb{I}_{d} )\cdot \mathbf{n}\rrbracket,\mathbf{w}\right)_{\mathbf{L}^2(S)},
\end{multline}
which holds for every $\mathbf{w}\in\mathbf{H}_0^1(\Omega)$ and $s\in L_0^2(\Omega)$. With equation \eqref{eq:error_eq_ad} at hand, in the following steps, we will estimate separately each of the individual terms that appear in the definition of $\mathcal{E}_{ad,T}$.

We now proceed on the basis of four steps.

\emph{Step 1.} Let $T\in \T$. Define
\begin{equation*}
\begin{aligned}
\mathbf{R}_{T}^{ad} & :=\left(\bar{\mathbf{y}}_\T-\mathbf{y}_\Omega+\nu\Delta\bar{\mathbf{z}}_\T-(\nabla \bar{\mathbf{y}}_\T)^{\intercal}\bar{\mathbf{z}}_\T+(\bar{\mathbf{y}}_\T\cdot \nabla)\bar{\mathbf{z}}_\T-\nabla \bar{r}_\T\right)|^{}_T,
\\
\hat{\mathbf{R}}_{T}^{ad} & :=\left(\bar{\mathbf{y}}_\T-\Pi_T(\mathbf{y}_\Omega)+\nu\Delta\bar{\mathbf{z}}_\T-(\nabla \bar{\mathbf{y}}_\T)^{\intercal}\bar{\mathbf{z}}_\T+(\bar{\mathbf{y}}_\T\cdot \nabla)\bar{\mathbf{z}}_\T-\nabla \bar{r}_\T\right)|^{}_T.
\end{aligned}
\end{equation*}
We estimate the residual term $h^2_T\| \mathbf{R}_{T}^{ad} \|_{\mathbf{L}^2(T)}^2$ in \eqref{eq:local_error_indicator_ad}. We begin with a simple application of the triangle inequality to obtain 
\begin{equation}\label{eq:estimate_Rad_triangular}
h_T\|\mathbf{R}_{T}^{ad}\|_{\mathbf{L}^2(T)}
\leq
h_T\| \hat{\mathbf{R}}_{T}^{ad}\|_{\mathbf{L}^2(T)}+\mathrm{osc}_{T}(\mathbf{y}_\Omega).
\end{equation}
\EO{It thus suffices to control the term $h_T\|\hat{\mathbf{R}}_{T}^{ad}\|_{\mathbf{L}^2(T)}$. We proceed differently according to the spatial dimension.}

\EO{Let $d=2$. Set $\mathbf{w} = \varphi_T^2\hat{\mathbf{R}}_{T}^{ad}$} and $s=0$ in identity \eqref{eq:error_eq_ad}, utilize standard properties of the bubble function $\varphi_T$ and \EO{inverse inequalities \cite[Lemma 4.5.3]{MR2373954} to arrive at
\begin{multline}\label{eq:eff_ad_I_new}
\|\hat{\mathbf{R}}_{T}^{ad}\|^2_{\mathbf{L}^2(T)} 
\lesssim
\| \varphi_T \mathbf{e}_{\mathbf{y}}\|_{\mathbf{L}^4(T)}
( \| \nabla \varphi_T \hat{\mathbf{R}}_{T}^{ad} \|_{\mathbf{L}^2(T)}  + \| \varphi_T \nabla \hat{\mathbf{R}}_{T}^{ad} \|_{\mathbf{L}^2(T)})
\|\bar{\mathbf{z}}\|_{\mathbf{L}^{4}(T)} 
\\
\| \varphi_T \mathbf{e}_{\mathbf{z}}\|_{\mathbf{L}^4(T)}
( \| \nabla \varphi_T \hat{\mathbf{R}}_{T}^{ad} \|_{\mathbf{L}^2(T)}  + \| \varphi_T \nabla \hat{\mathbf{R}}_{T}^{ad} \|_{\mathbf{L}^2(T)})\|\bar{\mathbf{y}}_{\T}\|_{\mathbf{L}^{4}(T)} 
\\
+ \| \varphi_T^2 \hat{\mathbf{R}}_{T}^{ad} \|_{\mathbf{L}^\infty(T)} \left( \| \nabla \mathbf{e}_{\mathbf{y}} \|_{\mathbf{L}^2(T)}
\| \bar{\mathbf{z}} \|_{\mathbf{L}^2(T)} + \| \nabla \bar{\mathbf{y}}_{\T} \|_{\mathbf{L}^2(T)}
\| \mathbf{e}_{\mathbf{z}} \|_{\mathbf{L}^2(T)} \right)
\\
+\left(\|\nabla\mathbf{e}_{\mathbf{z}}\|_{\mathbf{L}^2(T)}+\|e_r\|_{L^2(T)}\right) \| \nabla (\varphi_T^2 \hat{\mathbf{R}}_{T}^{ad}) \|_{\mathbf{L}^2(T)}  
\\
+\left( \|\mathbf{e}_{\mathbf{y}}\|_{\mathbf{L}^2(T)}+\|\Pi_T(\mathbf{y}_\Omega)-\mathbf{y}_\Omega\|_{\mathbf{L}^2(T)} \right)  \| \varphi_T^2 \hat{\mathbf{R}}_{T}^{ad} \|_{\mathbf{L}^2(T)}  .
\end{multline}
Observe that $\| \varphi_T \mathbf{e}_{\mathbf{y}}\|_{\mathbf{L}^4(T)} \lesssim h_T^{\frac{1}{2}}\| \nabla(\varphi_T \mathbf{e}_{\mathbf{y}})\|_{\mathbf{L}^2(T)} \lesssim h_T^{\frac{1}{2}}\| \nabla \mathbf{e}_{\mathbf{y}} \|_{\mathbf{L}^2(T)} +  h_T^{-\frac{1}{2}}\| \mathbf{e}_{\mathbf{y}} \|_{\mathbf{L}^2(T)}$, upon utilizing \cite[Lemma II.3.2 and inequality (II.5.5)]{Gal11}. On the other hand, basic bubble function arguments yield the estimates
\[ 
\| \nabla \varphi_T \hat{\mathbf{R}}_{T}^{ad} \|_{\mathbf{L}^2(T)}  + \| \varphi_T \nabla \hat{\mathbf{R}}_{T}^{ad} \|_{\mathbf{L}^2(T)} \lesssim h_T^{-1}\| \hat{\mathbf{R}}_{T}^{ad} \|_{\mathbf{L}^2(T)}  
\]
and $\| \varphi_T^2 \hat{\mathbf{R}}_{T}^{ad} \|_{\mathbf{L}^\infty(T)} \lesssim h_T^{-1}\|  \hat{\mathbf{R}}_{T}^{ad} \|_{\mathbf{L}^2(T)}$. These estimates allow us to obtain the bound
\begin{multline}
h_T^2 \|\hat{\mathbf{R}}_{T}^{ad}\|^2_{\mathbf{L}^2(T)} \lesssim (h_T + 1) \| \nabla \mathbf{e}_{\mathbf{y}} \|^2_{\mathbf{L}^2(T)} +  (h_T^{-1} + h_T^2)\| \mathbf{e}_{\mathbf{y}} \|^2_{\mathbf{L}^2(T)}
\\
+
(h_T + 1)\| \nabla \mathbf{e}_{\mathbf{z}} \|^2_{\mathbf{L}^2(T)} 
+  (1+h_T^{-1})\| \mathbf{e}_{\mathbf{z}} \|^2_{\mathbf{L}^2(T)} + \|e_r\|_{L^2(T)}^2 + \mathrm{osc}_{T}^2(\mathbf{y}_\Omega).
\label{eq:aux_residual_Rad}
\end{multline}
To obtain the previous estimate, we have also used the Sobolev embedding $\mathbf{H}_0^1(\Omega)\hookrightarrow \mathbf{L}^4(\Omega)$, the smallness assumption \eqref{eq:assumption_y_y_T} and} the stability estimate \eqref{eq:bound_bar_z}, which yields
\begin{equation}\label{eq:eff_ad_II}
\|\bar{\mathbf{z}}\|_{\mathbf{L}^2(T)}\leq \|\bar{\mathbf{z}}\|_{\mathbf{L}^2(\Omega)} \leq C_{2}\|\nabla \bar{\mathbf{z}}\|_{\mathbf{L}^2(\Omega)}\leq \frac{ C_2^2}{\nu(1-\theta)}\left(C_2\theta\mathcal{C}_{b}^{-1}\nu+\|\mathbf{y}_\Omega\|_{\mathbf{L}^2(\Omega)}\right).
\end{equation}

\EO{We now analyze the case $d=3$. Similarly, we set $\mathbf{w} = \varphi_T^2\hat{\mathbf{R}}_{T}^{ad}$ and $s=0$ in identity \eqref{eq:error_eq_ad} and obtain
\begin{multline}\label{eq:eff_ad_IV_new}
\|\hat{\mathbf{R}}_{T}^{ad}\|^2_{\mathbf{L}^2(T)} 
\lesssim
\| \varphi_T \mathbf{e}_{\mathbf{y}}\|_{\mathbf{L}^3(T)}
( \| \nabla \varphi_T \hat{\mathbf{R}}_{T}^{ad} \|_{\mathbf{L}^2(T)}  + \| \varphi_T \nabla \hat{\mathbf{R}}_{T}^{ad} \|_{\mathbf{L}^2(T)})
\|\bar{\mathbf{z}}\|_{\mathbf{L}^{6}(T)} 
\\
\| \varphi_T \mathbf{e}_{\mathbf{z}}\|_{\mathbf{L}^3(T)}
( \| \nabla \varphi_T \hat{\mathbf{R}}_{T}^{ad} \|_{\mathbf{L}^2(T)}  + \| \varphi_T \nabla \hat{\mathbf{R}}_{T}^{ad} \|_{\mathbf{L}^2(T)})\|\bar{\mathbf{y}}_{\T}\|_{\mathbf{L}^{6}(T)} 
\\
+ \| \varphi_T \hat{\mathbf{R}}_{T}^{ad} \|_{\mathbf{L}^6(T)} \left( \| \nabla \mathbf{e}_{\mathbf{y}} \|_{\mathbf{L}^2(T)}
\| \varphi_T\bar{\mathbf{z}} \|_{\mathbf{L}^3(T)} + \| \nabla \bar{\mathbf{y}}_{\T} \|_{\mathbf{L}^2(T)}
\| \varphi_T\mathbf{e}_{\mathbf{z}} \|_{\mathbf{L}^3(T)} \right)
\\
+\left(\|\nabla\mathbf{e}_{\mathbf{z}}\|_{\mathbf{L}^2(T)}+\|e_r\|_{L^2(T)}\right) \| \nabla (\varphi_T^2 \hat{\mathbf{R}}_{T}^{ad}) \|_{\mathbf{L}^2(T)}  
\\
+\left( \|\mathbf{e}_{\mathbf{y}}\|_{\mathbf{L}^2(T)}+\|\Pi_T(\mathbf{y}_\Omega)-\mathbf{y}_\Omega\|_{\mathbf{L}^2(T)} \right)  \| \varphi_T^2 \hat{\mathbf{R}}_{T}^{ad} \|_{\mathbf{L}^2(T)}  .
\end{multline}
Observe that $\| \varphi_T^2 \hat{\mathbf{R}}_{T}^{ad} \|_{\mathbf{L}^6(T)}  \lesssim \|  \hat{\mathbf{R}}_{T}^{ad} \|_{\mathbf{L}^6(T)} \lesssim h_T^{-1} \| \hat{\mathbf{R}}_{T}^{ad} \|_{\mathbf{L}^2(T)}$ \cite[Lemma 4.5.3]{MR2373954}. On the other hand, since $\varphi_T \mathbf{e}_{\mathbf{z}}\in \mathbf{H}_0^1(T)$, we invoke the Cauchy--Schwarz inequality and the Sobolev embedding result of \cite[inequality (II.3.7)]{Gal11}, to conclude that 
\begin{align*}
\|\varphi_T\mathbf{e}_{\mathbf{z}}\|_{\mathbf{L}^3(T)}
\leq 
|T|^{\frac{1}{6}}\|\varphi_T\mathbf{e}_{\mathbf{z}}\|_{\mathbf{L}^6(T)} 
&\lesssim
h_T^{\frac{1}{2}}\|\nabla(\varphi_T\mathbf{e}_{\mathbf{z}})\|_{\mathbf{L}^2(T)} \\
&\lesssim
h_T^{-\frac{1}{2}}\|\mathbf{e}_{\mathbf{z}}\|_{\mathbf{L}^2(T)} + h_T^{\frac{1}{2}}\|\nabla\mathbf{e}_{\mathbf{z}}\|_{\mathbf{L}^2(T)}.
\end{align*}
A similar estimate holds for $\| \varphi_T \mathbf{e}_{\mathbf{y}}\|_{\mathbf{L}^3(T)}$. With these estimates at hand, similar arguments to the ones that lead to \eqref{eq:aux_residual_Rad}} \EO{allow us to conclude that
\begin{multline}\label{eq:final_residual_ad_d=3}
h_T^2\|\hat{\mathbf{R}}_{T}^{ad}\|_{\mathbf{L}^2(T)}^2
\lesssim
\|\nabla\mathbf{e}_{\mathbf{z}}\|_{\mathbf{L}^2(T)}^2+\|\nabla\mathbf{e}_{\mathbf{y}}\|_{\mathbf{L}^2(T)}^2 
\\
+ h_T^{-1}\|\mathbf{e}_{\mathbf{z}}\|_{\mathbf{L}^2(T)}^2+h_T^{-1}\|\mathbf{e}_{\mathbf{y}}\|_{\mathbf{L}^2(T)}^2
+\|e_r\|_{L^2(T)}^2+\mathrm{osc}_{T}^2(\mathbf{y}_\Omega).
\end{multline}
}
\EO{The desired estimate follows from gathering \eqref{eq:estimate_Rad_triangular} with  \eqref{eq:aux_residual_Rad} and \eqref{eq:final_residual_ad_d=3}}.

\emph{Step 2.} Let $T\in\T$ and $S\in\Sides_T$. We bound $h_T\|\llbracket (\nu\nabla \bar{\mathbf{z}}_\T-\bar{r}_\T\mathbb{I}_{d} )\cdot \mathbf{n}\rrbracket\|_{\mathbf{L}^2(S)}^2$ in \eqref{eq:local_error_indicator_ad}. To simplify the presentation of the material, we define
\[
\mathbf{J}_{S}^{ad}:= \llbracket (\nu\nabla \bar{\mathbf{z}}_\T-\bar{r}_\T\mathbb{I}_{d} )\cdot \mathbf{n}\rrbracket.
\]
\FF{As in the previous step, we proceed differently according to the spatial dimension.} \EO{If $d=2$, we set $\mathbf{w}=\varphi_{S}^2\mathbf{J}_{S}^{ad}$} and $s=0$ in \eqref{eq:error_eq_ad} and proceed on the basis of similar arguments to the ones used to derive \eqref{eq:eff_ad_I_new}. These arguments yield
\begin{multline*}
\|\mathbf{J}_{S}^{ad}\|_{\mathbf{L}^2(S)}^2
\lesssim
\sum_{T'\in\mathcal{N}_S}
\bigg(
\FF{h_{T'}^{-1}\left(\|\varphi_S\mathbf{e}_{\mathbf{y}}\|_{\mathbf{L}^4(T')}\|\bar{\mathbf{z}}\|_{\mathbf{L}^4(T')} + \|\bar{\mathbf{y}}_\T\|_{\mathbf{L}^4(T')}\|\varphi_S\mathbf{e}_{\mathbf{z}}\|_{\mathbf{L}^4(T')}
\right)}
\\
\FF{+h_{T'}^{-1}\left(\|\nabla\mathbf{e}_{\mathbf{y}}\|_{\mathbf{L}^2(T')}\|\bar{\mathbf{z}}\|_{\mathbf{L}^2(T')}+\|\nabla\bar{\mathbf{y}}_\T\|_{\mathbf{L}^2(T')}\|\mathbf{e}_{\mathbf{z}}\|_{\mathbf{L}^2(T')}+\|\nabla\mathbf{e}_{\mathbf{z}}\|_{\mathbf{L}^2(T')}+\|e_r\|_{L^2(T')}\right)}\\
+\|\mathbf{e}_{\mathbf{y}}\|_{\mathbf{L}^2(T')}+ \|\hat{\mathbf{R}}_{T'}^{ad}\|_{\mathbf{L}^2(T')} + \|\Pi_{T'}(\mathbf{y}_\Omega)-\mathbf{y}_\Omega\|_{\mathbf{L}^2(T')}\bigg)h_T^{\frac{1}{2}}\|\mathbf{J}_S^{ad}\|_{\mathbf{L}^2(S)}.
\end{multline*}
Invoke the \EO{estimates that lead to \eqref{eq:aux_residual_Rad} to obtain
\begin{multline}\label{eq:final_jump_ad}
h_T\|\mathbf{J}_{S}^{ad}\|_{\mathbf{L}^2(S)}^2
\lesssim
\sum_{T'\in\mathcal{N}_S}\left( (h_{T'} + 1) \| \nabla \mathbf{e}_{\mathbf{y}} \|^2_{\mathbf{L}^2(T')} +  (h_{T'}^{-1} + h_{T'}^2)\| \mathbf{e}_{\mathbf{y}} \|^2_{\mathbf{L}^2(T')}\right.
\\
\left.+
(h_{T'} + 1)\| \nabla \mathbf{e}_{\mathbf{z}} \|^2_{\mathbf{L}^2(T')} 
+  (1+h_{T'}^{-1})\| \mathbf{e}_{\mathbf{z}} \|^2_{\mathbf{L}^2(T')} + \|e_r\|_{L^2(T')}^2 + \mathrm{osc}_{T'}^2(\mathbf{y}_\Omega)\right).
\end{multline}
In three dimensions, we follow similar arguments. For brevity we skip the details.}

\emph{Step 3.} Let $T\in \T$. Since $\text{div }\bar{\mathbf{z}} = 0$, we immediately obtain that
\begin{equation}\label{eq:final_div_ad}
\|\text{div }\bar{\mathbf{z}}_\T\|_{L^2(T)}
\EO{=} 
\|\text{div }\mathbf{e}_\mathbf{z}\|_{L^2(T)}
\lesssim 
\EO{\| \nabla \mathbf{e}_\mathbf{z}\|_{\mathbf{L}^2(T)}.}
\end{equation}

\emph{Step 4.} The proof concludes by gathering \FF{the estimates obtained in the previous steps.}
\end{proof}

The results obtained in Theorems \ref{thm:local_eff_st} and \ref{thm:local_eff_ad} yield \FF{local estimates for}
\begin{align}\label{def:indicator_ocp}
\mathcal{E}_{ocp,T}^2:= \mathcal{E}_{ad,T}^{2} + \mathcal{E}_{st,T}^{2} + \mathcal{E}_{ct,T}^{2}.
\end{align}

\begin{theorem}[\FF{local estimates for} $\mathcal{E}_{ocp,T}$]
\label{thm:global_eff}
Assume that assumptions \eqref{eq:smallness_assumption} \FF{and \eqref{eq:assumption_y_T_small_2} hold}. Let $(\bar{\mathbf{y}},\bar{p},\bar{\mathbf{z}},\bar{r},\bar{\mathbf{u}}) \in \mathbf{H}_0^1(\Omega)\times L_0^2(\Omega) \times \mathbf{H}_0^1(\Omega)\times L_0^2(\Omega) \times \mathbb{U}_{ad}$ be a local solution of \eqref{eq:min_weak_setting}--\eqref{eq:weak_st_equation}. Let $\bar{\mathbf{u}}_{\T}$ be a local minimum of the associated discrete optimal control problem with $(\bar{\mathbf{y}}_{\T},\bar{p}_\T)$ and $(\bar{\mathbf{z}}_{\T},\bar{r}_\T)$ being the corresponding state and adjoint state discrete variables, respectively. Then, for $T\in\T$, we have that
\begin{multline*}
\mathcal{E}_{ocp,T} 
\lesssim
\|\nabla\mathbf{e}_{\mathbf{z}}\|_{\mathbf{L}^2(\mathcal{N}_T)}+\|\nabla \mathbf{e}_{\mathbf{y}}\|_{\mathbf{L}^2(\mathcal{N}_T)} 
+
\EO{h_T^{-\frac{1}{2}}\|\mathbf{e}_{\mathbf{z}}\|_{\mathbf{L}^2(\mathcal{N}_T)}+h_T^{-\frac{1}{2}}\|\mathbf{e}_{\mathbf{y}}\|_{\mathbf{L}^2(\mathcal{N}_T)}}
\\
+  \|\mathbf{e}_{\mathbf{u}}\|_{\mathbf{L}^2(\mathcal{N}_{T})}+
\|e_p\|_{L^2(\mathcal{N}_{T})}+\|e_r\|_{L^2(\mathcal{N}_{T})}+\mathrm{osc}_{\mathcal{N}_T}(\mathbf{y}_\Omega),
\end{multline*}
where $\mathcal{N}_T$ and $\mathrm{osc}_{\mathcal{N}_T}(\mathbf{y}_\Omega)$ are defined as in \eqref{def:patch} and \eqref{eq:osc_term}, respectively. The hidden constant is independent of the continuous and discrete optimal variables, the size of the elements in the mesh $\T$, and $\#\T$.
\end{theorem}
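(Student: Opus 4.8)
The plan is to exploit the decomposition \eqref{def:indicator_ocp} and reduce the estimate to its three separate contributions, two of which have already been handled in Theorems \ref{thm:local_eff_st} and \ref{thm:local_eff_ad}. Since $\mathcal{E}_{ocp,T}^2 = \mathcal{E}_{ad,T}^2 + \mathcal{E}_{st,T}^2 + \mathcal{E}_{ct,T}^2$, the elementary bound $\sqrt{a^2+b^2+c^2}\le a+b+c$, valid for nonnegative $a,b,c$, immediately yields
\[
\mathcal{E}_{ocp,T}\le \mathcal{E}_{st,T}+\mathcal{E}_{ad,T}+\mathcal{E}_{ct,T}.
\]
The contributions $\mathcal{E}_{st,T}$ and $\mathcal{E}_{ad,T}$ are then controlled directly by the right-hand sides of \eqref{eq:local_eff_st} and \eqref{eq:local_eff_ad}, respectively. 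It thus only remains to estimate the control indicator $\mathcal{E}_{ct,T}=\|\tilde{\mathbf{u}}-\bar{\mathbf{u}}_\T\|_{\mathbf{L}^2(T)}$, which is the single genuinely new ingredient of this proof.

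For this term I would first add and subtract the continuous optimal control $\bar{\mathbf{u}}$ and invoke the triangle inequality, obtaining
\[
\mathcal{E}_{ct,T}\le \|\tilde{\mathbf{u}}-\bar{\mathbf{u}}\|_{\mathbf{L}^2(T)}+\|\bar{\mathbf{u}}-\bar{\mathbf{u}}_\T\|_{\mathbf{L}^2(T)}=\|\tilde{\mathbf{u}}-\bar{\mathbf{u}}\|_{\mathbf{L}^2(T)}+\|\mathbf{e}_{\mathbf{u}}\|_{\mathbf{L}^2(T)}.
\]
To bound the first summand I would use the projection characterizations $\bar{\mathbf{u}}=\Pi_{[\mathbf{a},\mathbf{b}]}(-\alpha^{-1}\bar{\mathbf{z}})$ from \eqref{eq:projection_formula} together with $\tilde{\mathbf{u}}=\Pi_{[\mathbf{a},\mathbf{b}]}(-\alpha^{-1}\bar{\mathbf{z}}_\T)$ from \eqref{def:tilde_u}, and then exploit the Lipschitz continuity (nonexpansiveness) of the projection operator $\Pi_{[\mathbf{a},\mathbf{b}]}$ to arrive at $\|\tilde{\mathbf{u}}-\bar{\mathbf{u}}\|_{\mathbf{L}^2(T)}\le \alpha^{-1}\|\bar{\mathbf{z}}-\bar{\mathbf{z}}_\T\|_{\mathbf{L}^2(T)}=\alpha^{-1}\|\mathbf{e}_{\mathbf{z}}\|_{\mathbf{L}^2(T)}$. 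Consequently, $\mathcal{E}_{ct,T}\lesssim \|\mathbf{e}_{\mathbf{z}}\|_{\mathbf{L}^2(T)}+\|\mathbf{e}_{\mathbf{u}}\|_{\mathbf{L}^2(T)}$.

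Finally I would collect the three bounds and tidy up the scaling factors. Because every mesh in $\mathbb{T}$ is a refinement of the fixed initial mesh $\mathscr{T}_0$, the local size $h_T$ is uniformly bounded from above; hence $h_T\|\mathbf{e}_{\mathbf{u}}\|_{\mathbf{L}^2(\mathcal{N}_T)}\lesssim \|\mathbf{e}_{\mathbf{u}}\|_{\mathbf{L}^2(\mathcal{N}_T)}$, and the zeroth-order terms $\|\mathbf{e}_{\mathbf{y}}\|_{\mathbf{L}^2(T)}$ and $\|\mathbf{e}_{\mathbf{z}}\|_{\mathbf{L}^2(T)}$ appearing above (and inside $\|\mathbf{e}_{\mathbf{y}}\|_{\mathbf{H}^1(\mathcal{N}_T)}$) are absorbed into their weighted counterparts $h_T^{-1/2}\|\mathbf{e}_{\mathbf{y}}\|_{\mathbf{L}^2(\mathcal{N}_T)}$ and $h_T^{-1/2}\|\mathbf{e}_{\mathbf{z}}\|_{\mathbf{L}^2(\mathcal{N}_T)}$. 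Bounding each element-wise norm by the corresponding norm over the star $\mathcal{N}_T$ then produces exactly the asserted inequality. I expect no substantial obstacle: the statement is, in essence, a bookkeeping assembly of Theorems \ref{thm:local_eff_st} and \ref{thm:local_eff_ad}, the only nontrivial additional step being the use of the nonexpansiveness of $\Pi_{[\mathbf{a},\mathbf{b}]}$ to trade $\|\tilde{\mathbf{u}}-\bar{\mathbf{u}}\|_{\mathbf{L}^2(T)}$ for the adjoint velocity error $\|\mathbf{e}_{\mathbf{z}}\|_{\mathbf{L}^2(T)}$.
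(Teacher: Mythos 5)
Your proposal is correct and follows essentially the same route as the paper's own proof: reduce $\mathcal{E}_{ocp,T}$ to its three contributions, invoke Theorems \ref{thm:local_eff_st} and \ref{thm:local_eff_ad} for $\mathcal{E}_{st,T}$ and $\mathcal{E}_{ad,T}$, and bound $\mathcal{E}_{ct,T}$ via the triangle inequality together with the Lipschitz property of $\Pi_{[\mathbf{a},\mathbf{b}]}$ applied to \eqref{eq:projection_formula} and \eqref{def:tilde_u}, which yields $\mathcal{E}_{ct,T}\leq \alpha^{-1}\|\mathbf{e}_{\mathbf{z}}\|_{\mathbf{L}^2(T)}+\|\mathbf{e}_{\mathbf{u}}\|_{\mathbf{L}^2(T)}$ exactly as in \eqref{eq:local_eff_ct}. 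Your final bookkeeping step (absorbing $h_T\|\mathbf{e}_{\mathbf{u}}\|_{\mathbf{L}^2(\mathcal{N}_T)}$ and the zeroth-order terms into their weighted counterparts using $h_T\lesssim 1$) is also sound and matches how the paper's stated right-hand side is obtained.
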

\begin{proof}
Let $T\in\T$. In view of the local estimates \eqref{eq:local_eff_st} and \eqref{eq:local_eff_ad}, it  suffices to bound $\mathcal{E}_{ct,T}$. Invoke \eqref{eq:error_estimator_control_var} and an application of the triangle inequality to obtain
\begin{align*}
\mathcal{E}_{ct,T} 
& \leq \|\tilde{\mathbf{u}}-\bar{\mathbf{u}}\|_{\mathbf{L}^2(T)} + \|\mathbf{e}_{\mathbf{u}}\|_{\mathbf{L}^2(T)}\\
& = \|\Pi_{[\mathbf{a},\mathbf{b}]}(-\alpha^{-1}\bar{\mathbf{z}}_\T)-\Pi_{[\mathbf{a},\mathbf{b}]}(-\alpha^{-1}\bar{\mathbf{z}})\|_{\mathbf{L}^2(T)}+\|\mathbf{e}_{\mathbf{u}}\|_{\mathbf{L}^2(T)}.
\end{align*}
Invoke the Lipschitz property of $\Pi_{[\mathbf{a},\mathbf{b}]}$, introduced in \eqref{def:projection_operator}, to obtain
\begin{equation}\label{eq:local_eff_ct}
\mathcal{E}_{ct,T}
\leq
\alpha^{-1}\|\mathbf{e}_{\mathbf{z}}\|_{\mathbf{L}^2(T)}+\|\mathbf{e}_{\mathbf{u}}\|_{\mathbf{L}^2(T)}\EO{.}
\end{equation}
The proof concludes by collecting estimates \eqref{eq:local_eff_st}, \eqref{eq:local_eff_ad}, and \eqref{eq:local_eff_ct}.
\end{proof}


\EO{
\section{A posteriori error analysis: the semi discrete scheme}\label{sec:a_posteriori_semi}

In this section, we propose and analyze an a posteriori error estimator for the semi discrete scheme \eqref{eq:discrete_cost_var}--\eqref{eq:discrete_st_variational}. In contrast to the estimator devised for the fully discrete scheme, the a posteriori error estimator is decomposed only in two contributions: one related to the discretization of the state equations and another one related to the discretization of the adjoint equations. 

In order to guarantee the existence of a local solution $(\bar{\mathbf{y}}_\T,\bar{p}_\T,\bar{\mathbf{z}}_\T,\bar{r}_\T,\bar{\mathbf{g}})\in \mathbf{V}(\T) \times \mathcal{P}(\T) \times \mathbf{V}(\T)\times \mathcal{P}(\T) \times \mathbb{U}_{ad}$ to the semi discrete scheme, satisfying the optimality system \eqref{eq:discrete_st_variational}--\eqref{eq:discrete_adjoint_equation_variational}, we shall assume assumptions \eqref{eq:smallness_assumption}, \ref{A1}, \eqref{eq:assumption_y_T_small_2}, and \ref{A3}.


\subsection{Reliability analysis}
Assume that \eqref{eq:smallness_assumption} holds. To perform an analysis, we introduce some auxiliary variables. Let $(\hat{\mathbf{y}},\hat{p}) \in \mathbf{H}_0^1(\Omega)\times L_0^2(\Omega)$ be the solution to
\begin{equation}\label{eq:hat_function_st_var}
\begin{aligned}
\nu(\nabla \hat{\mathbf{y}}, \nabla \mathbf{v})_{\mathbf{L}^2(\Omega)}+b(\hat{\mathbf{y}};\hat{\mathbf{y}},\mathbf{v})-(\hat{p},\text{div } \mathbf{v})_{L^2(\Omega)} & = (\bar{\mathbf{g}},\mathbf{v})_{\mathbf{L}^2(\Omega)}  & \forall \mathbf{v} \in \mathbf{H}_0^1(\Omega),\\
(q,\text{div } \hat{\mathbf{y}})_{L^2(\Omega)} & =  0  & \forall q\in L^2_0(\Omega).
\end{aligned}
\end{equation}
Notice that $(\bar{\mathbf{y}}_{\T},\bar{p}_{\T})$, which solves \eqref{eq:discrete_st_variational} with $\mathbf{g}$ replaced by $\bar{\mathbf{g}}$, can be seen as the finite element approximation within the space $\mathbf{V}(\mathscr{T}) \times \mathcal{P}(\T)$ of $(\hat{\mathbf{y}},\hat{p})$.

Define the following a posteriori error estimator and local error indicators:
\begin{gather}\label{eq:error_estimator_st_variational}
\mathfrak{E}_{st}^{2}:= \sum_{T\in\T}\mathfrak{E}_{st,T}^2, 
\qquad
\mathfrak{E}_{st,T}^{2}:=h_T^2\|\bar{\mathbf{g}}+\nu\Delta\bar{\mathbf{y}}_\T-(\bar{\mathbf{y}}_\T\cdot \nabla)\bar{\mathbf{y}}_\T-\nabla \bar{p}_\T\|_{\mathbf{L}^2(T)}^2 \\\nonumber
+ \|\text{div }\bar{\mathbf{y}}_\T\|_{L^2(T)}^2
+ h_T\|\llbracket (\nu\nabla \bar{\mathbf{y}}_\T-\bar{p}_\T\mathbb{I}_{d} )\cdot \mathbf{n}\rrbracket\|_{\mathbf{L}^2(\partial T \setminus \partial \Omega)}^2.
\end{gather}
Here $(\bar{\mathbf{y}}_{\T},\bar{p}_{\T})=(\bar{\mathbf{y}}_{\T}(\bar{\mathbf{g}}),\bar{p}_{\T}(\bar{\mathbf{g}}))$ denotes the solution to \eqref{eq:discrete_st_variational} with $\mathbf{g}$ replaced by $\bar{\mathbf{g}}$. With these ingredients at hand, Theorem \ref{thm:navier_stokes_estimator} yields the following global reliability bound for the error estimator $\mathfrak{E}_{st}$: If \eqref{eq:assumption_y_y_T} holds, then
\begin{equation}\label{eq:estimate_state_hat_discrete_var}
\|\nabla (\hat{\mathbf{y}}-\bar{\mathbf{y}}_\T)\|_{\mathbf{L}^2(\Omega)}^2+\|\hat{p}-\bar{p}_\T\|_{L^2(\Omega)}^2 
\lesssim
\mathfrak{E}_{st}^2.
\end{equation}

Let $(\hat{\mathbf{z}},\hat{r}) \in \mathbf{H}_0^1(\Omega)\times L_0^2(\Omega)$ be the solution to
\begin{align}\label{eq:hat_function_ad_var}
\begin{split}
\nu(\nabla \mathbf{w}, \nabla \hat{\mathbf{z}})_{\mathbf{L}^2(\Omega)}+b(\bar{\mathbf{y}}_\T;\mathbf{w},\hat{\mathbf{z}})
+b(\mathbf{w};\bar{\mathbf{y}}_\T,\hat{\mathbf{z}})
\\-(\hat{r},\text{div } \mathbf{w})_{L^2(\Omega)} & = (\bar{\mathbf{y}}_\T-\mathbf{y}_\Omega,\mathbf{w})_{\mathbf{L}^2(\Omega)}, \\
(s,\text{div } \hat{\mathbf{z}})_{L^2(\Omega)}   &=  0, 
\end{split}
\end{align}
for all $(\mathbf{w},s) \in \mathbf{H}_0^1(\Omega)\times  L_0^2(\Omega)$. Here, $\bar{\mathbf{y}}_{\T}=\bar{\mathbf{y}}_{\T}(\bar{\mathbf{g}})\in \mathbf{V}(\T)$ denotes the discrete velocity field that solves \eqref{eq:discrete_st_variational} with $\mathbf{g}$ replaced by $\bar{\mathbf{g}}$. Under assumption \eqref{eq:assumption_y_T_small_2}, problem \eqref{eq:hat_function_ad_var} is well--posed. On the other hand, notice that $(\bar{\mathbf{z}}_\T,\bar{r}_\T)$, the solution to \eqref{eq:discrete_adjoint_equation_variational}, can be seen as the finite element approximation, within $\mathbf{V}(\T)\times\mathcal{P}(\T)$, of $(\hat{\mathbf{z}},\hat{r})$.

Define, for $T\in\T$, the local error indicators 
\begin{multline}\label{eq:local_error_indicator_ad_var}
\mathfrak{E}_{ad,T}^2:=h_T^2\|\bar{\mathbf{y}}_\T-\mathbf{y}_\Omega+\nu\Delta\bar{\mathbf{z}}_\T-(\nabla \bar{\mathbf{y}}_\T)^{\intercal}\bar{\mathbf{z}}_\T+(\bar{\mathbf{y}}_\T\cdot \nabla)\bar{\mathbf{z}}_\T-\nabla \bar{r}_\T\|_{\mathbf{L}^2(T)}^2\\
+h_T\|\llbracket (\nu\nabla \bar{\mathbf{z}}_\T-\bar{r}_\T\mathbb{I}_{d} )\cdot \mathbf{n}\rrbracket\|_{\mathbf{L}^2(\partial T \setminus \partial \Omega)}^2+\|\text{div }\bar{\mathbf{z}}_\T\|_{L^2(T)}^2
\end{multline}
and the a posteriori error estimator
\begin{equation}\label{eq:error_estimator_adjoint_eq_var}
\mathfrak{E}_{ad}^2:=\sum_{T\in\T}\mathfrak{E}_{ad,T}^2.
\end{equation}
With these ingredients at hand, we are in position to invoke Theorem \ref{thm:estimate_aux_variables} to immediately arrive at the following global reliability bound:
\begin{equation}\label{eq:adjoint_hat_estimate_var}
\|\nabla(\hat{\mathbf{z}}-\bar{\mathbf{z}}_\T)\|_{\mathbf{L}^2(\Omega)}^2+\|\hat{r}-\bar{r}_\T\|_{L^2(\Omega)}^2\lesssim \mathfrak{E}_{ad}^2.
\end{equation}

In what follows, we present a global reliability result for the error estimator $\mathfrak{E}_{ocp}$ based on the semi discrete scheme \eqref{eq:discrete_cost_var}--\eqref{eq:discrete_st_variational}. To present such a result, we define $\mathfrak{e}_{\mathbf{u}} := \bar{\mathbf{u}}-\bar{\mathbf{g}}$ and the total error norm associated to the semi discrete scheme:
\begin{equation}\label{def:error_norm_variational}
\|\mathfrak{e} \|^2_{\Omega}: =
\|\nabla \mathbf{e}_{\mathbf{y}}\|_{\mathbf{L}^2(\Omega)}^2 +
\|e_p \|_{L^{2}(\Omega)}^2 +
\|\nabla \mathbf{e}_{\mathbf{z}} \|_{\mathbf{L}^2(\Omega)}^2
+ \|e_r\|_{L^2(\Omega)}^2
+ \|\mathfrak{e}_{\mathbf{u}}\|_{\mathbf{L}^2(\Omega)}^2.
\end{equation}

\begin{theorem}[global reliability of $\mathfrak{E}_{ocp}$]
\label{thm:control_bound_variational}
Assume that the smallness assumptions \eqref{eq:smallness_assumption} and \eqref{eq:assumption_y_T_small_2} hold. Let $(\bar{\mathbf{y}},\bar{p},\bar{\mathbf{z}},\bar{r},\bar{\mathbf{u}}) \in \mathbf{H}_0^1(\Omega)\times L_0^2(\Omega) \times \mathbf{H}_0^1(\Omega)\times L_0^2(\Omega) \times \mathbb{U}_{ad}$ be a local solution of \eqref{eq:min_weak_setting}--\eqref{eq:weak_st_equation} that satisfies the sufficient second order optimality condition \eqref{eq:equivalent_second_I}, or equivalently \eqref{eq:equivalent_second_II}. Let $\bar{\mathbf{g}}$ be a local minimum of the semi discrete control problem \eqref{eq:discrete_cost_var}--\eqref{eq:discrete_st_variational}, with $(\bar{\mathbf{y}}_{\T},\bar{p}_\T)$ and $(\bar{\mathbf{z}}_{\T},\bar{r}_\T)$ being the corresponding state and adjoint state variables, respectively. Let $\T$ be a mesh such that \eqref{eq:assumption_mesh} holds. Then,
\begin{equation}\label{eq:global_rel_var}
\|\mathfrak{e}  \|_{\Omega}^2
\lesssim 
\mathfrak{E}_{ocp}^2,
\end{equation}
where 
\begin{equation}\label{def:error_estimator_ocp_var}
\mathfrak{E}_{ocp}^2:=  \mathfrak{E}_{ad}^2 + \mathfrak{E}_{st}^2.
\end{equation}
The estimators $\mathfrak{E}_{st}$ and $\mathfrak{E}_{ad}$ are defined as in \eqref{eq:error_estimator_st_variational} and \eqref{eq:error_estimator_adjoint_eq_var}, respectively. In \eqref{eq:global_rel_var}, the hidden constant is independent of the continuous and discrete optimal variables, the size of the elements in the mesh $\T$, and $\#\T$.
\end{theorem}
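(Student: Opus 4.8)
The plan is to reproduce, \emph{mutatis mutandis}, the six-step argument of Theorem \ref{thm:control_bound}, exploiting one structural simplification peculiar to the variational discretization approach. The key observation is that the discrete variational inequality \eqref{eq:discrete_var_ineq_variational} is posed over the \emph{entire} admissible set $\mathbb{U}_{ad}$, exactly as \eqref{eq:var_ineq_tilde_u}. By the uniqueness of the projection (see \cite[Lemma 2.26]{Troltzsch}), the semi discrete optimal control therefore satisfies the exact projection formula $\bar{\mathbf{g}} = \Pi_{[\mathbf{a},\mathbf{b}]}(-\alpha^{-1}\bar{\mathbf{z}}_\T)$, i.e., $\bar{\mathbf{g}}$ coincides with the auxiliary variable $\tilde{\mathbf{u}}$ of \eqref{def:tilde_u}. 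Consequently, the auxiliary state $\tilde{\mathbf{y}}$ of \eqref{eq:tilde_y}, the Navier--Stokes solution with datum $\tilde{\mathbf{u}}$, coincides with $\hat{\mathbf{y}}$ from \eqref{eq:hat_function_st_var}, the solution with datum $\bar{\mathbf{g}}$; this follows from the uniqueness granted by Theorem \ref{thm:well_posedness_navier_stokes}. This identification is precisely what annihilates the control contribution and leaves $\mathfrak{E}_{ocp}$ with only the two terms in \eqref{def:error_estimator_ocp_var}.

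In the first step I would bound the control error $\mathfrak{e}_{\mathbf{u}} = \bar{\mathbf{u}} - \bar{\mathbf{g}} = \bar{\mathbf{u}} - \tilde{\mathbf{u}}$. Invoking Theorem \ref{thm:error_bound_control_tilde} (legitimate since \eqref{eq:assumption_mesh} is assumed) and then repeating the chain of variational-inequality manipulations from Step 1 of Theorem \ref{thm:control_bound} — now using \eqref{eq:variational_ineq} together with \eqref{eq:discrete_var_ineq_variational} — yields $\tfrac{\mu}{2}\|\bar{\mathbf{u}} - \tilde{\mathbf{u}}\|_{\mathbf{L}^2(\Omega)}^2 \leq (\tilde{\mathbf{z}} - \bar{\mathbf{z}}_\T, \tilde{\mathbf{u}} - \bar{\mathbf{u}})_{\mathbf{L}^2(\Omega)}$. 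Adding and subtracting $\hat{\mathbf{z}}$, controlling $\|\hat{\mathbf{z}} - \bar{\mathbf{z}}_\T\|_{\mathbf{L}^2(\Omega)}$ by $\mathfrak{E}_{ad}$ through \eqref{eq:adjoint_hat_estimate_var}, and estimating $\|\nabla(\tilde{\mathbf{z}} - \hat{\mathbf{z}})\|_{\mathbf{L}^2(\Omega)}$ exactly as in \eqref{eq:tilde_hat_adjoint_IV} finishes the step; crucially, the factor $\|\nabla(\tilde{\mathbf{y}} - \hat{\mathbf{y}})\|_{\mathbf{L}^2(\Omega)}$ appearing there now vanishes because $\tilde{\mathbf{y}} = \hat{\mathbf{y}}$, so that, after bounding $\|\nabla\tilde{\mathbf{z}}\|_{\mathbf{L}^2(\Omega)}$ by a constant via stability, $\|\nabla(\tilde{\mathbf{z}} - \hat{\mathbf{z}})\|_{\mathbf{L}^2(\Omega)} \lesssim \mathfrak{E}_{st}$ via \eqref{eq:estimate_state_hat_discrete_var}. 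Collecting gives $\|\mathfrak{e}_{\mathbf{u}}\|_{\mathbf{L}^2(\Omega)} \lesssim \mathfrak{E}_{ad} + \mathfrak{E}_{st}$, with no control term.

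The remaining four bounds — on $\|\nabla\mathbf{e}_{\mathbf{y}}\|_{\mathbf{L}^2(\Omega)}$, $\|e_p\|_{L^2(\Omega)}$, $\|\nabla\mathbf{e}_{\mathbf{z}}\|_{\mathbf{L}^2(\Omega)}$, and $\|e_r\|_{L^2(\Omega)}$ — are transcribed essentially verbatim from Steps 2--5 of Theorem \ref{thm:control_bound}, replacing the fully discrete auxiliary pairs $(\hat{\mathbf{y}},\hat p)$ and $(\hat{\mathbf{z}},\hat r)$ by their semi discrete counterparts from \eqref{eq:hat_function_st_var} and \eqref{eq:hat_function_ad_var}, and the estimates \eqref{eq:estimate_state_hat_discrete} and \eqref{eq:adjoint_hat_estimate} by \eqref{eq:estimate_state_hat_discrete_var} and \eqref{eq:adjoint_hat_estimate_var}. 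The differences $\|\nabla(\bar{\mathbf{y}} - \hat{\mathbf{y}})\|_{\mathbf{L}^2(\Omega)}$ and $\|\nabla(\bar{\mathbf{z}} - \hat{\mathbf{z}})\|_{\mathbf{L}^2(\Omega)}$ are again controlled by $\|\mathfrak{e}_{\mathbf{u}}\|_{\mathbf{L}^2(\Omega)}$ using the properties \eqref{eq:properties_trilinear}, \eqref{eq:trilinear_embedding}, the stability of $\hat{\mathbf{y}}$, the inf--sup condition \eqref{eq:inf_sup_cond}, and the smallness conditions \eqref{eq:smallness_assumption}, \eqref{eq:assumption_y_T_small_2}. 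Feeding in the Step-1 control bound then converts each right-hand side into $\mathfrak{E}_{ad} + \mathfrak{E}_{st}$. A final collection of all five estimates into the total error norm \eqref{def:error_norm_variational} yields \eqref{eq:global_rel_var}.

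The only genuinely new ingredient, and hence the main point to get right, is the identification $\bar{\mathbf{g}} = \tilde{\mathbf{u}}$ and its corollary $\tilde{\mathbf{y}} = \hat{\mathbf{y}}$; this is what suppresses the control indicator $\mathcal{E}_{ct}$ and explains the two-term structure of $\mathfrak{E}_{ocp}$. I do not anticipate a serious technical obstacle beyond bookkeeping, since every subsequent estimate is already established in the fully discrete analysis. The one subtlety worth checking is that $\|\nabla(\tilde{\mathbf{z}} - \hat{\mathbf{z}})\|_{\mathbf{L}^2(\Omega)}$ does \emph{not} vanish — the two adjoints are linearized around $\tilde{\mathbf{y}}=\hat{\mathbf{y}}$ and $\bar{\mathbf{y}}_\T$, respectively — but is nonetheless bounded solely by $\mathfrak{E}_{st}$ once the state contribution $\|\nabla(\tilde{\mathbf{y}}-\hat{\mathbf{y}})\|_{\mathbf{L}^2(\Omega)}$ drops out of \eqref{eq:tilde_hat_adjoint_IV}.
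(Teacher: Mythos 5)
Your proposal is correct and follows essentially the same route as the paper: the paper's proof likewise rests on the identification $\bar{\mathbf{g}}=\tilde{\mathbf{u}}$ (hence $\hat{\mathbf{y}}=\tilde{\mathbf{y}}$), then invokes \eqref{eq:control_estimate_III} and \eqref{eq:tilde_hat_adjoint_V} — with the term $\|\nabla(\tilde{\mathbf{y}}-\hat{\mathbf{y}})\|_{\mathbf{L}^2(\Omega)}$ dropping out — to get $\|\mathfrak{e}_{\mathbf{u}}\|_{\mathbf{L}^2(\Omega)}\lesssim \mathfrak{E}_{st}+\mathfrak{E}_{ad}$, and handles the remaining error components by repeating the arguments of Theorem \ref{thm:control_bound}. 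Your flagged subtlety, that $\|\nabla(\tilde{\mathbf{z}}-\hat{\mathbf{z}})\|_{\mathbf{L}^2(\Omega)}$ does not vanish but is bounded by $\mathfrak{E}_{st}$ alone, is exactly the point the paper's one-line chain of estimates encodes.
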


\begin{proof}
Within the setting of the variational discretization approach, we have that $\bar{\mathbf{g}}=\tilde{\mathbf{u}}$, where $\tilde{\mathbf{u}}$ is defined in \eqref{def:tilde_u}. This fact immediately implies that $\hat{\mathbf{y}}=\tilde{\mathbf{y}}$, with $\hat{\mathbf{y}}$ and $\tilde{\mathbf{y}}$ given as the solutions to \eqref{eq:hat_function_st_var} and \eqref{eq:tilde_y}, respectively. Consequently, in view of \eqref{eq:control_estimate_III} and \eqref{eq:tilde_hat_adjoint_V}, we can conclude that
\begin{equation*}
\|\mathfrak{e}_{\mathbf{u}}\|_{\mathbf{L}^2(\Omega)} = \|\bar{\mathbf{u}}-\tilde{\mathbf{u}}\|_{\mathbf{L}^2(\Omega)} 
\lesssim 
\|\nabla(\tilde{\mathbf{z}}-\hat{\mathbf{z}})\|_{\mathbf{L}^2(\Omega)}+ \mathfrak{E}_{ad}
\lesssim
\mathfrak{E}_{st}
+ \mathfrak{E}_{ad}.
\end{equation*}
The control of the remaining terms in \eqref{def:error_norm_variational} follows by utilizing the bound $\|\mathfrak{e}_{\mathbf{u}}\|_{\mathbf{L}^2(\Omega)}\lesssim \mathfrak{E}_{ocp}$ and the arguments developed in the proof of Theorem \ref{thm:control_bound}. For brevity we skip the details. 
\end{proof}


\subsection{Efficiency analysis}

The local estimates obtained in Theorems \ref{thm:local_eff_st} and \ref{thm:local_eff_ad} can also be obtained within the setting of the variational discretization approach. In fact, having derived these results, it can be immediately deduced a local estimate for the local error indicator
\begin{align}\label{def:indicator_ocp_variational}
\mathfrak{E}_{ocp,T}^2:= \mathfrak{E}_{ad,T}^{2} + \mathfrak{E}_{st,T}^{2}.
\end{align}

We present the following result.

\begin{theorem}[local estimates for $\mathfrak{E}_{ocp,T}$]
\label{thm:global_eff_var}  
Assume that the smallness assumptions \eqref{eq:smallness_assumption} and \eqref{eq:assumption_y_T_small_2} hold. Let $(\bar{\mathbf{y}},\bar{p},\bar{\mathbf{z}},\bar{r},\bar{\mathbf{u}}) \in \mathbf{H}_0^1(\Omega)\times L_0^2(\Omega) \times \mathbf{H}_0^1(\Omega)\times L_0^2(\Omega) \times \mathbb{U}_{ad}$ be a local solution of \eqref{eq:min_weak_setting}--\eqref{eq:weak_st_equation}. Let $\bar{\mathbf{g}}$ be a local minimum of the semi discrete optimal control problem \eqref{eq:discrete_cost_var}--\eqref{eq:discrete_st_variational}, with $(\bar{\mathbf{y}}_{\T},\bar{p}_\T)$ and $(\bar{\mathbf{z}}_{\T},\bar{r}_\T)$ being the corresponding state and adjoint state discrete variables, respectively. Then, for $T\in\T$, we have
\begin{multline*}
\mathfrak{E}_{ocp,T} 
\lesssim
\|\nabla\mathbf{e}_{\mathbf{z}}\|_{\mathbf{L}^2(\mathcal{N}_T)}+\|\nabla \mathbf{e}_{\mathbf{y}}\|_{\mathbf{L}^2(\mathcal{N}_T)} 
+h_T^{-\frac{1}{2}}\|\mathbf{e}_{\mathbf{z}}\|_{\mathbf{L}^2(\mathcal{N}_T)}+h_T^{-\frac{1}{2}}\|\mathbf{e}_{\mathbf{y}}\|_{\mathbf{L}^2(\mathcal{N}_T)}
\\
+ h_T\|\mathbf{e}_{\mathbf{u}}\|_{\mathbf{L}^2(\mathcal{N}_{T})}+
\|e_p\|_{L^2(\mathcal{N}_{T})}+\|e_r\|_{L^2(\mathcal{N}_{T})}+\mathrm{osc}_{\mathcal{N}_T}(\mathbf{y}_\Omega),
\end{multline*}
where $\mathcal{N}_T$ and $\mathrm{osc}_{\mathcal{N}_T}(\mathbf{y}_\Omega)$ are defined as in \eqref{def:patch} and \eqref{eq:osc_term}, respectively. The hidden constant is independent of the continuous and discrete optimal variables, the size of the elements in the mesh $\T$, and $\#\T$.
\end{theorem}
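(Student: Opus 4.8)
The plan is to reduce the claim to the efficiency estimates already established for the fully discrete scheme in Theorems \ref{thm:local_eff_st} and \ref{thm:local_eff_ad}. The essential observation is that the two indicators that compose $\mathfrak{E}_{ocp,T}$ share exactly the structure of their fully discrete counterparts: the adjoint indicator $\mathfrak{E}_{ad,T}$ in \eqref{eq:local_error_indicator_ad_var} is \emph{identical} in form to $\mathcal{E}_{ad,T}$ in \eqref{eq:local_error_indicator_ad}, while the state indicator $\mathfrak{E}_{st,T}$ in \eqref{eq:error_estimator_st_variational} differs from $\mathcal{E}_{st,T}$ in \eqref{eq:error_estimator_st} only in that the discrete control $\bar{\mathbf{u}}_\T$ is replaced by the non-discretized optimal control $\bar{\mathbf{g}}$.

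First I would treat the adjoint contribution. Since $(\bar{\mathbf{z}},\bar{r})$ solves the continuous adjoint system \eqref{eq:weak_adjoint_equation} and $(\bar{\mathbf{z}}_\T,\bar{r}_\T)$ solves \eqref{eq:discrete_adjoint_equation_variational}, which coincides in form with \eqref{eq:discrete_adjoint_equation}, the residual identity \eqref{eq:error_eq_ad} and the subsequent bubble function arguments of Theorem \ref{thm:local_eff_ad} apply without change. I stress that the $\mathbf{W}^{1,3}(\Omega)$--regularity of $\bar{\mathbf{y}}$ for $d=3$ recorded in Remark \ref{rk:regularity} depends only on the continuous problem and therefore remains valid here. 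This delivers the bound for $\mathfrak{E}_{ad,T}$ with precisely the terms appearing on the right-hand side of \eqref{eq:local_eff_ad}.

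Next I would treat the state contribution. Writing $\mathfrak{e}_{\mathbf{u}} := \bar{\mathbf{u}}-\bar{\mathbf{g}}$ for the control error of the semi discrete scheme, I would reproduce the derivation of the residual identity \eqref{eq:error_eq_st}; the only modification is that the data term now reads $(\bar{\mathbf{u}}-\bar{\mathbf{g}},\mathbf{v})_{\mathbf{L}^2(\Omega)}=(\mathfrak{e}_{\mathbf{u}},\mathbf{v})_{\mathbf{L}^2(\Omega)}$. Testing with the element and edge bubble functions $\varphi_T\mathbf{R}_T^{st}$ and $\varphi_S\mathbf{J}_S^{st}$, invoking the inverse inequalities, the Sobolev embedding $\mathbf{H}_0^1(\Omega)\hookrightarrow\mathbf{L}^d(\Omega)$, the bound \eqref{eq:eff_st_II} on $\|\bar{\mathbf{y}}_\T\|_{\mathbf{L}^d(\Omega)}$, and the control of $\|\nabla\bar{\mathbf{y}}\|_{\mathbf{L}^d(\Omega)}$ furnished by \eqref{eq:stability_state_eq} when $d=2$ and by Remark \ref{rk:regularity} when $d=3$, exactly as in the proof of Theorem \ref{thm:local_eff_st}, I obtain $\mathfrak{E}_{st,T}\lesssim\|\mathbf{e}_{\mathbf{y}}\|_{\mathbf{H}^1(\mathcal{N}_T)}+\|e_p\|_{L^2(\mathcal{N}_T)}+h_T\|\mathfrak{e}_{\mathbf{u}}\|_{\mathbf{L}^2(\mathcal{N}_T)}$, that is, estimate \eqref{eq:local_eff_st} with $\mathbf{e}_{\mathbf{u}}$ replaced by $\mathfrak{e}_{\mathbf{u}}$.

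Finally, recalling $\mathfrak{E}_{ocp,T}^2=\mathfrak{E}_{ad,T}^2+\mathfrak{E}_{st,T}^2$ from \eqref{def:indicator_ocp_variational}, I would collect the two bounds to conclude. Since the argument is a direct transcription of the fully discrete efficiency proofs, there is no genuine obstacle; the only point requiring care is the bookkeeping of the control error, which here enters through $\mathfrak{e}_{\mathbf{u}}=\bar{\mathbf{u}}-\bar{\mathbf{g}}$ rather than $\bar{\mathbf{u}}-\bar{\mathbf{u}}_\T$, together with the verification that the continuous regularity and stability estimates invoked in Theorems \ref{thm:local_eff_st} and \ref{thm:local_eff_ad} are independent of the choice of discretization and hence unaffected.
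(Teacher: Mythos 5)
Your proposal is correct and follows essentially the same route as the paper: the paper's own proof simply states that the arguments of Theorems \ref{thm:local_eff_st} and \ref{thm:local_eff_ad} carry over to the semi discrete setting and then collects the resulting bounds for $\mathfrak{E}_{st,T}$ and $\mathfrak{E}_{ad,T}$, exactly as you do, with the control error entering only through the state residual as $h_T\|\bar{\mathbf{u}}-\bar{\mathbf{g}}\|_{\mathbf{L}^2(\mathcal{N}_T)}$ since there is no indicator $\mathcal{E}_{ct,T}$ to account for. Your bookkeeping of $\mathfrak{e}_{\mathbf{u}}=\bar{\mathbf{u}}-\bar{\mathbf{g}}$ and the observation that the continuous regularity and stability estimates (including Remark \ref{rk:regularity}) are discretization-independent are precisely the points the paper leaves implicit.
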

\begin{proof}
The proof follows the same arguments developed in the proof of Theorem \ref{thm:global_eff}. For brevity, we skip details.
\end{proof}
}


\section{Numerical examples}\label{sec:numerical_ex}

\DQ{
In this section we conduct numerical experiments for the fully discrete scheme of section \ref{sec:fully_discrete_scheme} and the semi discrete one of section \ref{sec:semi_discrete_scheme} and illustrate the performance of the devised a posteriori error estimators $\mathcal{E}_{ocp}$ and $\mathfrak{E}_{ocp}$ defined in \eqref{def:error_estimator_ocp} and \eqref{def:error_estimator_ocp_var}, respectively.

\subsection{Implementation}

The presented numerical examples have been carried out with the help of a code that we implemented using \texttt{C++}. The involved global linear systems were solved by using the multifrontal massively parallel sparse direct solver (MUMPS) \cite{MUMPS1,MUMPS2}. The right hand sides, the approximation errors, and the error estimators were computed by a quadrature formula which is exact for polynomials of degree nineteen $(19)$ for two dimensional domains and degree fourteen $(14)$ for three dimensional domains. 

In what follows, we discuss implementation details for each numerical scheme. We recall that the discrete spaces $\mathbf{V}(\T)$, $\mathcal{P}(\T)$, and $\mathbb{U}(\T)$ are defined by \eqref{def:velocity_space}, \eqref{def:pressure_space}, and \eqref{def:control_space}, respectively.
\begin{itemize}
\item[(i)] \textbf{The fully discrete scheme}: For a given partition $\mathscr{T}$, we seek a discrete solution $(\bar{\mathbf{y}}^{}_\mathscr{T},\bar{p}^{}_\mathscr{T},\bar{\mathbf{z}}^{}_\mathscr{T},\bar{r}^{}_\mathscr{T},\bar{\mathbf{u}}^{}_\mathscr{T})\,\in \mathbf{V}(\mathscr{T}) \times \mathcal{P}(\mathscr{T}) \times \mathbf{V}(\mathscr{T}) \times \mathcal{P}(\mathscr{T}) \times \mathbb{U}_{ad}(\T)$ that solves \eqref{eq:discrete_state_equation}--\eqref{eq:discrete_adjoint_equation}. System \eqref{eq:discrete_state_equation}--\eqref{eq:discrete_adjoint_equation} is solved by using a primal--dual active set strategy \cite[section 2.12.4]{Troltzsch} combined with a fixed point method: for each active set iteration, the ensuing nonlinear system is solved by using a fixed point algorithm. All matrices involved in the left hand side of the resulting linear system have been assembled exactly. 

The total number of degrees of freedom for the fully discrete scheme is $\mathsf{Ndof}=2\left[ \dim(\mathbf{V}(\T)) + \dim(\mathcal{P}(\mathscr{T})) \right] + \dim(\mathbb{U}(\T))$. We measure the error in the norm $\|\mathbf{e}\|_{\Omega}$, which is defined in \eqref{def:error_norm}. Finally,  we introduce the effectivity index $\mathcal{I}:= \mathcal{E}_{ocp}/\|\mathbf{e}\|_{\Omega}$.

\item[(ii)] \textbf{The semi discrete scheme}: For a given partition $\mathscr{T}$, we seek a solution $(\bar{\mathbf{y}}^{}_\mathscr{T},\bar{p}^{}_\mathscr{T},\bar{\mathbf{z}}^{}_\mathscr{T},\bar{r}^{}_\mathscr{T})\,\in \mathbf{V}(\mathscr{T}) \times \mathcal{P}(\mathscr{T}) \times \mathbf{V}(\mathscr{T}) \times \mathcal{P}(\mathscr{T})$ that solves system \eqref{eq:discrete_st_variational}--\eqref{eq:discrete_adjoint_equation_variational}. This system is solved by using an adaptation of the Newton method presented in \cite[Section 3]{MR2867194}. We notice that the numerical implementation of the variational discretization scheme requires the assembling and exact computation of $(\bar{\mathbf{g}},\mathbf{v}_{\mathscr{T}})_{\mathbf{L}^{2}(\Omega)}$. It is thus particularly needed the exact integration of such a term on the elements $T\in\T$ where the control $\bar{\mathbf{g}}$ exhibits kinks. An alternative to accomplish this task is as follows:
\begin{itemize}
\item[S1)] identify the elements $T \in \T$ where the control $\bar{\mathbf{g}}$ have kinks.
\item[S2)] identify the regions of such elements $T$ where the control variable is active/inactive. Notice that, since we are using Taylor--Hood finite elements, these regions have, in general, curved boundaries.
\item[S3)] compute the term $(\bar{\mathbf{g}},\mathbf{v}_{\mathscr{T}})_{\mathbf{L}^{2}(T)}$ by decomposing the integral on the aforementioned regions.
\end{itemize}
We notice that the computational implementation of S2) and S3) is far from being trivial. In view of this fact, we will compute the term $(\bar{\mathbf{g}},\mathbf{v}_{\mathscr{T}})_{\mathbf{L}^{2}(T)}$ with the help of different quadrature formulas. We stress that this numerical implementation leads to an approximated version of the variational discretization approach and immediately mention that, for the different quadrature formulas that we consider, the devised adaptive loops deliver optimal experimental rates of convergence for all the involved variables.

The total number of degrees of freedom for the semi discrete scheme is $\mathsf{Ndof}=2\left[ \dim(\mathbf{V}(\T)) + \dim(\mathcal{P}(\mathscr{T})) \right]$. The error is measured in the norm $\|\mathfrak{e}\|_{\Omega}$ defined in \eqref{def:error_norm_variational}. Finally, we introduce the effectivity index $\mathfrak{I}:= \mathfrak{E}_{ocp}/\|\mathfrak{e}\|_{\Omega}$. 
\end{itemize}

Once the discrete solution is obtained, we compute, for $T\in \T$, the error indicators $\mathcal{E}_{ocp,T}$ and $\mathfrak{E}_{ocp,T}$, defined in \eqref{def:indicator_ocp} and \eqref{def:indicator_ocp_variational}, respectively, to drive the adaptive mesh refinement procedure described in Algorithm \ref{Algorithm1}. A sequence of adaptively refined meshes is thus generated from the initial meshes shown in Figure \ref{fig:initial_meshes}. To visualize such meshes we have used the open--source application ParaView \cite{Ahrens2005ParaViewAE,Ayachit2015ThePG}.

To simplify the construction of exact solutions, we incorporate an extra source term $\mathbf{f} \in \mathbf{L}^{\infty}(\Omega)$ in the right hand side of the momentum equation of \eqref{eq:weak_st_equation}. With such a modification, the right hand side of the first equation in \eqref{eq:weak_st_equation} now reads $(\mathbf{f}+\mathbf{u},\mathbf{v})_{\mathbf{L}^2(\Omega)}$.

\begin{algorithm}[ht]
\caption{\textbf{Adaptive algorithm.}}
\label{Algorithm1}
\textbf{Input:} Initial mesh $\mathscr{T}_{0}$, fluid viscosity $\nu$, desired state $\mathbf{y}_\Omega$, external source $\mathbf{f}$, constraints $\mathbf{a}$ and $\mathbf{b}$, and regularization parameter $\alpha$;
\\
\textbf{Set:} $i=0$.
\\
\DQ{$\boldsymbol{1}$: Choose an initial discrete guess $(\mathbf{y}_{\T_{i}}^{0},p_{\T_{i}}^{0},\mathbf{z}_{\T_{i}}^{0},r_{\T_{i}}^{0}) \in \mathbf{V}(\mathscr{T}_{i}) \times \mathcal{P}(\mathscr{T}_{i}) \times \mathbf{V}(\mathscr{T}_{i}) \times \mathcal{P}(\mathscr{T}_{i})$;
\\
$\boldsymbol{2}$: (i) For the fully discrete scheme choose, in addition, $\mathbf{u}_{\T_{i}}^{0} \in \mathbb{U}(\T_{i})$ and compute $[\bar{\mathbf{y}}_{\T_{i}},\bar{p}_{\T_{i}},\bar{\mathbf{z}}_{\T_{i}},\bar{r}_{\T_{i}},\bar{\mathbf{u}}_{\T_{i}}]=\textbf{Active-Set}[\mathscr{T}_i,\nu,\mathbf{y}_\Omega,\mathbf{f},\mathbf{a},\mathbf{b},\alpha,\mathbf{y}_{\T_{i}}^{0},p_{\T_{i}}^{0},\mathbf{z}_{\T_{i}}^{0},r_{\T_{i}}^{0},\mathbf{u}_{\T_{i}}^{0}]$. \textbf{Active-Set} implements the active set strategy of \cite[section 2.12.4]{Troltzsch}; for each active set iteration, the ensuing nonlinear system is solved by using a fixed point method;

\hspace{0.5cm} (ii) For the semi discrete scheme, compute $[\bar{\mathbf{y}}_{\T_{i}},\bar{p}_{\T_{i}},\bar{\mathbf{z}}_{\T_{i}},\bar{r}_{\T_{i}}] =$\\ $\textbf{Newton-Method}[\mathscr{T}_i,\nu,\mathbf{y}_\Omega,\mathbf{f},\mathbf{a},\mathbf{b},\alpha,\mathbf{y}_{\T_{i}}^{0},p_{\T_{i}}^{0},\mathbf{z}_{\T_{i}}^{0},r_{\T_{i}}^{0}]$. \textbf{Newton-Method} implements an adaptation of the numerical algorithm presented in \cite[Section 3]{MR2867194}; }
\\
\textbf{Adaptive loop:}
\\
$\boldsymbol{3}$: For each $T \in \mathscr{T}_i$ compute the local error indicator $\mathcal{E}_{ocp,T}$ \DQ{($\mathfrak{E}_{ocp,T}$)} defined in \eqref{def:indicator_ocp} \DQ{(\eqref{def:indicator_ocp_variational})};
\\
$\boldsymbol{4}$: Mark an element $T \in \T_i$ for refinement if \DQ{
$$\mathcal{E}_{ocp,T}^{2}> \frac{1}{2}\max_{T'\in \mathscr{T}_i}\mathcal{E}_{ocp,T'}^{2} \left( \mathfrak{E}_{ocp,T}^{2}> \frac{1}{2}\max_{T'\in \mathscr{T}_i}\mathfrak{E}_{ocp,T'}^{2} \right); $$
}
\\
$\boldsymbol{5}$: From step $\boldsymbol{4}$, construct a new mesh $\mathscr{T}_{i+1}$, using a longest edge bisection algorithm. Set $i \leftarrow i + 1$ and go to step $\boldsymbol{1}$.
\end{algorithm}

\begin{figure}[!ht]
\centering
\begin{minipage}{0.310\textwidth}\centering
\includegraphics[trim={0 0 0 0},clip,width=1.5cm,height=1.5cm,scale=0.2]{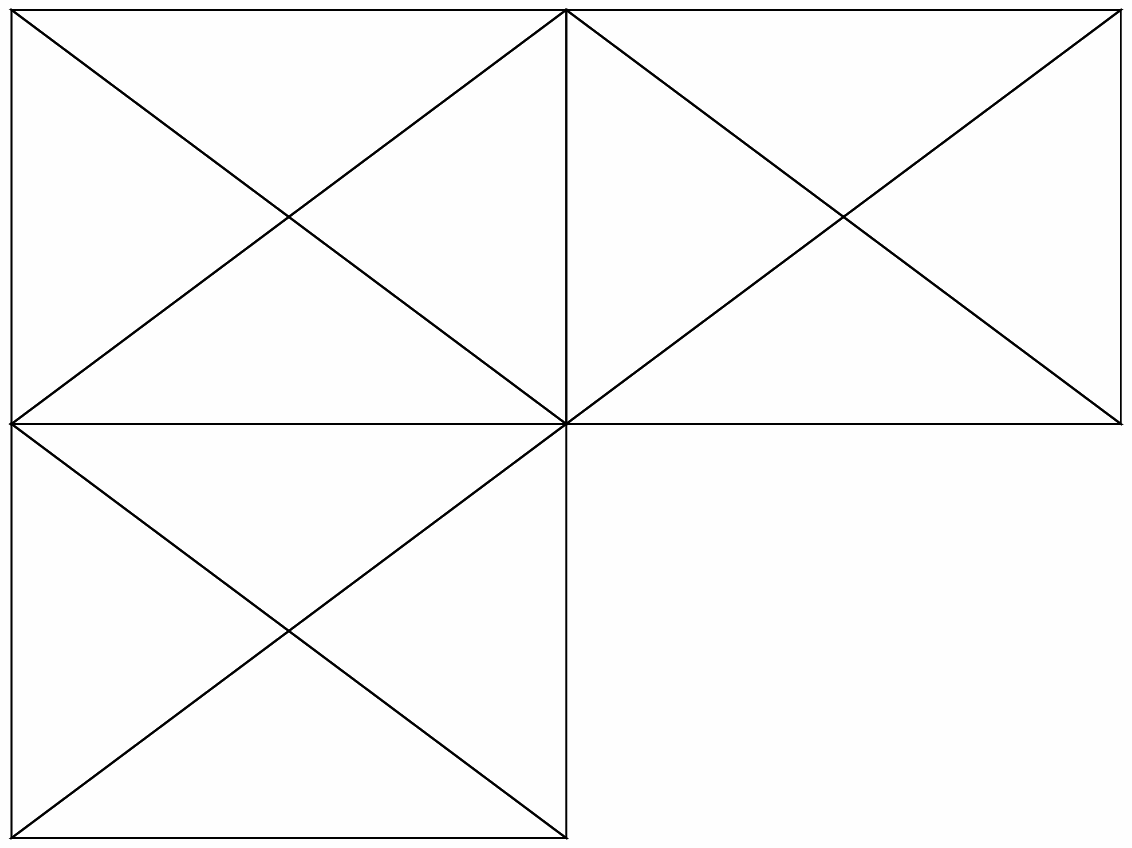}\\
\end{minipage}
\begin{minipage}{0.310\textwidth}\centering
\includegraphics[trim={0 0 0 0},clip,width=1.5cm,height=1.5cm,scale=0.2]{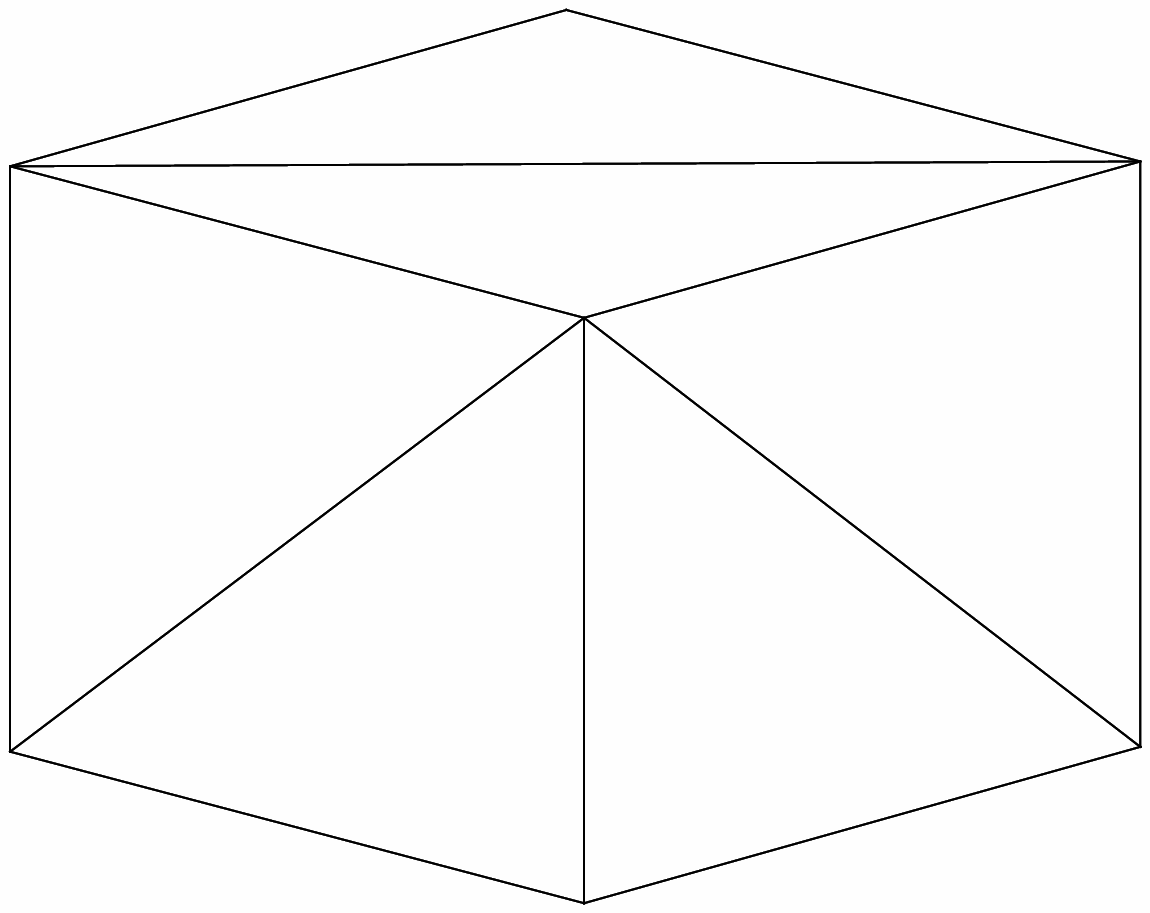}\\
\end{minipage}
\caption{The initial meshes used when the domain $\Omega$ is a $L$-shaped domain (Example 1)  and a cube (Example 2).}
\label{fig:initial_meshes}
\end{figure}

We now provide two numerical experiments where the exact solutions are known. The first example is posed on a two-dimensional L-shaped domain and operate under inhomogeneous Dirichlet boundary conditions for the state and adjoint equations. We notice that this violates the assumption of homogeneous Dirichlet boundary conditions but it retains essential difficulties and allows us to evaluate experimental rates of convergence. The second example is posed on a three-dimensional domain and involves homogeneous Dirichlet boundary conditions for the state and adjoint equations.

\subsection{Example 1 (two dimensional L--shaped domain)}

We set 
\[
\Omega = (-1,1)^{2} \setminus [0,1)\times(-1,0],
\] 
$\mathbf{a} = (-2,-2)$, $\mathbf{b} = (2,2)$, $\alpha = 10^{-4}$, and $\nu = 1$. The optimal state and adjoint state are given, in polar coordinates $(\rho,\vartheta)$, by
\[ \bar{\mathbf{y}}(\rho,\vartheta) = 10^{-2}\rho^{\sigma}\begin{pmatrix}
(1+\sigma)\sin(\vartheta)\psi(\vartheta) + \cos(\vartheta)\psi'(\vartheta) \\
-(1+\sigma)\cos(\vartheta)\psi(\vartheta) + \sin(\vartheta)\psi'(\vartheta)
\end{pmatrix}, \quad \bar{\mathbf{z}}(\rho,\vartheta) = \bar{\mathbf{y}}(\rho,\vartheta) - \begin{pmatrix}
2 \\
2
\end{pmatrix} \]
\[\bar{p}(\rho,\vartheta) = \bar{r}(\rho,\vartheta) = \frac{1}{1-\sigma}\rho^{\sigma - 1}\left((1 + \sigma)^{2})\psi'(\vartheta) + \psi'''(\vartheta) \right),\]
\[\psi(\vartheta) = \left(\frac{\sin((1+\sigma)\vartheta)}{1 + \sigma} + \frac{\sin((\sigma-1)\vartheta)}{\sigma - 1}\right)\cos(\gamma \sigma) - \cos((1 + \sigma)\vartheta) + \cos((\sigma - 1)\vartheta),\]
where $\vartheta \in [0,3\pi/2]$, $\sigma = 856399/1572864$, and $\gamma = 3\pi/2$.

The purpose of this example is twofold. First, we compare uniform versus adaptive refinement by utilizing the devised error estimators $\mathcal{E}_{ocp}$ and $\mathfrak{E}_{ocp}$ within the adaptive schemes of Algorithm \ref{Algorithm1}. Second, within the setting of the semi discrete scheme, we quantify the effect of utilizing different integration rules in the assembling procedure when computing $(\bar{\mathbf{g}},\mathbf{v}_{\mathscr{T}})_{\mathbf{L}^{2}(\Omega)}$ and suitable derivatives involved in the Newton method. We consider three different implementations, which we describe in what follows.
\begin{itemize}
\item[$\bullet$] We first use a quadrature formula which, in two dimensions, is exact for polynomials of degree nineteen $(19)$. To simplify the presentation of the results, we use $\mathfrak{S}_{19}$ to identify the results obtained under this particular implementation.
\item[$\bullet$] Second, we use a quadrature formula which, in two dimensions, is exact for polynomials of degree five $(5)$. We use $\mathfrak{S}_{5}$ to identify the results obtained under this particular implementation.
\item[$\bullet$] Third, we consider a composed quadrature: in the elements $T \in \T$ where the control exhibits kinks we use a quadrature exact for polynomials of degree five, whereas in the remaining elements we utilize a quadrature formula which is exact for polynomials of degree nineteen. We use $\mathfrak{S}_{5c}$ to identify the results obtained under this particular implementation.
\end{itemize}
We also use $\mathfrak{F}$ to identify the results obtained within the setting of the fully discrete scheme. 


\begin{figure}[!ht]
\centering
\psfrag{error ct}{{\large $\| \mathfrak{e}_{\mathfrak{u}}\|_{\mathbf{L}^{2}(\Omega)}$}}
\psfrag{ndofs 03}{{\normalsize $\mathsf{Ndof}^{-0.3}$}}
\psfrag{ndofs 12}{{\normalsize $\mathsf{Ndof}^{-1/2}$}}
\psfrag{Ndofs}{{\large $\mathsf{Ndof}$}}
\psfrag{73}{{\large $\mathfrak{S}_{19}$}}
\psfrag{5e}{{\large $\mathfrak{S}_{5}$}}
\psfrag{5c}{{\large $\mathfrak{S}_{5c}$}}
\psfrag{fd}{{$\mathfrak{F}$}}
\begin{minipage}[c]{0.4\textwidth}\centering
\psfrag{errores totales}{\hspace{0.1cm}\large{$\|\mathbf{e}\|_{\Omega}$ and $\|\mathfrak{e}\|_{\Omega}$}}
\includegraphics[trim={0 0 0 0},clip,width=5.05cm,height=3.2cm,scale=0.30]{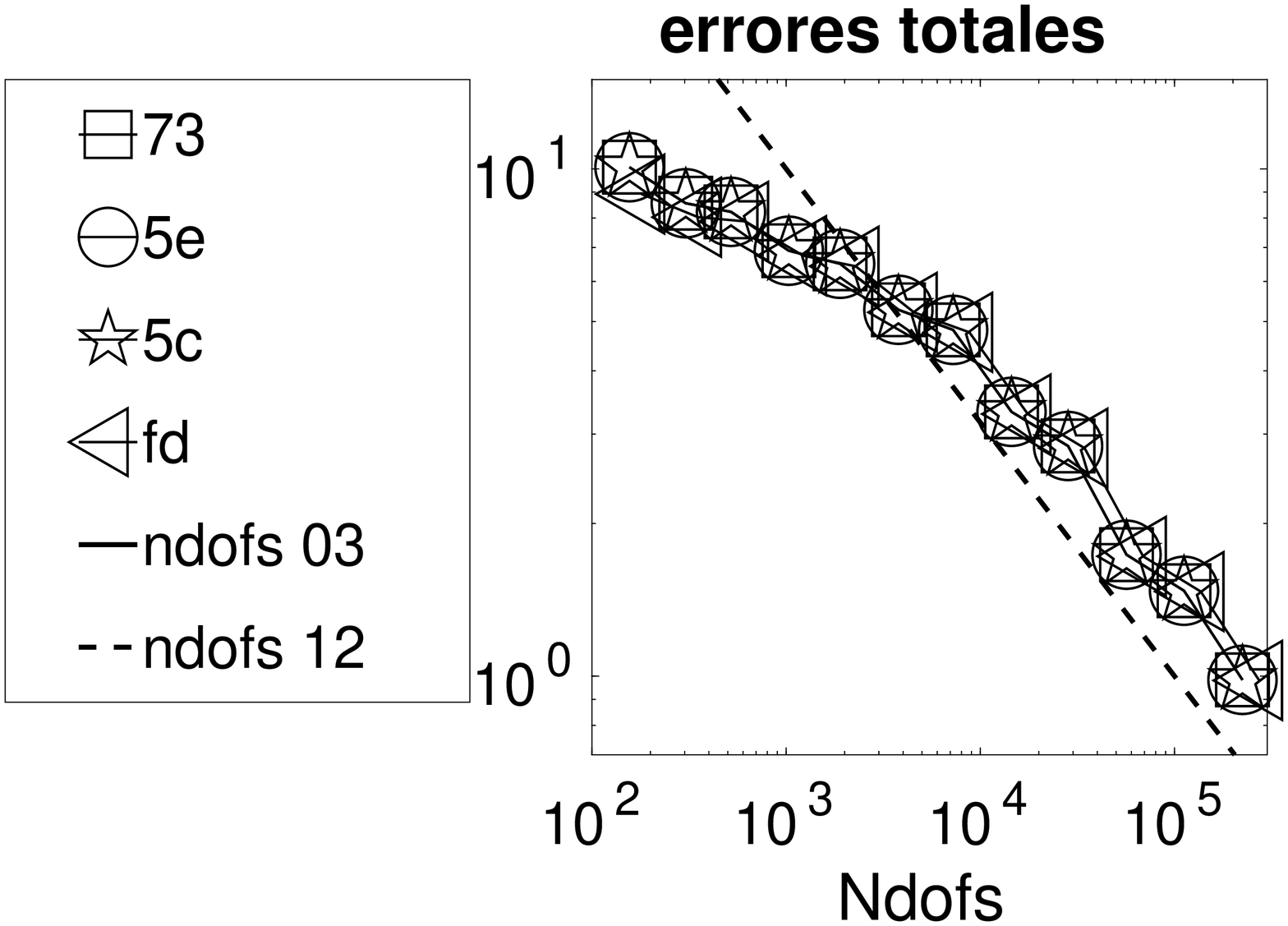}\\
\hspace{1.9cm}\tiny{(A.1)}
\end{minipage}
\begin{minipage}[c]{0.3\textwidth}\centering
\psfrag{errors ct}{\hspace{-1.1cm}\large{$\|\mathbf{e}_{\mathbf{u}}\|_{\mathbf{L}^{2}(\Omega)}$ and $\|\mathfrak{e}_{\mathbf{u}}\|_{\mathbf{L}^{2}(\Omega)}$}}
\includegraphics[trim={0 0 0 0},clip,width=3.15cm,height=3.2cm,scale=0.30]{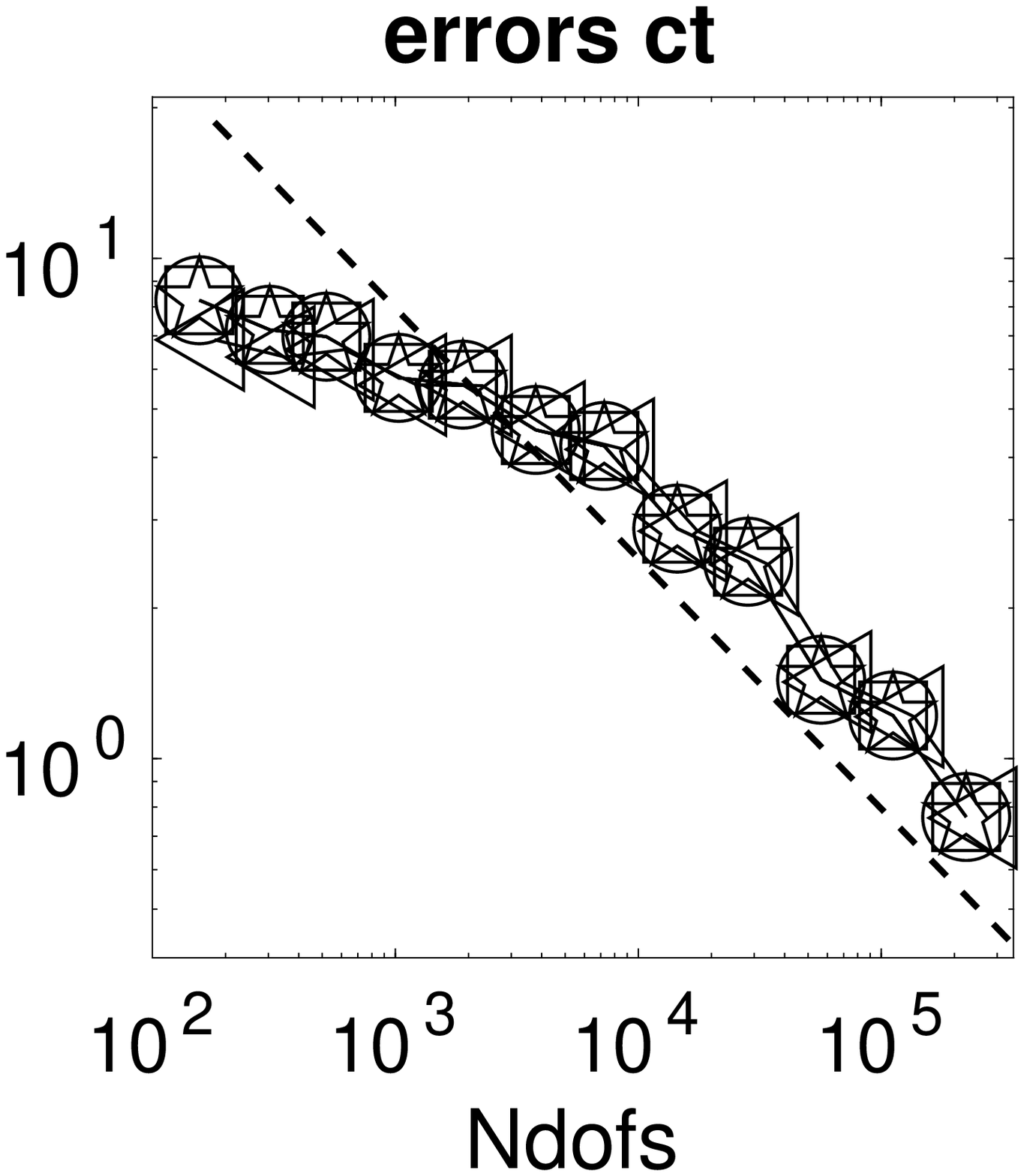}\\
\hspace{0.2cm}\tiny{(A.2)}
\end{minipage}
\\
\begin{minipage}[c]{0.248\textwidth}\centering
\psfrag{errors st}{\hspace{0.0cm}\large{$\|\nabla \mathbf{e}_{\mathbf{y}}\|_{\mathbf{L}^{2}(\Omega)}$}}
\includegraphics[trim={0 0 0 0},clip,width=3.15cm,height=3.2cm,scale=0.30]{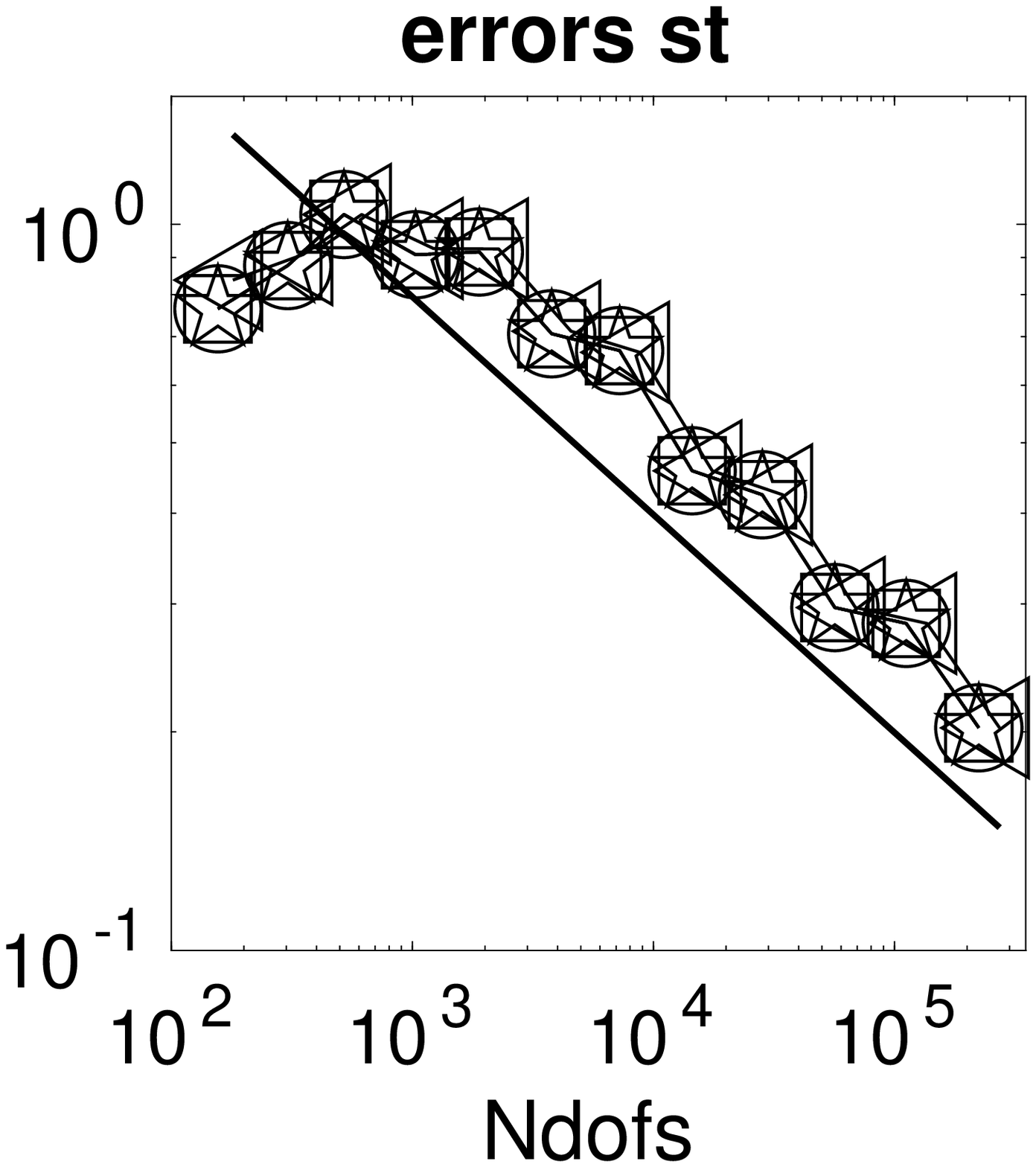}\\
\hspace{0.2cm}\tiny{(A.3)}
\end{minipage}
\begin{minipage}[c]{0.248\textwidth}\centering
\psfrag{errors st}{\hspace{0.4cm}\large{$\|e_{p}\|_{L^{2}(\Omega)}$}}
\includegraphics[trim={0 0 0 0},clip,width=3.15cm,height=3.2cm,scale=0.30]{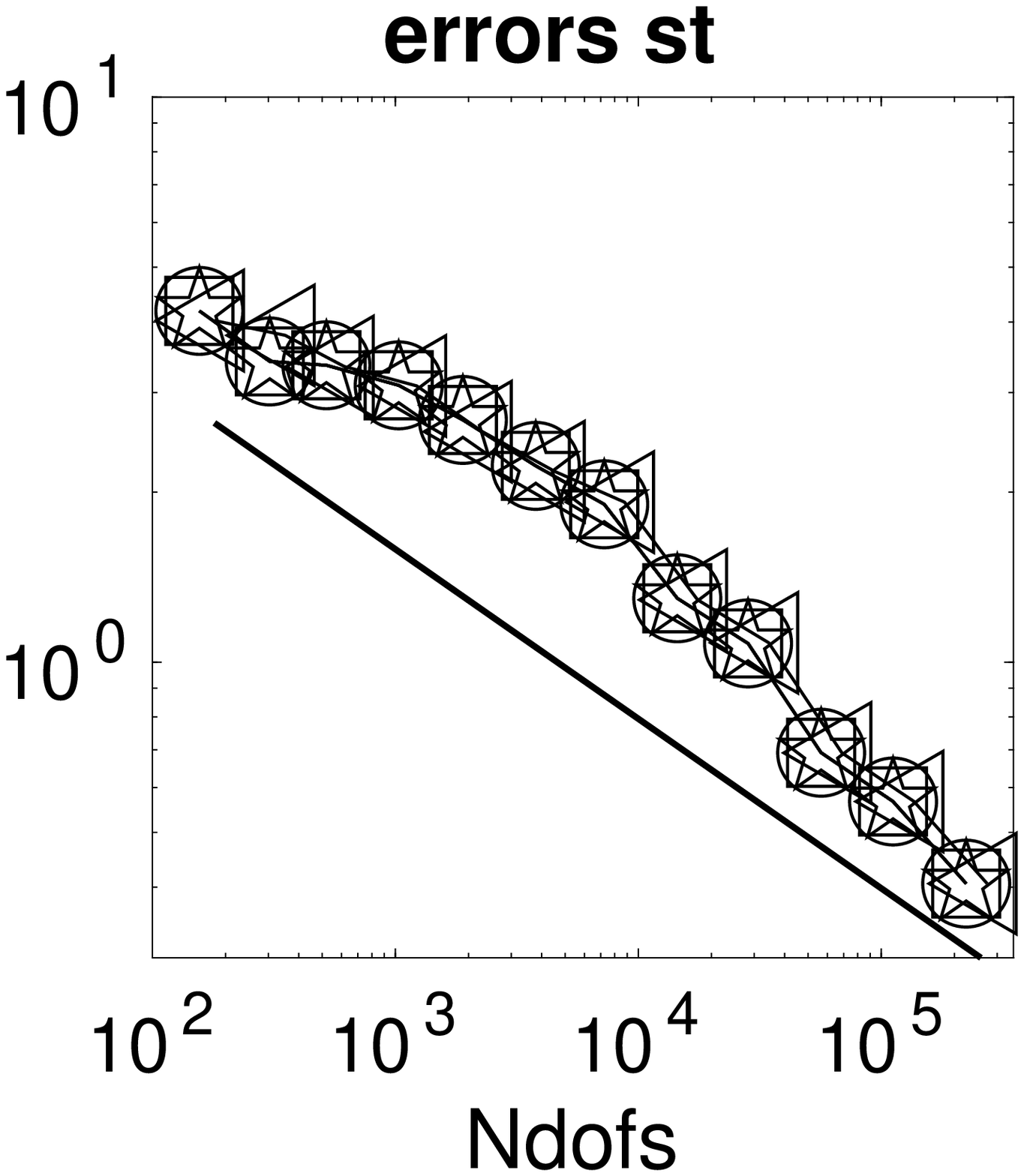}\\
\hspace{0.2cm}\tiny{(A.4)}
\end{minipage}
\begin{minipage}[c]{0.248\textwidth}\centering
\psfrag{errors ad}{\hspace{0.0cm}\large{$\|\nabla \mathbf{e}_{\mathbf{z}}\|_{\mathbf{L}^{2}(\Omega)}$}}
\includegraphics[trim={0 0 0 0},clip,width=3.15cm,height=3.2cm,scale=0.30]{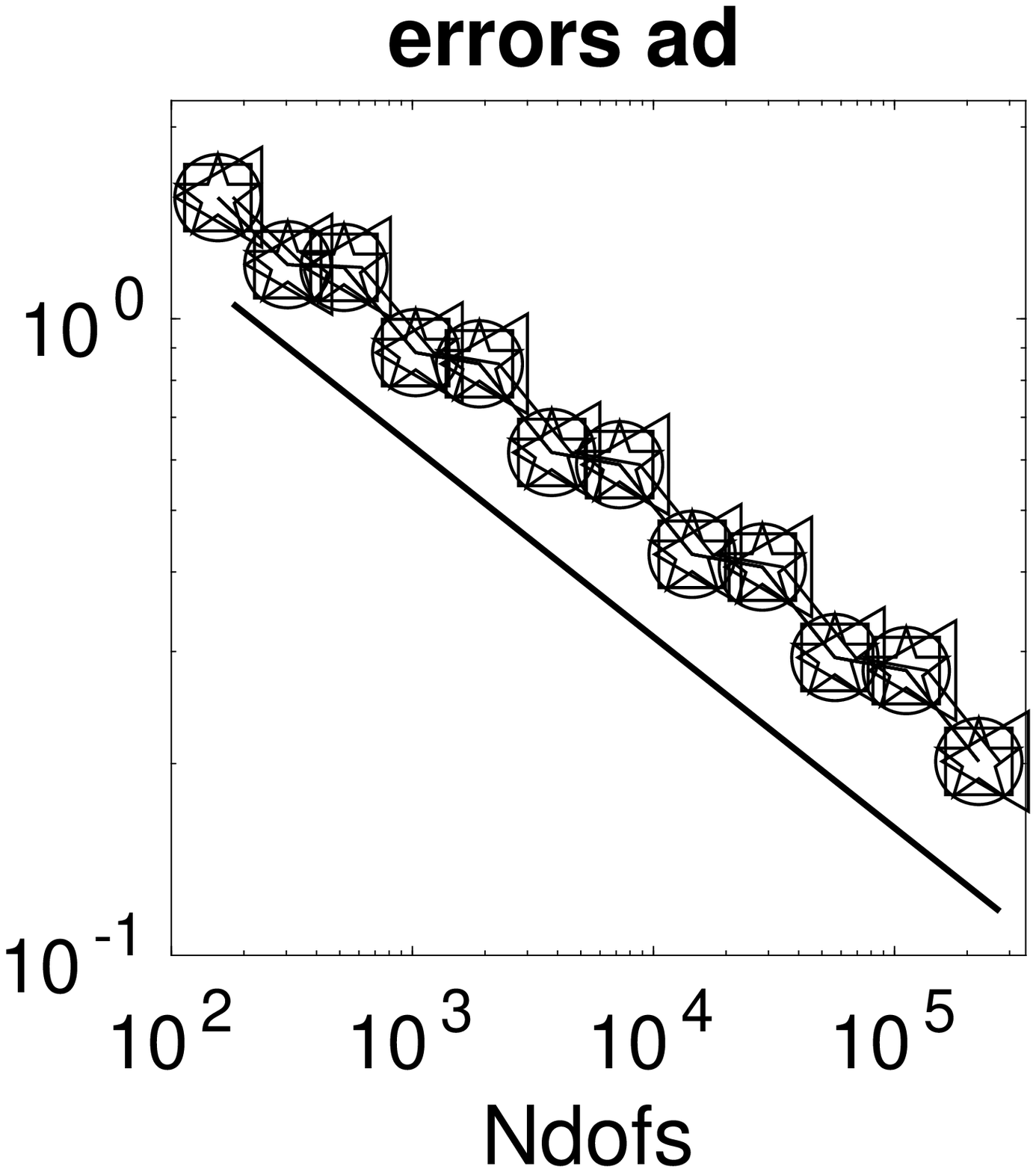}\\
\hspace{0.2cm}\tiny{(A.5)}
\end{minipage}
\begin{minipage}[c]{0.248\textwidth}\centering
\psfrag{errors ad}{\hspace{0.4cm}\large{$\|e_{r}\|_{L^{2}(\Omega)}$}}
\includegraphics[trim={0 0 0 0},clip,width=3.15cm,height=3.2cm,scale=0.30]{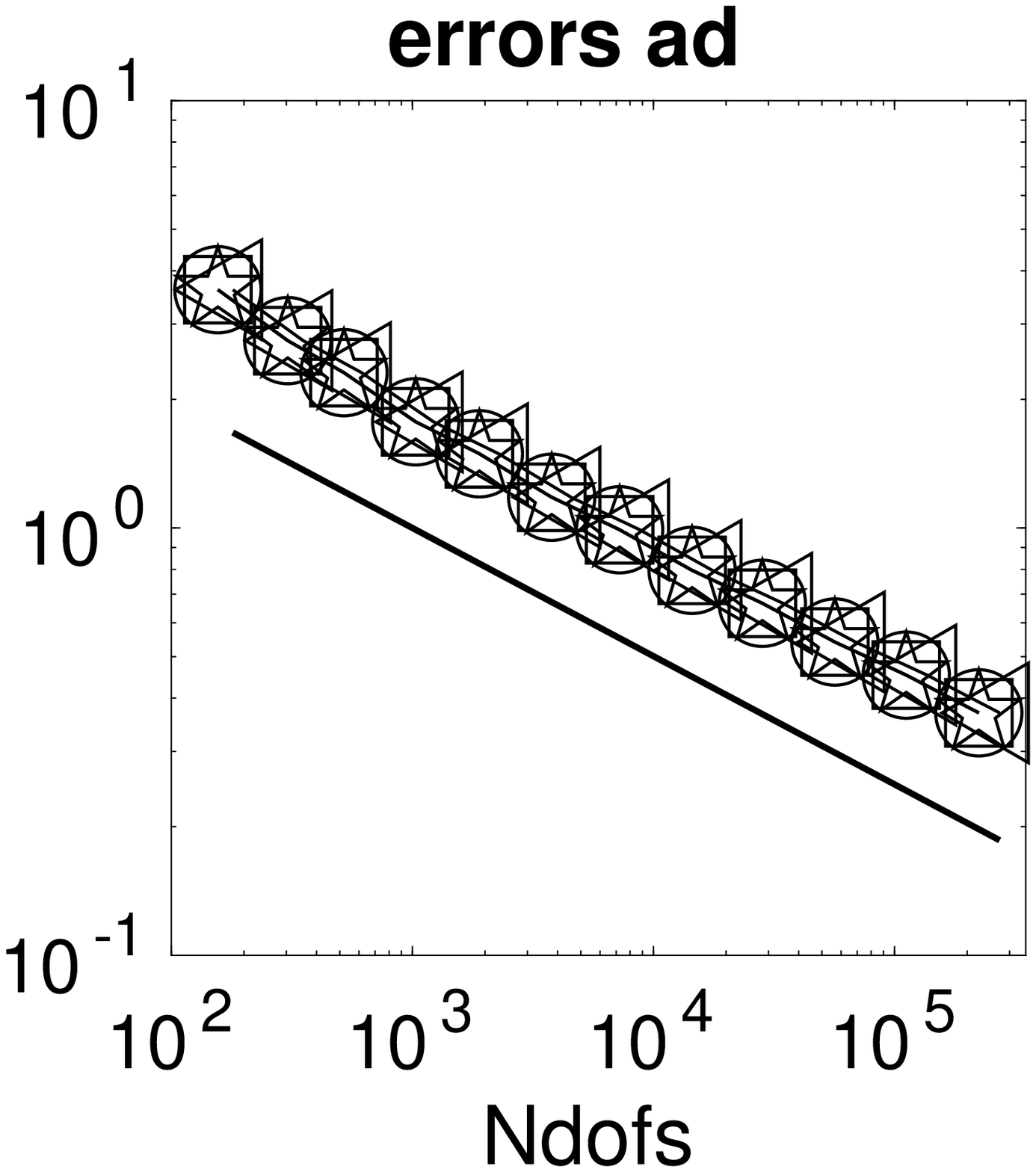}\\
\hspace{0.2cm}\tiny{(A.6)}
\end{minipage}
\caption{Example 1. Experimental rates of convergence, within uniform refinement, for the total errors $\|\mathbf{e}\|_{\Omega}$ and $\|\mathfrak{e}\|_{\Omega}$ (A.1) and each of their contributions (A.2)--(A.6) by considering the semi discrete scheme (with the implementations $\mathfrak{S}_{19}$, $\mathfrak{S}_{5}$, and $\mathfrak{S}_{5c}$) and the fully discrete scheme $\mathfrak{F}$.}
\label{fig:ex-1}
\end{figure}


\begin{figure}[!ht]
\centering
\psfrag{ndof 12}{{\normalsize $\mathsf{Ndof}^{-1.2}$}}
\psfrag{ndof 2}{{\normalsize $\mathsf{Ndof}^{-1}$}}
\psfrag{ndof 32}{{\normalsize $\mathsf{Ndof}^{-3/2}$}}
\psfrag{Ndofs}{{\large $\mathsf{Ndof}$}}
\psfrag{I 73}{{\large $\mathfrak{S}_{19}$}}
\psfrag{I 5}{{\large $\mathfrak{S}_{5}$}}
\psfrag{I 5c}{{\large $\mathfrak{S}_{5c}$}}
\psfrag{I fd}{{$\mathfrak{F}$}}
\begin{minipage}[c]{0.4\textwidth}\centering
\psfrag{etas totales}{\hspace{-0.0cm}\large{$\mathcal{E}_{ocp}$ and $\mathfrak{E}_{ocp}$}}
\includegraphics[trim={0 0 0 0},clip,width=4.7cm,height=3.2cm,scale=0.30]{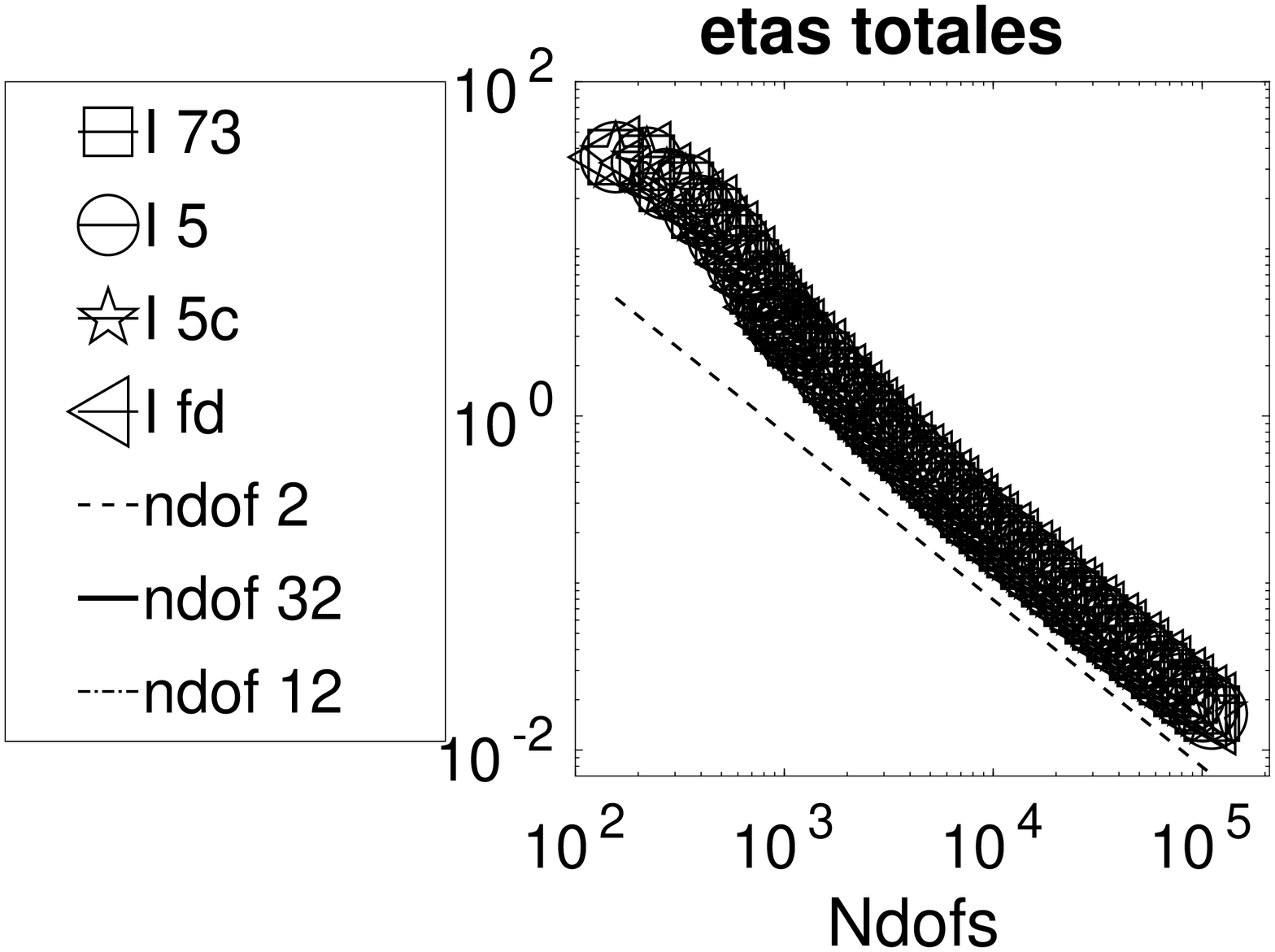}\\
\hspace{1.9cm}\tiny{(B.1)}
\end{minipage}
\begin{minipage}[c]{0.265\textwidth}\centering
\psfrag{indice ef}{\hspace{-0.0cm}\large{$\mathcal{I}$ and $\mathfrak{I}$}}
\includegraphics[trim={0 0 0 0},clip,width=3.4cm,height=3.2cm,,scale=0.30]{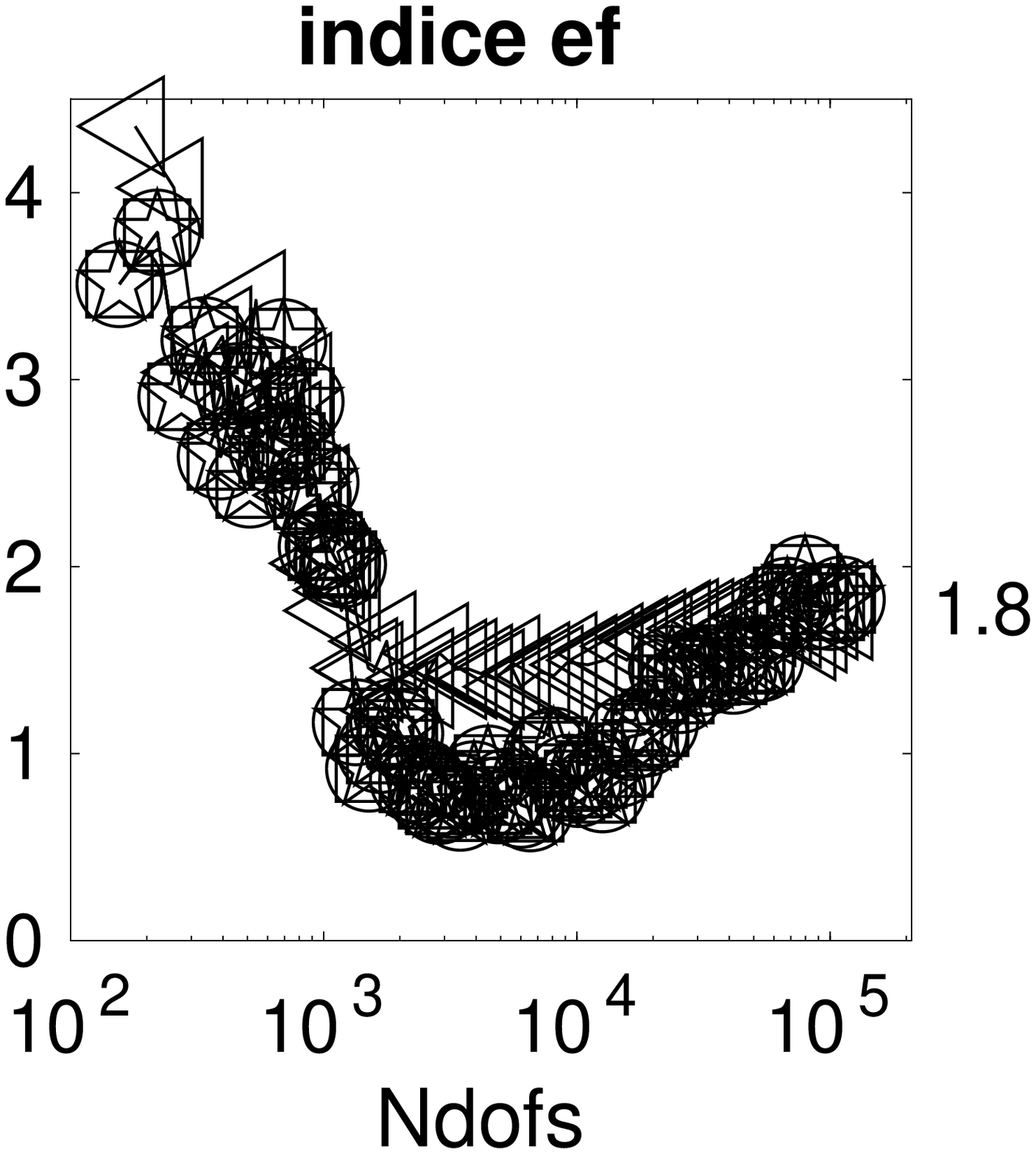}\\
\hspace{0.20cm}\tiny{(B.2)}
\end{minipage}
\begin{minipage}[c]{0.25\textwidth}\centering
\psfrag{errors ct}{\hspace{-1.0cm}\large{$\|\mathbf{e}_{\mathbf{u}}\|_{L^{2}(\Omega)}$ and $\|\mathfrak{e}_{\mathbf{u}}\|_{L^{2}(\Omega)}$}}
\includegraphics[trim={0 0 0 0},clip,width=3.2cm,height=3.2cm,scale=0.30]{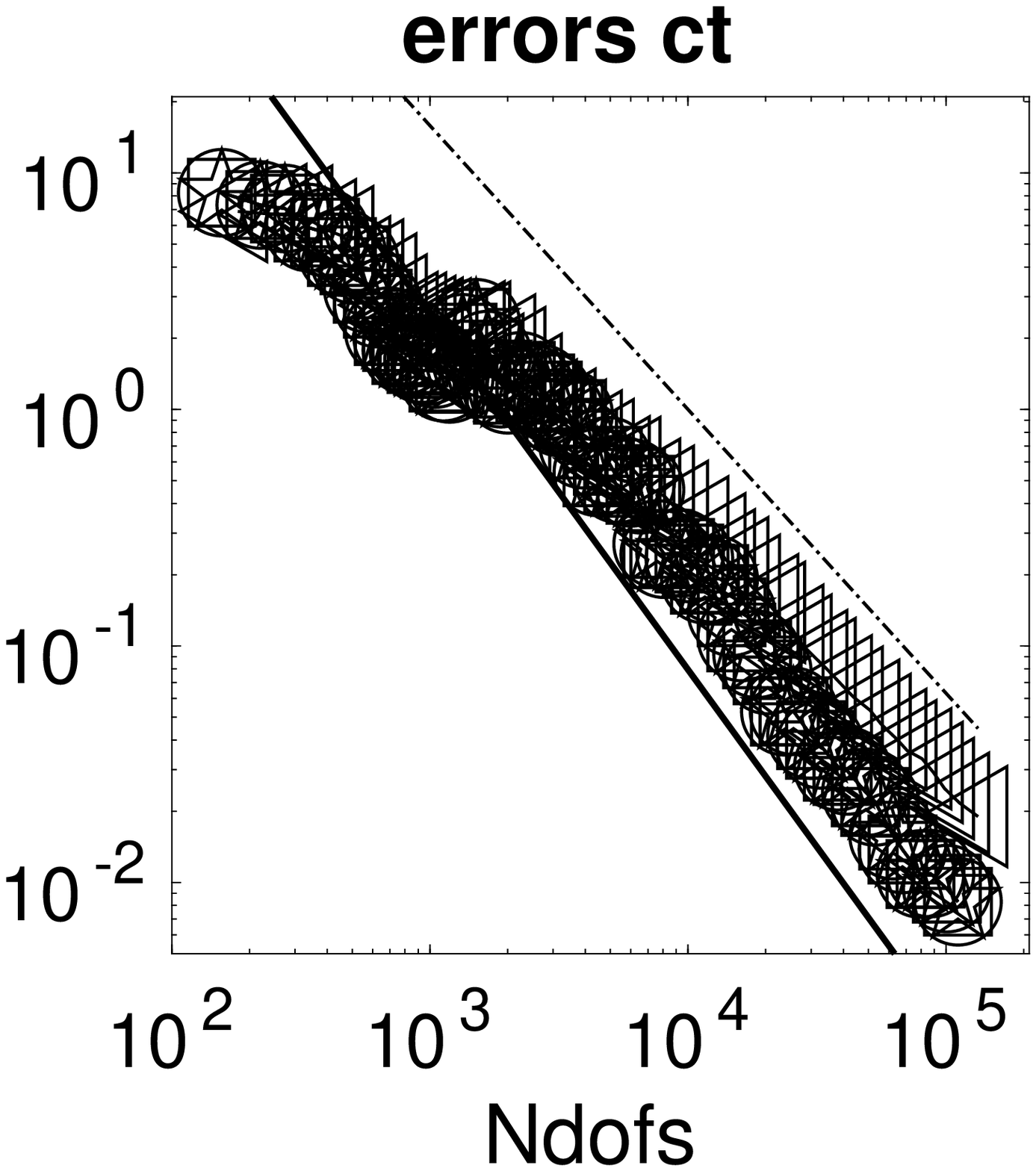}\\
\hspace{0.4cm}\tiny{(B.3)}
\end{minipage}
\\
\begin{minipage}[c]{0.248\textwidth}\centering
\psfrag{errors st}{\hspace{0.0cm}\large{$\|\nabla \mathbf{e}_{\mathbf{y}}\|_{\mathbf{L}^{2}(\Omega)}$}}
\includegraphics[trim={0 0 0 0},clip,width=3.2cm,height=3.2cm,scale=0.30]{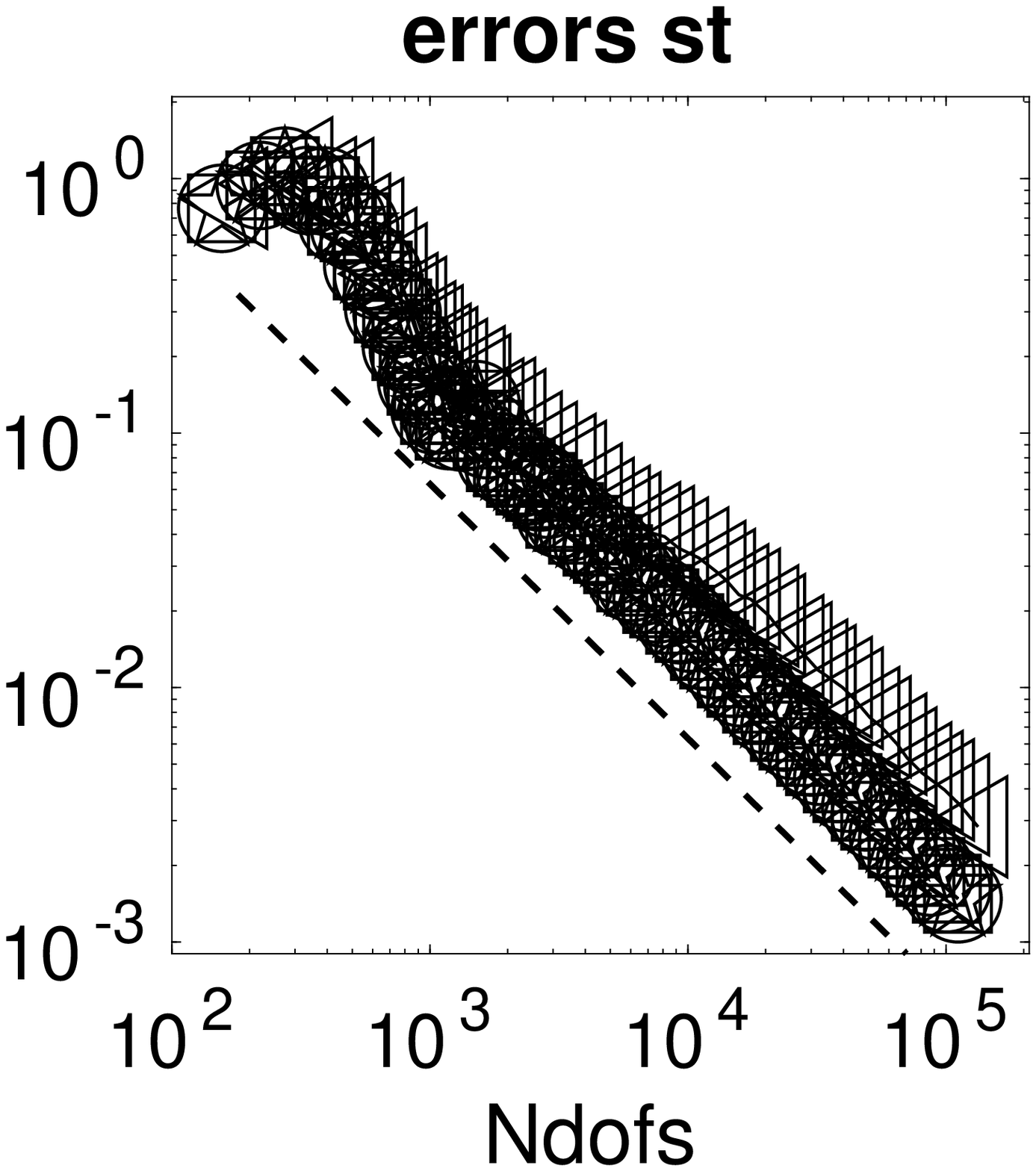}\\
\hspace{0.35cm}\tiny{(B.4)}
\end{minipage}
\begin{minipage}[c]{0.248\textwidth}\centering
\psfrag{errors st}{\hspace{0.2cm}\large{$\|e_{p}\|_{L^{2}(\Omega)}$}}
\includegraphics[trim={0 0 0 0},clip,width=3.2cm,height=3.2cm,scale=0.30]{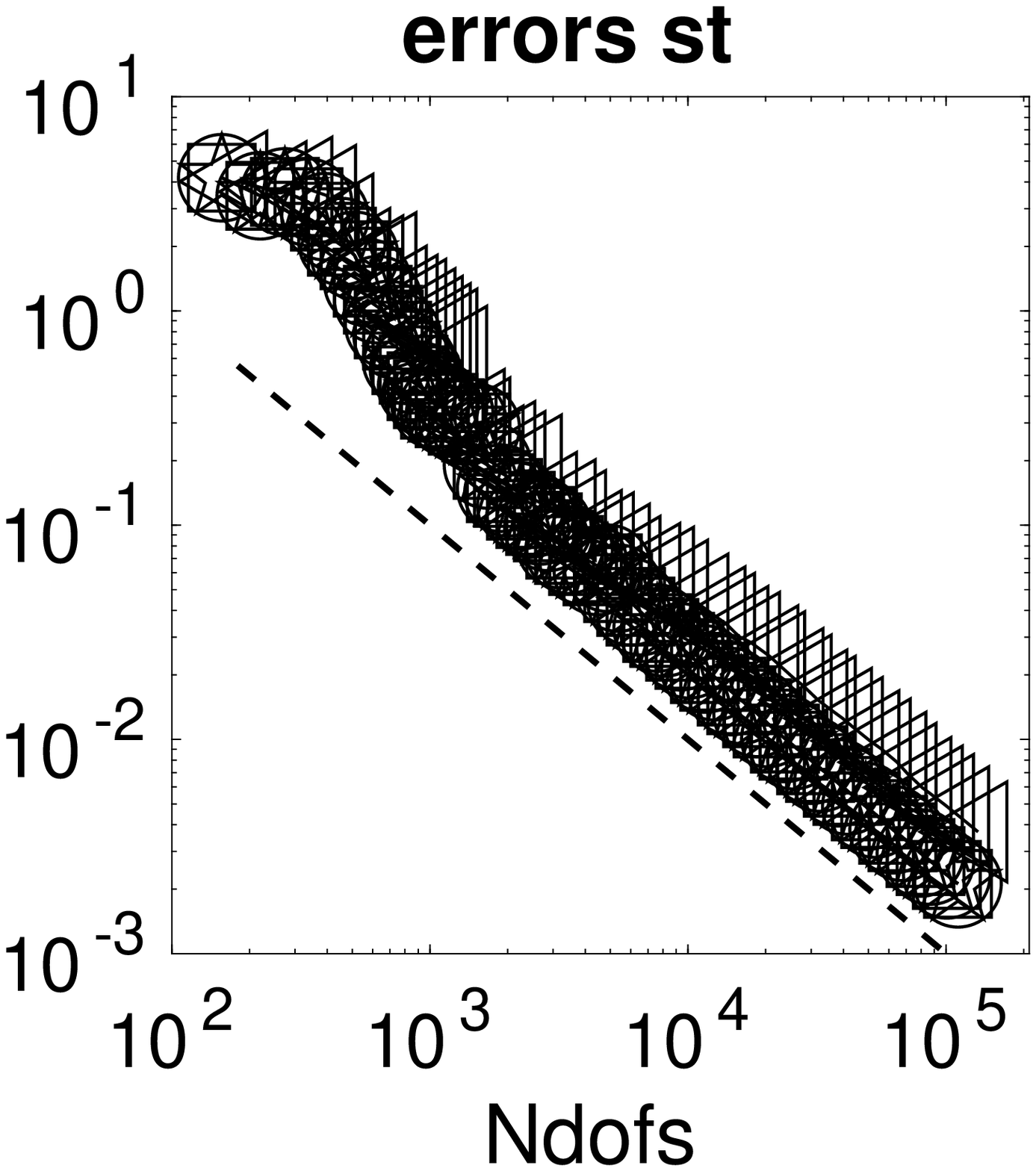}\\
\hspace{0.35cm}\tiny{(B.5)}
\end{minipage}
\begin{minipage}[c]{0.248\textwidth}\centering
\psfrag{errors ad}{\hspace{0.0cm}\large{$\|\nabla \mathbf{e}_{\mathbf{z}}\|_{\mathbf{L}^{2}(\Omega)}$}}
\includegraphics[trim={0 0 0 0},clip,width=3.2cm,height=3.2cm,scale=0.30]{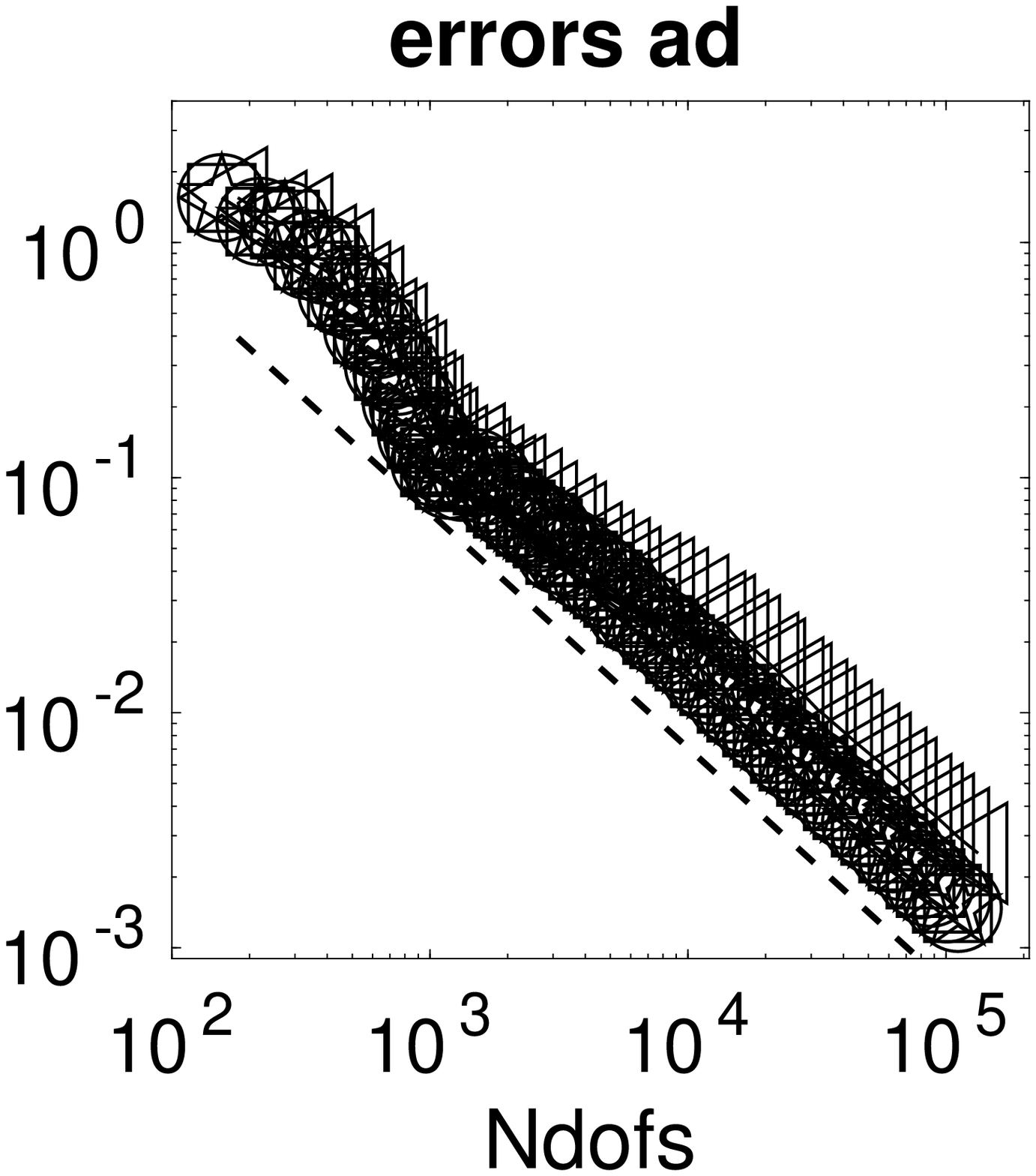}\\
\hspace{0.35cm}\tiny{(B.6)}
\end{minipage}
\begin{minipage}[c]{0.248\textwidth}\centering
\psfrag{errors ad}{\hspace{0.2cm}\large{$\|e_{r}\|_{L^{2}(\Omega)}$}}
\includegraphics[trim={0 0 0 0},clip,width=3.2cm,height=3.2cm,scale=0.30]{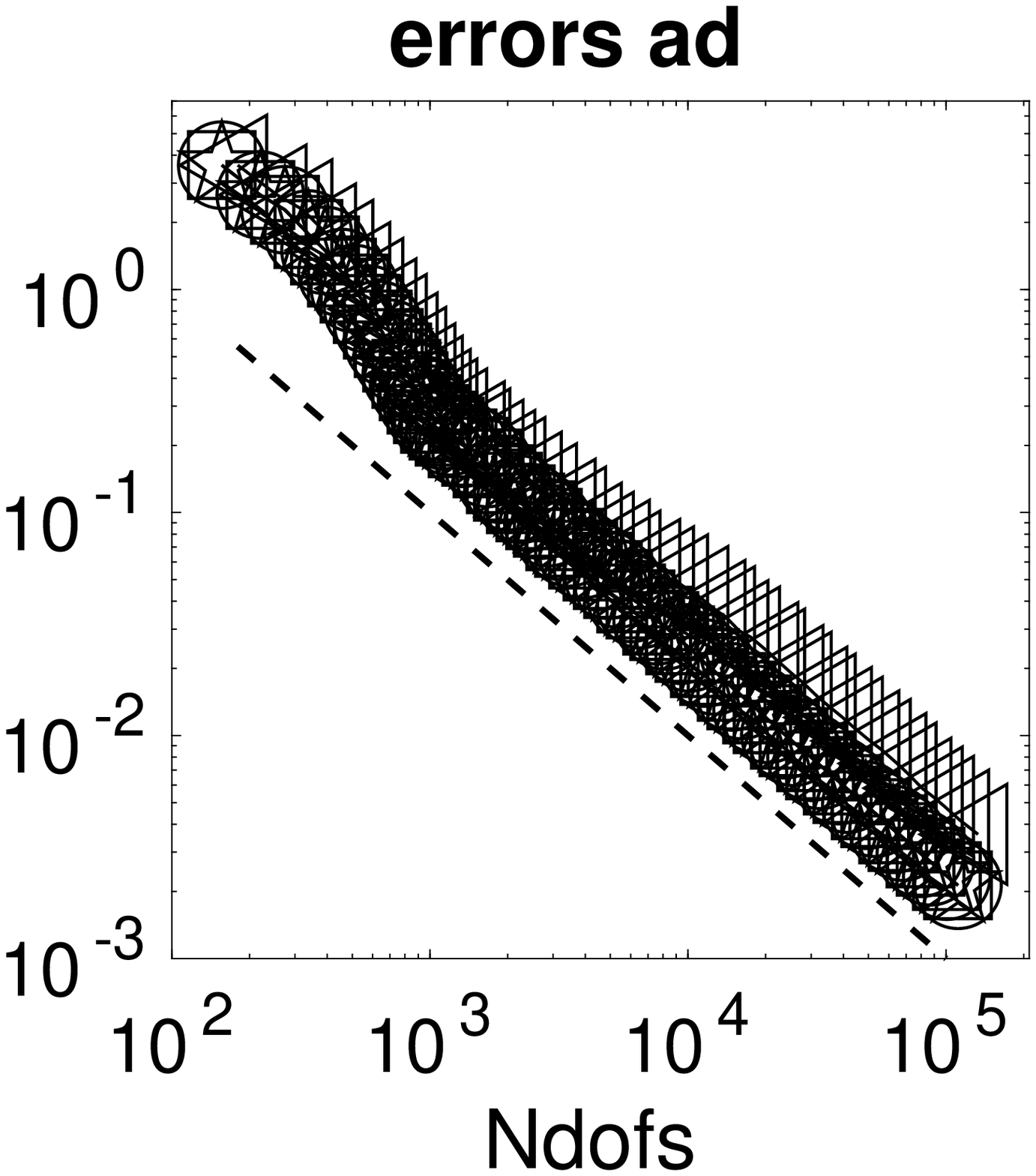}\\
\hspace{0.35cm}\tiny{(B.7)}
\end{minipage}
\\
\begin{minipage}[c]{0.248\textwidth}\centering
\includegraphics[trim={0 0 0 0},clip,width=3.0cm,height=3.0cm,scale=0.30]{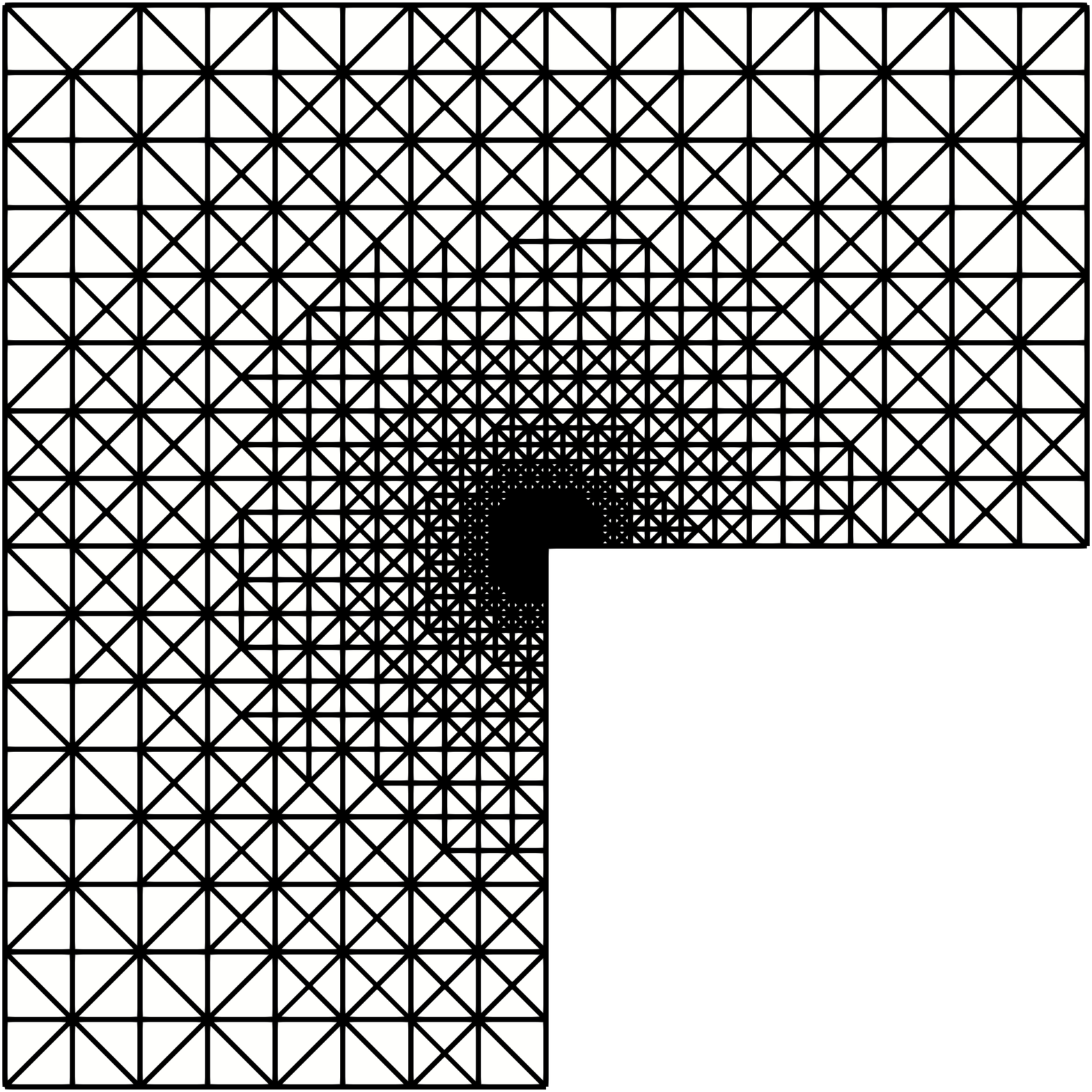}\\
\hspace{0.0cm}\tiny{(B.8)}
\end{minipage}
\begin{minipage}[c]{0.248\textwidth}\centering
\includegraphics[trim={0 0 0 0},clip,width=3.0cm,height=3.0cm,scale=0.30]{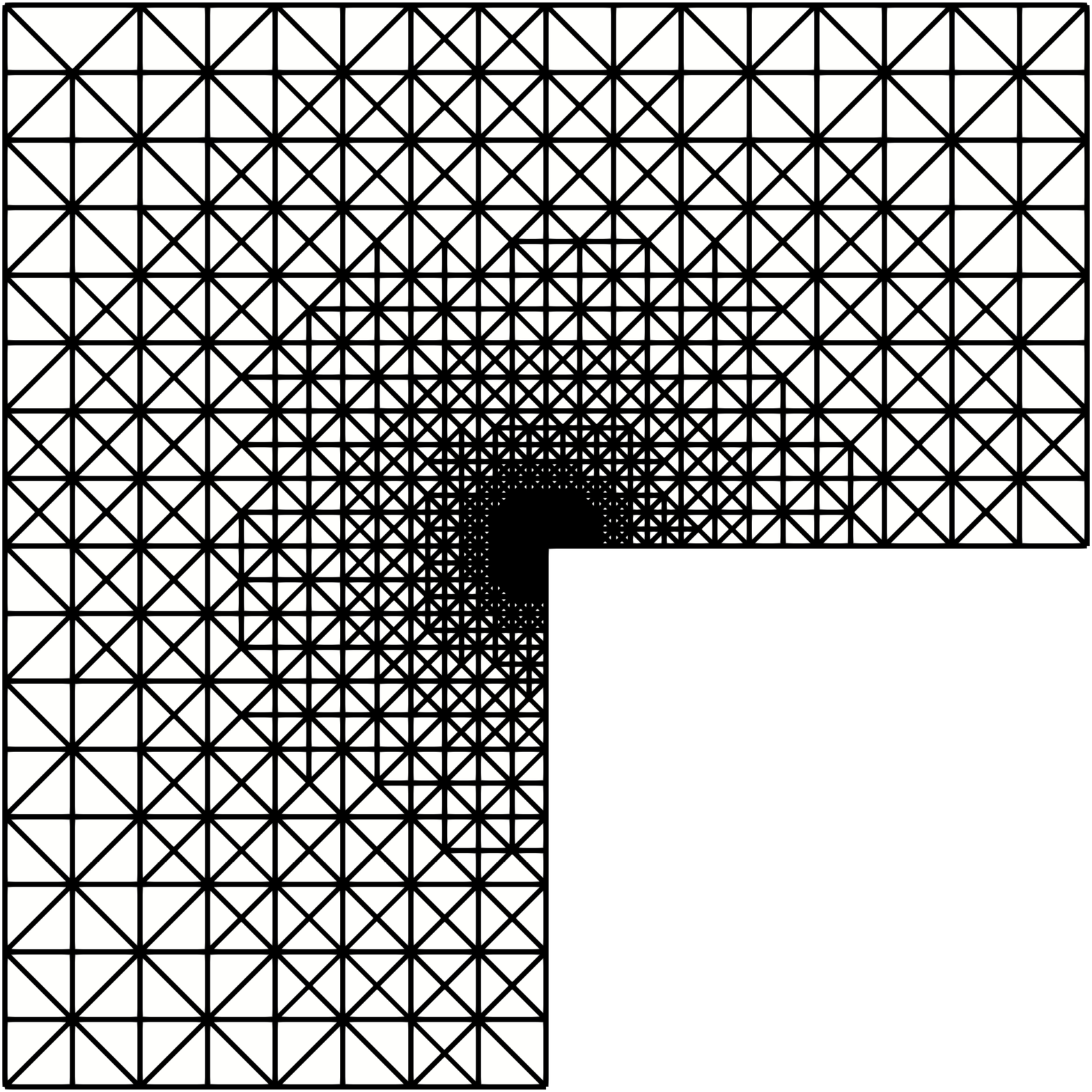}\\
\hspace{0.0cm}\tiny{(B.9)}
\end{minipage}
\begin{minipage}[c]{0.248\textwidth}\centering
\includegraphics[trim={0 0 0 0},clip,width=3.0cm,height=3.0cm,scale=0.30]{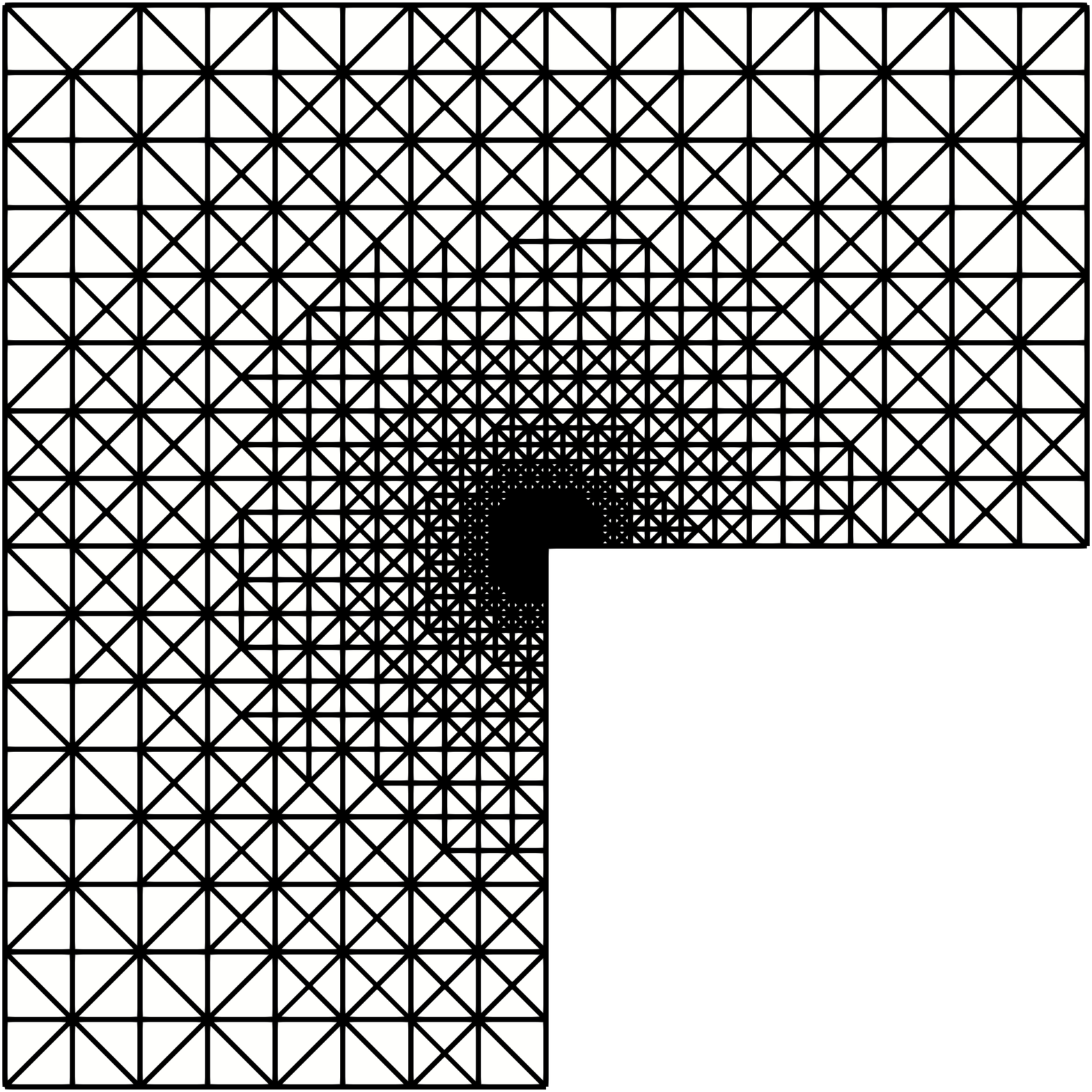}\\
\hspace{0.0cm}\tiny{(B.10)}
\end{minipage}
\begin{minipage}[c]{0.248\textwidth}\centering
\includegraphics[trim={0 0 0 0},clip,width=3.0cm,height=3.0cm,scale=0.30]{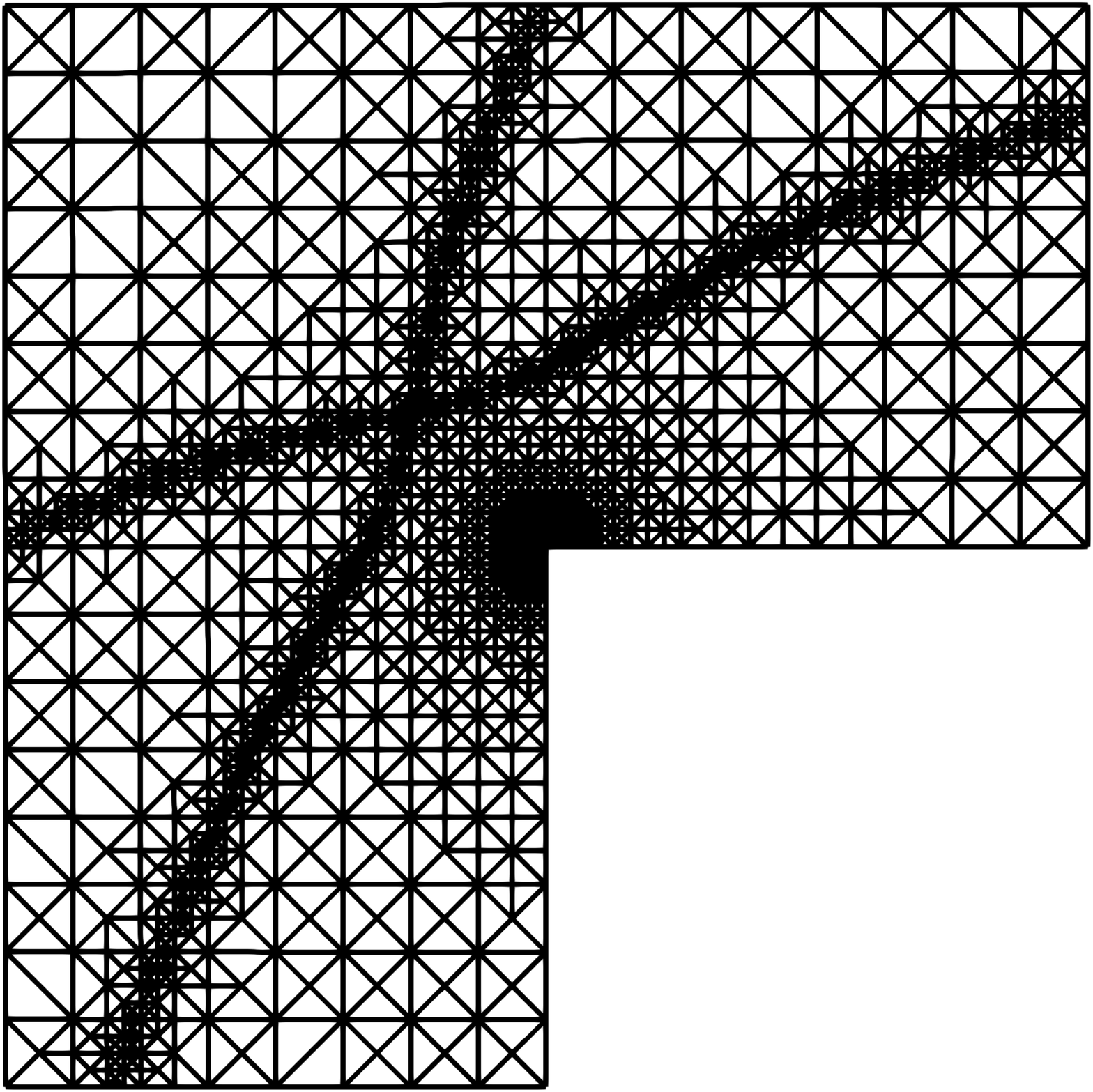}\\
\hspace{0.0cm}\tiny{(B.11)}
\end{minipage}
\caption{Example 1. Experimental rates of convergence, within adaptive refinement, for the total error estimators $\mathcal{E}_{ocp}$ and $\mathfrak{E}_{ocp}$ (B.1), effectivity indices $\mathcal{I}$ and $\mathfrak{I}$ (B.2), experimental rates of convergence of each contribution of $\|\mathbf{e}\|_{\Omega}$ and $\|\mathfrak{e}\|_{\Omega}$ (B.3)--(B.7), and the adaptively refined meshes obtained after $45$ iterations of the adaptive loop by considering the semi discrete schemes $\mathfrak{S}_{19}$, $\mathfrak{S}_{5}$, and $\mathfrak{S}_{5c}$ and the fully discrete scheme $\mathfrak{F}$ (B.8)--(B.11).}
\label{fig:ex-1.2}
\end{figure}


\begin{figure}[!ht]
\centering
\begin{minipage}[c]{0.245\textwidth}\centering
\includegraphics[trim={0 0 0 0},clip,width=2.8cm,height=2.5cm,scale=0.30]{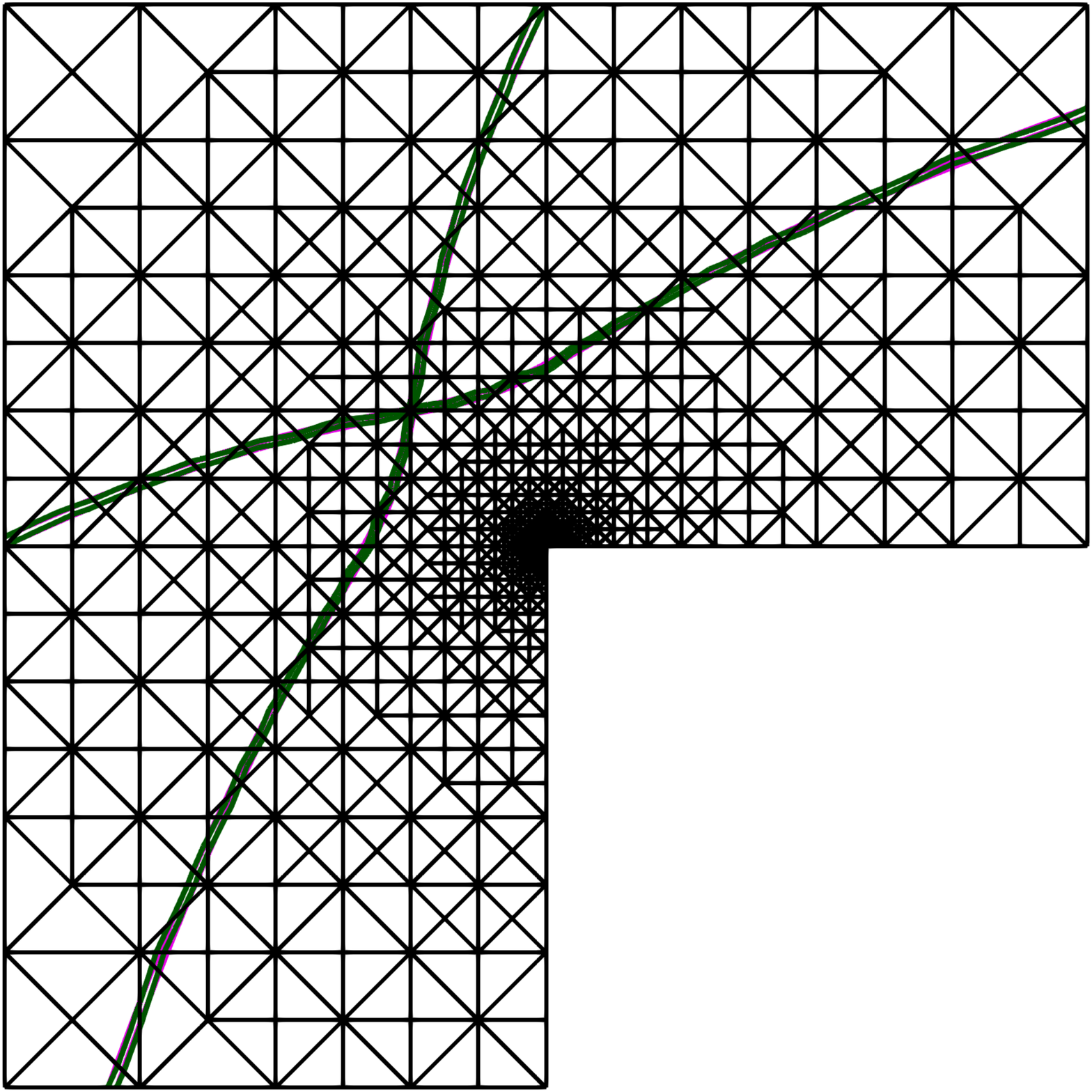}\\
\hspace{0.0cm}\tiny{(C.1)}
\end{minipage}
\begin{minipage}[c]{0.245\textwidth}\centering
\includegraphics[trim={0 0 0 0},clip,width=2.8cm,height=2.5cm,scale=0.30]{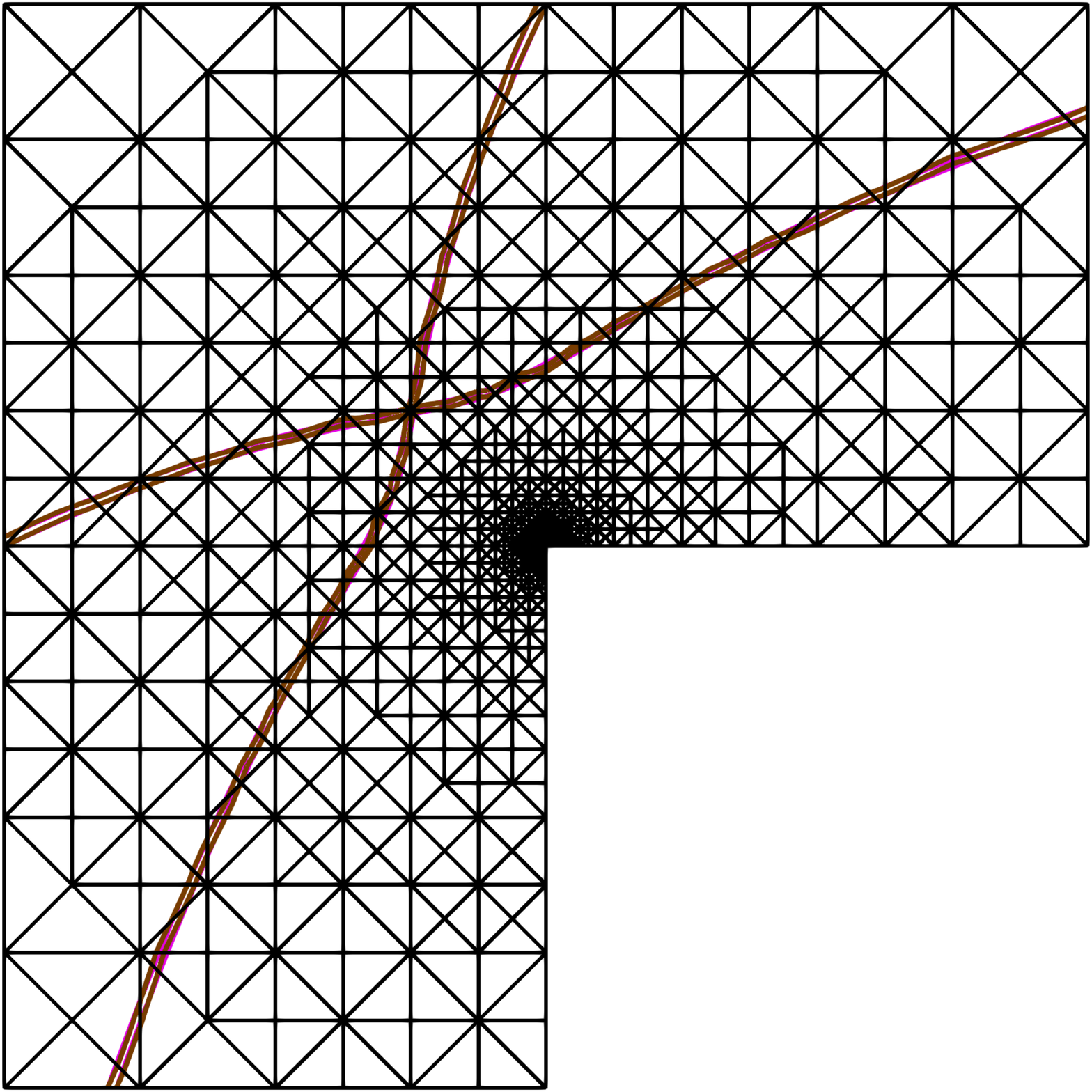}\\
\hspace{0.0cm}\tiny{(C.2)}
\end{minipage}
\begin{minipage}[c]{0.245\textwidth}\centering
\includegraphics[trim={0 0 0 0},clip,width=2.8cm,height=2.5cm,scale=0.30]{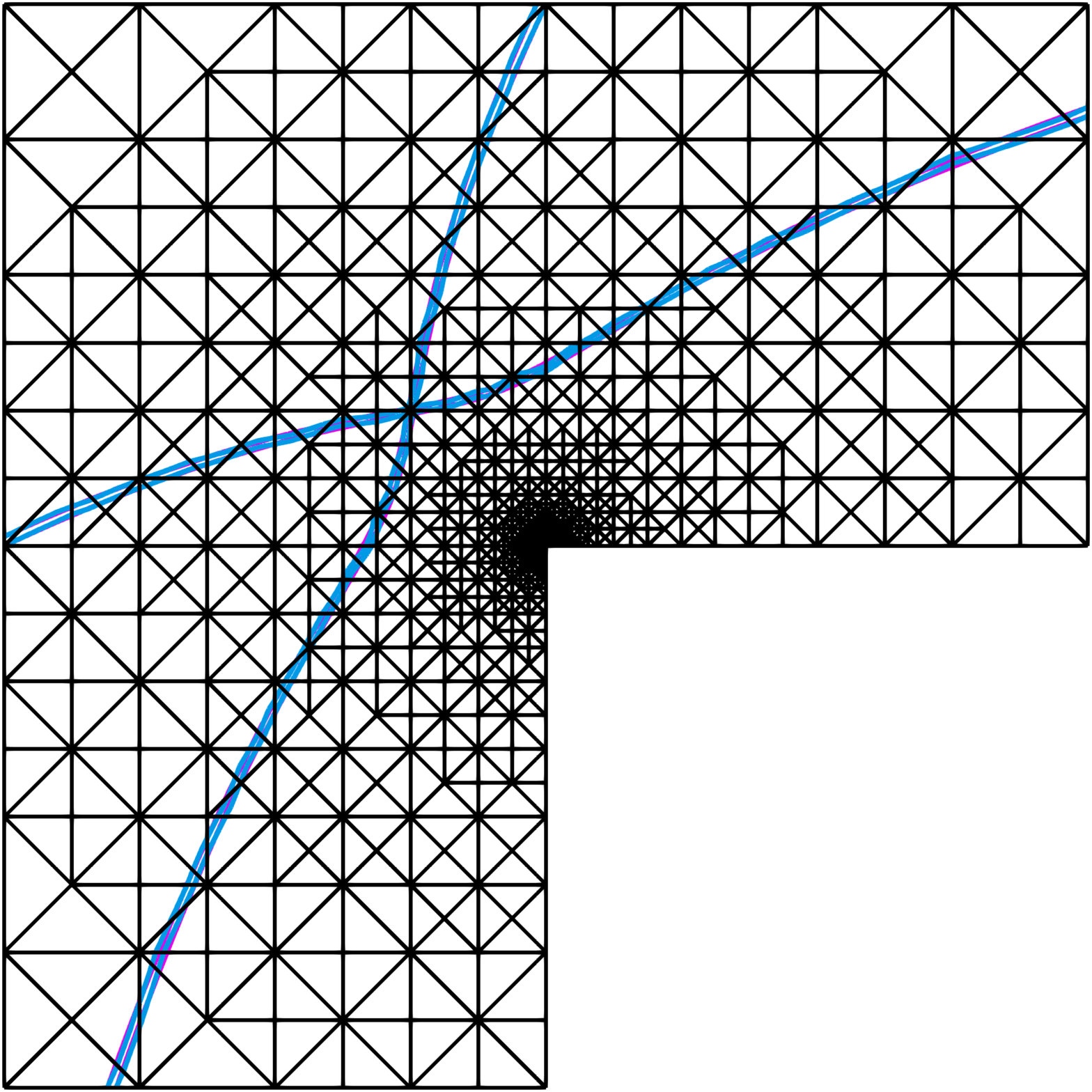}\\
\hspace{0.0cm}\tiny{(C.3)}
\end{minipage}
\begin{minipage}[c]{0.245\textwidth}\centering
\includegraphics[trim={0 0 0 0},clip,width=2.8cm,height=2.5cm,scale=0.30]{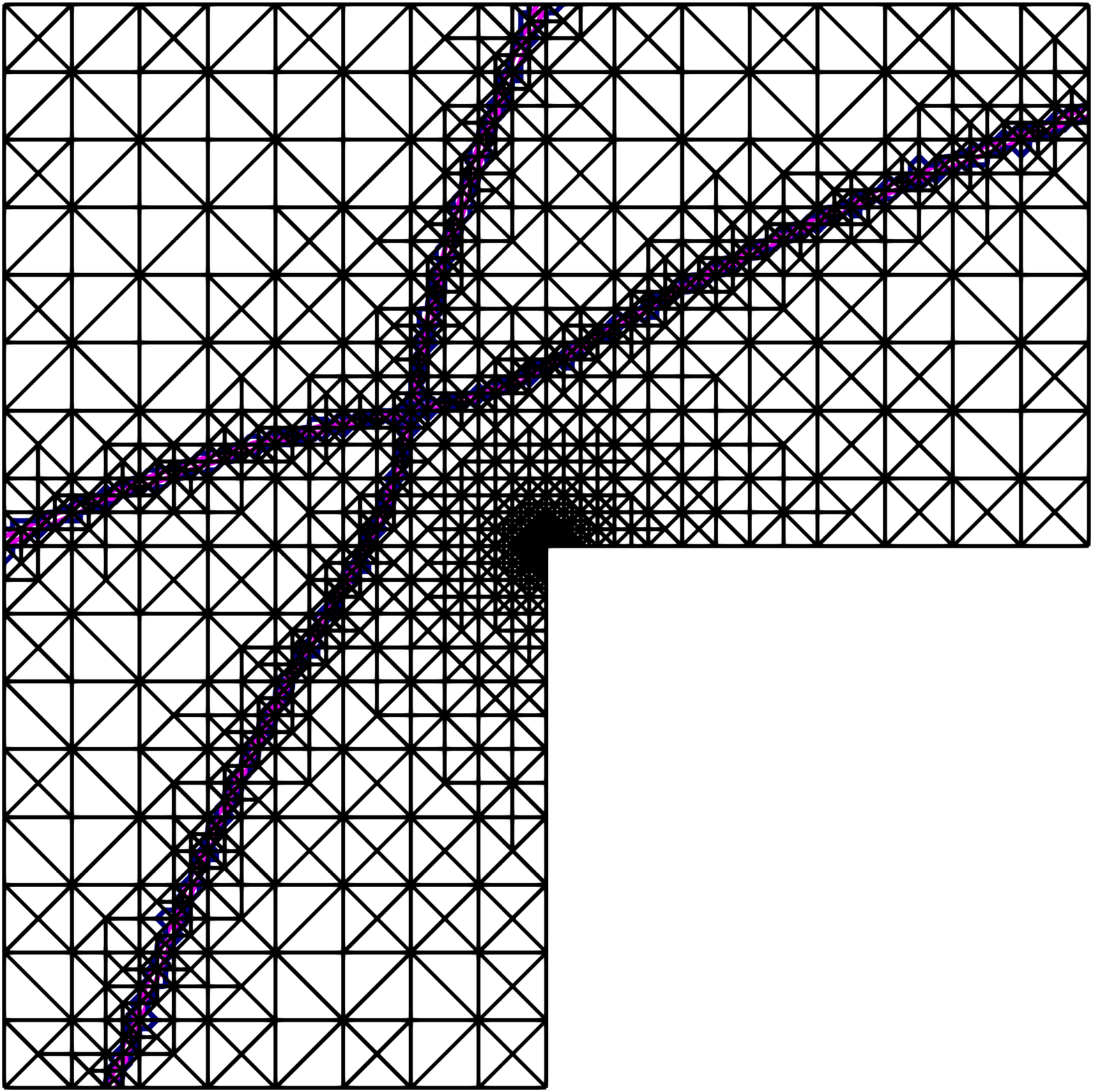}\\
\hspace{0.0cm}\tiny{(C.4)}
\end{minipage}
\\
\begin{minipage}[c]{0.245\textwidth}\centering
\includegraphics[trim={0 0 0 0},clip,width=2.8cm,height=2.8cm,scale=0.30]{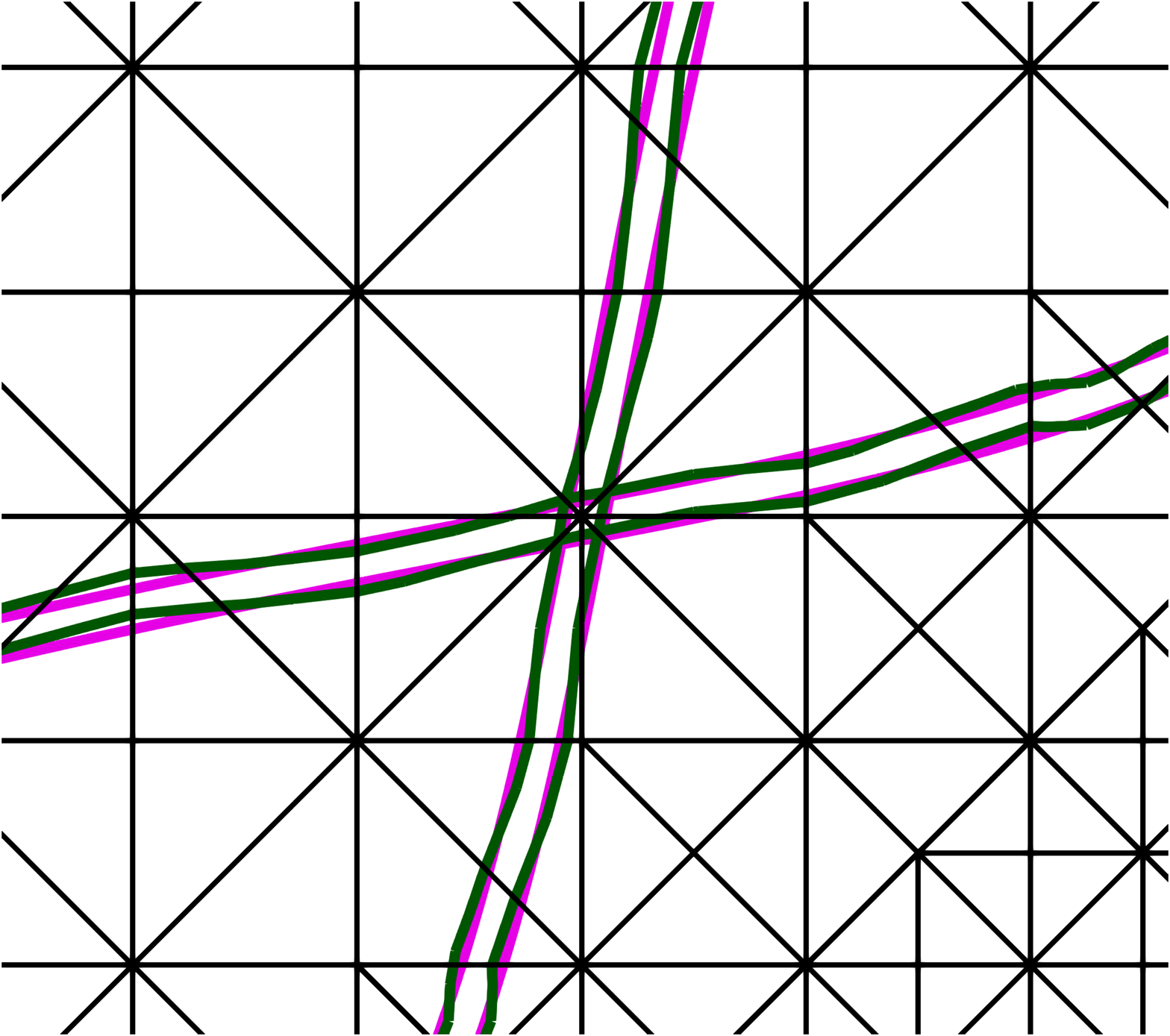}\\
\hspace{0.0cm}\tiny{(C.5)}
\end{minipage}
\begin{minipage}[c]{0.245\textwidth}\centering
\includegraphics[trim={0 0 0 0},clip,width=2.8cm,height=2.8cm,scale=0.30]{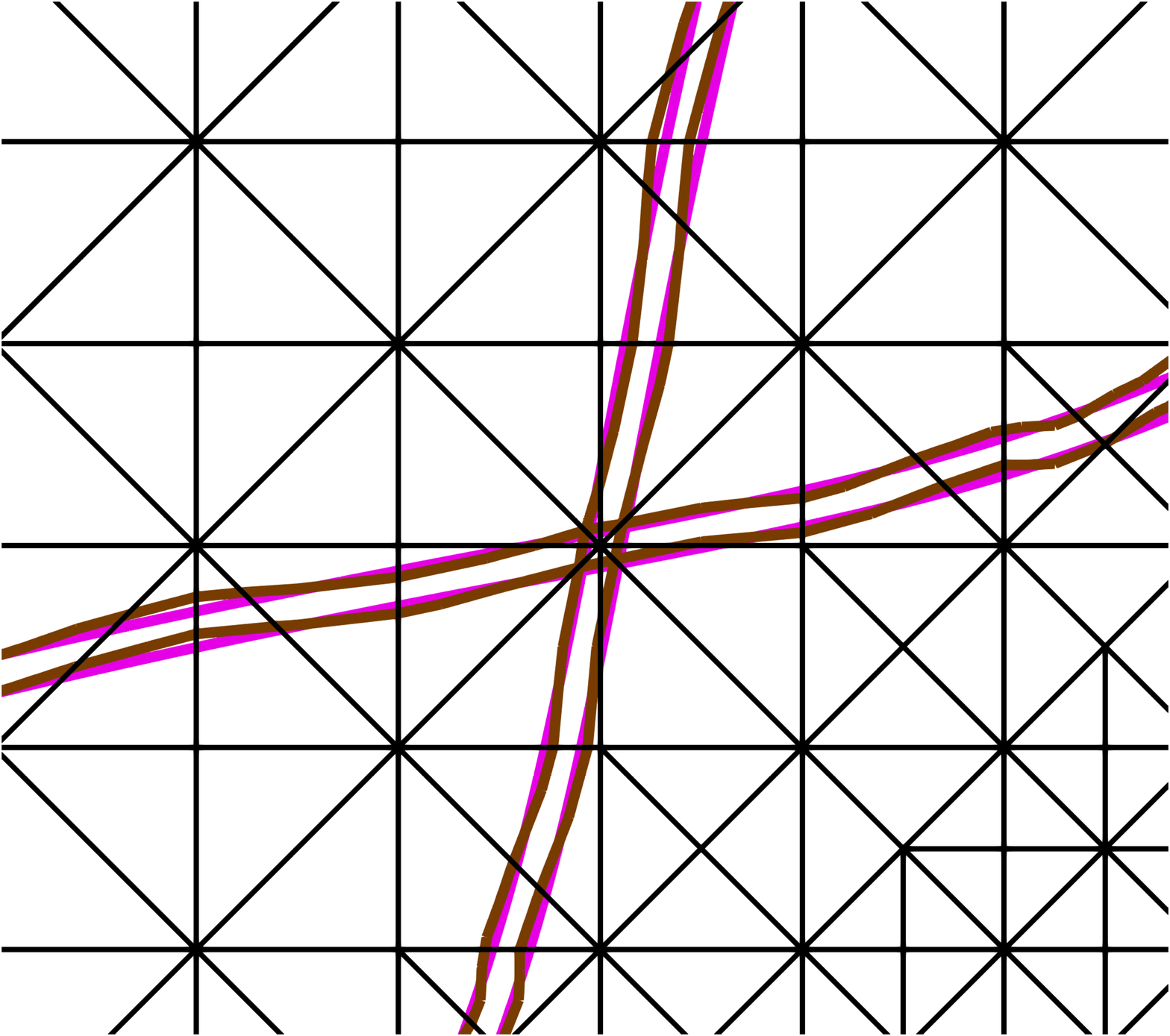}\\
\hspace{0.0cm}\tiny{(C.6)}
\end{minipage}
\begin{minipage}[c]{0.245\textwidth}\centering
\includegraphics[trim={0 0 0 0},clip,width=2.8cm,height=2.8cm,scale=0.30]{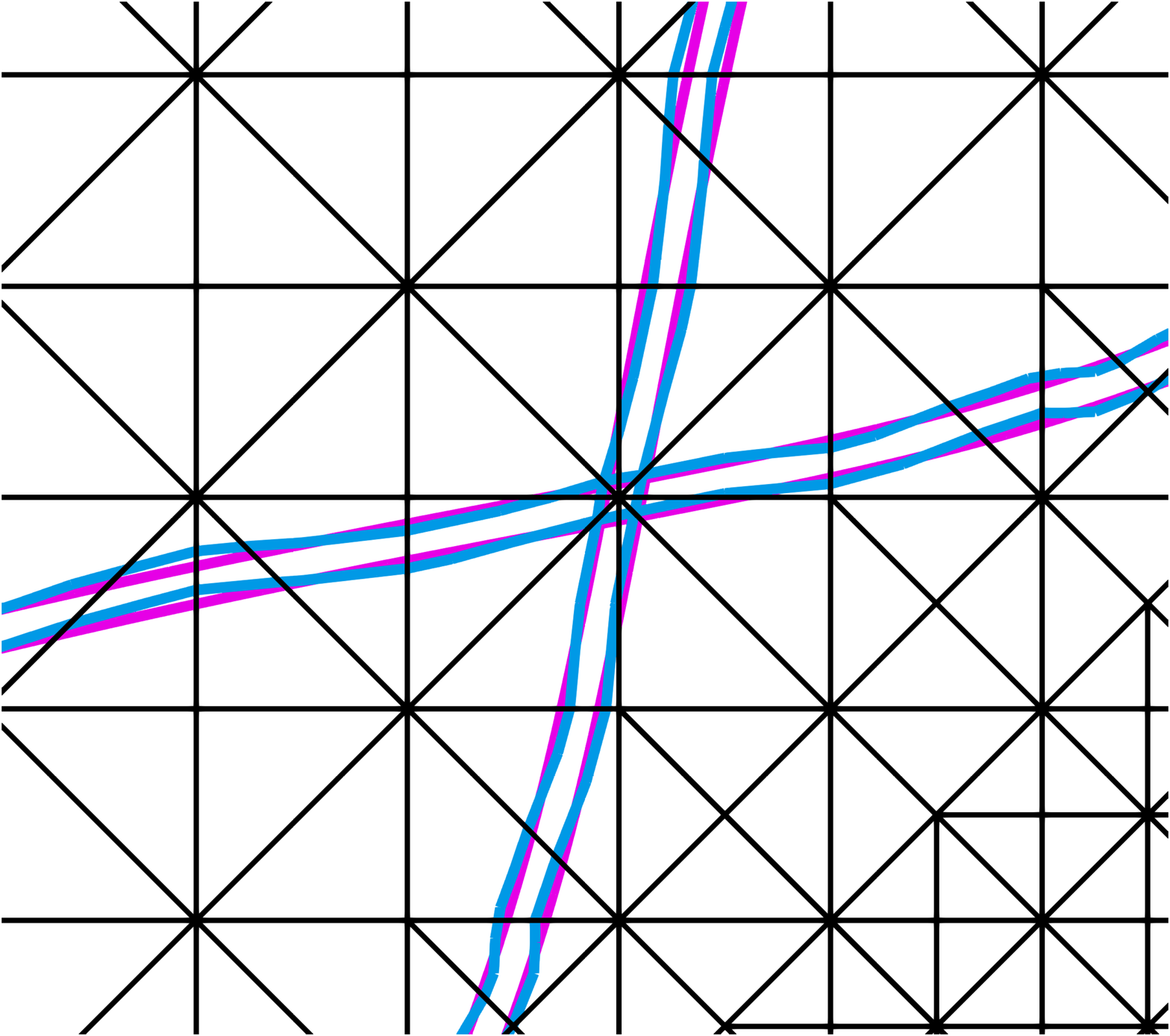}\\
\hspace{0.0cm}\tiny{(C.7)}
\end{minipage}
\begin{minipage}[c]{0.245\textwidth}\centering
\includegraphics[trim={0 0 0 0},clip,width=2.8cm,height=2.8cm,scale=0.30]{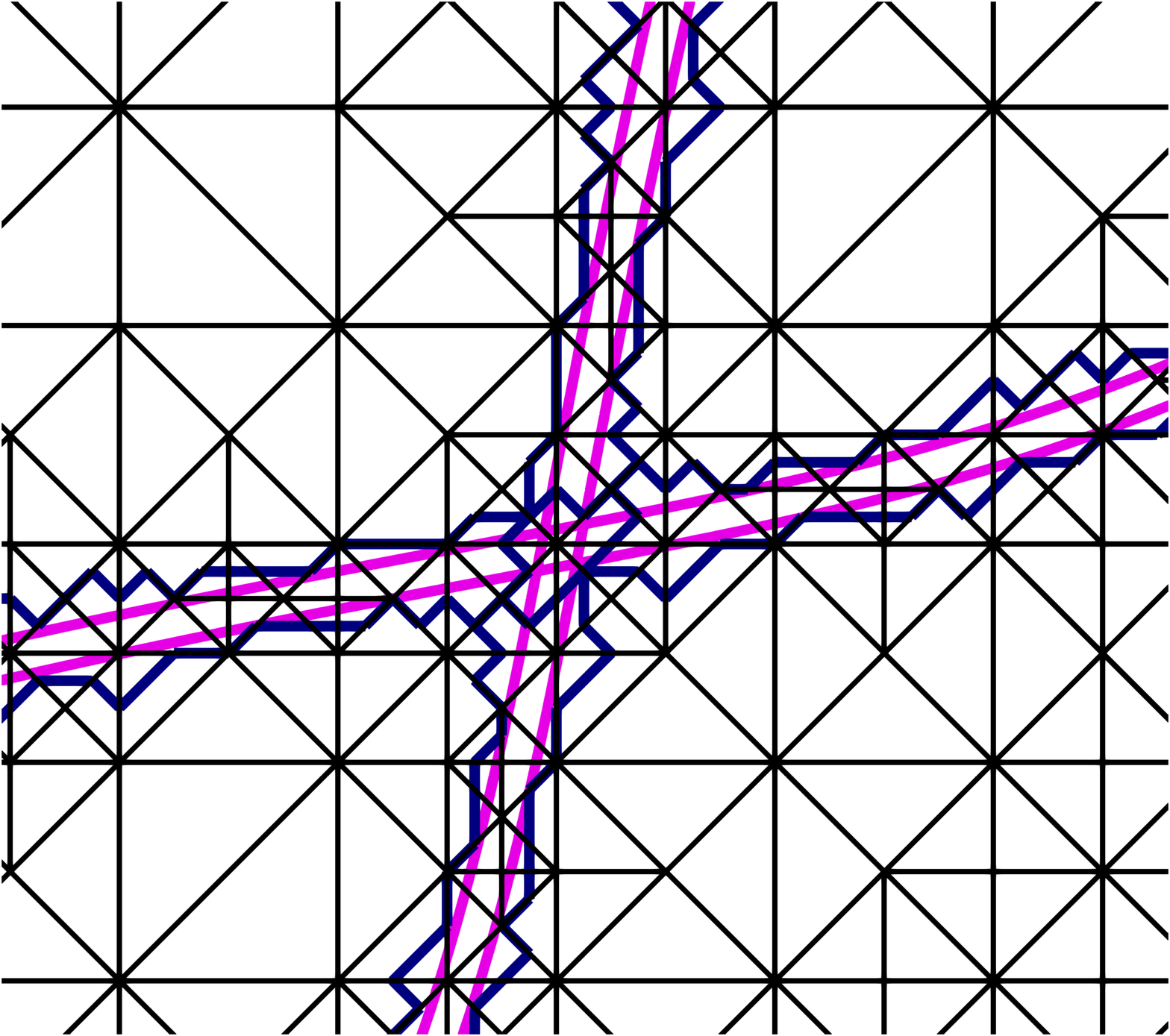}\\
\hspace{0.0cm}\tiny{(C.8)}
\end{minipage}
\caption{Example 1: Comparison of the continuous (magenta) and discrete level sets of the boundaries of the control active/inactive regions on the adaptively refined meshes obtained after $42$ iterations of the adaptive loop by considering the semi discrete schemes $\mathfrak{S}_{19}$ in green ((C.1) and (C.5)), $\mathfrak{S}_{5}$ in brown ((C.2) and (C.6)), and $\mathfrak{S}_{5c}$ in cyan ((C.3) and (C.7)) and the fully discrete scheme $\mathfrak{F}$ in blue ((C.4) and (C.8)), respectively.}
\label{fig:ex-1.3}
\end{figure}

In Figures \ref{fig:ex-1}, \ref{fig:ex-1.2} and \ref{fig:ex-1.3} we present the results obtained for Example 1. We show experimental rates of convergence for the total errors $\|\mathbf{e}\|_{\Omega}$ and $\|\mathfrak{e}\|_{\Omega}$ and each of their individual contributions when uniform (Figure  \ref{fig:ex-1}) and adaptive refinement (Figure \ref{fig:ex-1.2}) are considered. In Figure \ref{fig:ex-1.3} we present the borders of the active sets for the exact and approximated solutions. The following comments and remarks are now in order:
\begin{itemize}
\item \textit{Adaptive versus uniform refinement}: the devised adaptive strategies based on both the fully discrete scheme and the semi discrete one \emph{outperform} uniform refinement. 

\item \textit{Semi discrete scheme}: the adaptive strategies based on the approximated variational discretization approach deliver optimal experimental rates of convergences \emph{for all of the contributions} of the total error $\|\mathfrak{e}\|_{\Omega}$; see subfigures (B.3)--(B.7).

\item \textit{Fully discrete scheme}: the adaptive strategy based on the fully discrete scheme delivers optimal experimental rates of convergences for all of the contributions of the total error $\|\mathbf{e}\|_{\Omega}$ \emph{but with the exception} of $\|\mathbf{e}_{\mathbf{u}}\|_{\mathbf{L}^2(\Omega)}$; see subfigure (B.3). This is in sharp contrast with the approximated variational discretization approach since the latter is able to deliver optimal experimental rates of convergence for the error approximation of the control variable.

\item \textit{Effectivity indices}: all the effectivity indices are stabilized around the value $1.8$; see subfigure (B.2).

\item \textit{Mesh refinement}: we observe, in Figure \ref{fig:ex-1.2}, that most of the refinement is being performed in the regions of the domain that are close to the geometric singularity (B.8)--(B.11). Furthermore, we observe that, when the fully discrete scheme is considered, the refinement is also being performed in the regions where the restrictions of the control variable become active (B.11). These DOF seem not necessary for an accurate approximation of the control variable; compare with the optimal experimental rate of convergence achieved by the approximated variational discretization scheme, which is observed in subfigure (B.3).

\item \textit{Boundary of the discrete active set}: we observe, in Figure \ref{fig:ex-1.3}, that the discrete level sets obtained with the approximated variational discretization schemes \alert{provide excellent approximations to that of the continuous active set
(see subfigures (C.1)--(C.4) and (C.5)--(C.8)). We stress the fact that this positive behavior is achieved without the necessity of the extra degrees of freedom generated by the fully discrete scheme.}

\end{itemize}

\subsection{Example 2 (three dimensional convex domain)} We consider $\Omega=(0,1)^{3}$, $\mathbf{a} = 10^{-3}(-7,-7,-7)$, $\mathbf{b} = 10^{-3}(7,7,7)$, $\alpha = 10^{-1}$, and $\nu = 10^{-2}$. The exact optimal state and adjoint state are given by
\begin{align*}
\bar{\mathbf{y}}(x_{1},x_{2},x_{3}) =& 10^{-3}\textbf{curl}\left( \left(x_{2}x_{3}(1-x_{2})(1-x_{3})\right)^{2}\left(1 - x_{1} - \dfrac{e^{-x_{1}/\nu} - e^{-1/\nu}}{1 - e^{-1/\nu}} \right) \right),\\
\bar{\mathbf{z}}(x_{1},x_{2},x_{3}) =& \textbf{curl}\left( \left(x_{1}x_{2}x_{3}(1-x_{1})(1-x_{2})(1 - x_{3})\right)^{2} \right),\\
& ~ \bar{p}(x_{1},x_{2}) = \bar{r}(x_{1},x_{2}) = (x_{1}x_{2}x_{3} - 1/8).
\end{align*}

Here, we study the performance of the proposed error estimators $\mathcal{E}_{ocp}$ and $\mathfrak{E}_{ocp}$ by comparing uniform versus adaptive refinement. \alert{We also quantify the effect of utilizing different integration rules when implementing the semi discrete scheme. We describe such integration rules in what follows.}

\begin{itemize}
\item[$\bullet$] We first use a quadrature formula which, in three dimensions, is exact for polynomials of degree fourteen $(14)$.
\item[$\bullet$] Second, we use a quadrature formula which, in three dimensions, is exact for polynomials of degree five $(5)$.
\item[$\bullet$] Third, we consider a composed quadrature: in the elements $T \in \T$ where the control exhibits kinks we use a quadrature formula which is exact for polynomials of degree five, whereas in the remaining elements we utilize a quadrature formula which is exact for polynomials of degree fourteen.
\end{itemize}

In Figures \ref{fig:ex-2_unif} and \ref{fig:ex-2_adap} we present the results obtained for Example 2. Similar conclusions to the ones presented for Example 1 can be derived. In particular, we observe optimal experimental rates of convergence for all the involved variables within the adaptive loops of both discretization schemes.

}



\begin{figure}[!ht]
\centering
\psfrag{error ct}{{\large $\| \mathfrak{e}_{\mathfrak{u}}\|_{\mathbf{L}^{2}(\Omega)}$}}
\psfrag{ndofs 13}{{\normalsize $\mathsf{Ndof}^{-1/3}$}}
\psfrag{ndofs 15}{{\normalsize $\mathsf{Ndof}^{-1/2}$}}
\psfrag{Ndofs}{{\large $\mathsf{Ndof}$}}
\psfrag{73}{{\large $\mathfrak{S}_{14}$}}
\psfrag{5e}{{\large $\mathfrak{S}_{5}$}}
\psfrag{5c}{{\large $\mathfrak{S}_{5c}$}}
\psfrag{fd}{{$\mathfrak{F}$}}
\begin{minipage}[c]{0.4\textwidth}\centering
\psfrag{errores totales}{\hspace{0.1cm}\large{$\|\mathbf{e}\|_{\Omega}$ and $\|\mathfrak{e}\|_{\Omega}$}}
\includegraphics[trim={0 0 0 0},clip,width=5.05cm,height=3.2cm,scale=0.30]{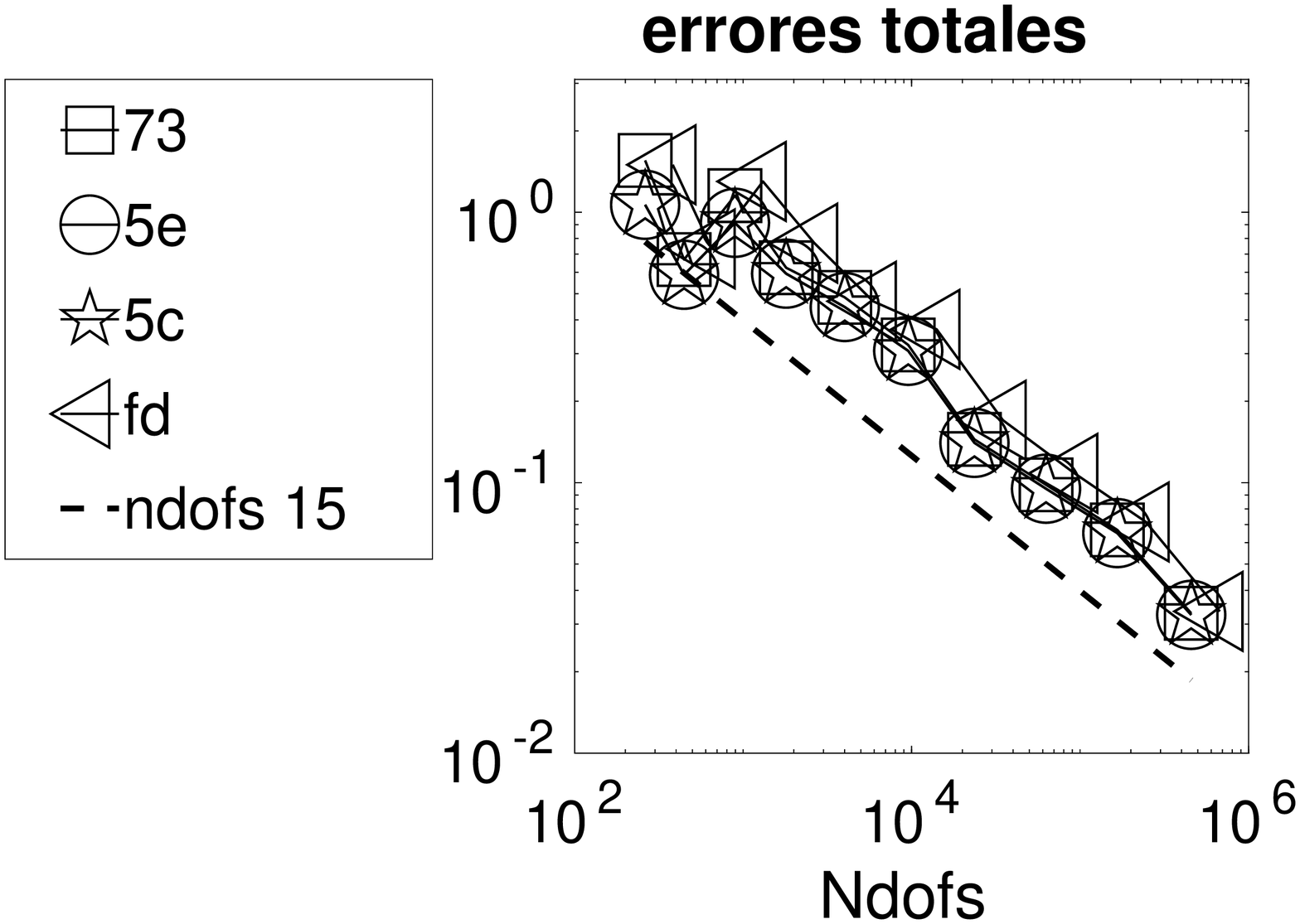}\\
\hspace{1.9cm}\tiny{(D.1)}
\end{minipage}
\begin{minipage}[c]{0.3\textwidth}\centering
\psfrag{errors ct}{\hspace{-1.1cm}\large{$\|\mathbf{e}_{\mathbf{u}}\|_{\mathbf{L}^{2}(\Omega)}$ and $\|\mathfrak{e}_{\mathbf{u}}\|_{\mathbf{L}^{2}(\Omega)}$}}
\includegraphics[trim={0 0 0 0},clip,width=3.15cm,height=3.2cm,scale=0.30]{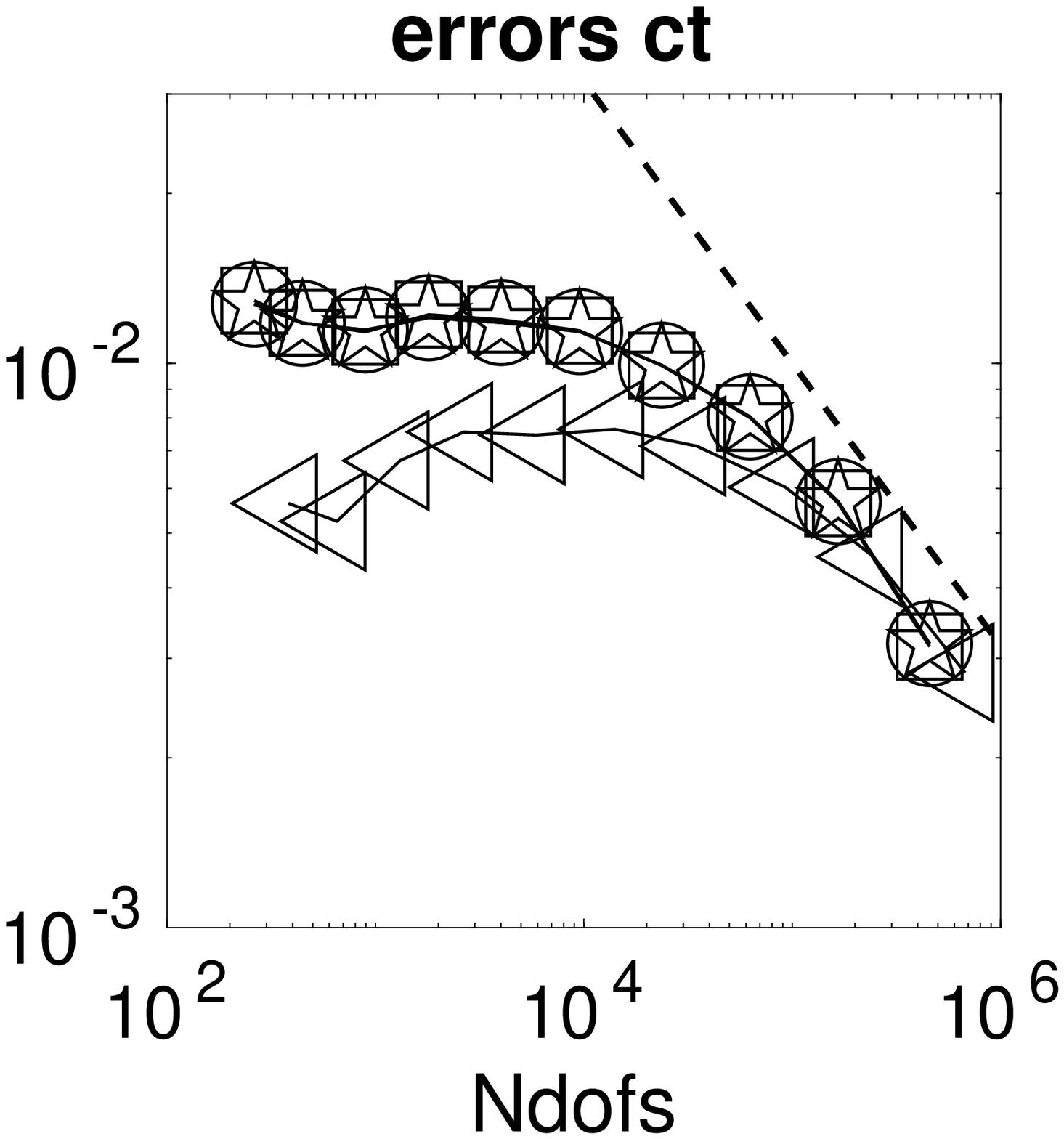}\\
\hspace{0.2cm}\tiny{(D.2)}
\end{minipage}
\\
\begin{minipage}[c]{0.248\textwidth}\centering
\psfrag{errors st}{\hspace{0.0cm}\large{$\|\nabla \mathbf{e}_{\mathbf{y}}\|_{\mathbf{L}^{2}(\Omega)}$}}
\includegraphics[trim={0 0 0 0},clip,width=3.15cm,height=3.2cm,scale=0.30]{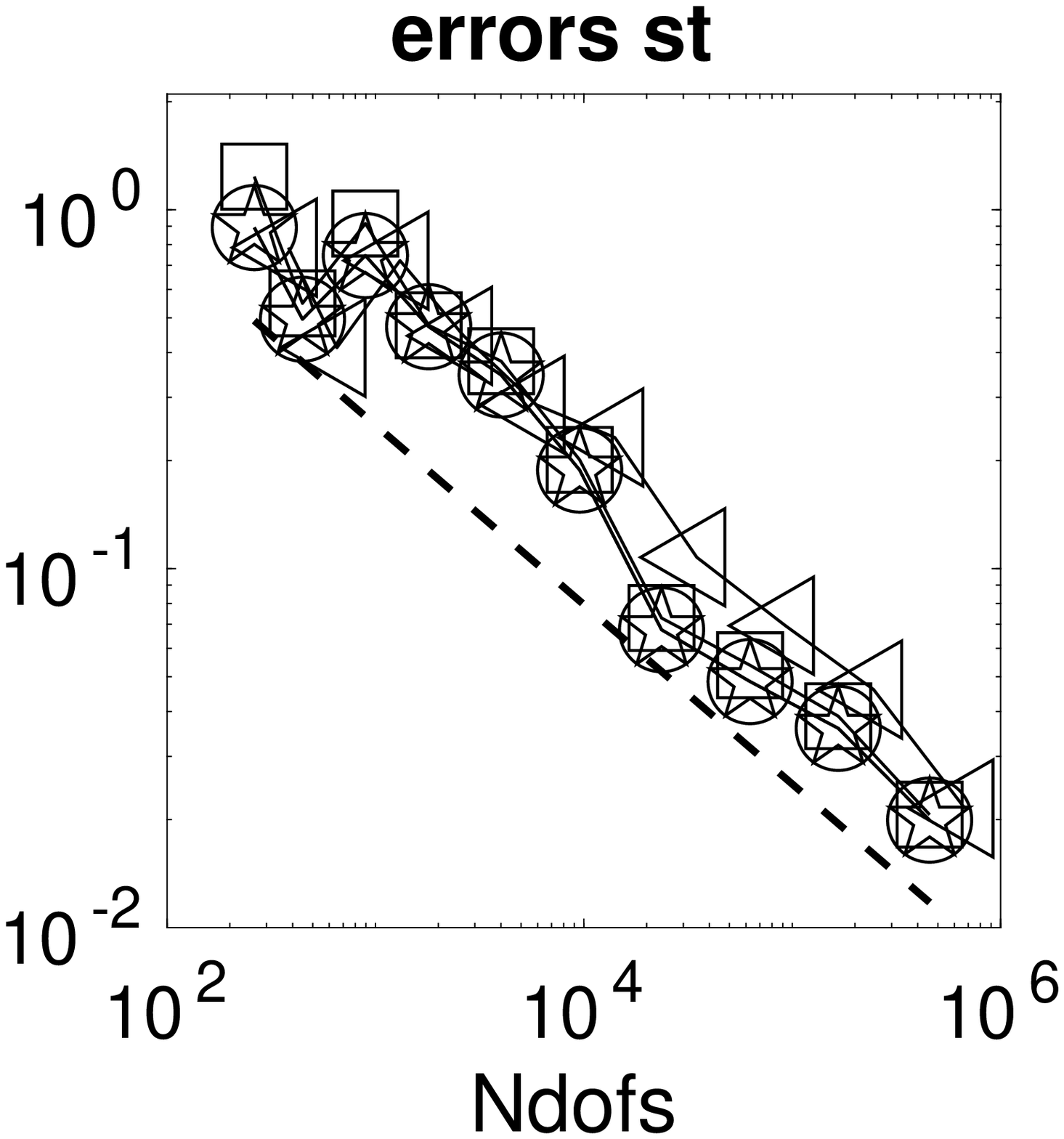}\\
\hspace{0.2cm}\tiny{(D.3)}
\end{minipage}
\begin{minipage}[c]{0.248\textwidth}\centering
\psfrag{errors st}{\hspace{0.4cm}\large{$\|e_{p}\|_{L^{2}(\Omega)}$}}
\includegraphics[trim={0 0 0 0},clip,width=3.15cm,height=3.2cm,scale=0.30]{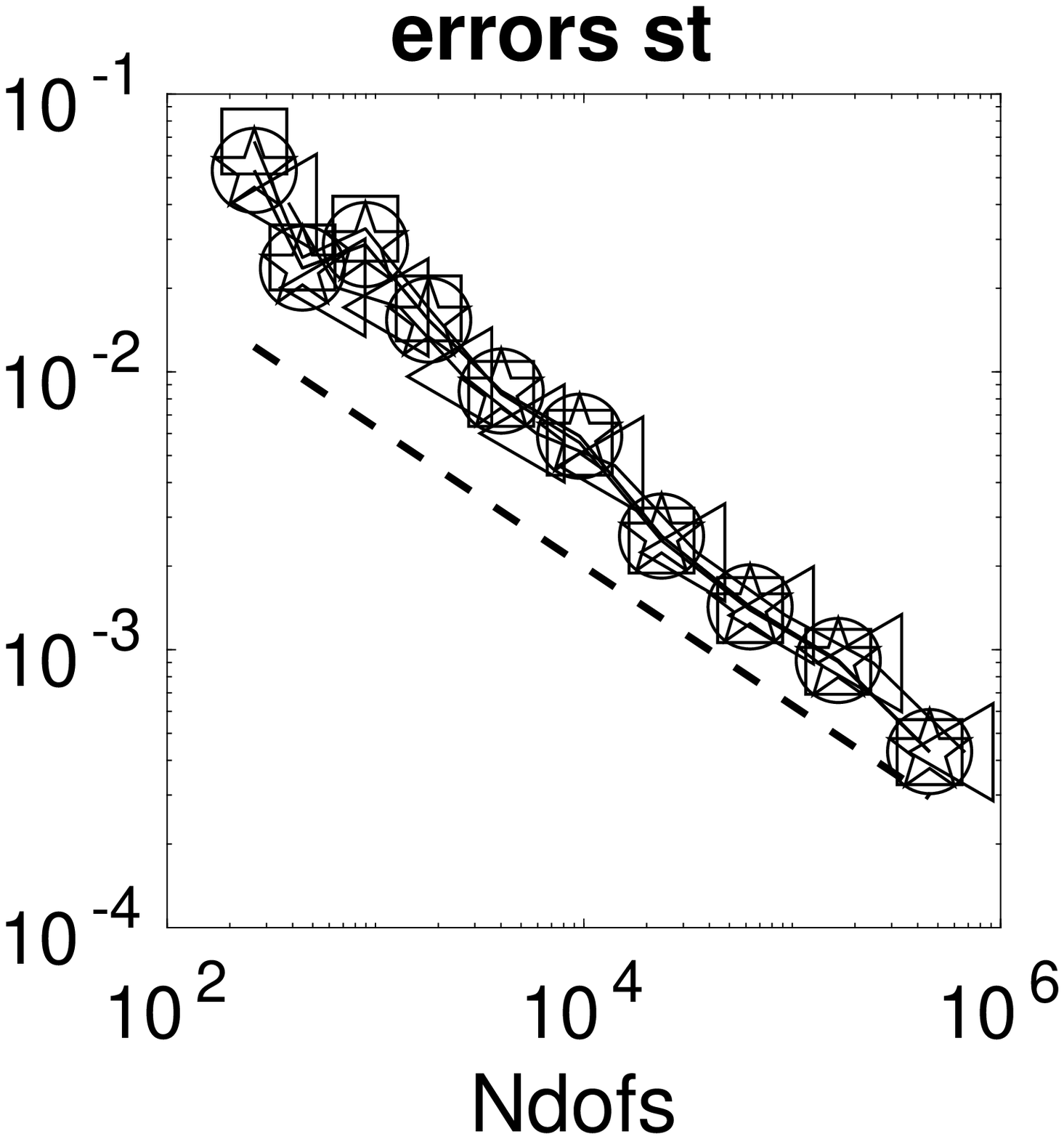}\\
\hspace{0.2cm}\tiny{(D.4)}
\end{minipage}
\begin{minipage}[c]{0.248\textwidth}\centering
\psfrag{errors ad}{\hspace{0.0cm}\large{$\|\nabla \mathbf{e}_{\mathbf{z}}\|_{\mathbf{L}^{2}(\Omega)}$}}
\includegraphics[trim={0 0 0 0},clip,width=3.15cm,height=3.2cm,scale=0.30]{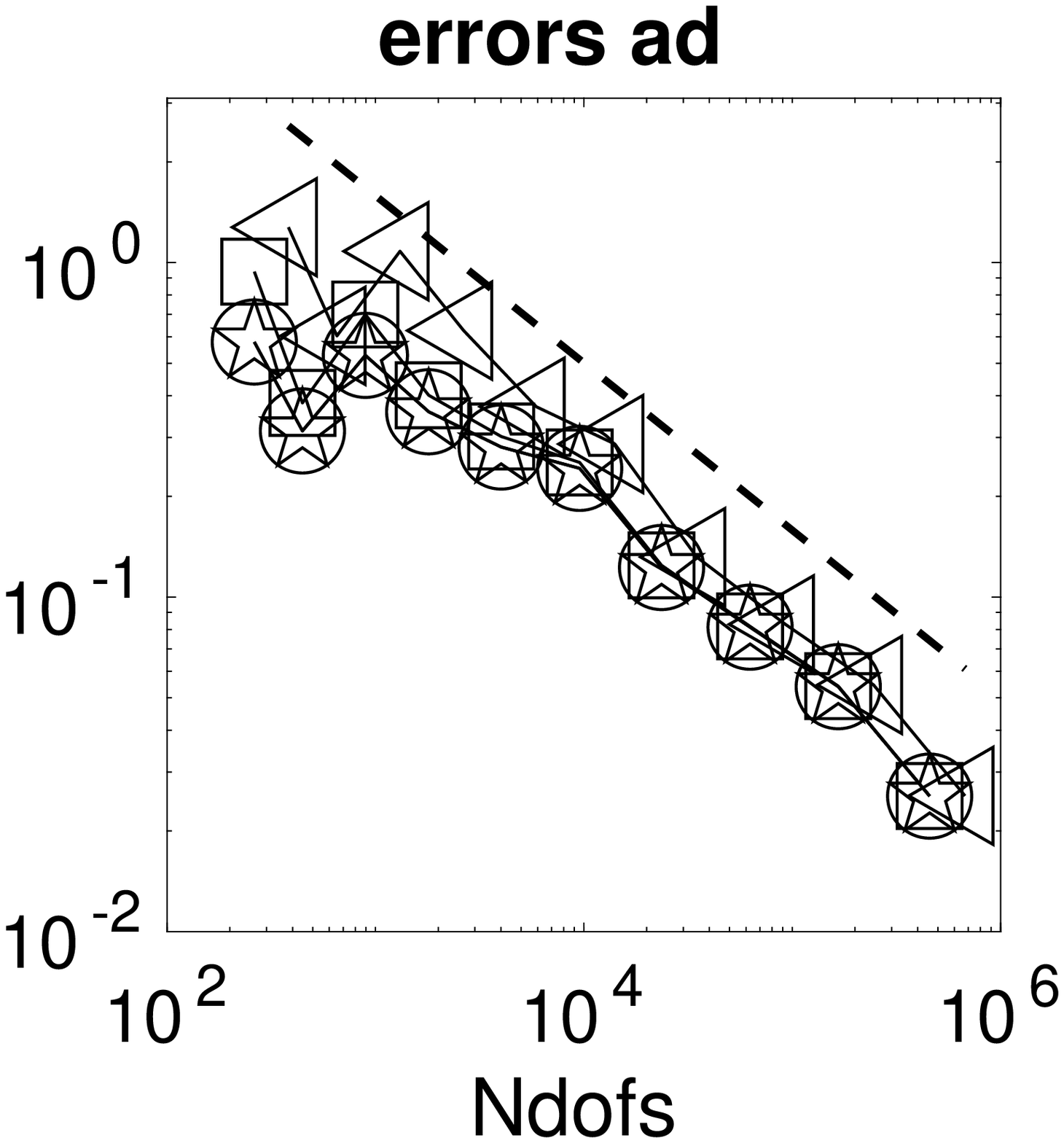}\\
\hspace{0.2cm}\tiny{(D.5)}
\end{minipage}
\begin{minipage}[c]{0.248\textwidth}\centering
\psfrag{errors ad}{\hspace{0.4cm}\large{$\|e_{r}\|_{L^{2}(\Omega)}$}}
\includegraphics[trim={0 0 0 0},clip,width=3.15cm,height=3.2cm,scale=0.30]{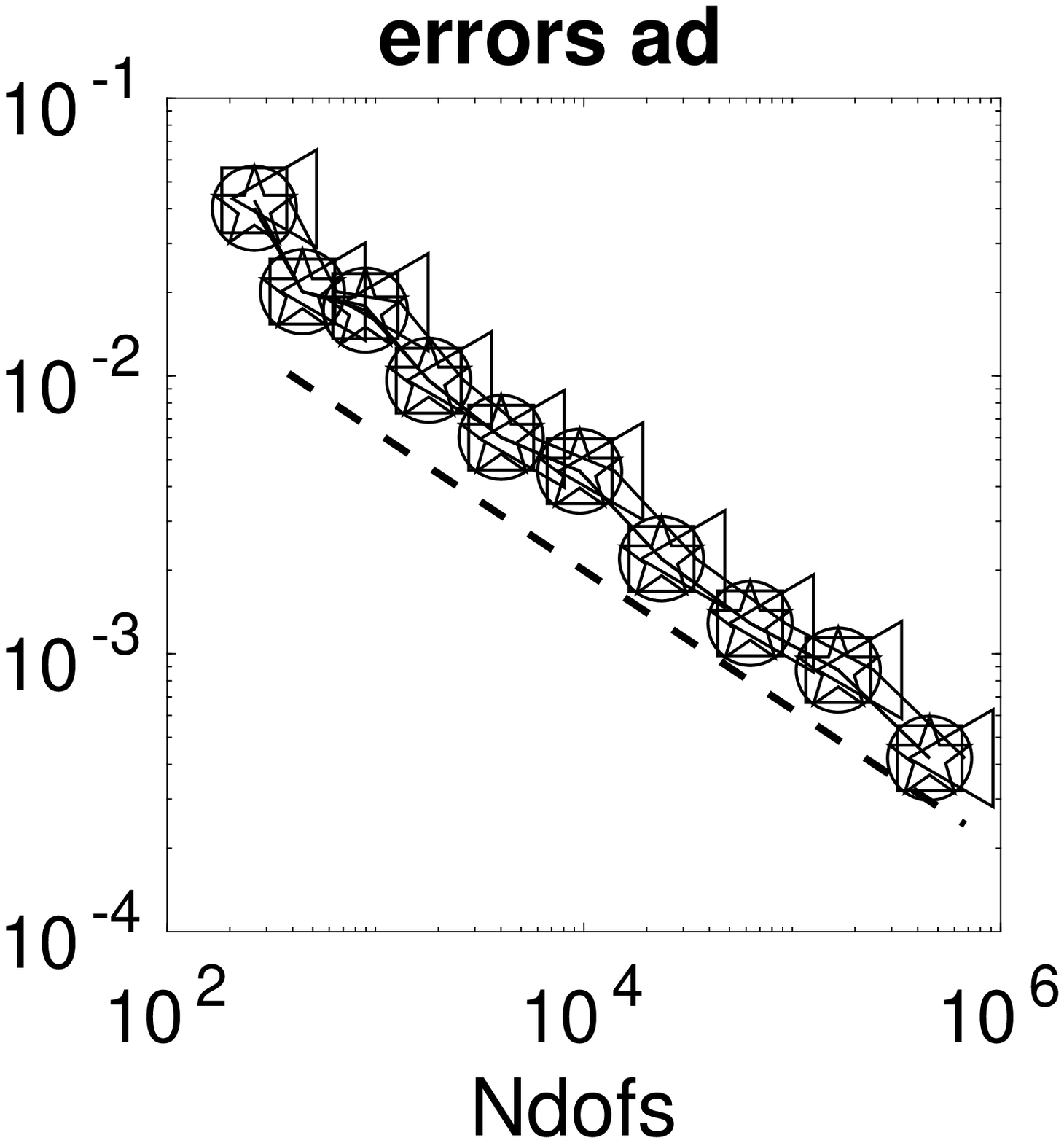}\\
\hspace{0.2cm}\tiny{(D.6)}
\end{minipage}
\caption{Example 1. Experimental rates of convergence, with uniform refinement, for the total errors $\|\mathbf{e}\|_{\Omega}$ and $\|\mathfrak{e}\|_{\Omega}$ (D.1) and each of their contributions (D.2)--(D.6) by considering the semi discrete scheme (with the implementations $\mathfrak{S}_{14}$, $\mathfrak{S}_{5}$, and $\mathfrak{S}_{5c}$) and the fully discrete scheme $\mathfrak{F}$.}
\label{fig:ex-2_unif}
\end{figure}


\begin{figure}[!ht]
\centering
\psfrag{ndof 23}{{\normalsize $\mathsf{Ndof}^{-2/3}$}}
\psfrag{ndof 33}{{\normalsize $\mathsf{Ndof}^{-1}$}}
\psfrag{Ndofs}{{\large $\mathsf{Ndof}$}}
\psfrag{I 73}{{\large $\mathfrak{S}_{14}$}}
\psfrag{I 5}{{\large $\mathfrak{S}_{5}$}}
\psfrag{I 5c}{{\large $\mathfrak{S}_{5c}$}}
\psfrag{I fd}{{$\mathfrak{F}$}}
\begin{minipage}[c]{0.4\textwidth}\centering
\psfrag{etas totales}{\hspace{-0.0cm}\large{$\mathcal{E}_{ocp}$ and $\mathfrak{E}_{ocp}$}}
\includegraphics[trim={0 0 0 0},clip,width=4.7cm,height=3.2cm,scale=0.30]{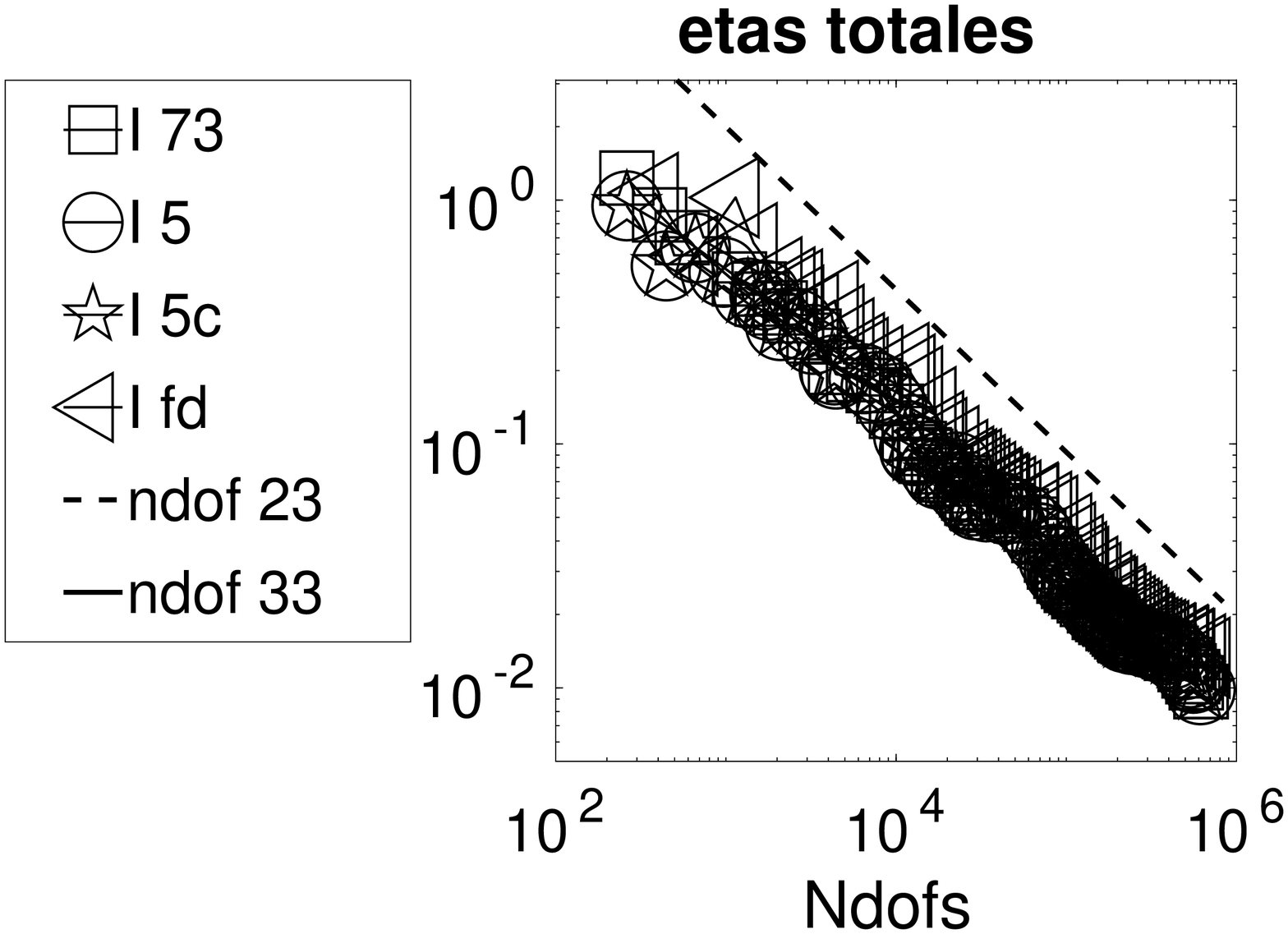}\\
\hspace{1.9cm}\tiny{(E.1)}
\end{minipage}
\begin{minipage}[c]{0.265\textwidth}\centering
\psfrag{indice ef}{\hspace{-0.0cm}\large{$\mathcal{I}$ and $\mathfrak{I}$}}
\includegraphics[trim={0 0 0 0},clip,width=3.4cm,height=3.2cm,,scale=0.30]{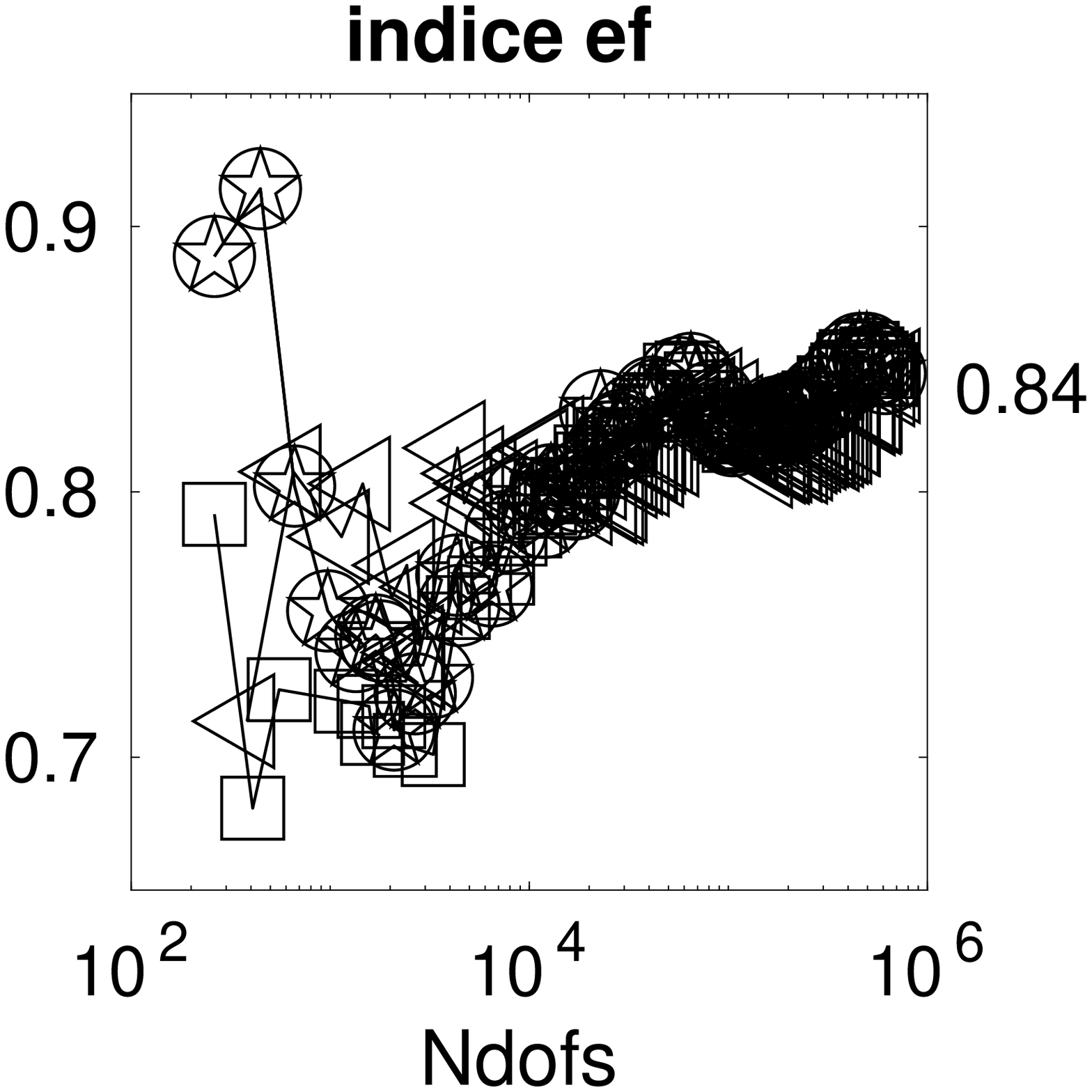}\\
\hspace{-0.30cm}\tiny{(E.2)}
\end{minipage}
\begin{minipage}[c]{0.25\textwidth}\centering
\psfrag{errors ct}{\hspace{-1.0cm}\large{$\|\mathbf{e}_{\mathbf{u}}\|_{L^{2}(\Omega)}$ and $\|\mathfrak{e}_{\mathbf{u}}\|_{L^{2}(\Omega)}$}}
\includegraphics[trim={0 0 0 0},clip,width=3.2cm,height=3.2cm,scale=0.30]{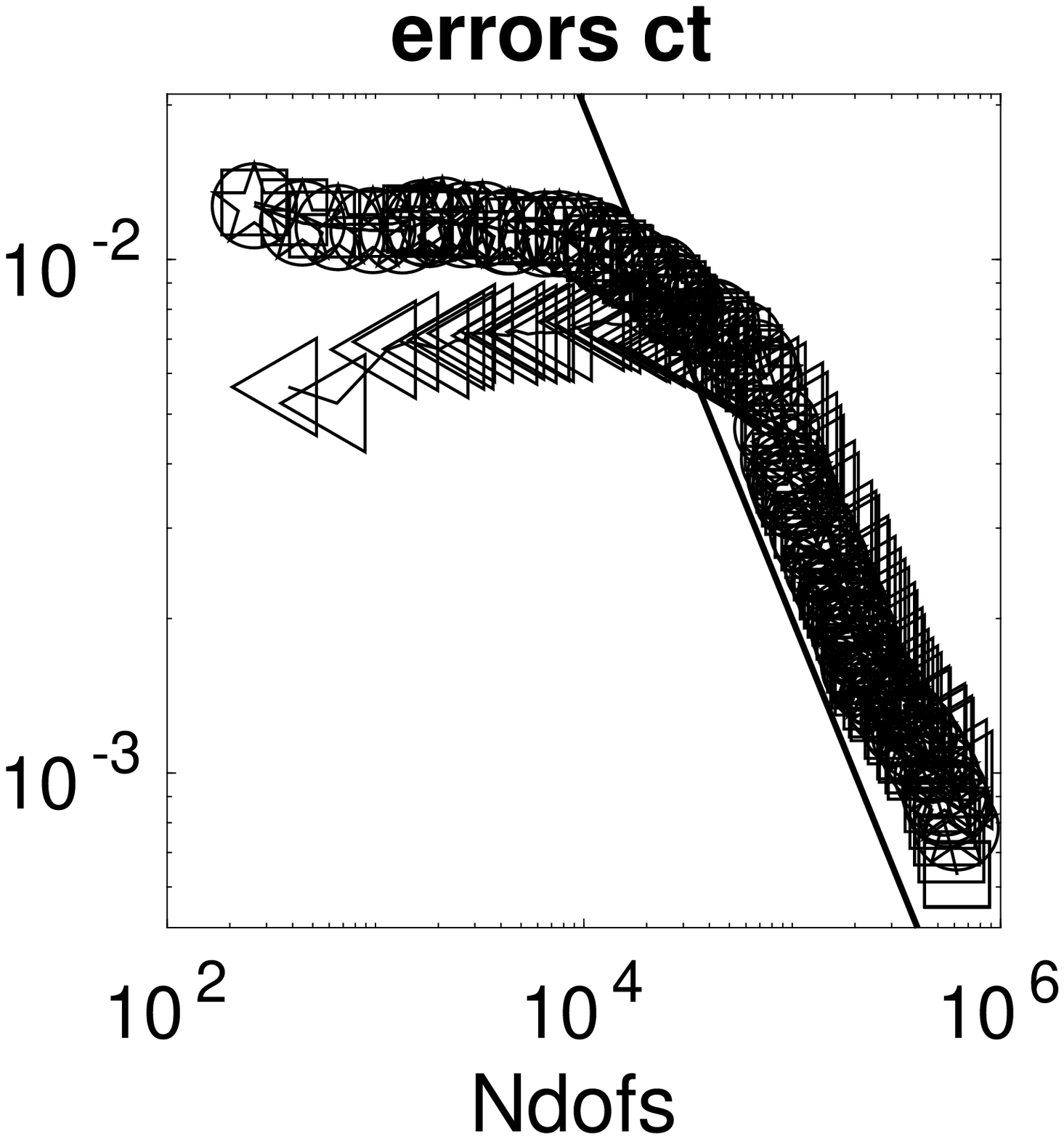}\\
\hspace{0.4cm}\tiny{(E.3)}
\end{minipage}
\\
\begin{minipage}[c]{0.248\textwidth}\centering
\psfrag{erros st}{\hspace{0.0cm}\large{$\|\nabla \mathbf{e}_{\mathbf{y}}\|_{\mathbf{L}^{2}(\Omega)}$}}
\includegraphics[trim={0 0 0 0},clip,width=3.2cm,height=3.2cm,scale=0.30]{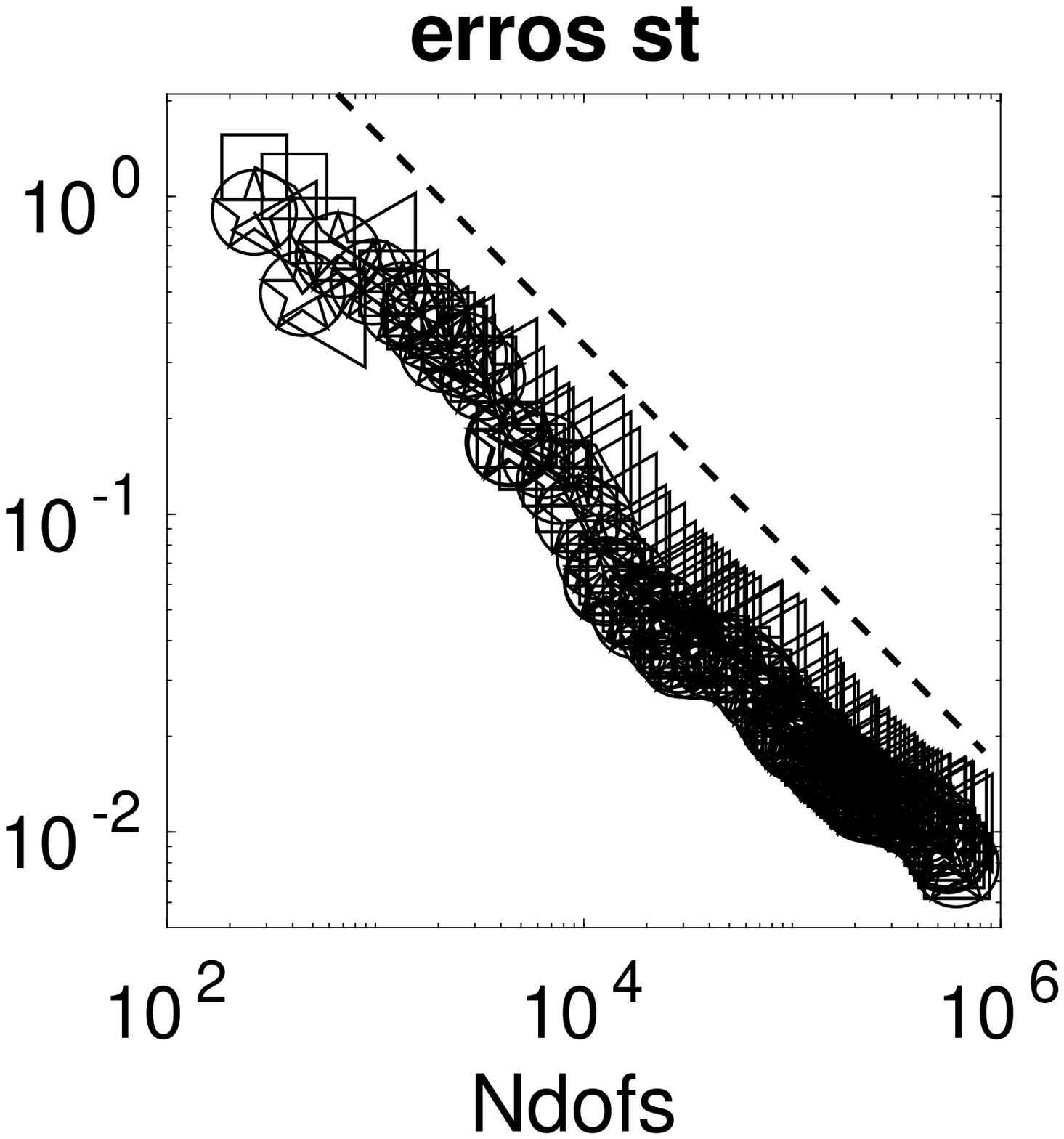}\\
\hspace{0.35cm}\tiny{(E.4)}
\end{minipage}
\begin{minipage}[c]{0.248\textwidth}\centering
\psfrag{errors st}{\hspace{0.2cm}\large{$\|e_{p}\|_{L^{2}(\Omega)}$}}
\includegraphics[trim={0 0 0 0},clip,width=3.2cm,height=3.2cm,scale=0.30]{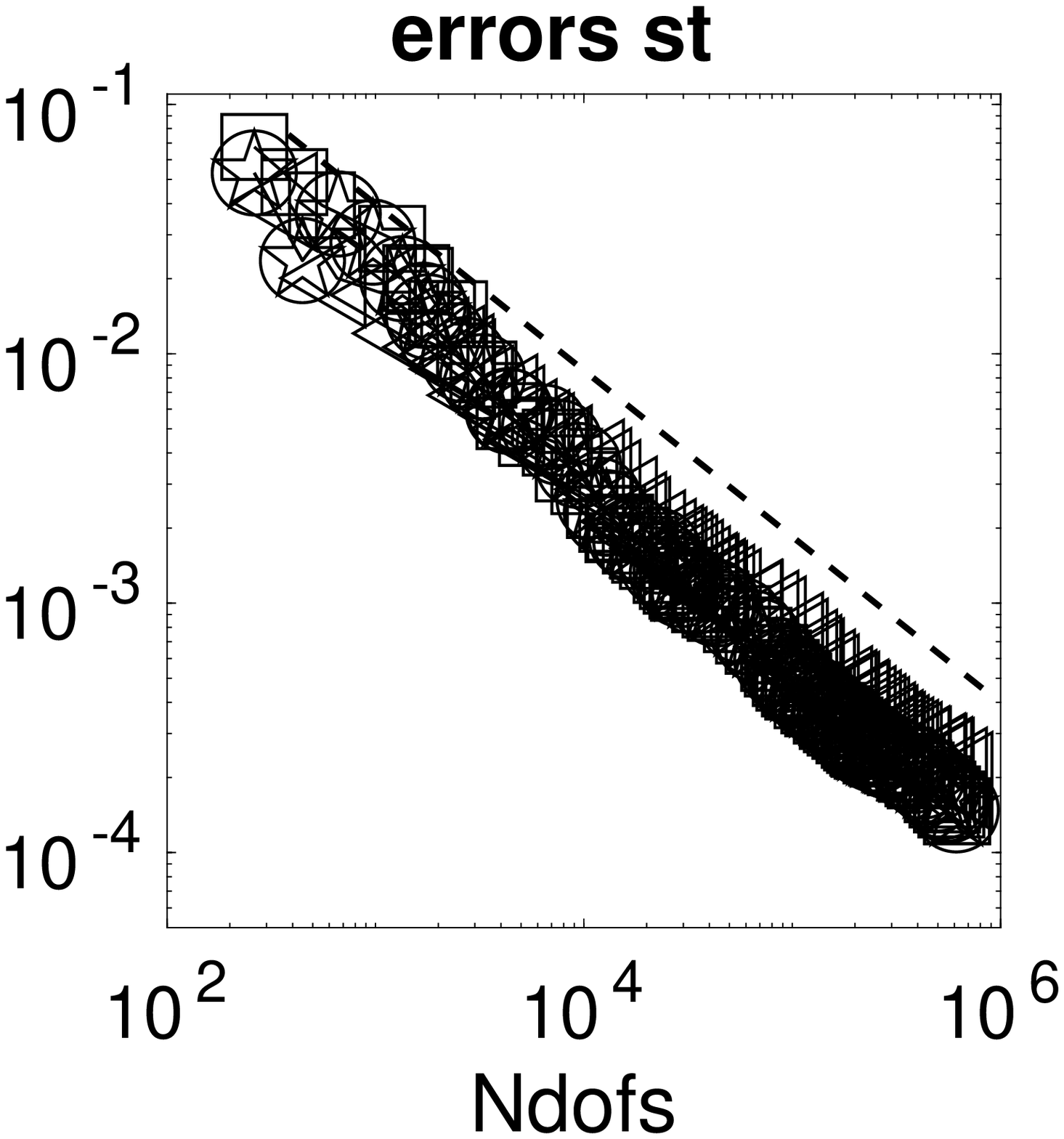}\\
\hspace{0.35cm}\tiny{(E.5)}
\end{minipage}
\begin{minipage}[c]{0.248\textwidth}\centering
\psfrag{erros ad}{\hspace{0.0cm}\large{$\|\nabla \mathbf{e}_{\mathbf{z}}\|_{\mathbf{L}^{2}(\Omega)}$}}
\includegraphics[trim={0 0 0 0},clip,width=3.2cm,height=3.2cm,scale=0.30]{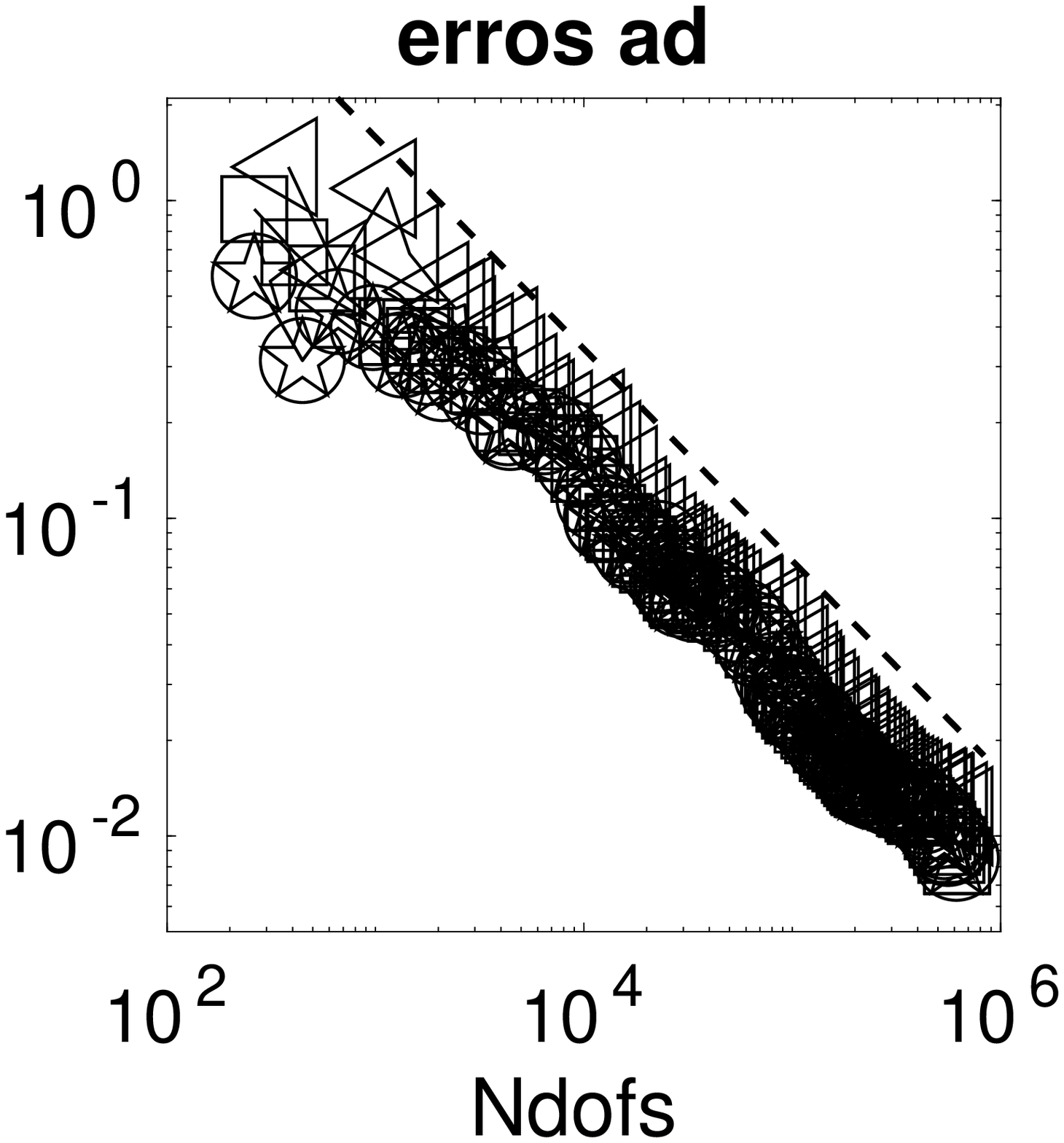}\\
\hspace{0.35cm}\tiny{(E.6)}
\end{minipage}
\begin{minipage}[c]{0.248\textwidth}\centering
\psfrag{errors ad}{\hspace{0.2cm}\large{$\|e_{r}\|_{L^{2}(\Omega)}$}}
\includegraphics[trim={0 0 0 0},clip,width=3.2cm,height=3.2cm,scale=0.30]{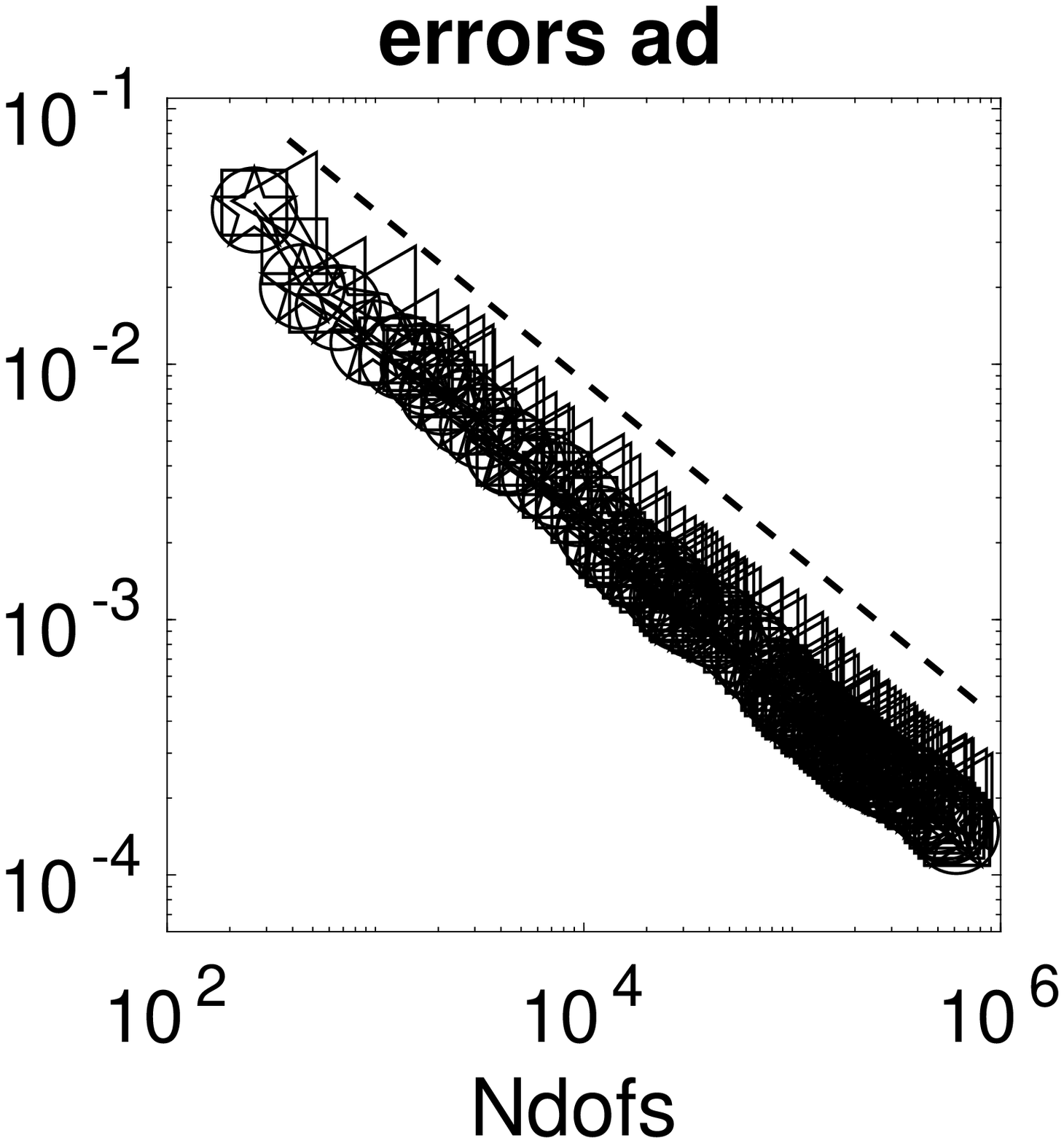}\\
\hspace{0.35cm}\tiny{(E.7)}
\end{minipage}
\caption{Example 2. Experimental rates of convergence, with adaptive refinement, for the total error estimators $\mathcal{E}_{ocp}$ and $\mathfrak{E}_{ocp}$ (E.1), effectivity indices $\mathcal{I}$ and $\mathfrak{I}$ (E.2), and experimental rates of convergence of each contribution of $\|\mathbf{e}\|_{\Omega}$ and $\|\mathfrak{e}\|_{\Omega}$ (E.3)--(E.7) 
by considering the semi discrete scheme (with the implementations $\mathfrak{S}_{14}$, $\mathfrak{S}_{5}$, and $\mathfrak{S}_{5c}$) and the fully discrete scheme $\mathfrak{F}$.}
\label{fig:ex-2_adap}
\end{figure}

\bibliographystyle{siam}
\footnotesize
\bibliography{biblio}

\end{document}